\newcommand{\R}{\mathbb{R}}
\newcommand{\N}{\mathbb{N}}
\newcommand{\B}{\mathcal{B}}
\newcommand{\I}{\mathcal{I}}
\newcommand{\E}{\mathbb{E}}
\newcommand{\V}{\mathbb{V}}
\newcommand{\D}{\mathcal{D}}
\newcommand{\uA}{u_B}
\newcommand{\uI}{u_{aux}}
\newcommand{\cI}{c_{aux}}
\newcommand{\cA}{c_B}
\newcommand{\hB}{h_{b}}
\newcommand{\hT}{h_{tail}}
\newcommand{\Tb}{T_{bulk}}
\newcommand{\psib}{\psi_{bulk}^*}
\newcommand{\psit}{\psi_{tail}^*}
\newcommand{\rhob}{{\rho_{bulk}^*}}
\newcommand{\rhot}{{\rho_{tail}^*}}
\newcommand{\TL}{\widetilde L_n}
\newcommand{\ThreshB}{{C_{\psi_{b}}}}
\newcommand{\RadB}{{C'_b}}
\newcommand{\ThreshT}{C''}
\newcommand{\CK}{C_K}
\newcommand{\CKprime}{{C_K^{(2)}}}
\newcommand{\cstar}{c_{\star}}
\newcommand{\comega}{c^{(\omega)}}
\newcommand{\Comega}{C^{(\omega)}}
\newcommand{\ch}{c^{(h)}}
\newcommand{\Ch}{C^{(h)}}
\newcommand{\cp}{c^{(p)}}
\newcommand{\Cp}{C^{(p)}}
\newcommand{\ctilde}{\widetilde c}
\newcommand{\Cmom}{\bar C}
\newcommand{\CPsiOne}{C_{\psi_1}}
\newcommand{\barhT}{\bar{h}_{tail}}
\newcommand{\cphi}{c^{(\phi)} }
\newcommand{\Cphi}{C^{(\phi)} }
\newcommand{\Ctilde}{\widetilde C}
\newcommand{\CbulkLB}{C_{bulk}^{LB}}
\newcommand{\CtailLB}{C_{tail}^{LB}}
\newcommand{\cu}{c_u}
\newcommand{\ctail}{c_{tail}}
\newcommand{\A}{\mathcal{A}}
\newcommand{\CvarNull}{C_{\V, H_0}}
\newcommand{\Ccov}{C^{(\text{c})}}
\newcommand{\CBT}{C_{BT}}
\newcommand{\T}{\mathcal{T}}
\newcommand{\AJtwo}{A_{J_2}}
\newcommand{\BJtwo}{B_{J_2}}
\newcommand{\CJtwo}{C_{J_2}}
\newcommand{\DJtwo}{D_{J_2}}
\newcommand{\gammadown}{\gamma^{(\downarrow)}}
\newcommand{\gammaup}{\gamma^{(\uparrow)}}
\newcommand{\Gammadown}{\Gamma^{(\downarrow)}}
\newcommand{\Gammaup}{\Gamma^{(\uparrow)}}
\newcommand{\cdown}{c^{(\downarrow)}}
\newcommand{\Cdown}{C^{(\downarrow)}}
\newcommand{\philj}{\phi_l^{(j)}}
\newcommand{\Elj}{E_l^{(j)}}
\newcommand{\hlj}{h_l^{(j)}}
\newcommand{\zlj}{z_l^{(j)}}
\newcommand{\cupward}{c^{(\uparrow)}}
\newcommand{\Xtilde}{\widetilde{\mathcal{X}}}
\newcommand{\An}{\A_1}
\newcommand{\Aonce}{\A}
\newcommand{\Aoncej}{\Aonce^{(j)}}
\newcommand{\Cint}{\|f\|_1}
\newcommand{\Poi}{\text{Poi}}
\newcommand{\ptens}{p_0^{\otimes \widetilde n}}
\newcommand{\qtens}{q_b^{(\widetilde n)}}
\newcommand{\pup}{p^{(\uparrow)}}
\newcommand{\pdown}{p^{(\downarrow)}}
\newcommand{\Azeroone}{A_0^{(1)}}
\newcommand{\Azerotwo}{A_0^{(2)}}
\newcommand{\Cq}{C_q}
\newcommand{\Cpiup}{C_{\pi}^{(\uparrow)}}
\newcommand{\Cintt}{\|f\|_{\alpha t}^{\alpha t}}
\newcommand{\Cgap}{C_{\text{gap}}}
\newcommand{\Asep}{\mathcal{A}_{\text{sep}}}
\newcommand{\cR}{c_r}
\newcommand{\nzero}{n_0}
\newcommand{\rhor}{{\rho_{r}^*}}
\newcommand{\cbeta}{c_\beta}
\newcommand{\calpha}{c_\alpha}
\newcommand{\Calpha}{C_\alpha}
\newcommand{\cch}{c_h}
\newcommand{\AL}[1]{A_{#1}}
\newcommand{\BL}[1]{B_{#1}}
\newcommand{\CL}[1]{C_{#1}}
\newcommand{\cL}[1]{c_{#1}}
\newcommand{\ssmall}[1]{can be made arbitrarily small by choosing #1 small enough}
\newcommand{\llarge}[1]{can be made arbitrarily large by choosing #1 large enough}
\newcommand{\slarge}[1]{can be made arbitrarily small by choosing #1 large enough}
\newcommand{\lsmall}[1]{can be made arbitrarily large by choosing #1 small enough}
\newcommand{\nbulk}{n_{bulk}}
\newcommand{\CvarAlt}{C_{\mathbb{V}, H_1}}
\newcommand{\psiout}{\psi_{\text{out}}}
\newcommand{\cout}{c_{\text{out}}}
\newcommand{\Cout}{C_{\text{out}}}
\newcommand{\Cd}{C_d}
\newcommand{\hm}{h_m}
\newcommand{\cm}{c_m}
\newcommand{\um}{u_m}
\newcommand{\Apsitwo}{A_{\psi_2}}
\newcommand{\Bpsitwo}{B_{\psi_2}}
\newcommand{\CDelta}{C_\Delta}
\newcommand{\TB}{{\widetilde{\B}}}
\newcommand{\tcA}{{\widetilde{\cA}}}
\newcommand{\tuA}{{\widetilde{\uA}}}
\newcommand{\Ctn}{C_{t_n}}
\newcommand{\Diag}{{\text{Diag}}}
\newcommand{\bI}{b_I}
\newcommand{\qI}{q_{aux}}
\newcommand{\cIM}{{\cI'}}
\newcommand{\cAM}{{\cA'}}
\newcommand{\rhobM}{{\rho_{bulk}^{*\;\mathcal{M}}}}
\newcommand{\rhotM}{{\rho_{tail}^{*\;\mathcal{M}}}}
\newcommand{\rhorM}{{\rho_{remain}^{*\;\mathcal{M}}}}
\newcommand{\barn}{\overline{n}}
\newcommand{\homega}{h_\Omega}
\newcommand{\csmall}{c_{small}}
\newcommand{\Ctaun}{C_{\tau_n}}
\newcommand{\Tdeg}{T}
\newcommand{\psidegen}{\psi_d^*}
\newcommand{\taun}{\tau_n}
\newcommand{\Clarge}{C_{large}}
\newcommand{\boundedcase}[1]{}
\newcommand{\unboundedcase}[1]{#1}
\newtheorem{theorem}{Theorem}
\newtheorem{lemma}{Lemma}
\newtheorem{proposition}{Proposition}
\newtheorem{corollary}{Corollary}
\newtheorem{definition}{Definition}
\title{Local Goodness-of-Fit Testing for H\"older-Continuous Densities: Minimax Rates}
\author[1]{Julien Chhor }
\author[2]{Alexandra Carpentier}
\affil[1]{CREST/ENSAE}
\affil[2]{OvGU, Magdeburg}
\date{}
\newcommand{\al}[1]{{\color{red} alex: #1}}
\begin{document}

\maketitle
\begin{center}
    Contact: jchhor@hsph.harvard.edu, carpentier@uni-potsdam.de
\end{center}

\hfill

\begin{abstract}
        \noindent We consider the goodness-of fit testing problem for H\"older smooth densities over $\R^d$: 
    given $n$ iid observations with unknown density $p$ and given a known density $p_0$, we investigate how large $\rho$ should be to distinguish, with high probability, the case $p=p_0$ from the composite alternative of all H\"older-smooth densities $p$ such that $\|p-p_0\|_t \geq \rho$ where $t \in [1,2]$. 
    The densities are assumed to be defined over \unboundedcase{$\R^d$ }\boundedcase{any arbitrary cubic domain $\Omega$ of $\R^d$ (including the unbounded domains $\R_+^d$ or $\R^d$ itself)} and to have H\"older smoothness parameter $\alpha>0$.
    In the present work, we solve the case $\alpha \leq 1$ and handle the case $\alpha>1$ using an additional technical restriction on the densities.
    We identify matching upper  and lower bounds on the local minimax rates of testing, given explicitly in terms of $p_0$. 
    We propose novel test statistics which we believe could be of independent interest. 
    We also establish the first definition of an explicit cutoff $\uA$ allowing us to split \boundedcase{$\Omega$}\unboundedcase{$\R^d$} into a bulk part (defined as the subset of \unboundedcase{$\R^d$}\boundedcase{$\Omega$} where $p_0$ takes only values greater than or equal to $\uA$) and a tail part (defined as the complementary of the bulk), each part involving fundamentally different contributions to the local minimax rates of testing. 
\end{abstract}

\hfill

%\ju{Ce qu'il reste à faire :
%\begin{itemize}
    %\item Vérifier que la preuve en domaine non borné marche
    %\item Comprendre la condition qui dit que l'algo split au moins 1 fois
    %\item Peut-on tout écrire avec $\uA$ ?
    %\item Y a-t-il besoin de $\uI , \uA \leq \max p_0$ ?
    %\item Comprendre si les splittings UB et LB tail sont les mêmes. Pour tail, découper en cases suffisamment petites pour que $\{U,\dots, M\} \neq \emptyset$. Pour le bulk, ce splitting pourrait-il convenir ou faut-il des plus grosses cases ?
    %\item Unifier l'indexation $p_b^{(n)}$ partout
    %\item Changer $M+1$ en $\infty$ ?
    %\item Problème : on montre que dans tous les cas la borne sup bulk est en $\rhob$. On montre que si le bulk domine alors la borne inf bulk match. Puis on montre que si le tail domine alors la borne sup et inf du tail est en $\rhot$. Donc si le tail domine on est sûr que le rayon est en $\rhot$. Mais si le bulk domine on n'a jamais prouvé que le tail ne pouvait pas donner un rayo plus élevé ?
    %\item Changer la def de la norme Lt (pb aux bords)
    %\item Retravailler les cas dégén pour $\hB > \homega$
    %\item Retravailler la stat bulk dans le cas borné
    %\item citer 3436
%\end{itemize}
%}

\section{Introduction}

This paper studies the local Goodness-of-Fit testing problem for $\alpha$-Hölder densities over \unboundedcase{$\Omega = \R^d$.}\boundedcase{a cubic domain $\Omega \subseteq \R^d$.
By convention, $\R^d$ and $\R_+^d$ are considered as cubic domains of $\R^d$.}
For all $\alpha, L>0$, $H(\alpha,L)$ denotes the class of $\alpha$-Hölder densities over $\Omega$. We place ourselves on a subclass $\mathcal{P}(\alpha, L)$ of $H(\alpha, L)$.
The classes $\mathcal{P}(\alpha,L)$ and $H(\alpha,L)$ are defined in Section \ref{Problem_Statement}. 
We endow $\mathcal{P}(\alpha, L)$ with some distance denoted by $\text{dist}(\cdot,\cdot)$, which in our setting, can be any $L_t$ distance for $t \in [1,2]$: $\text{dist}(p,q) = \|p-q\|_t$. 
Given the iid observations $X_1, \dots, X_n$ with same unknown density $p \in \mathcal{P}(\alpha, L)$, and given a known density $p_0 \in \mathcal{P}(\alpha,L)$, we consider the non-parametric testing problem:
\begin{equation}\label{abstract_testing_pb}
    H_0: p=p_0 ~~~~ \text{ vs } ~~~~ H_1(\rho): p \in \mathcal{P}(\alpha,L) \text{ and } \text{dist}(p,p_0) \geq \rho.
\end{equation}
This problem is called the goodness-of-fit problem for continuous densities, which has been thoroughly studied in many works~\cite{barron1989uniformly, ingster1986minimax, ingster1984asymptotically, ingster2012nonparametric, ermakov1995minimax, fromont2006adaptive,  fan1998goodness}.\\
%\al{petit blabla literature qui dit pk probleme tres important, gnagnagna}

Following~\cite{ingster1984asymptotically, ingster1986minimax, ingster2012nonparametric}, we will focus on establishing, up to a multiplicative constant, the smallest possible separation distance $\rho^* = \rho^*(p_0, n, \text{dist})$ in a minimax sense such that a uniformly consistent test exists for Problem \eqref{abstract_testing_pb} - this condition will be specified in more details in Section \ref{Problem_Statement}.\\ %In other words, we investigate how separated away $H_0$ and $H_1$ need to be in order to ensure distinguishability from $p_0$.

Problem \eqref{abstract_testing_pb} has most often been studied for the uniform density $p_0$ over a bounded domain, e.g.~$[0,1]^d$~\cite{ingster1986minimax}, \cite{gine2016mathematical}. 
It has been extended to the case of densities $p_0$ constrained to be bracketed between two constants, still on a bounded domain~\cite{ingster1984asymptotically}, \cite{ermakov1995minimax}. See~\cite[Chapter 6.2]{gine2016mathematical} for a more recent overview. In the case where $p_0$ is the uniform density on $[0,1]^d$ and for $\alpha$-H\"older densities with $L=1$, and when the distance is defined as $d(p,q) = \|p-q\|_t$ where $\|.\|_t$ is the $L_t$ norm with $t\in [1,\infty]$, the minimax-optimal separation radius for Problem \eqref{abstract_testing_pb} is 
\begin{equation} \label{eq:ings}
    n^{-2\alpha/(4\alpha+d)}.
\end{equation}
See e.g.~\cite[Theorem 4.2]{ingster2012nonparametric} for the case where $d=1$ and in the related sequence space model over Besov balls. 
However, these results hinge on the assumption that $p_0$ is lower bounded by a positive constant. Hence, they cannot be extended to null densities on unbounded domains.\\ %: assuming that the domain is bounded and that $p_0$ is lower bounded makes any such generalisation impossible, even when using the duality between the H\"older constant $L$ and the domain size.\\  %For global results, see e.g.~\cite{ingster1984asymptotically}, \cite{ingster1986minimax}, \cite{ermakov1995minimax}, \cite{gine2016mathematical}, \cite{paninski2008coincidence} and more recently \cite{lam2020minimax} under privacy constraints. \\

In fact, there is a fundamental gap between testing on bounded or unbounded domains. This was recently illustrated in the paper~\cite{balakrishnan2019hypothesis} which considers the case of Hölder continuous densities for $\alpha \in (0, 1]$ with separation in total variation distance ($L_1$ distance). The authors prove that there can be substantial heterogeneity when it comes to the minimax-optimal radius $\rho$, depending on $p_0$: testing some null hypotheses can be much easier than testing others. 
More precisely, they prove that uniformly over the class of $L$-Lipschitz densities, the minimax separation distance is bracketed as follows:
\begin{equation*}
    \left(\frac{L^{d/2} \left(\int_{p_0 \geq a(p_0)}p_0^{\frac{2}{3+d}}\right)^{\frac{3+d}{2}}}{n}\right)^\frac{2}{4+d} \lesssim \rho^*(p_0,n,\|.\|_1) \lesssim  \left(\frac{L^{d/2} \left(\int_{p_0 \geq b(p_0)}p_0^{\frac{2}{3+d}}\right)^{\frac{3+d}{2}}}{n}\right)^\frac{2}{4+d},
\end{equation*}
where $a(p_0)> b(p_0)>0$ are quantities - that are small and matching in order of magnitude for many cases, albeit not all - that depend only on $n,p_0$ and that are defined implicitly. 
See Section \ref{comparison_TV} for a thorough description of their results. 
The authors formally prove the interesting fact that the minimax separation distance depends on $p_0$ and they provide a test adapted to the shape of the density. 
For instance, if the density $p_0$ defined over $\mathbb R$ has essentially all its mass on e.g.~$[0,1]$, then the minimax optimal $\rho$ is $L^{1/5}n^{-2/5}$ - unsurprisingly comparable with in~\cite{ingster2012nonparametric}. However, if $p_0$ is heavy tailed, e.g.~corresponds to the Pareto distribution with parameter $\beta$, then the minimax optimal $\rho$ is $L^{1/5}n^{-2\beta/(2+3\beta)}$ - differing considerably from the rate of~\cite{ingster2012nonparametric}. This example highlights a specificity of testing heavy-tailed distributions, and by extension, distributions with unbounded support. 
To encompass all cases, it is therefore important to derive \textit{local} results where both the separation distance and the associated tests depend on $p_0$ in a refined way. 
The results in \cite{balakrishnan2019hypothesis} follow on ideas from a stream of literature concerning property testing.
For goodness-of-fit testing in the discrete (multinomial) setting, see~\cite{hoeffding1965asymptotically, fienberg1979use, batu2000testing, acharya2015optimal, chan2014optimal} for global results and ~\cite{diakonikolas2016new, valiant2017automatic, diakonikolas2017near, chhor2020sharp} for local results - see also~\cite{balakrishnan2018hypothesis} for an excellent survey. In the related setting of goodness-of-fit testing under local differential privacy, see \cite{berrett2020locally, dubois2021goodness}. 
Further papers considering estimation and hypothesis testing under privacy are~\cite{pmlr-v80-sheffet18a,pmlr-v89-acharya19b,9264231,9333586,9591593}.
Closest to our setting is~\cite{chhor2020sharp}, which studies the problem of goodness-of-fit for multinomials in $L_t$ norm for $t \in [1,2]$ - see Section~\ref{Discussion} for a thorough description of their results, and comparison.\\ %\al{blabla sur le papier}

In this paper, we focus on the problem of goodness-of-fit testing for H\"older smooth densities $p_0$, \boundedcase{possibly} defined on unbounded domains, extending over classical goodness-of-fit testing results following~\cite{ingster2012nonparametric}. We find how the minimax separation distance $\rho$ depends on $p_0$ and therefore provide local results. %In other words, for any H\"older smooth $p_0 \in \mathcal P(\alpha, L)$ - also those defined on unbounded domains - we obtain \textit{local} results, in a variety of cases an for any $p_0$ - also unbounded. %This dependence will be referred to as the \textit{locality} of our results.
We consider a variety of separation distances, going beyond the $\|.\|_1$ distance from~\cite{balakrishnan2019hypothesis}: namely we consider all the $L_t$ distances for $t \in [1,2]$, as in \cite{chhor2020sharp} for the multinomial case. %, and extend the case of the $L^1$ norm, equivalent to testing in total variation distance.
We cover all the scale of Hölder classes $H(\alpha, L)$ for all $\alpha > 0$, under technical assumptions for $\alpha >1$, extending from the case $\alpha \leq 1$ studied in \cite{balakrishnan2019hypothesis}. We identify the matching upper and lower bounds on $\rho(p_0, n, \text{dist})$ and provide the corresponding optimal tests in all the cases described above. In our results, the radius $\rho(n,p_0, \text{dist}_{L_t})$ is given explicitly as a function of $p_0$.\\
%\al{Ici a voir si on veut remettre litemize.}

We now give a brief overview of the related literature on density testing, and explain more in details our contributions.
\begin{enumerate}
    \item \textbf{Testing for $\alpha$-H\"older distributions:} In the setting where $p_0$ is the uniform distribution over $[0,1]^d$ the global minimax rates are well understood, see~Equation~\eqref{eq:ings} and \cite{ingster2012nonparametric} for an adaptation of these results to the setting where $p_0$ is uniform over $[0,1]^d$. %\ju{Enlever ça ? on ne peut pas traiter l'uniforme. De plus Pb c'est sur les densités pas sur le bruit blanc gaussien ?} 
    In the local setting, however, little is known. 
    In the breakthrough results of \cite{balakrishnan2019hypothesis}, the case $\alpha \leq 1$ is almost completely solved. 
    However, the general case $\alpha>0$ is not considered and from the construction of the tests in \cite{balakrishnan2019hypothesis}, it is clear that the case $\alpha>1$ is far from being a trivial extension. Indeed, the test statistics in \cite{balakrishnan2019hypothesis} are built using heterogeneous histograms, which are not smooth enough when $\alpha >1$. 
    In the present paper, we solve the case of $\alpha$-H\"older densities for any $\alpha$ - but we need to introduce a technical condition for $\alpha >1$. 
    This assumption is akin to assuming that all the derivatives of the densities $p$ take value $0$ in their inflection points whose value is close to $0$. 
    We introduce novel test statistics, based on kernel estimators with heterogeneous bandwidth, simpler than the test statistic defined on an heterogeneous partition of the space from  \cite{balakrishnan2019hypothesis}. 
    We believe that this test statistic can also be of independent interest. See Section~\ref{Discussion} for a comparison with~\cite{balakrishnan2019hypothesis}.
    \item \textbf{Extension to $L_t$ distance:} The choice of distance influences the geometry of the alternative and consequently the nature of optimal tests as well as the expression for the minimax separation radius. 
    In Section \ref{Discussion}, we highlight that changing the norm can actually change the null densities $p_0$ which are the easiest or most difficult ones to test.
    The distances considered in the density testing literature are often either the $L_1$ distance - as is the case of local results in \cite{balakrishnan2019hypothesis} where the separation is only considered in $L_1$ distance, the $L_\infty$ distance \cite[Chapter 6.2]{gine2016mathematical}, or other $f$-divergences, such as the Kullback-Leibler, $\chi^2$ divergences or the Hellinger distance~\cite{daskalakis2018distribution}.
    In the discrete setting, the $L_1$ norm is often considered \cite{valiant2017automatic}, as well as the $L_2$ norm \cite{berrett2020locally}. The paper \cite{ghoshdastidar2020two} considers the two sample testing problem in inhomogeneous random graphs, in order to study the effect of various distances (total variation, Frobenius distance, operator norm, Kullback-Leibler divergence).
    The paper \cite{dan2020goodness} considers the goodness-of-fit testing problem in inhomogeneous random graphs for the Frobenius and operator norm distances.
    However, as will appear in the present paper, the phenomena occurring for testing in $L_t$ distances are similar for all $t \in [1,2]$. This property has already been identified in \cite{chhor2020sharp} in the discrete setting (multivariate Poisson families, inhomogeneous random graphs, multinomials). 
    The present paper extends the results from \cite{chhor2020sharp} to the continuous case, highlighting a deep connection between the two settings. However, as discussed in Section \ref{Discussion}, the results from \cite{chhor2020sharp} cannot be directly transferred to the density setting. 
    In our paper, we extend the results of \cite{balakrishnan2019hypothesis} to the case of more general norms. This impacts the choice of test statistics and new regimes appear, see Section~\ref{Discussion} for a comparison with~\cite{balakrishnan2019hypothesis} .
    % \al{reformuler}
    \item \textbf{Matching upper  and lower bounds:} The local rates established by \cite{balakrishnan2019hypothesis} provide the first upper  and lower bounds for density testing in the local case. 
    Although matching in most usual cases, the authors discuss quite pathological cases for which their upper and lower bounds do not match. 
    Indeed, the method proposed in \cite{balakrishnan2019hypothesis} builds on the well-known multinomial identity testing analysis from \cite{valiant2017automatic}, which identifies upper  and lower bounds on the minimax separation radius for testing in total variation distance.
    However, even in the discrete setting, some specific cases can be found for which these upper  and lower bounds do not match, explaining the untightness of \cite{balakrishnan2019hypothesis} in some cases. In the present paper we bridge the gap, by proposing a new way to define a cut-off between bulk (set of large values of $p_0$) and tail (set of small values of $p_0$). This approach leads us to provably matching upper  and lower bounds on the minimax separation radius. 
    As opposed to \cite{balakrishnan2019hypothesis}, our result can moreover be expressed as an explicit function of $p_0$.
\end{enumerate}

The paper is organized as follows. In Section \ref{Problem_Statement}, we define the testing problem. 
In Section \ref{results}, we state our main theorem identifying the sharp minimax rate for the testing problem. 
We then analyse separately two different regimes, namely the bulk regime (in Section \ref{Bulk_regime_section}) and the tail regime (in Section \ref{Tail_regime_section}). 
We finally discuss our results in Section \ref{Discussion}.

%\al{choice of the tests stat, un peu plus elegante}

%We get explicit expressions for the minimax separation radius, as opposed to \cite{balakrishnan2019hypothesis}

%\al{
%Proposition de plan pour intro:
%\begin{itemize}
%    \item Commencer a parler de goodness-of-fit pour densite (Ingster and co) - dire que cest des resultats "globaux" souvent par rapport au pire des cas de densite, i.e. uniforme
%    \item parler des resultats locaux en discret a la Vailant et Valiant - dire que cest local (et expliquer ce que local veut dire) mais que cest en discret et souvent seulement en norme 1 ou 2
%    \item parler des deux papiers les plus proches: Wasserman+Balakrishnan, et notre papier en discret. Enoncer plus en detail les resultats de ces deux papiers. Expliquer que le premier ne concerne que la norme 1 (et dire aussi que le test est pas tres "closed form", et que en plus il y a peut etre un gap - a checker) et dire que le 2e est en discret et que le probleme est discret est tres different (expliquer que la "ponderation" est pas la meme et parler des problemes de decoupage/approx)
%\end{itemize}
%je pense que cest possible de reprendre pas mal de literature de lautre papier.

%}

\hfill

\section{Problem Statement}\label{Problem_Statement}

\subsection{Definition of the class of densities {$\mathcal{P}(\alpha, L)$}}
To ensure the existence of consistent tests for Problem \eqref{abstract_testing_pb}, structural assumptions need to be made on the class of densities we consider. 
Indeed, as shown in \cite{lecam1973convergence} and \cite{barron1989uniformly}, no consistent test can distinguish between an arbitrary $p_0$ and alternatives separated in $l_t$ norm if no further assumption is imposed on the set of alternatives. 
Throughout the paper, we place ourselves on a restricted subclass of the Hölder class of functions. 
Our class corresponds to the densities on \boundedcase{ the cubic domain $\Omega \subset \R^d$}\unboundedcase{ $\Omega = \R^d$ $(d \in \mathbb{N}^*)$}, with Hölder regularity and satisfying Assumption \eqref{simplifyingAssumption} defined below. 
\boundedcase{By convention, $\R^d$ is considered as a cubic domain of $\R^d$.}\\

Let $\alpha,L>0$ and denote by $\|\cdot\|$ the Euclidean norm over $\R^d$. We recall the definition of the Hölder class over $\Omega$. Set\footnote{Here $\lceil x \rceil$ is the smallest integer greater than or equal to a given real number $x$.} $m = \lceil \alpha \rceil - 1$ and consider a function $p:\Omega \longrightarrow \mathbb R$ that is $m$ times differentiable. Write $z\mapsto P_p(x, z)$ for the Taylor polynomial of degree $m$ of $p$ at $x$. The Hölder class is defined as:
\unboundedcase{
\begin{align*}
    H(\alpha, L) = \big\{p ~|~ \Omega \longrightarrow \mathbb R: ~ & p \text{ is }m \text{ times differentiable and } \\
    &\forall  x,y \in \Omega: |p(x)- P_p(x, y-x)| \le L \| x-y\|^\alpha\big\}.
\end{align*}}

\boundedcase{\begin{align*}
    H(\alpha, L) = \left\{\right.p : \Omega \longrightarrow \mathbb R: ~ & p \text{ is }m \text{ times differentiable and } |p(x)- P_p(x, y-x)| \le L \| x-y\|^\alpha\left.\right\}.
\end{align*}}

Our class of densities is obtained by intersecting $H(\alpha,L)$ with the set of densities $p: \Omega \longrightarrow \R_+$ satisfying:
\begin{equation}\label{simplifyingAssumption}
    \forall (x,y) \in \Omega : ~~ |p(x) - p(y)| \leq \; \cstar  p(x) \, + \,  L \|x-y\|^\alpha \tag{$\star$}, 
\end{equation}
for some fixed constant $\cstar \in (0,\frac{1}{2})$. Note that Assumption \eqref{simplifyingAssumption} is automatically satisfied for $\alpha \leq 1$. We discuss Assumption \eqref{simplifyingAssumption} in Section \ref{Discussion}. The class of densities therefore considered throughout the paper is defined as:
\boundedcase{
\begin{equation}\label{class_densities}
    \mathcal{P}(\alpha,L, \cstar) = \Big\{p \in H(\alpha,L)  \; \Big| \; \int_{\Omega} p = 1,\; p\geq 0 \text{ and $p$ satisfies \eqref{simplifyingAssumption} } \Big\}.
\end{equation} }
\unboundedcase{\begin{equation}\label{class_densities}
    \mathcal{P}_\Omega(\alpha,L, \cstar) = \Big\{p \in H(\alpha,L)  \; \Big| \; \int_{\Omega} p = 1,\; p\geq 0 \text{ and $p$ satisfies \eqref{simplifyingAssumption} } \Big\}.
\end{equation}}
When no ambiguity arises, we will drop the lower index $\Omega$ since it is assumed to be equal to $\R^d$. 

\subsection{Minimax testing framework}
Throughout the paper, we fix $t\in [1,2]$. For $f \in L_t(\Omega)$, we denote by $\|f\|_t$ the $L_t$ norm of $f$ with respect to the Lebesgue measure:
$$ \|f\|_t = \Big(\int_{\Omega} |f|^t \; dx\Big)^{1/t}.$$ 

\boundedcase{\al{
If $\Omega = \mathbb R^d$
$$ \|f\|_t = \Big(\int_{\Omega} |f|^t \; dx\Big)^{1/t},$$
and otherwise if $\Omega = [0,A]^d$
$$ \|f\|_t = \Big(\int_{\tilde \Omega} |f|^t \; dx\Big)^{1/t},$$
where $\tilde \Omega = [u, A-u]^d$. The latter definition is used to avoid boundary problems, see the discussion.
}}

Assume \textit{wlog} that the number of observations $n$ is even: $n=2k \; (k \in \N^*)$. 
We fix two constants $\alpha,L >0$. 
Assume moreover that we observe $n$ \textit{iid} random variables $X_1, \cdots, X_n$ with the same \textit{unknown} density $p \in \mathcal{P}(\alpha,L,\cstar)$. 
Let $p_0 $ be one particular \textit{known} density in $\mathcal{P}(\alpha,L,\cstar)$ and fix $\delta >0$. 
For some $\rho > 0$, the \textit{goodness-of-fit} testing problem is defined as:
\begin{align}\label{testing_pb}
\begin{split}
    H_0~~~ &: \; p = p_0 \;\;~~~~~~\; \text{ versus }\\
    H_1(\rho,t) &:\; p \in \mathcal{P}(\alpha,L', \cstar') \text{ s.t. } \|p-p_0\|_t \geq \rho,
\end{split}
\end{align}
where $L' = (1+\delta) L$ and $\cstar' = (1+\delta)  \cstar$. The parameter $\delta>0$ can be chosen arbitrarily small. 
Note that in Theorem 4.1 from~\cite{balakrishnan2019hypothesis}, a similar restriction, equivalent to enlarging the alternative, is also needed in the lower bounds. 
Namely, the authors test equality to $p_0$ against the set of $(\alpha,L)$-Hölder densities separated from $p_0$ in $L_1$ distance. 
For the lower bound, they need to assume that for some constant $c_{int} \in (0,1)$, the density $p_0$ has Hölder constant $c_{int}L$ rather than $L$. 
In comparison, we assume that $p_0$ has Hölder constant equal to $L$ and that, on the alternative, the densities have Hölder constant $L(1+\delta)$.
Clearly, these two formulations are equivalent. 
Similarly to~\cite{balakrishnan2019hypothesis}, this assumption is never needed in our upper bounds. 
However, enlarging the alternative is important for obtaining local minimax lower bounds since without this assumption, problems can arise when $p_0$ is on the boundary of the class $H(\alpha,L)$. 
For instance, consider $p_0 = \sqrt{L}(1-|x| /\sqrt L)_+$ for $\alpha = 1$. 
Here, we can say that $p_0$ is on the boundary of the class $H(1,L)$, as its Lipschitz constant is exactly equal to $L$. 
Therefore, perturbations of $p_0$ of the form $p_0\pm \phi$ for some function $\phi:\R^d\to \R$ can be out of the class (they might not be $L$-Lipschitz, even for arbitrarily small such $\phi$). 
Since the least favorable functions in the lower bounds are small perturbations of $p_0$, putting $\delta=0$ is problematic. 
%In contrast, when $\delta>0$, we can guarantee that $p_0$ is contained in an open ball of $\mathcal{P}(\alpha,L',\cstar')$ for the topology induced by $\|\cdot\|_t$. %It allows us to exclude artificial problems occurring when $p_0$ is on the boundary of $\mathcal{P}(\alpha, L ,\cstar)$, by guaranteeing that whenever $p_0 \in \mathcal{P}(\alpha,L,\cstar)$, $p_0$ is contained in an open ball of $\mathcal{P}(\alpha,L',\cstar')$ for the topology induced by $\|\cdot\|_t$. 
\vspace{2mm}

Our goal is to establish how large $\rho$ should be for \eqref{testing_pb} to be feasible in a sense we now formally specify.\\ 

A \textit{test function} $\psi: \Omega^n \longrightarrow \{0,1\}$ is defined as a measurable function of the observations $(X_1, \dots, X_n)$ taking only values $0$ or $1$. The quality of any given test $\psi$ is measured by its \textit{risk}, defined as the sum of its type-I and type-II errors:
\begin{equation}\label{risk}
    R(\psi, \rho) \overset{def}{=} \mathbb{P}_{p_0}(\psi=1) + \sup_{p \in \mathcal{P}_1(\rho,t)} \mathbb{P}_p(\psi=0)
\end{equation}
where $\mathcal{P}_1(\rho,t) = \{p \in \mathcal{P}(\alpha,L', \cstar') : \|p-p_0\|_t \geq \rho\}$ is the set of all $p$ satisfying $H_1(\rho,t)$. We are looking for a test with smallest possible risk, if it exists. We therefore introduce the \textit{minimax risk} as:
\begin{align}
    R^*(\rho) = R^*(n,p_0,t, \rho) &= \inf_{\psi} R(\psi) = \inf_{\psi} \Big\{\mathbb{P}_{p_0}(\psi = 1) + \sup_{p \in \mathcal{P}_1(\rho,t)} \mathbb{P}_p(\psi = 0)\Big\}, \label{def_minimax_risk}
\end{align}
which corresponds to the risk of the best possible test. Here, $\inf\limits_{\psi}$ denotes the infimum over all tests. Note that if $R^*(\rho) = 1$, then random guessing is optimal. %(indeed, random guessing corresponds to the test $\widetilde \psi(X_1, \cdots, X_n) \sim Ber(\frac{1}{2}) \indep X_1, \dots, X_n$ which clearly has a risk equal to $1$).
Hence, to have a non-trivial testing problem, it is necessary to guarantee $R^*(\rho) \leq \eta$ for some fixed constant $\eta \in (0,1)$. Noting that this bound on $R^*$ can only hold for $\rho$ large enough, we introduce the \textit{minimax separation radius}, also called \textit{minimax (testing) rate} or \textit{critical radius}, defined as the smallest $\rho>0$ ensuring $R^*(\rho) \leq \eta$.

\begin{definition}[Minimax separation radius]
We define the minimax separation radius, or minimax (testing) rate, as:
\begin{equation}\label{separation_radius}
    \rho^* := \inf \big\{\rho>0 : R^*(n,p_0,t,\rho) \leq \eta \big\}.
\end{equation}
\end{definition}

In the following, we fix $\eta \in (0,1)$. The aim of the paper is two-fold.
\begin{enumerate}
    \item Find the minimax rate $\rho^* = \rho^*(n,p_0,\alpha,L,t)$ defined in \eqref{separation_radius} and associated to problem \eqref{testing_pb}, up to multiplicative constants which are allowed to depend on $t, \eta, \alpha$ and $d$. 
    \item Find a test $\psi^*$ and a constant $C>0$ such that $R(\psi^*, C \rho^*) \leq \eta$. This ensures that if the hypotheses are separated by $C\rho^*$, then Problem \eqref{testing_pb} is guaranteed to have a decision procedure with risk at most $\eta$, namely $\psi^*$. 
\end{enumerate}

\subsection{Notation}

In what follows, we will define $x \lor y = \max(x,y)$, $x \land y = \min(x,y)$ and $x_+ = x\lor 0$. 
We will use $\|\cdot\|$ to denote the Euclidean norm over $\R^d$.
The \textbf{support} of a function $p: \Omega \longrightarrow \R$ is defined as $\{x \in \Omega : p(x) \neq 0\}$.
We write $\lceil x \rceil$ the smallest integer greater than or equal to a given real number $x$. For any set $A \subset \Omega$ and any function \boundedcase{$f \in L_t(\Omega)$}\unboundedcase{$f \in L_t$}, we also define $\|f_A\|_t = \big(\int_{A} |f|^t \big)^{1/t}$. \textbf{Throughout the paper, we will call "constant" any strictly positive constant depending only on $\eta, \alpha, t$ and $d$.} 
For any two nonnegative functions $f,g$, we will write $f \lesssim g$ if there exists a constant $C > 0$ such that $f \leq Cg$, where $C = C(\eta, d, t, \alpha)$. %\al{Je crois que la plupart du temps quand tu utilises $\lesssim$ et les autres en fait ta constante universelle depend au moins de $\eta$. Si oui il faudrait le preciser et le definir}. 
We will also write $f \gtrsim g$ if $g \lesssim f$ and $f \asymp g$ if $f \lesssim g$ and $f \gtrsim g$. 
For any two real numbers $a,b$ we will write $[a \pm b] = [a-b,a+b]$. 
For any set $A \subset \Omega$, we will denote by $A^c$ the complement of $A$ in $\Omega$: $A^c = \Omega \setminus A$. 
Denote by $\mathfrak{B}(\Omega)$ the Borel $\sigma$-algebra over $\Omega$. For any two probability distributions $P,Q$ over $(\Omega, \mathfrak{B}(\Omega))$, we will denote by $d_{TV}(P,Q) = \sup\limits_{A \in \mathfrak{B}(\Omega)} \left|P(A)-Q(A)\right|$ the total variation distance between $P$ and $Q$. If $P$ and $Q$ are absolutely continuous with respect to some measure $\mu$ over $(\Omega, \mathfrak{B}(\Omega))$ with densities $p$ and $q$ respectively, we will also write 
$$ d_{TV}(p,q) = \frac{1}{2}\int_\Omega |p-q|\, d\mu = d_{TV}(P,Q).$$

We also denote by $\text{Unif}(A)$ the uniform distribution over any bounded Borel set $A \subset \R^d$.

%We now state our results in the density setting, which provide the answer to the two above questions.\\

\section{Results}\label{results}

We fix $p_0 \in \mathcal{P}(\alpha, L)$ defined in \eqref{class_densities}, along with some constant $\eta \in (0,1)$. We first give an overview of our results. The domain $\Omega$ will be split into two parts, namely the \textit{bulk} part, where $p_0$ takes only large values, and the \textit{tail} part, where $p_0$ takes only small values. The explicit definitions of $\B(u)$ and $\mathcal T(u)$ are given below.\\

We will analyze separately the bulk and the tail regimes. In each case, we will restrict $p_0$ to each particular set, and separately establish the minimax separation radii $\rhob$ and $\rhot$. Likewise, we will identify the optimal tests $\psib$ and $\psit$ independently on each set. The overall minimax separation radius $\eqref{separation_radius}$ will be given - up to multiplicative constants - by the sum of the two terms: $\rho^* \asymp \rhob + \rhot$, and the overall optimal test by the combination of the two tests: $\psi^* = \psib \lor \psit$.

\subsection{Partitioning the domain $\Omega$}
\subsubsection{Splitting the domain into bulk and tail}
It has been well known since \cite{valiant2017automatic} that, in the multinomial setting, the local goodness-of-fit problem involves splitting the null distribution into bulk and tail. In our analysis, we also divide $\Omega$ into a bulk $\B = \{x \in \Omega ~|~ p_0(x) \geq \uA\}$ and a tail $\T = \B^c = \{x \in \Omega ~|~ p_0(x) < \uA\}$ for some value $\uA$ specified later. On the other hand, like in \cite{balakrishnan2019hypothesis}, a further key idea is to divide $\Omega$ into smaller cubes with possibly varying edge lengths. Each cube will be considered as a single coordinate of a multinomial distribution, allowing us to (approximately) represent our continuous density as a discrete multinomial distribution.\\

The fundamental idea of our tail definition is to ensure the following condition. Assume the tail has been split into cubes with suitable edge length $\hT$ (specified below). If $H_0$ holds, then with high probability, none of the tail cubes will contain $2$ observations or more. The cut-off $\uA$ is designed to ensure this condition. Before giving its expression, we first introduce:
\begin{equation}\label{def_B}
    \forall u \geq 0: ~~ \B(u) := \left\{x\in \Omega: p_0(x) \geq u \right\} ~~ \text{ and } ~~ \T(u) := \left\{x\in \Omega: p_0(x) < u \right\} = \B(u)^c.
\end{equation}

\hfill

For any Borel set $A \subset \Omega$ and any measurable nonnegative function $f : \Omega \longrightarrow \R_+$, we write $f[A] = f(A) = \int_A f$. 
For any $\gamma>0$, the notation $f^\gamma[A] $ will always denote the quantity $\int_A f^\gamma$ and the notation $f[A]^\gamma$ will always denote the quantity $\left(\int_A f\right)^\gamma$.
We now introduce an auxiliary value $\uI$, used to define the cut-off $\uA$:
\begin{equation}\label{def_uI}
    \uI := \sup \Bigg\{u\geq 0: \frac{(p_0^2)\left[\T(u)\right]}{\big(p_0\left[\T(u)\right]\big)^{d/(\alpha+d)}} \leq \cI \TL^{1/(\alpha+d)}\Bigg\}, ~~ \text{ where $\TL = \frac{L^d}{n^{2\alpha}}$} 
\end{equation}
and $\cI$ is a small enough constant. We will also refer to the following notation throughout the paper:
\begin{equation}\label{def_I}
    \I := \int_{\B(\uI)} p_0^{r} =: (p_0^r)\big[\B(\uI)\big], ~~ \text{ where } r = \frac{2\alpha t}{(4-t)\alpha +d}.
\end{equation}
We now introduce the value $\uA$ defining our cut-off as 
\begin{equation}\label{def_uA}
        \uA = \uI \lor \left[\cA \frac{ L^\frac{d}{4\alpha + d}}{ \left(n^2 \I \right)^\frac{\alpha}{4\alpha + d}}\right]^\frac{(4-t)\alpha +d}{(2-t)\alpha +d} \hspace{-1cm},
\end{equation}
where $\cA$ is a small enough constant. The constants $\cI$ and $\cA$ can be chosen arbitrarily small, as long as they only depend on $\eta, \alpha, t$ and $d$. \boundedcase{Moreover, it is possible to impose the relation $\cI^{\alpha+d} = \cA^{4\alpha+d}$. }%\ju{vérifier.} 
The value $\uA$ is the value defining our cut-off between bulk and tail. In the sequel we will write 
\begin{equation}
    \B = \B(\uA) ~~ \text{ and } ~~ \T = \T(\uA).
\end{equation}

\boundedcase{
We will denote by $\homega$ the edge length of $\Omega$ if $\Omega$ is bounded, and $\homega = +\infty$ otherwise. We start with the following Proposition handling degenerate cases which we wish to exclude in the sequel. 

\begin{proposition}\label{Lnhomega_leq_1_proposition}
Assume $\Omega$ is bounded and write \textit{wlog} $\Omega = [\pm \homega/2]^d$. Define a sufficiently small constant $\csmall>0$ and a sufficiently large constant $\Clarge>0$.
\begin{enumerate}
    \item If $\max\limits_\B \hB > \Clarge\homega$, then $ \rho^*(p_0,n,\alpha,L) \asymp \left({\sqrt{n}\homega^{d-d/t}}\right)^{-1}.$
\end{enumerate}
Assume now $nL\homega^{\alpha+d} \leq \csmall$. 
\begin{enumerate}
    \item[2.] If $\alpha \leq 1$, then the minimax separation radius is trivial: $\rho^* = \text{diam}\left(\mathcal{P}(\alpha,L,\cstar)\right) \asymp \left(n\homega^{d-d/t}\right)^{-1}$.
    \item[3.] If $\alpha>1$ then $\rho^*(p_0,n,\alpha,L)  \asymp \left({\sqrt{n}\homega^{d-d/t}}\right)^{-1}$.
\end{enumerate}
\end{proposition}

Proposition \ref{Lnhomega_leq_1_proposition} is proved in Appendix \ref{hb>homega}, \ref{degen_alpha_leq_1} and \ref{degen_alpha>1}. \textbf{From now on, we will always assume $nL\homega^{\alpha+d} > \csmall$ and $\max\limits_\B \hB \leq \Clarge\homega$.} }We now state our main theorem:
\begin{theorem}\label{main_th} 
\boundedcase{Suppose $nL\homega^{\alpha+d} > \csmall$ and $\max\limits_\B \hB \leq \Clarge\homega$. }Set $\TL = L^d/n^{2\alpha}$ and $r = \frac{2\alpha t}{(4-t)\alpha+d}$. There exists a constant $\barn = \barn(d,\eta,t,\alpha)$ independent of $p_0$ such that, for all $n \geq \barn$, the minimax separation radius associated to problem \eqref{testing_pb} is given by
\vspace{2mm}
\large{
\begin{equation}\label{expression_rho_star}
    \rho^* \asymp \rhob + \rhot + \rhor,
\end{equation}} 
where 

\vspace{-6mm}

\begin{align}
    \rhob = \bigg[\TL~ \big\|\, p_{0,\,\B(\uI)}\, \big\|_{\,r}^{2\alpha}\bigg]^\frac{1}{4\alpha+d},&\hspace{1cm} \rhot = \Big[\, \TL^{t-1}\; p_0[\, \T \, ] ^{(2-t)\alpha+d}\, \Big]^{\frac{1}{t(\alpha+d)}} \label{def_rho_bulk_and_tail}\\
    \text{ and } ~ &\rhor = \left[\, \frac{L^{d(t-1)}}{n^{\alpha t + d}}\,\right]^\frac{1}{t(\alpha + d)}.\label{eq:def_rhor}
\end{align}
%$$ \rhob=\frac{L^\frac{d}{4\alpha+d}\; \I^{\frac{1}{t} - \frac{\alpha}{4\alpha + d}}}{n^\frac{2\alpha}{4\alpha + d}}, \rhot= \left[\frac{L^\frac{d(t-1)}{\alpha + d} \left(\int_{\B^c}p_{0}\right)^\frac{(2-t)\alpha+d}{\alpha + d}}{n^\frac{2\alpha(t-1)}{\alpha + d}}\right]^\frac{1}{t}$$
\end{theorem}
In the above Theorem, $p_{0,\B(\uI)} = p_0\, \mathbb{1}\{\B(\uI)\}$. Note that $\rhob$ depends on $n$ as $n^{-2\alpha/(4\alpha+d)}$. Note moreover that $\rhot + \rhor \asymp \left[\TL^{\frac{t-1}{\alpha+d}} \left(\frac{1}{n} + \int_{\T}p_0 \right)^\frac{(2-t)\alpha+d}{\alpha+d}\right]^{1/t}\hspace{-3mm}.$ The quantity $\rhor$ is a \underline{r}emainder term which is analogous to $\frac{1}{n}$ in discrete testing (see e.g. \cite{chhor2020sharp}).
%In what follows, we will sometimes need to introduce the slightly modified version of the tail rate, defined as: \alex{Pourquoi est ce que dans ce setting on a pas besoin de bien distinguer $\uA$de $\uI$ a ton sens? Cest quoi lintuition?}
%\begin{equation}\label{def_rhot_uI}
    %\rhot(\uI) =   \left[\frac{L^\frac{d(t-1)}{\alpha + d} \left(\int_{\B(\uI)^c}p_{0} + \frac{1}{n}\right)^\frac{(2-t)\alpha+d}{\alpha + d}}{n^\frac{2\alpha(t-1)}{\alpha + d}}\right]^\frac{1}{t}
%\end{equation} where the integral is taken over $\B(\uI)^c$ instead of $\B^c$. \ju{Est-ce qu'on en a vraiment besoin ?}
The optimal test achieving this rate is given by $\psi^* = \psib \lor \psi_1 \lor \psi_2$ where $\psib$ is defined in \eqref{def_test_bulk}, $\psi_1$ in \eqref{test_psi1} and $\psi_2$ in \eqref{test_psi2}. We now successively study the bulk and the tail regimes individually.\\

\section{Bulk regime}\label{Bulk_regime_section}
In this section, we place ourselves on the bulk and analyze separately the upper bound and the lower bound on the minimax separation radius. 

%
%We recall that the bulk is defined such that, when split by Algorithm \ref{Algo_partitionnement} into cubes of the suitable edge length, many cells must contain many observations under $H_0$. This case thus corresponds to a sub-Gaussian regime. \al{C'est vrai mais je pense que cette remarque est pas assez precise pour etre clairement comprise. Peut etre a enlever}

\subsection{Bulk upper bound}

%\ju{For the upper bound, we identify a constant $\RadB = \RadB(\eta,\alpha,t,d)$ and a test $\psib$ with risk $R(\psib, \RadB \rhob) \leq \eta$. %}

In this subsection, we construct a test statistic $\psib$ over the bulk. %possibly over the enlarged set $\B(\frac{\uA}{2})$ instead of the bulk $\B(\uA)$.
For each $x \in \B(\frac{\uA}{2})$, introduce the following bandwidth value depending on $p_0(x)$:
    \begin{equation}\label{def_hbulk}
        \hB(x) = \frac{p_0(x)^\frac{2}{(4-t)\alpha + d}}{\Big(n^2 L^4 \, \I \Big)^\frac{1}{4\alpha + d}}.
    \end{equation}

This bandwidth allows us to give some interpretation for the cut-off $\uA$ defined above. 
On the bulk, we have $p_0(x) \geq \uA$ by definition. 
Moreover, for the upper and lower bounds, it turns out to be important for the following relation to hold on the bulk: $\forall x \in \B(\uA): p_0(x) \geq \cA \, L \, \hB(x)^\alpha$, and $p_0(x) \geq \uI$. 
This suggest introducing our cut-off as the smallest value $u \geq \uI$ such that $\forall x \in \R^d, \left[p_0(x)\geq u \implies p_0(x) \geq \cA \, L \, \hB(x)^\alpha\right]$. 
This exactly corresponds to the definition of $\uA$ from~\eqref{def_uA}.

Throughout the paper, we will say that \textit{the bulk dominates (over the tail)} whenever
\begin{equation}\label{def_bulk_dominates}
    \CBT\rhob \geq \rhot,
\end{equation}
for some sufficiently large constant $\CBT$. In the converse case, (when $\CBT\rhob < \rhot$), we will say that the \textit{tail dominates}. We recall that "\textit{constant}" denotes any positive real number allowed to depend only on $\eta, t, d$ and $\alpha$. To define our bulk test, we distinguish between two cases. We set
\begin{equation}\label{def_tuA}
    \tuA = \begin{cases}\frac{\uA}{2} & \text{ if the bulk dominates }\\ \uA & \text{ if the tail dominates,}\end{cases} ~~~~ \text{ and } ~~~~ \TB = \B(\tuA).
\end{equation}

Note that over $\TB$, it always holds $p_0 \geq \tcA L \hB^\alpha$ where $\tcA = 2^{\frac{2\alpha}{(4-t)\alpha+d} -1}\cA$.  The bulk upper bound will be analyzed over $\TB$ rather than $\B$.\\

%As classical in the subgaussian case \alex{Peut etre enlever ici la ref a sub-Gaussian qui peut etre mal comprise}, 

Define a kernel $K$ over $\R^d$ and introduce the usual notation $K_h(x) = \frac{1}{h^d}K(\frac{x}{h})$ for any $h>0$ and $x \in \R^d$. We choose $K$ such that
\begin{itemize}
    \item $K$ is of order $\alpha$, i.e.~for any $f \in \mathcal H(\alpha, L)$ and $h>0$: $\|f - K_h(x- ~ \cdot ~)*f \|_\infty \leq \CK L h^\alpha.$ 
    \item $K$ is bounded in absolute value by a constant depending on $\alpha$ and $d$.
    \item $K$ is $0$ over $\{x\in\R^d : \|x\|_2 > \frac{1}{2}\}$.
\end{itemize}

In the above definition, we set, for $m= \lceil \alpha \rceil - 1$:
\begin{equation}\label{def_CK}
    \CK = \frac{1}{m!} \int_{\R^d} \|u\|_2^m \; |K(u)| \; du.
\end{equation}
%\ju{Je ne suis pas sûr de l'expression de $\CK$, en fait c'est juste la constante qui permet d'avoir que le biais d'estimation de $p$ par $\hat p$ avec le noyau $K$ est majoré par $L\CK h^\alpha$} \al{Oulala ca je sais plus faudra verifier a la fin. La comme ca ca ma lair OK}\\

We first split the data $(X_1, \dots, X_{2k})$ in two equal-sized parts $(X_1, \dots, X_k)$ and $(X_{k+1}, \dots, X_n)$. We set $h(x)=\cch\hB(x)$ where $\cch = (\tcA/4)^\frac{1}{\alpha}$ and build for each batch an estimator of the true underlying distribution $p$ over $\TB$:
\begin{equation}\label{estimates_of_p}
    \hat p(x) = \frac{1}{k} \sum_{i=1}^k K_{h(x)}(x-X_i),~~~~~~\hat p'(x) = \frac{1}{k} \sum_{i=k+1}^{2k} K_{h(x)}(x-X_i).
\end{equation}
For all $x \in \TB$, $\hat p(x)$ and $\hat{p}'(x)$ are independent random variables. Note moreover the variable bandwidth $h(x)$ depends on $x$. We propose the following test statistic:

\vspace{5mm}

\begin{equation}\label{bulk_test_statistic}
    \Tb = \int_{\TB}   \omega(x) \left[\hat p(x) - p_0(x) \right]\left[\hat p'(x)   - p_0(x) \right]dx,
\end{equation}

\begin{align}
    \text{where  } ~~ \omega (x) &= p_0(x)^\frac{2\alpha t - 4 \alpha}{(4-t) \alpha + d}.\label{def_omega}
\end{align}

\vspace{3mm}

We can now define the optimal test on the bulk:
\begin{equation}\label{def_test_bulk}
    \psib = \mathbb{1} \big\{ \Tb > \ThreshB t_n\big\} ~~ \text{ where } t_n = \Ctn \left(L^{2d}\I^{2\alpha+d}n^{-{4\alpha}}\right)^{1/(4\alpha + d)},
\end{equation}
where $\ThreshB$ and $\Ctn>1$ are sufficiently large constants.\\

%The term $\left[\hat p(x) - p_0(x) \right]\left[\hat p'(x)   - p_0(x) \right]$ is an estimator of the \textit{dispersion} of the observations around the mean, or equivalently of the second order moment. 
We use sample splitting in~\eqref{bulk_test_statistic} to simplify the analysis of the variance of $\Tb$. The re-weighting $\omega(x)$ is a re-normalizing factor whose role is to balance the expectation and 
variance of $\Tb$. 
%This renormalization in a way that is adapted to, and depends on, the index $t$ of the norm. 
Note that $\omega(x)$ increases as $p_0(x)$ decreases. Therefore, a large dispersion observed at some $x \in \TB$ for which $p_0(x)$ is small will be amplified by $\omega(x)$ and contribute to a larger increase in $\Tb$. 
Up to a constant, the threshold $t_n$ corresponds to the standard deviation of $\Tb$ under $H_0$.\\

%\ju{We propose a test statistic based on the \textit{dispersion} of the data around the mean (or the second order moment). As will appear below, our test statistic will not make explicit use of the splitting given by Algorithm \ref{Algo_partitionnement}, which will only be needed in the lower bound. However, \ju{Dire que ça l'utilise implicitement}} %}

The following proposition yields guarantees on $\rhob$ in the bulk regime. Recall that $L'=(1+\delta)L$ and $\cstar' = (1+\delta)\cstar$ where $\delta \in (0,1)$ is a constant.

\begin{proposition}\label{upper_bound_bulk}
For any $\rho>0$, define $\mathcal{P}_{Bulk}(\rho) = \big\{p \in \mathcal{P}(\alpha, L', \cstar') ~\big| ~ \displaystyle\int_\TB |p-p_0|^t \geq \rho^t\big\}$. There exists a constant $\RadB = \RadB(\eta, \alpha, d, t)>0$, such that: 
$$ \mathbb{P}_{p_0}(\psib=1) + \sup_{p \in \mathcal{P}_{Bulk}(\RadB\rhob)} \mathbb{P}_p(\psib=0) \leq \frac{\eta}{2}.$$
%$R(\psib, \RadB \rhob)\leq \eta $
\end{proposition}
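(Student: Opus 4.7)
I would control the Type I and Type II errors separately by computing the mean and variance of $\Tb$ under each hypothesis and applying Chebyshev's inequality. The sample-splitting in \eqref{estimates_of_p} is the key structural feature: $\hat p(x)$ and $\hat p'(x)$ are independent, so pointwise
\begin{equation*}
    \mathbb{E}_p\big[(\hat p(x)-p_0(x))(\hat p'(x)-p_0(x))\big] = \beta_p(x)^2,\qquad \beta_p(x) := (K_{h(x)}\ast p)(x) - p_0(x),
\end{equation*}
and therefore $\mathbb{E}_p[\Tb] = \int_\TB \omega(x)\beta_p(x)^2\, dx$ for every admissible $p$.

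\textbf{Type I.} Under $H_0$, $\beta_{p_0}$ is pure kernel bias; since $K$ is of order $\alpha$ and $p_0\in H(\alpha,L')$, one has $|\beta_{p_0}(x)|\le \CK L'\, h(x)^\alpha$. Substituting $h(x)=\cch\hB(x)$ and using the definitions of $\hB$ and $\omega$, a direct check gives $\omega(x)\,h(x)^{2\alpha}\propto p_0(x)^r /(n^2 L^4\I)^{2\alpha/(4\alpha+d)}$, so integration yields $\mathbb{E}_{p_0}[\Tb]\lesssim \cch^{2\alpha}\, t_n/\Ctn$, negligible against $\ThreshB t_n$ once $\cch$ is small and $\Ctn,\ThreshB$ are large. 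For the variance I would expand
\begin{equation*}
    \Tb - \mathbb{E}_{p_0}[\Tb] = \frac{1}{k^2}\sum_{i\le k,\, j>k} \int_\TB \omega(x)\big[Y_i(x)Y_j(x) + \beta_{p_0}(x)(Y_i(x)+Y_j(x))\big]\, dx,
\end{equation*}
where $Y_i(x) = K_{h(x)}(x-X_i) - (K_{h(x)}\ast p_0)(x)$ are centered and independent across batches. Only diagonal index-pairs survive in the product expectation, so
\begin{equation*}
    \mathrm{Var}_{p_0}(\Tb)\;\lesssim\; \frac{1}{k^2}\int_\TB\!\!\int_\TB \omega(x)\omega(y)\,\Gamma(x,y)^2\, dx\, dy,\qquad \Gamma(x,y)=\mathrm{Cov}_{p_0}\!\big(K_{h(x)}(x-X_1),K_{h(y)}(y-X_1)\big).
\end{equation*}
Compact support of $K$ confines $\Gamma(x,y)$ to $\|x-y\|\lesssim h(x)\vee h(y)$ with $|\Gamma(x,y)|\lesssim p_0(x)/h(x)^d$ there; plugging in $\hB$ gives $\mathrm{Var}_{p_0}(\Tb)\lesssim t_n^2$, and Chebyshev closes the Type I bound.

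\textbf{Type II.} Fix $p \in \mathcal P_{Bulk}(\RadB\rhob)$, write $\beta_p(x) = \big[(K_{h(x)}\ast p)(x)-p(x)\big] + (p-p_0)(x)$, and use $(a+b)^2\ge \tfrac12 b^2 - a^2$ to obtain
\begin{equation*}
    \mathbb{E}_p[\Tb] \;\ge\; \tfrac12 \int_\TB \omega\,(p-p_0)^2\, dx\;-\; \int_\TB \omega(x)\big[(K_{h(x)}\ast p)(x)-p(x)\big]^2 dx.
\end{equation*}
The subtracted kernel bias is $\lesssim t_n$ by the same computation as in Type I (using $p\in H(\alpha,L')$). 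To handle the signal, H\"older's inequality with exponents $(2/t,2/(2-t))$ gives
\begin{equation*}
    \|p-p_0\|_{L_t(\TB)}^t \;\le\; \Big(\int_\TB \omega\,(p-p_0)^2\Big)^{t/2}\Big(\int_\TB \omega^{-t/(2-t)}\Big)^{(2-t)/2}.
\end{equation*}
An algebraic check shows $\omega^{-t/(2-t)} = p_0^r$, so the second factor equals $\big(\int_\TB p_0^r\big)^{(2-t)/2}$. Since $\tuA\in\{\uA,\uA/2\}$ and $\uA\ge\uI$, the H\"older continuity of $p_0$ and the definitions \eqref{def_uI}--\eqref{def_uA} yield $\int_\TB p_0^r \asymp \I$. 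Inverting the H\"older inequality and plugging in the expression of $\rhob$ from \eqref{def_rho_bulk_and_tail} produces
\begin{equation*}
    \tfrac12 \int_\TB \omega\,(p-p_0)^2 \;\gtrsim\; \RadB^2\,\rhob^2/\I^{(2-t)/t}\;\asymp\; \RadB^2\, t_n,
\end{equation*}
so for $\RadB$ large enough $\mathbb{E}_p[\Tb]\ge 2\ThreshB t_n$. The variance of $\Tb$ under $p$ is bounded exactly as under $H_0$, with Assumption \eqref{simplifyingAssumption} ensuring that $p(x)$ stays comparable to $p_0(x)$ on the scale $h(x)$; Chebyshev then delivers the Type II bound.

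\textbf{Main obstacles.} The technical heart is the variance computation, especially under the alternative, where $p\ne p_0$ in a possibly complicated way: Assumption \eqref{simplifyingAssumption} together with the H\"older class must be invoked to prevent $p$ from being much spikier than $p_0$ on scales below $\hB$. A second subtle point is the equivalence $\int_\TB p_0^r \asymp \I$, which is exactly what the joint definitions of $\uI,\uA,\tuA$ are designed to guarantee and without which the H\"older step would fail; this, and the choice of the exponent in $\omega$ and in $\hB$, is what ultimately produces the rate $\rhob$.
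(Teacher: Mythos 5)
Your overall plan — compute the mean and variance of $\Tb$ under each hypothesis, then apply Chebyshev — is exactly the paper's route (Propositions \ref{ExpVar_Bulk_H0}--\ref{ExpVar_Bulk_H1} and Corollary \ref{Risk_Tb}). The Type~I bound, the identity $\omega(x)h(x)^{2\alpha}\propto p_0(x)^r/(n^2L^4\I)^{2\alpha/(4\alpha+d)}$, the H\"older step with exponents $(2/t,2/(2-t))$, the check $\omega^{-t/(2-t)}=p_0^r$, and the algebra $\rhob^2/\I^{(2-t)/t}\asymp t_n$ are all correct and match the paper; your elementary $(a+b)^2\ge \tfrac12 b^2-a^2$ in place of the paper's Minkowski argument (Lemma \ref{Expectation_Tb_H1}) is a fine alternative. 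The observation $\int_{\TB}p_0^r\lesssim \I$ is also what is needed, and indeed requires the case split $\tuA\in\{\uA,\uA/2\}$ (Lemma \ref{Bulk_asymp_BulkCases}).

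The genuine gap is the variance under the alternative. You claim it is "bounded exactly as under $H_0$", i.e.\ $\lesssim t_n^2$, because Assumption \eqref{simplifyingAssumption} "ensur[es] that $p(x)$ stays comparable to $p_0(x)$ on the scale $h(x)$." That is not what Assumption \eqref{simplifyingAssumption} gives: it controls $p(y)/p(x)$ for $\|y-x\|\lesssim (p(x)/L)^{1/\alpha}$, but says nothing about $p(x)/p_0(x)$, which under $H_1$ can be arbitrarily large on part of $\TB$. Consequently the on-diagonal covariance is $|\Gamma(x,y)|\lesssim p(x)/h(x)^d$, not $p_0(x)/h(x)^d$, and after squaring and integrating one picks up (writing $J=\int_{\TB}\omega\,\Delta^2$) a term of order $t_n J$ on the diagonal — this is the paper's "Term~I" inside $J_\Diag$ — and a term of order $(t_n+J)^2/n^2$ off the diagonal \eqref{Majorant_JDc}. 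These dwarf $t_n^2$ once $J\geq\RadB t_n$. The paper's actual bound is $\V_p(\Tb)\lesssim J^2/\RadB$ \eqref{Majorant_J_2}, and Chebyshev still closes because $\E_p\Tb\asymp J$, giving $\V_p(\Tb)/(\E_p\Tb)^2\lesssim 1/\RadB$; but reaching this requires splitting the covariance contributions with $p^2\le 2p_0^2+2\Delta^2$ and absorbing the $J$-dependent pieces via $t_n\le J/\RadB$, none of which is present in your sketch. As written, your variance step would fail for $p$ concentrating extra mass in a sub-region of $\TB$.
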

%\ju{J'ai changé le $\eta$ en $\eta/2$, modifier les constantes de la preuve en conséquence.}
%\al{Cest pas vrai cette proposition, si? La on ne gere que le bulk, ie les coeff du bulk. Donc cest pas la separation qui est lower boundee mais juste ce qui se passe sur les coeff du bulk?}

Note that Proposition \ref{upper_bound_bulk} does not directly yield an upper bound on the minimax separation radius $\rho^*$. 
Indeed, it only ensures that, with high probability, the test 
$\psib$ can detect any probability distribution $p \in \mathcal{P}(\alpha, L', \cstar')$ that is separated from $p_0$ by a sufficiently large $L^t$ discrepancy over $\TB$, but does not say anything about detecting perturbations over the complement of $\TB$. 
In order to obtain the full upper bound, one has to combine Proposition~\ref{upper_bound_bulk} with the tail upper bound stated in Proposition~\ref{upper_bound_tail}, Section~\ref{Tail_regime_section}. % $\RadB \rhob$ is an upper bound on the contribution of the bulk to the critical radius. Moreover, it states that in this regime, $\psib$ is an optimal test.\\

\subsection{Bulk lower bound}\label{bulk_lower_bound_subsection}

%\ju{For the lower bound, we build a prior distribution $p_\epsilon^{(n)} \in \mathcal{P}(\alpha,L', \cstar')$ and identify a constant $\CbulkLB$ such that almost surely $\|p_0 - p_\epsilon^{(n)}\|_t \geq \CbulkLB \, \rhob$ and $d_{TV}(p_0^{\otimes n},\mathbb{E}_\epsilon (p_\epsilon^{(n)})^{\otimes n}) < 1-\eta$ where $\mathbb{E}_\epsilon$ is the expectation with respect to the prior distribution.}

\textbf{Throughout Subsection \ref{bulk_lower_bound_subsection}, we assume that the bulk dominates.} In the following sections, we will justify why, here, we can make this assumption without loss of generality.\\

We recall that we aim at proving $\rho^* \gtrsim \rhob + \rhot + \rhor$. 
In this Subsection, we explain how to obtain the lower bound $\rho^* \gtrsim \rhob$ when the bulk dominates.
To do so, we use Le Cam's two-point method with a prior distribution referred to as ``\textit{bulk prior}'', defined as a mixture of densities in $\mathcal{P}(\alpha,L', \cstar')$. 
We further ensure that this prior is "indistinguishable" from $p_0$ with risk at most $\eta$, meaning that the total variation between $n$ data from $p_0$ and $n$ data from the mixture is constrained to be $\leq 1-\eta$. \\

We now explain the high-level construction of the bulk prior.
The idea is to split the bulk domain $\B$ into small cubes of various edge lengths using Algorithm~\ref{Algo_partitionnement}. 
More precisely, Algorithm~\ref{Algo_partitionnement} yields a \textit{covering} of the domain $\B$ by disjoint cells $(B_1,\dots,B_N)$ satisfying $\B \subset \cup_{j=1}^N B_j$, for some $N\in \N$.  
Now, our prior is defined by perturbing $p_0$ over each cell $B_j$ independently at random. 
More precisely, for each $j \in \{1,\dots,N\}$, we define a deterministic function $\phi_j$ supported on $B_j$ and satisfying 
\begin{align*}
    \int_{B_j} \phi_j = 0, \quad p_0\pm \phi_j \geq 0, \quad \text{ and } \quad \phi_j \in H(\alpha, \delta L).
\end{align*} 
The first two conditions ensure that, for any $j\in \{1,\dots, N\}$, $p_0 \pm \phi_j$ is a density. 
Our prior is simply defined as follows: Let $\epsilon_1,\dots, \epsilon_N \overset{iid}{\sim} \operatorname{Rad}(1/2)$ and let $\epsilon = \left(\epsilon_1,\dots, \epsilon_N\right)$. 
Conditional on $\epsilon$, we define the function
\begin{align*}
    p_\epsilon^{(n)} := p_0 + \sum_{j=1}^N \epsilon_j \phi_j.
\end{align*}

This prior satisfies the following useful properties. For any fixed $\epsilon \in \{\pm1\}^N$, The function $p_\epsilon^{(n)}$
\begin{itemize}
    \item is a density by construction,
    \item belongs to $ \mathcal{P}(\alpha,L',\cstar')$ (see Proposition~\ref{prior_bulk_admissible}),
    \item  is separated from $p_0$ by $\|p_\epsilon^{(n)}-p_0\|_t = \|\sum_j \phi_j\|_t \gtrsim \rhob$ if the bulk dominates (see equation~\eqref{eq:separation_LB_bulk_phij}).
\end{itemize}

Moreover, unconditionally on $\epsilon$, we also have that $d_{TV}\big(p_\epsilon^{(n)}, p_0^{\otimes n}\big) \leq 1-\eta$ (see equation~\eqref{eq:TV_prior_tail}). 
Therefore, the lower bound $\rho^* \gtrsim \rhob$ follows.
\\

The bulk prior is defined in full details in Appendix \ref{Lower_bound_bulk_appendix}, and the following Proposition, also proved in Appendix~\ref{Lower_bound_bulk_appendix}, yields the bulk lower bound. 

\begin{proposition}\label{Lower_bound_bulk_proposition}
In the case where the bulk dominates, i.e. when \eqref{def_bulk_dominates} holds, there exists a constant $\CbulkLB$ such that $\rho^* \geq \CbulkLB\, \rhob$.
\end{proposition}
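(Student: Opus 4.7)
The proof follows Le Cam's two-point method, realized as a Rademacher mixture of smooth bumps supported in cells of a heterogeneous cube partition of the bulk. The plan is to construct a prior distribution $\pi$ on $\mathcal{P}(\alpha, L', \cstar')$ such that (a) every density in the support of $\pi$ is separated from $p_0$ in $L_t$ norm by at least $\CbulkLB \, \rhob$, and (b) the $n$-fold mixture $\mathbb{E}_{\pi}[p^{\otimes n}]$ is within total variation $1-\eta$ of $p_0^{\otimes n}$. Applying Le Cam's inequality then gives $R^*(\CbulkLB \rhob) \geq \eta$, which is exactly the lower bound.

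First, I would invoke Algorithm \ref{Algo_partitionnement} to produce the disjoint family $(B_j)_{j=1}^N$ of cubes covering $\B$, with $B_j$ centered at some $x_j$ and of edge length $\asymp \hB(x_j)$ defined in \eqref{def_hbulk}. Next, I fix once and for all a reference bump $\Phi: \R^d \to \R$, compactly supported in the unit ball, with $\int \Phi = 0$, $\|\Phi\|_\infty \leq 1$, and $\Phi \in H(\alpha, 1)$, and I define $\phi_j(x) = A_j \, \Phi\big((x-x_j)/\hB(x_j)\big)$ with amplitude $A_j \asymp \delta L \hB(x_j)^\alpha$. By the bulk property $p_0(x_j) \gtrsim L \hB(x_j)^\alpha$, the amplitude satisfies $A_j \leq \cstar p_0(x_j)/2$, so that Assumption \eqref{simplifyingAssumption} forces $p_0 \pm \phi_j \geq 0$ on $B_j$. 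With iid Rademacher signs $(\epsilon_j)_j$, I set $p_\epsilon^{(n)} = p_0 + \sum_{j=1}^N \epsilon_j \phi_j$; the disjoint-support property gives $\int p_\epsilon^{(n)} = 1$ and an elementary Hölder computation (using the slack $\delta$ in $L'$ and $\cstar'$) places $p_\epsilon^{(n)}$ inside $\mathcal{P}(\alpha, L', \cstar')$ almost surely.

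For the separation, disjoint supports give
\begin{equation*}
\|p_\epsilon^{(n)} - p_0\|_t^t = \sum_{j=1}^N \|\phi_j\|_t^t \asymp \sum_{j=1}^N A_j^t \, \hB(x_j)^d.
\end{equation*}
Substituting $A_j \asymp L \hB(x_j)^\alpha$ and the expression \eqref{def_hbulk} of $\hB$, the sum becomes a Riemann approximation of a weighted integral of $p_0^r$ over $\B$, which — thanks to the bulk-dominance assumption, ensuring this integral agrees with $\I$ up to constants — is of order $\rhob^t$. For indistinguishability, I would compute via Ingster's identity
\begin{equation*}
\chi^2\Big(\mathbb{E}_\epsilon(p_\epsilon^{(n)})^{\otimes n},\, p_0^{\otimes n}\Big) = \mathbb{E}_{\epsilon,\epsilon'}\left(1 + \sum_{j=1}^N \epsilon_j \epsilon'_j \, a_j\right)^n - 1, \qquad a_j = \int \phi_j^2/p_0,
\end{equation*}
and then bound this by $\exp\bigl(\tfrac{n^2}{2}\sum_j a_j^2\bigr) - 1$ via $\cosh(x) \leq e^{x^2/2}$. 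The definition \eqref{def_hbulk} of $\hB$ and the cutoff \eqref{def_uA} are calibrated precisely so that $n^2 \sum_j a_j^2 $ is bounded by a small constant (tunable through $\cA, \cI$); Pinsker then yields $d_{TV} \leq 1-\eta$, and Le Cam's inequality concludes.

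The main obstacle is the simultaneous tuning of the heterogeneous bandwidths $\hB(x_j)$ so that the three constraints — (i) $\phi_j \in H(\alpha, \delta L)$ with $|\phi_j| \leq \cstar p_0$, (ii) total $L_t$ separation of order $\rhob$, and (iii) cumulative chi-square of order one — are all tight. The role of the auxiliary threshold $\uI$ in the cutoff $\uA$ is precisely to delimit the maximal region where all three conditions can saturate simultaneously: extending the construction beyond $\B(\uI)$ would violate the chi-square budget, while truncating it earlier would lose mass in the $L_t$ separation. Verifying that the Riemann sum in step (ii) matches $\I^{(2\alpha+d)/(4\alpha+d)}$ up to constants, and checking that Assumption \eqref{simplifyingAssumption} transfers cleanly from $p_0$ to each $p_\epsilon^{(n)}$ with constant $\cstar' = (1+\delta)\cstar$, are the two delicate bookkeeping steps.
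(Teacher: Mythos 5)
Your proposal matches the paper's proof of Proposition \ref{Lower_bound_bulk_proposition}: a Rademacher mixture of disjoint smooth bumps on the heterogeneous cube partition produced by Algorithm \ref{Algo_partitionnement}, the $\chi^2$ distance controlled via Ingster's identity together with $\cosh(x)\le e^{x^2/2}$, and the bandwidth \eqref{def_hbulk} calibrated so the cumulative $\chi^2$ budget is $O(1)$ while the $L_t$ mass of the perturbation attains $\rhob$. The only cosmetic deviation is that the paper realizes each mean-zero perturbation $\phi_j$ as a difference of two displaced copies of a nonnegative $f$ (Eq.~\eqref{perturbation}) rather than a single bump $\Phi$ with $\int\Phi=0$, and checks admissibility $p_\epsilon^{(n)}\in\mathcal{P}(\alpha,L',\cstar')$ through Lemma~\ref{simplifying_assumpt_satisfied}; both constructions are interchangeable here.
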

 %, by showing that $\|\sum_j \phi_j\|_t = \CbulkLB\, \rhob$ and bounding from above by $1-\eta$ the total variation distance between the null distribution and the bulk prior.

\section{Tail regime}\label{Tail_regime_section}

In this section, we place ourselves on the tail %\al{on a pas du tout besoin de $\uI$ alors?} 
and analyze separately the upper bound and the lower bound. 
%For the upper bound, we identify a constant $\ThreshT$ and a test $\psit$ with risk $R(\psit, \ThreshT \rhot) \leq \eta$. 
%For the lower bound, we build a prior distribution $p_b^{(n)}$ such that $\|p_0 - p_b^{(n)}\|_t \geq \CtailLB\, \rhot$ with high probability and $d_{TV}(p_0^{\otimes n}, \mathbb{E}_b[p_b^{(n)}]) < 1-\eta$ where $\mathbb{E}_b$ is the expectation with respect to the prior distribution. %\al{Il faut reecrire cette phrase, la je crois que ca melange un peu prior, loi des donnees, etc (je lai corrige dans le bulk)}. 
We recall that the tail is defined such that, with high probability under $H_0$, when split into cubes with suitable edge length $\hT$, no cube contains more than one observation. Recalling that $\T = \T(\uA)$, define:
\begin{equation}\label{def_htail}
    \hT := \Big(n^2 L\; p_0[\T]\Big)^{-\frac{1}{\alpha + d}}.
\end{equation}

For both the upper and the lower bound, we define a binning of the tail domain. 
This is done using the following algorithm. As inputs, the algorithm takes a value $u>0$ and a length $\tilde h>0$. It (implicitly) defines a grid of cubes $C_{j_1,\dots, j_d} := [j_1\, \tilde h, (j_1+1) \tilde h]\times \dots \times [j_d \tilde h, (j_d+1)\tilde h]$ for all $(j_1,\dots,j_d) \in \mathbb{Z}^d$\boundedcase{ with $\tilde h \asymp h$}. It returns the indices of all such cubes whose intersection with $\T(u)$ is empty (hence indices of cubes to be removed from the tail covering). \boundedcase{ We recall that $\Omega$ is a cubic domain of $\R^d$.If $\Omega$ is finite, we denote by $h_\Omega$ the edge length of $\Omega$.}\\

\begin{algorithm}[H]\label{splitting_tail}
\SetAlgoLined
\begin{enumerate}
    \item \textbf{Input:} $u, \boundedcase{h}\unboundedcase{\tilde h}$. 
    \boundedcase{\item \textbf{If} $\Omega \neq \R^d$: \textbf{then} $\tilde h := h_\Omega \lceil h_\Omega/h\rceil^{-1}$ \textbf{else} $\tilde h = h$.}
    \item Set $\lambda \in \N$ such that $\B(u) \subset [-\lambda \tilde h, \lambda \tilde h]^d$. Set $P = \emptyset$. %For all $(j_1, \dots, j_d) \in \{-\lambda, \dots, \lambda\}^d$, set $C_{j_1,\dots, j_d} = [j_1\, h, (j_1+1)h]\times \dots \times [j_d h, (j_d+1)h]$.
    \item For $(j_1, \dots, j_d) \in ([-\lambda, \lambda]\cap \mathbb{Z})^d$:\\
    \hspace{.5cm} \textbf{if} $\T(u) \cap \big[j_1\, \tilde h, (j_1+1)\tilde h\big]\times \dots \times \big[j_d \tilde h, (j_d+1)\tilde h\big] = \emptyset$: \textbf{then} $P \longleftarrow P\cup \{(j_1,\dots, j_d)\}$.
    \item \textbf{Return} $P$.
\end{enumerate}
\caption{Tail splitting}
\end{algorithm}

\hfill\break

The tail splitting is defined as follows. 
We denote by $P$ the output of Algorithm \ref{splitting_tail} \unboundedcase{and set $I = \mathbb{Z}^d\setminus P$.}\boundedcase{ and consider the \boundedcase{(possibly infinite)}\unboundedcase{(infinite)} family of cubes $\left(C_{j}\right)_{j \in \mathbb{Z}^d \setminus P}$. 
We let $I$ be the set of all indices $j \in \mathbb{Z}^d \setminus P$ such that $C_j \cap \Omega \neq \emptyset$ and we consider $(C_j)_{j\in I}$. }
\boundedcase{This family is infinite if $\Omega = \R^d$ and finite otherwise. }
Since the bulk is a bounded subset of $\Omega$, $P$ is finite so that $I$ is infinite.
Moreover, since the sum $\sum\limits_{j \in I} \int_{\widetilde C_j} p_0$ is finite ($\leq 1$), %the family of non-negative real numbers $\left(\int_{C_j}p_0\right)_{j \in I}$ has total sum less than $1$, 
it is possible to sort the cubes $\left(C_{j}\right)_{j \in I}$ as $(\widetilde C_l)_{l \in \Ibar}$, while ensuring that $\left(\int_{\widetilde C_l}p_0\right)_{l \in \Ibar}$ is sorted in decreasing order. 
\boundedcase{We take $\Ibar = \N^*$ if $\Omega = \R^d$ and $\Ibar = \{1,\dots,M\}$ for some $M \in \N$ otherwise. We call $(\widetilde C_j)_{j \in \Ibar}$ the \textit{covering defined by Algorithm \ref{splitting_tail} with inputs $u$ and $h$}. }
Note that $\T(u) \subset \bigcup\limits_{j\in \Ibar} \widetilde C_j$, but that the reverse inclusion does not necessarily hold. %We partition $\T$ into small cubes of edge length $\asymp \hT$. To do so, we set $H := \lceil 1/\hT \rceil \asymp 1/\hT$ and simply divide the whole domain $[0,1]^d$ into a grid of cubes of the same edge length $1/H$. More precisely, for all $(j_1, \dots, j_d) \in \{0,\dots, H-1\}^d$, define $C_{j_1,\dots, j_d} = [j_1, \; j_1+\frac{1}{H}]\times \dots \times [j_d,\; j_d+\frac{1}{H}]$. We now consider the family of all such cubes whose intersection with $\B^c$ is non-empty. We call this family $\{\widetilde C_1, \dots, \widetilde C_M\}$ imposing moreover that: $\int_{\widetilde C_1} p_0 \geq \dots \geq \int_{\widetilde C_M} p_0$. We also define for all $j= 1, \dots, M: C_j = \widetilde C_j \cap \B^c$. Note that the $(\widetilde C_j)_j$ are all actual cubes whereas the $(C_j)_j$ can have an arbitrary shape.\\
Moreover, for $j \neq l$, $\widetilde C_j \cap \widetilde C_l$ has Lebesgue measure $0$. Therefore, almost surely, any observation $X_i$ belongs to at most one of the cubes $(\widetilde C_j)_j$. %Moreover, Lemmas \ref{proba_observed_twice} and \ref{int_p0squared_uA_small} prove that under $H_0$, with high probability, no cube $\widetilde C_j$ contains two observations or more. We can now move to the definition of our tail tests, which extensively use this property. Note that the family $(C_j)_j$ will be used for the upper bound and the family $(\widetilde C_j)_j$ for the lower bound.

\subsection{Tail upper bound}

To define our tail test, we distinguish between two cases. In the sequel, $\CBT$ denotes a large constant. 
\begin{itemize}
    \item \underline{If the tail dominates}, i.e. if $\CBT\rhob \leq \rhot$, then we set $(\widetilde C_j)_{j \in \Ibar}$ to be the covering defined by Algorithm \ref{splitting_tail} with inputs $\tuA=\uA$ and $\tilde h = \hT$. \boundedcase{In the analysis of the tail upper bound (Appendix \ref{UB_tail_regime}), we show that in this case, necessarily $h < \homega$.} %we split $\T(\uA)$ into cubes of edge length $\hT$. We define a test rejecting $H_0$ whenever one of the cells contains at least two observations or when the total number of observations in the cells is substantially different from its expectation under $H_0$.
    \item \underline{If the bulk dominates}, i.e. if $\CBT\rhob \geq \rhot$, then we set $(\widetilde C_j)_{j \in \Ibar}$ to be the covering defined by Algorithm \ref{splitting_tail} with inputs $\tuA=\frac{\uA}{2}$ and $\tilde h = \hm$ where
    \begin{equation}\label{def_hm}
        \hm =  \frac{\cm}{(n^2 L^4 \I)^\frac{1}{4\alpha+d}}\left[\cA \frac{ L^\frac{d}{4\alpha + d}}{ \left(n^2 \I \right)^\frac{\alpha}{4\alpha + d}}\right]^\frac{2}{(2-t)\alpha +d},
    \end{equation}
    and $\cm = \left((\frac{1}{2} - \frac{\cstar}{2}) \frac{\cA}{\sqrt{d}^\alpha}\right)^\frac{1}{\alpha}$. To understand why $\hm$ is a natural bandwidth to introduce, set $\um = \left[\cA \frac{ L^d}{ \left(n^2 \I \right)^\alpha}\right]^{\frac{1}{4\alpha + d}\frac{(4-t)\alpha +d}{(2-t)\alpha +d}}$ and note that $\uA = \uI \lor \um$. Observe moreover that $\um$ is the unique value ensuring that, if for $x\in\Omega, \; p_0(x) = \um$, then $\cm \hB(x) = \hm$.
    % we split $\T\left(\frac{\uA}{2}\right)$ into cubes of edge length $\min\limits_{\B(\uA)} \hB$. Similarly as in the previous case, we reject $H_0$ whenever one of the cells contains at least two observations or when the total number of observations in the cells is substantially different from its expectation under $H_0$. Moreover, we combine this test statistic with the bulk test statistic over $\B\left(\frac{\uA}{2}\right)$ instead of $\B(\uA)$.
\end{itemize}

%consider the splitting of the tail associated with the 

%We consider a splitting of $\T(\frac{\uA}{2})$ with cubes of edge length $\hm := \min\limits_{\B(\uA)}\hB$. To do so, we set $H := \lceil 1/\hm \rceil \asymp 1/\hm$ and simply divide the whole domain $\Omega$ into a grid of cubes of the same edge length $1/H$. More precisely, for all $(j_1, \dots, j_d) \in \mathbb{Z}^d$, define $C_{j_1,\dots, j_d} = [j_1, \; j_1+\frac{1}{H}]\times \dots \times [j_d,\; j_d+\frac{1}{H}]$. We now consider the family of all such cubes whose intersection with $\T(\frac{\uA}{2})$ is non-empty. We call this family $\{\widetilde C_1, \dots, \widetilde C_M\}$ imposing moreover that: $\int_{\widetilde C_1} p_0 \geq \dots \geq \int_{\widetilde C_M} p_0$. %We also define for all $j= 1, \dots, M: C_j = \widetilde C_j \cap \B^c$. Note that the $(\widetilde C_j)_j$ are all actual cubes.\\

\hspace{3cm}

The tail test $\psit$ is defined as a combination of two tests:
\vspace{-2mm}
\begin{itemize}
    \item The first test $\psi_1$ counts the \textit{total number of observations} on the tail, i.e.~on the union of the sets $\widetilde C_j$, and rejects $H_0$ when this total mass is substantially different from its expectation under $H_0$.
    \vspace{-1mm}
    \item The second test $\psi_2$ rejects $H_0$ whenever there exists one cell $\widetilde C_j$  containing two observations or more.
\end{itemize} 
For each cube $\widetilde C_j$, the integer $N_j$ is defined as the total number of observations on $\widetilde C_j$:
\begin{equation}\label{Nj}
    N_j = \sum_{i=1}^n \mathbb{1}\{X_i \in \widetilde C_j\}.
\end{equation}
We call $(N_j)_j$ the \textit{histogram} of $(X_i)_i$ on the tail. Note that the family $(\widetilde C_j)_j$ is infinite but that all of the observations are only contained in a finite number of sets $\widetilde C_j$. 
Thus, the number of values $N_j$ that are nonzero is finite. Recalling that $\T = \T(\uA)$, our tail test $\psit$ is defined as $\psit = \psi_1 \lor \psi_2$ where:
\begin{align}
    \psi_1 &= \mathbb{1}\Big\{\Big|\,\frac{1}{n}\sum_{j \in \Ibar} N_j - p_0[\T]\, \Big| > \CPsiOne \sqrt{\frac{p_0[\T]}{n}}\Big\}\label{test_psi1},\\
    &\nonumber\\
    \psi_2 & = \begin{cases} 1 \text{ if } N_j \geq 2 \text{ for some  } j, \\
    0 \text{ otherwise}.\end{cases} \label{test_psi2}
\end{align} Here, $\CPsiOne$ is a sufficiently large constant. The following proposition yields guarantees on $\rhot$ in the tail regime.

\begin{proposition}\label{upper_bound_tail}
For all $\rho>0$, define $\mathcal{P}_{Tail}(\rho) = \big\{p \in \mathcal{P}(\alpha, L', \cstar') ~\big| ~ \int_{\T\left(\tuA\right)} |p-p_0|^t \geq \rho^t\big\}$. There exists a constant $\ThreshT = \ThreshT(\eta, \alpha, d, t) >0$, such that
$$ \mathbb{P}_{p_0}(\psit=1) ~~ + \sup_{p \,\in\, \mathcal{P}_{Tail}\left(\ThreshT\rho^*\right)} \mathbb{P}_p(\psit=0) ~\leq ~ \frac{\eta}{2}.$$
\end{proposition}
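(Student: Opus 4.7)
The plan is to bound the Type I and Type II errors of $\psit = \psi_1 \vee \psi_2$ separately, and in each case to analyze $\psi_1$ and $\psi_2$ in turn. Throughout, write $q_j = p_0[\widetilde C_j]$, $q_j^{(p)} = p[\widetilde C_j]$, and $E = \bigcup_j \widetilde C_j$. By the construction of Algorithm~\ref{splitting_tail}, $\T(\tuA) \subset E$, and using the H\"older smoothness together with Assumption \eqref{simplifyingAssumption}, $|p_0[E] - p_0[\T(\tuA)]|$ is a lower-order perturbation of $p_0[\T]$, because $E \setminus \T(\tuA)$ consists only of boundary cells on which $p_0$ is still $\lesssim \tuA + L \tilde h^\alpha$.

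For the Type I bound on $\psi_1$: under $H_0$, $\sum_j N_j \sim \mathrm{Bin}(n, p_0[E])$, so Chebyshev applied to $\sum_j N_j/n - p_0[E]$ with variance $\leq p_0[E]/n$ yields $|\sum_j N_j/n - p_0[\T]| \leq C\sqrt{p_0[\T]/n}$ with probability $\geq 1-\eta/8$; choosing $\CPsiOne$ large enough gives $\mathbb{P}_{p_0}(\psi_1=1) \leq \eta/8$. For $\psi_2$: a union bound gives $\mathbb{P}_{p_0}(\psi_2=1) \leq \binom{n}{2}\sum_j q_j^2 \leq \frac{n^2}{2} (\max_j q_j) \cdot p_0[E]$. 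The smoothness plus $(\star)$ forces $\max_j q_j \lesssim (\tuA + L \tilde h^\alpha)\tilde h^d$. Plugging in either $(\tuA, \tilde h) = (\uA,\hT)$ or $(\uA/2,\hm)$, the definitions of $\uI$ (through the constraint $p_0^2[\T(\uI)] \lesssim \cI \TL^{1/(\alpha+d)}\, p_0[\T(\uI)]^{d/(\alpha+d)}$), of $\uA$ and of $\hT$ (resp.\ $\hm$) are precisely tuned so that both $n^2 \uA \tilde h^d\, p_0[\T]$ and $n^2 L \tilde h^{\alpha+d} p_0[\T]$ are bounded by $\eta/8$, provided $\cI, \cA$ and the prefactor hidden in $\hT$ are taken small.

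For the Type II bound, fix $p \in \mathcal{P}_{Tail}(\ThreshT \rhot)$ and split into two cases. \emph{Case A:} if $|p[E] - p_0[\T]| \geq 2\CPsiOne \sqrt{p_0[\T]/n}$, then Chebyshev under $p$ (with $\operatorname{Var}_p(\sum_j N_j) \leq n\, p[E]$, itself controlled by $p_0[\T]$ up to constants via the class assumption) gives $\mathbb{P}_p(\psi_1 = 0) \leq \eta/4$. \emph{Case B:} otherwise $|p[E] - p_0[E]|$ is small. I then use that $\mathbb{P}_p(\psi_2 = 0) \lesssim \exp(-c\, n^2 \sum_j (q_j^{(p)})^2)$ (a standard collision lower bound; cells where $n q_j^{(p)}$ is not small would be caught separately by $\psi_1$ or $\psi_2$), so it suffices to lower bound $n^2 \sum_j (q_j^{(p)})^2$. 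Write $\sum_j (q_j^{(p)})^2 = \sum_j q_j^2 + 2 \sum_j q_j (q_j^{(p)} - q_j) + \sum_j (q_j^{(p)} - q_j)^2$; the cross term is $\lesssim (\max_j q_j) \cdot |p[E] - p_0[E]|$, negligible in Case B. To lower bound $\sum_j (q_j^{(p)} - q_j)^2$, I transfer the $L_t$ separation down to the cell level: by H\"older smoothness plus $(\star)$, for each cell,
\[
\int_{\widetilde C_j} |p - p_0|^t \leq 2^{t-1}\Bigl[\frac{|q_j^{(p)} - q_j|^t}{|\widetilde C_j|^{t-1}} + \int_{\widetilde C_j} \bigl(L\tilde h^\alpha + \cstar'(p + p_0)\bigr)^t\Bigr],
\]
and the smoothness remainder, summed over cells, is dominated by a fraction of $\rhot^t$ thanks to the defining relation $\tilde h^{-(\alpha+d)} \asymp n^2 L\, p_0[\T]$. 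Rearranging yields $\sum_j |q_j^{(p)}-q_j|^t \gtrsim \tilde h^{d(t-1)} (\ThreshT \rhot)^t$. Combined with the uniform cell-level bound $|q_j^{(p)} - q_j| \leq M := 2(\tuA + L\tilde h^\alpha)\tilde h^d$ coming from the tail cutoff, one passes from $\ell^t$ to $\ell^2$ (via $x_j^2 \geq M^{t-2} x_j^t \cdot x_j^{2-t}$ on cells of maximal discrepancy, and Cauchy-Schwarz on the rest) to deduce $n^2 \sum_j (q_j^{(p)}-q_j)^2 \gtrsim \log(8/\eta)$ for $\ThreshT$ large.

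The main obstacle is Case B: transferring the continuous $L_t$ separation into a collision signal, since for $t<2$ the adversary can spread the discrepancy over many cells and weaken the $\ell^2$ norm. This is precisely why the cutoff $\uA$ was chosen as in~\eqref{def_uA} and the bandwidth $\hT$ as in~\eqref{def_htail}: together they enforce a uniform cell-level bound $M$ and a cell mass $\lesssim 1/n^2$ that compensate exactly for the $t<2$ spread. A secondary subtlety is keeping the two regimes $\tuA \in \{\uA, \uA/2\}$, $\tilde h \in \{\hT, \hm\}$ aligned; in the bulk-dominates case one must show that the choice $(\uA/2, \hm)$ still produces the right variance and collision bounds, which follows because $\hm$ was defined so that $\cm \hB(x) = \hm$ at $p_0(x) = \um$.
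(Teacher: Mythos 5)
Your high-level plan (bound Type~I and Type~II errors separately, use $\psi_1$ for first-moment deviations and $\psi_2$ for collisions) matches the paper's, and the Type~I analysis is essentially the same. The problem is in the Type~II Case~B argument, which contains two genuine gaps.

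First, the cell-level $L_t$-to-$\ell^t$ transfer does not close. You write
\[
\int_{\widetilde C_j}|p-p_0|^t \leq 2^{t-1}\Bigl[\frac{|q_j^{(p)}-q_j|^t}{|\widetilde C_j|^{t-1}} + \int_{\widetilde C_j}\bigl(L\tilde h^\alpha + \cstar'(p+p_0)\bigr)^t\Bigr]
\]
and claim the remainder, summed over $j$, is a small fraction of $\rhot^t$. But the summed remainder contains a term of order $(L\tilde h^\alpha)^t\sum_j|\widetilde C_j| = (L\tilde h^\alpha)^t|E|$, and since $\Omega=\R^d$ the covering $E=\bigcup_j\widetilde C_j$ is unbounded, so this term is infinite and the bound is vacuous. (The $\cstar'(p+p_0)$ part is controllable via $\int_\T p_0^t\lesssim\Cmom^{t-1}\rhot^t$, but the $(L\tilde h^\alpha)^t$ part is not.) The paper sidesteps this entirely by applying H\"older at the outset to factor $\|p-p_0\|_t^t \leq \|p-p_0\|_1^{2-t}\|p-p_0\|_2^{2(t-1)}$, splitting into a case where $\|p-p_0\|_1$ is large (handled by $\psi_1$) and a case where $\|p-p_0\|_2^2$ is large (handled by $\psi_2$); in the latter case, Lemma~\ref{link_2ndMoment__sumSquares} bounds $\int(p-p_0)^2$ from \emph{above} by $\frac{1}{h^d}\sum_j q_j^{(p)2}$ plus a remainder $Lh^\alpha\|p-p_0\|_1$ — crucially attached to the \emph{finite} quantity $\|p-p_0\|_1$ rather than to $|E|$, because one bounds $p^2 = p\cdot p$ by approximating only one factor by its cell average.

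Second, the $\ell^t$-to-$\ell^2$ passage is simply false as stated. The displayed inequality ``$x_j^2 \geq M^{t-2}x_j^t\cdot x_j^{2-t}$'' reduces to $x_j^2\geq M^{t-2}x_j^2$, which for $t<2$ requires $M\geq 1$, whereas here $M\asymp 1/n^2$ is tiny. The corrected form $x_j^t\geq M^{t-2}x_j^2$ (using $x_j\leq M$) gives only the \emph{upper} bound $\sum x_j^2\leq M^{2-t}\sum x_j^t$, not a lower bound. More fundamentally, no lower bound of the form you seek can exist: taking $x_j=\epsilon$ on $N=S/\epsilon^t$ cells gives $\sum x_j^t=S$ fixed while $\sum x_j^2 = S\epsilon^{2-t}\to 0$, so $\sum x_j^t$ large and $x_j\leq M$ together do not force $\sum x_j^2$ large. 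This is exactly why the paper factors the $L_t$ norm via H\"older into $L_1$ and $L_2$ pieces before discretizing — the $L_1$ excess is caught by $\psi_1$ and the $L_2$ excess by $\psi_2$ — instead of discretizing to $\ell^t$ cell masses and then trying to convert to $\ell^2$.
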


We can now explain why Propositions~\ref{upper_bound_bulk} and~\ref{upper_bound_tail} yield the upper bound $\rho^* \lesssim \rhob + \rhot + \rhor$. 
We note that for any $p \in \mathcal{P}(\alpha, L', \cstar')$ such that $\|p-p_0\|_t\geq C \rho^*$, we either have $\int_{\TB} |p-p_0|^t \geq \frac{1}{2}C^t {\rhob}^t$ or $\int_{\T\left(\tuA\right)} |p-p_0|^t \geq \frac{1}{2}C^t {\rho}^t$.
Therefore, writing $\psi^* = \psib \lor \psit$, and taking the constant $C$ sufficiently large, we get by Propositions~\ref{upper_bound_bulk} and~\ref{upper_bound_tail} that
\begin{align*}
    &\mathbb{P}_{p_0}(\psi^*=1) 
    + \sup_{
        \substack{p \in \mathcal{P}(\alpha, L', \cstar')\\\\ \|p-p_0\|_t\geq C \rho^*}} 
        \mathbb{P}_p(\psi^*=0) \\
    \leq & ~ \mathbb{P}_{p_0}(\psi^*=1) 
    + \sup_{p \in \mathcal{P}_{Bulk}(\RadB\rhob)} \mathbb{P}_p(\psi^*=0) 
    + \sup_{p \,\in\, \mathcal{P}_{Tail}\left(\ThreshT\rho^*\right)} \mathbb{P}_p(\psi^*=0)\\
    \leq & ~\mathbb{P}_{p_0}(\psib=1)
    + \mathbb{P}_{p_0}(\psit=1) 
    + \sup_{p \in \mathcal{P}_{Bulk}(\RadB\rhob)} \mathbb{P}_p(\psib=0) 
    + \sup_{p \,\in\, \mathcal{P}_{Tail}\left(\ThreshT\rho^*\right)} \mathbb{P}_p(\psit=0)\\
    \leq & ~\frac{\eta}{2} + \frac{\eta}{2} = \eta.
\end{align*}

This ensures that $\rhob + \rhot + \rhor$ is an upper bound on the minimax separation radius, and that $\psib\lor\psit$ is a test reaching this bound. 
Note that in Proposition \ref{upper_bound_tail}, the separation radius on the tail is $\rho^* \asymp \rhob + \rhor$ if the bulk dominates, or $\rho^* \asymp \rhot + \rhor$ if the tail dominates.

%\ju{Attention !!! Vraisemblablement ce n'est pas le même découpage pour la borne sup et la borne inf: pour la borne inf il faut un découpage plus fin sinon il se peut que $U$ ne soit pas défini car on peut ne même pas avoir $n^2p_M^2 \leq \cu$.}

\subsection{Tail lower bound}

To begin with, we state Proposition \ref{remainder_term} which handles the case where $\int_\T p_0 < \frac{\ctail}{n}$ for a large constant $\ctail$.

\begin{proposition}\label{remainder_term}
There exists a constant $n_0$ such that whenever $n \geq \nzero $ and $\int_\T p_0 < \frac{1}{n}\ctail$, it holds that \large{$\rho^* \gtrsim \rhor := L^{\frac{d(t-1)}{t(\alpha+d)}}n^{-\frac{\alpha t + d}{t(\alpha+d)}}$.}
\end{proposition}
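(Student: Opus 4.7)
The plan is to apply Le Cam's two-point method with a single alternative $p_1 \in \mathcal{P}(\alpha, L', \cstar')$ obtained by adding one localized positive bump to $p_0$. I would fix a sufficiently smooth nonnegative kernel $K : \R^d \to \R$ supported in the unit ball, set the bandwidth $h = \cstep (nL)^{-1/(\alpha+d)}$ for a small constant $\cstep$ to be chosen, pick an arbitrary point $x_0 \in \Omega$, and define $\phi_+(x) = (\delta L/2)\, h^\alpha\, K((x-x_0)/h)$. Setting $M = \int \phi_+$, which is of order $L h^{\alpha+d} \asymp \cstep^{\alpha+d}/n$, the alternative would then be $p_1 = (p_0 + \phi_+)/(1+M)$.

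I would first check that $p_1 \in \mathcal{P}(\alpha, L', \cstar')$: positivity and unit integral hold by construction; the H\"older constant of $p_0 + \phi_+$ is at most $L + \delta L/2$, so dividing by $1+M$ keeps it below $L'$; and Assumption \eqref{simplifyingAssumption} is verified by combining the triangle inequality with $p_0/(1+M) \leq p_1$ (valid since $\phi_+ \geq 0$), which gives $|p_1(x)-p_1(y)| \leq \cstar p_1(x) + L'\|x-y\|^\alpha$. For statistical indistinguishability, a direct computation gives $d_{TV}(p_0, p_1) \leq M$, and sub-additivity of total variation under products then yields $d_{TV}(p_0^{\otimes n}, p_1^{\otimes n}) \leq nM$, which is $\leq 1-\eta$ provided $\cstep$ is chosen small enough.

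The main obstacle is establishing $\|p_1 - p_0\|_t \gtrsim \rhor$, because the difference $p_1 - p_0 = (\phi_+ - Mp_0)/(1+M)$ contains a non-local term $-Mp_0/(1+M)$ that could in principle cancel the bump in $L_t$ norm. I would address this via the classical estimate $\|p_0\|_\infty \leq C_d\, L^{d/(\alpha+d)}$, which follows by applying the H\"older condition in a ball around a near-maximizer of $p_0$ together with $\int p_0 = 1$. On a slightly smaller ball $S' \subset B(x_0, h)$ where $K$ is bounded below by a positive constant, one has $\phi_+ \gtrsim L h^\alpha \asymp L^{d/(\alpha+d)} n^{-\alpha/(\alpha+d)}$ while $M p_0 \leq M \|p_0\|_\infty \lesssim L^{d/(\alpha+d)}/n$. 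The bump therefore dominates the rescaling term by a factor of order $n^{d/(\alpha+d)}$; for $n \geq \nzero$ large enough this yields $\phi_+ \geq 2 M p_0$ on $S'$, so that
\[
\|p_1 - p_0\|_t \;\geq\; \tfrac{1}{2}\Big(\int_{S'} \phi_+^t\Big)^{1/t} \;\gtrsim\; L h^{\alpha + d/t} \;\asymp\; \rhor,
\]
where the last equivalence is a direct algebraic check from $h = \cstep(nL)^{-1/(\alpha+d)}$.

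Finally, Le Cam's inequality yields $R^*(c\rhor) \geq 1 - d_{TV}(p_0^{\otimes n}, p_1^{\otimes n}) \geq \eta$ for a suitable constant $c > 0$, whence $\rho^* \gtrsim \rhor$. The hypothesis $\int_\T p_0 < \ctail/n$ is not itself used in the construction above; it merely identifies the regime in which $\rhor$ is the binding lower bound on $\rho^*$, since in the complementary case $\int_\T p_0 \geq \ctail/n$ a direct calculation from the expression in \eqref{def_rho_bulk_and_tail} already gives $\rhot \gtrsim \rhor$.
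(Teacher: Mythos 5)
Your proposal is correct in substance but follows a genuinely different route from the paper. The paper also uses Le Cam's two-point method, but places a \emph{mass-neutral, two-sided} perturbation (one bump added, one equal bump subtracted, on two disjoint balls) near $x_0 = \arg\max_\Omega p_0$. Because that perturbation subtracts mass, the paper must ensure $p_0$ is bounded below on the relevant balls, which in turn requires proving $\max_\Omega p_0 \geq m \asymp L^{d/(\alpha+d)}n^{-\alpha/(\alpha+d)}$; this is exactly where the hypothesis $\int_\T p_0 < \ctail/n$ is used, via the definition of $\uI$. In contrast, you add a single \emph{nonnegative} bump at an arbitrary location and renormalize. Positivity is automatic, so you never need a lower bound on $p_0$ anywhere, and the price is the extra term $-M p_0/(1+M)$ in $p_1 - p_0$, which you correctly neutralize using the universal estimate $\|p_0\|_\infty \lesssim L^{d/(\alpha+d)}$ (Lemma~\ref{control_norm_t_Rd}, item~2). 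As a result your argument does not use the tail hypothesis at all, which you rightly note at the end; this makes your proof simpler and slightly more general. The computations of the $L_t$ separation and of $d_{TV}(p_0^{\otimes n}, p_1^{\otimes n}) \leq nM \asymp \cstep^{\alpha+d}$ are both correct, and the needed $\nzero$ depends only on $\alpha, d, \cstep$, not on $p_0$, as required.

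One small gap to flag. For $\alpha > 1$, membership $\phi_+ \in H(\alpha, \delta L/2)$ does \emph{not} by itself give a Lipschitz-type bound $|\phi_+(x) - \phi_+(y)| \leq C L\|x-y\|^\alpha$, which is what your triangle-inequality check of Assumption~\eqref{simplifyingAssumption} implicitly uses. You need the kernel $K$ to satisfy a condition analogous to~\eqref{simplifying_assymption_f}, namely $|K(u) - K(v)| \leq c\, K(u) + \|u-v\|^\alpha$ for a small constant $c$; this is precisely what the paper imposes on the bump shape $f$ in its own constructions, and it is achievable. With that condition on $K$, the argument closes: the term $c\,\phi_+(x)/(1+M)$ is absorbed into $\cstar' p_1(x)$ (using $\phi_+/(1+M) \leq p_1$), and the remaining $\|x-y\|^\alpha$ contributions fit under $L'\|x-y\|^\alpha$. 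Making this requirement on $K$ explicit would complete the proposal.
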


To analyze the tail lower bound, Proposition \ref{remainder_term} allows us to make the following two assumptions \textit{wlog}:
\begin{itemize}
    \item[(a)] $\CBT \rhob \leq \rhot$ i.e. the tail dominates,
    \item[(b)] $\int_\T p_0 \geq \ctail/n$.
\end{itemize}

Indeed, for (a), if we have the converse inequality $\CBT \rhob > \rhot$, then Propositions \ref{upper_bound_bulk} and \ref{upper_bound_tail} already establish that $\rhob + \rhor$ is an upper bound over $\rho^*$. If the bulk dominates, then Propositions \ref{Lower_bound_bulk_proposition} and \ref{remainder_term} yield that $\rhob + \rhor$ is also a lower bound over $\rho^*$. Therefore, (a) can from now be assumed \textit{wlog}. \\

As for (b), Propositions \ref{upper_bound_bulk} and  \ref{upper_bound_tail} already establish that $\rhot + \rhor$ is an upper bound over $\rho^*$ when (a) holds. If $\int_\T p_0 < \frac{\ctail}{n}$, this upper bound further simplifies as $\rhot + \rhor \asymp \rhor$ and Proposition \ref{remainder_term} yields the matching lower bound $\rho^* \gtrsim\rhor$.\\

The following proposition yields a lower bound in the tail regime.

\begin{proposition}\label{Lower_bound_tail_proposition}
If the tail dominates, i.e. $\CBT \rhob \geq \rhot$, and if $\int_{\T(\uA)} p_0 \geq \frac{\ctail}{n}$, there exists a constant $\CtailLB$ such that $\rho^* \geq \CtailLB \rhot$.
\end{proposition}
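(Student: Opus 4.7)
The strategy is Le Cam's two-point method with the prior $p_b^{(n)}$ constructed in \eqref{def_prior_tail}. The proof decomposes into three tasks: (i) almost surely, $p_b^{(n)} \in \mathcal{P}(\alpha, L', \cstar')$; (ii) with probability at least $1 - \eta/4$ over the draw of $b = (b_j)_{j \geq U}$, one has $\|p_b^{(n)} - p_0\|_t \geq \CtailLB\,\rhot$, which is the content of the auxiliary Proposition \ref{separation_prior_tail} (already announced in the excerpt); and (iii) $d_{TV}\bigl(p_0^{\otimes n},\, \mathbb{E}_b[(p_b^{(n)})^{\otimes n}]\bigr) \leq 1 - \eta/2$. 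Combining (ii) and (iii) via the standard Le Cam reduction gives $R^*(\CtailLB \rhot) \geq \eta$ and hence $\rho^* \geq \CtailLB\,\rhot$.

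For (i), the perturbations $\gammaup_j$ and $\gammadown_j$ live in $H(\alpha, \delta L)$ and are supported on the disjoint cubes $\widetilde C_j$, so the random sum inherits Hölder regularity with constant $\delta L$; adding $p_0 \in H(\alpha,L)$ keeps $q_b$ in $H(\alpha, L')$. Pointwise nonnegativity of $q_b$ requires $\gammadown_j \leq p_0$ on $\widetilde C_j$, which is built into the scale of $\gammadown_j$ through the bandwidth $\hT$ and the cut-off $\uA$ (since $\widetilde C_j \subset \T(\uA)$). Condition \eqref{simplifyingAssumption} follows from the same calibration together with the slack $\delta$. The normalization by $\|q_b\|_1$ degrades these constants by a multiplicative factor which, on the event that $\|q_b\|_1$ is close to $1$, is absorbed into $L'$ and $\cstar'$ via $\delta$.

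The main obstacle is step (iii). I use the $L_2$ relaxation $d_{TV}^2 \leq \tfrac{1}{4}\chi^2\bigl(\mathbb{E}_b[q_b^{\otimes n}]\,\|\,p_0^{\otimes n}\bigr)$, which, by the standard mixture identity, equals
\begin{equation*}
    \chi^2 + 1 = \mathbb{E}_{b,b'}\Bigl[\Bigl(\int \frac{q_b\, q_{b'}}{p_0}\,dx\Bigr)^n\Bigr],
\end{equation*}
where $b, b'$ are i.i.d.\ copies of the Bernoulli collection. Because the supports of the perturbations on distinct $\widetilde C_j$ are disjoint and because the per-cube perturbations are calibrated with mean zero under the Bernoulli law, the inner integral splits as $1 + \sum_{j \geq U} A_j(b_j, b_j')$ with $(A_j)_{j \geq U}$ independent across $j$. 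Applying $(1+x)^n \leq e^{nx}$ and factoring,
\begin{equation*}
    \chi^2 + 1 \leq \prod_{j \geq U} \mathbb{E}_{b_j, b_j'}\bigl[\exp\bigl(n\,A_j(b_j, b_j')\bigr)\bigr].
\end{equation*}
Each factor is controlled through the scaling $\bar\pi \asymp (n^2 S_U)^{-1}$ from \eqref{def_pi}: the cut-off $U$ in \eqref{def_U} was precisely engineered so that $n^2 p_j S_U \leq \cu$ for $j \geq U$, which keeps each factor $\leq 1 + C\cu \cdot \pi_j^2 / \pi_j = 1 + C\cu \pi_j$ after Taylor expansion, so that $\prod_j (1 + C\cu\,\pi_j) \leq \exp(C\cu \sum_j \pi_j) \leq \exp(2C\cu)$. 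Choosing $\cu$ small enough — and recalling that the lower hypothesis $\int_\T p_0 \geq \ctail/n$ guarantees $\{j \geq U\} \neq \emptyset$ and $S_U > 0$ (Lemma \ref{sum_pj_geq_int}) — yields $\chi^2 \leq (1 - \eta)^2 / 4$, as needed.

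Finally, the normalization $\|q_b\|_1$ is handled by the high-probability event $\{|\|q_b\|_1 - 1| \leq c/\sqrt{n}\}$: the deviation $\|q_b\|_1 - 1$ is a sum of independent zero-mean terms indexed by $j \geq U$, with variance controlled by $\sum_j \pi_j \bigl(\int |\gammaup_j| + \int|\gammadown_j|\bigr)^2$, which is smaller than the targeted separation by construction of $\hT$. One conditions on this event and restricts the prior accordingly; by a standard absorbing argument, this modification affects the constants in (ii) and (iii) by an arbitrarily small amount and the proof concludes.
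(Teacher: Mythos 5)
The route you take is genuinely different from the paper's, and in my view it does not close. The paper proves Proposition \ref{Lower_bound_tail_proposition} (via Proposition \ref{lower_bound_tail}) by a Poissonization argument together with a \emph{restricted} total-variation bound on the high-probability event $\Aonce$ (``every tail cell contains at most one observation''), computed cell by cell (Lemmas \ref{TVvraie_TVpoisson}--\ref{TV_on_each_cell}). You instead propose the $\chi^2$-mixture identity
$\chi^2 + 1 = \mathbb{E}_{b,b'}\bigl[(\int q_b q_{b'}/p_0)^n\bigr]$.
That is exactly the tool the paper uses for the \emph{bulk} lower bound (Proposition \ref{lower_bound_bulk}), and it works there precisely because $p_0 \geq \uA/2$ on the bulk. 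On the tail it breaks, for the following reasons.

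\textbf{(a) The $\chi^2$ can be infinite.} Once you expand, the diagonal $(b_j = b'_j = 1)$ event contributes the term $n\int (\gammaup_j)^2/p_0$. The perturbation $\gammaup_j$ has magnitude $\asymp L\hT^\alpha$ on a ball inscribed in $\widetilde C_j$, while $p_0$ on that same ball is only known to be $< \uA$ and may be arbitrarily small or even identically $0$ (the tail cells are the cells where $p_0$ is tiny; for compactly supported or Gaussian nulls $p_0$ can be zero or exponentially small there). Thus $\int (\gammaup_j)^2/p_0$ is uncontrolled and can equal $+\infty$, so $\chi^2\bigl(\mathbb{E}_b[q_b^{\otimes n}]\,\|\,p_0^{\otimes n}\bigr)$ need not be finite. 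The paper's restricted TV computation in Lemma \ref{TV_on_each_cell} deliberately avoids any pointwise $1/p_0$: it only involves quantities like $p_j$, $\Gammaup_j$, $\Gammadown_j$ which are integrals over $\widetilde C_j$.

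\textbf{(b) The product bound is also off.} Even granting each factor $\leq 1 + C\cu\,\pi_j$, you conclude $\prod_j(1+C\cu\,\pi_j)\leq \exp(2C\cu)$, which requires $\sum_{j\geq U}\pi_j \leq 2$. But from \eqref{def_pi} one has $\sum_{j\geq U}\pi_j = n^2 S_U^2/(2\cu)$, and by Lemma \ref{sum_pj_geq_int} this is $\gtrsim \ctail^2/\cu$, a large constant (and it can even grow with $n$ since $S_U \asymp p_0[\T]$ can be $\Theta(1)$). There is nothing forcing $\sum\pi_j = O(1)$. Contrast with the paper's per-cell bound $\AL{\ref{TV_on_each_cell}} n^2 p_j^2 + \BL{\ref{TV_on_each_cell}}\,p_j/S_U$: when summed, the definition of $U$ gives $n^2\sum_{j\geq U}p_j^2 \leq \cu$ and $\sum_j p_j/S_U = 1$, so the overall bound is a constant that can be made small in $\cu$. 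Your factorization does not reproduce this cancellation.

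A secondary remark: the normalization $\|q_b\|_1$ also needs more care than ``absorb into $L'$ and $\cstar'$''; the paper resolves it cleanly by noting $\bar p^{(\widetilde n')} = \qtens$ under Poissonization (Lemma \ref{pbar=qbn'}), so the random normalization disappears exactly rather than being absorbed by a constant-deviation event. But the decisive issue is (a)--(b): the $\chi^2$ route does not work on the tail, and the Poissonization-plus-restricted-TV machinery is not an optional refinement but the substance of the argument.
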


Proposition \ref{Lower_bound_tail_proposition} is a corollary of Proposition \ref{lower_bound_tail} proved in Appendix \ref{LB_tail_appendix}. 
The proof again relies on a Le Cam two-point argument, with a prior defined as a mixture of probability distributions over the alternative space. 
\vspace{1mm}

The definition of the prior is very involved and is deferred to Appendix~\ref{LB_tail_appendix}. 
At a high level, it is defined by first covering the tail with disjoint cells with the same edge length $\hT$. 
Then, we perturb $p_0$ on each cell independently at random. 
On most cells, the perturbation added to $p_0$ is small and negative. 
Conversely, on a small number of cells, the perturbation added to $p_0$ is very large and positive. 
This prior can be understood as a ``sparse'' perturbation of $p_0$, and it contrasts with the paper~\cite{balakrishnan2019hypothesis}, which does not propose a tail lower bound.

\hfill

\section{Discussion}\label{Discussion}

\subsection{Discussion of the results}
\subsubsection{Rates}
For $n$ larger than a constant $\nzero$, we prove matching upper and lower bounds leading to the following expression for the critical radius
$$ \rho^*(p_0, \alpha,L, n) \asymp \TL^\frac{1}{4\alpha+d} \left(\int_{\B(\uI)}p_0^r\right)^\frac{(4-t)\alpha + d}{t(4\alpha+d)} + \TL^\frac{t-1}{\alpha+d} \left(\frac{1}{n}+p_0\big[\T(\uA)\big] \right)^\frac{(2-t)\alpha+d}{t(\alpha+d)},$$
where $r = \frac{2\alpha t}{4\alpha+d}$ and $\TL = L^d/n^{2\alpha}$. The bulk term involves the quantity $n^{-2\alpha/(4\alpha+d)}$ which is the classical non-parametric rate for testing the null hypothesis of the uniform distribution on $[0,1]^d$ against the alternative composed of $(\alpha,L)$-Hölder densities separated from $p_0$ in $\|\cdot\|_t$ norm (see e.g. \cite{ingster2012nonparametric}). 
This rate is faster than the non-parametric rate of estimation $n^{-\frac{\alpha}{2\alpha+d}}$.
%Therefore, testing identity to some $p_0$ can be easier than actually estimating $p_0$. 
We observe that, for fixed $p_0$, the quantities $\uA$ and $\uI$ both decrease to zero as $n$ increases. 
Consequently, if a fixed $p_0$ is supported on a fixed bounded domain, then the bulk eventually dominates for $n$ larger than a critical value (depending on $p_0$). 
The asymptotic rate therefore simplifies as $\rho^*(p_0,n,\alpha,L) \asymp \TL^\frac{1}{4\alpha + d}\left(\int_{\Omega}p_0^r\right)^\frac{(4-t) + d}{t(4\alpha+d)}$ which decays with $n$ at the non-parametric rate of testing $n^{-\frac{2\alpha}{4\alpha+d}}$. 
However, when $\Omega$ is not bounded, we give in Subsection \ref{Examples_subsection} examples of fixed null densities $p_0$ for which the tail always dominates, leading to critical radii $\rho^*(p_0,n,\alpha,L)$ decaying with $n$ at slower rates than $n^{-\frac{2\alpha}{4\alpha+d}}$.\\

In the tail test statistic, we combine the tests $\psi_1$ and $\psi_2$ from \eqref{test_psi1} and \eqref{test_psi2}. Test $\psi_1$ compares the first order moment of $p$ with that of $p_0$. 
The test $\psi_2$ checks that no cell $\widetilde C_j$ contains at least two observations. 
This condition is actually equivalent to checking that the second moment of $p$ is no larger than that of $p_0$. Indeed, on the tail, the second moment of $p_0$ is so small that, \textit{whp}, any cell $\widetilde C_j$ contains at most one observation under $H_0$. Conversely, if the second moment of $p$ is substantially larger than that of $p_0$, then \textit{whp} one of the cells will contain at least two observations.\\

\subsubsection{Discussion on the regularity conditions - Assumption \eqref{simplifyingAssumption}}

Our results constitute an attempt to address the case of arbitrary $\alpha$-Hölder densities over $\R^d$. Our analysis relies on Assumption \eqref{simplifyingAssumption}, which is automatically satisfied for $\alpha \leq 1$. % and does not affect our result's generality. %For $\alpha \leq 1$, this condition is automatically met and does not affect our results' generality. Therefore Assumption \eqref{simplifyingAssumption}  %Since all $\alpha$-H\"older functions with $\alpha >1$ are also Lipschitz, it means that Assumption \eqref{simplifyingAssumption} is not restrictive - with respect to the H\"older assumption - whenever $x,y \in \Omega$ belong to the 
For $\alpha>1$, Assumption \eqref{simplifyingAssumption} essentially implies two limitations.
\begin{itemize}
    \item %Over the bulk: the bulk is the set of points $x$ where $p(x)$ is large, namely such that $C\hB(x) \leq  h(x)$
    {\bf Limitation for two points which are close:} First, any $p$ satisfying \eqref{simplifyingAssumption} should be "approximately constant" over the balls $B(x,h(x))$ - namely the Euclidean balls centered at $x$ with radius $h(x)\asymp \left(p(x)/L\right)^{1/\alpha}$.
    Formally, for any $y\in B(x,h(x))$ and for all $c \in [\frac{1}{2},1)$, it imposes that $\frac{p(y)}{p(x)} \in \big[1 \pm c\big]$ whenever $y\in B(x,h(x))$ where $h(x)=\left(\frac{c - \cstar }{L}p(x)\right)^{1/\alpha}$. Noting that the bulk precisely consists of all $x$ such that $C\hB(x) \leq  h(x)$ for some $C>0$, this condition allows us to exclude fast variations of $p$ and $p_0$ over the bulk. 
 
  %  - and is not more restrictive than assuming that $p$ is \ju{finir}.
    \item {\bf Limitation for two points which are far:} Second, when $y \notin B(x,h(x))$, Assumption \eqref{simplifyingAssumption} bounds the maximum deviations of $p$ as $|p(x) - p(y)| \lesssim L\|x-y\|^\alpha$. This condition naturally arises for $x$ corresponding to small values of $p(x)$. In particular, it allows us to exclude fast variations of $p$ and $p_0$ over the tail.
\end{itemize}
This assumption is therefore implied by - and in fact, up to multiplicative constants, equivalent to - assuming that for any $m,M$ such that $0\leq 2(1+\cstar)m<M$ and such that the level sets $\{p\leq m\}$ and $\{p \geq M\}$ are not empty, the smallest distance between any two points in these level sets should be at least $(M/L)^{1/\alpha}$. 
This is implied by assuming that whenever $p$ has a local minimum at a point $x$ where $p(x)$ is close to $0$, then all its derivatives up to order $\lfloor \alpha \rfloor$ are zero at this point. 
We believe that Assumption \eqref{simplifyingAssumption} is not very restrictive. For example, it is always satisfied for unimodal densities, or any densities that are monotone outside of a fixed compact, such that the ratio of the upper and lower bound of $p$ on this compact is bounded by a constant. 
\vspace{2mm}

%\ju{Est-ce qu'on peut justifier qu'elle n'est pas extrêmement restrictive ?}. Moreover, we here do not consider adaptation to $\alpha$. We conjecture that our results could be extended to this setting using Lepski's method \cite{spokoiny1996adaptive}, \cite{lepski1999minimax}.\\

\subsubsection{Influence of the norm}

We cover the scale of all $L_t$ distances for $t\in [1,2]$. Among these distances, only the $L_1$ distance is an $f$-divergence. %Therefore, our result from Theorem \ref{main_th} will not be invariant through rescaling for $L_t$ norms with $t>1$.\\
We also identify a duality between the norms: When testing in $L_t$ norm, the bulk radius is expressed in terms of the $L_r$ norm, where $r$ and $t$ are linked through the relation $r = \frac{2\alpha t}{(4-t)\alpha+d}$. 
Depending on the value of $r$, the hardest and easiest null $p_0$ to test are different.
\boundedcase{For $r < 1$, $\|p_0\|_r$ is maximal when $p_0$ uniformly small and is minimal for spiked $p_0$'s (see the example of the spiky null in Subsection \ref{Examples_subsection}). 
Over a fixed bounded domain $[0,\lambda]^d$ (for large $\lambda$ depending on $L$), the uniform distribution is therefore the most difficult density to test, while the spikiest densities are the easiest ones.
This hierarchy is reversed when $r>1$ (see Subsection \ref{Examples_subsection}).\\

}\unboundedcase{If $r < 1$, then $\|p_0\|_r$ can be made arbitrarily large if the density $p_0$ is sufficiently small everywhere on $\R^d$. Conversely, $\|p_0\|_r$ is minimal for spiked $p_0$ and so is $\rhob(p_0)$ (see the example of the spiky null in Subsection \ref{Examples_subsection}). This hierarchy is reversed when $r \geq 1$.
}\\

One could argue that the $L_t$ distances lack operational meaning (for example, it is possible for a density $ p$ to be arbitrarily close to the true density $p_0$ in $L_2$ distance, but trivially distinguishable from $p_0$). 
In fact, it is true that our expression of the minimax separation radius $\rho^*$ is not invariant by rescaling when $t>1$ (see Proposition~\ref{Rescaling_proposition}). 
In particular, it is not a ``good'' notion of difficulty for the considered testing problem. 
The right notion of difficulty is rather given by the \textit{ratio}  
\begin{align}
    \operatorname{Diff}(p_0,L,\alpha,n,t) = \frac{\rho^*(p_0,L,\alpha,n,t)}{\rhor(\alpha,L,n=1,t)},\label{eq:def_notion_difficulty}
\end{align}
where $\rhor$ is defined in~\eqref{eq:def_rhor}. 
By Corollary~\ref{cor:notion_difficulty}, $\operatorname{Diff}(p_0,L,\alpha,n,t)$ is invariant by rescaling. 
The idea is to compare the numerical value $\rho^*(p_0,L,\alpha,n,t)$ with the remainder term $\rhor(\alpha,L,n=1,t) \asymp L^{\frac{d(t-1)}{t(\alpha + d)}}$. 
When $n=1$, this term represents the minimax separation radius of the \textit{easiest} density $p_0^*$ in the class, namely the best approximation of the Dirac distribution in the class (see Example 4). 
Taking $n=1$ in the denominator ensures that the dependence of $\operatorname{Diff}(p_0,L,\alpha,n,t)$ with respect to $n$ is the same as that of $\rho^*(p_0,L,\alpha,n,t)$.
In a nutshell, the term $L^{\frac{d(t-1)}{t(\alpha + d)}}$ furnishes a benchmark to characterize the \textit{relative} difficulty of testing $p_0$, which allows us to exclude artificial rescaling problems. 
Note that for the $L_1$ distance, which is an $f$-divergence, no rescaling is needed since $L^{\frac{d(t-1)}{t(\alpha + d)}} = 1$.

\subsection{Examples}\label{Examples_subsection}
To illustrate our results, we give examples of null densities $p_0$ and of the associated radii.\\ %$\rho^*(p_0,n,\alpha,L,t)$.\\

\textbf{Example 1} (Uniform null distribution over $\Lambda = [0,\lambda]^d$). 

\vspace{3mm}

We consider $p_0 = \lambda^{-d}$ over $[0, \lambda]^d$ and set $|\Lambda| = \lambda^d$. \unboundedcase{Note that this example cannot be handled by our present results since $p_0$ is not defined over $\R^d$. However, this case has already been analyzed in \cite{ingster2012nonparametric}, which we give here for a comparison with our results. The asymptotic minimax rate (as $n \to \infty$) writes:
\begin{equation}\label{rate_uniforme}
    \rho^*(\alpha, L, n, p_0) \asymp \TL^\frac{1}{4\alpha+d} |\Lambda|^\frac{(4-3t)\alpha+d}{(4\alpha+d)t}, ~~ \text{ where $\TL = L^d/n^{2\alpha}$.}
\end{equation}
} 

\boundedcase{We get:
\begin{equation}\label{rate_uniforme}
    \rho^*(\alpha, L, n, p_0) \asymp \TL^\frac{1}{4\alpha+d}\left( |\Lambda| ~ \land ~ \TL^{-\frac{1}{\alpha+d}}\right)^\frac{(4-3t)\alpha+d}{(4\alpha+d)t}, ~~ \text{ where $\TL = L^d/n^{2\alpha}$.}
\end{equation}}

\boundedcase{Testing equality to $p_0$ therefore involves two regimes. Assume e.g. that $L=1$. As long as $n < \left(\cI |\Lambda|\right)^\frac{\alpha+d}{2\alpha}$, the rate decays as $n^{-2\alpha\frac{t-1}{\alpha+d}}$. For larger $n$, the rate becomes $n^{-\frac{2\alpha}{4\alpha+d}}$, which is the classical (asymptotic) rate associated to the uniform distribution \cite{ingster2012nonparametric}.\\}

This is the most commonly studied setting in the literature \cite{ingster2012nonparametric}, \cite{ermakov1995minimax}, \cite{ingster1986minimax}, \cite{ingster1984asymptotically}, \cite{gine2016mathematical}. Indeed, for fixed  constants $C>c>0$, and any smooth density $p_0$ satisfying $c \leq p_0 \leq C$, its critical radius is given by \eqref{rate_uniforme}. However, when we relieve this last assumption and let $L$ be arbitrary, $\rho^*(\alpha,L,n,p_0)$ can substantially deviate from \eqref{rate_uniforme}.\\

\textbf{Example 2} (Gaussian null)
\vspace{3mm}

Suppose $p_0$ is the density of $\mathcal{N}(0,\sigma^2 I_d)$ over $\R^d$, where $\sigma>0$. Fix $\alpha,L$ and $\sigma$, and consider the asymptotics as $n \to +\infty$. The \textit{asymptotic} minimax rate associated to $p_0$ is 
\begin{equation}\label{rate_gaussian}
    \rho^*(\alpha,L,n,p_0) \asymp \TL^\frac{1}{4\alpha+d}\, (\sigma^d)^{\frac{(4-3t)\alpha+d}{t(4\alpha+d)}}.
\end{equation}
This asymptotic rate exclusively corresponds to the bulk rate $\rhob$. It decays with $n$ at the classical non-parametric rate of testing $n^{-\frac{2\alpha}{4\alpha+d}}$. Note the similarity between \eqref{rate_uniforme} and \eqref{rate_gaussian} when $\sigma^d$ plays the role of $|\Lambda|$. Regardless of the fixed constant $\sigma$, testing equality to $\mathcal{N}(0,\sigma^2 I_d)$ or to $\text{Unif}\left([-\sigma,\sigma]^d\right)$  are asymptotically equally difficult.\\

%\ju{Essayer de trouver le moment où ça passe de tail à bulk}

\boundedcase{
\textbf{Example 3} (Arbitrary $p_0$ over $\Omega = [0,1]^d$ with $L=1$.)
\vspace{3mm}

For any $p_0 \in \mathcal{P}(\alpha,1,\cstar)$, the rate simplifies as:
\begin{equation}\label{rate_arbitrary}
    \rho^*(\alpha, 1, n, p_0) \asymp n^{-\frac{2\alpha}{4\alpha+d}}.%\left( 1 ~ \land ~ n^{\frac{2\alpha}{\alpha+d}}\right)^\frac{(4-3t)\alpha+d}{(4\alpha+d)t}.
\end{equation}
Noticeably, this rate is independent of $p_0$ and coincides with \eqref{rate_uniforme}. Indeed, fixing $\Omega=[0,1]^d$ and $L=1$ constrains $p_0$ to have only limited variations so that all of its mass cannot be concentrated on a small part of the domain. Therefore, $p_0$ cannot deviate enough from the uniform distribution to affect the rate. This case illustrates that, for fixed \textit{bounded} domain and for $L=1$, all of the testing problems \eqref{testing_pb} are equally difficult regardless of $p_0$. Conversely, when $L$ or $\Omega$ are allowed to depend on $n$ or when $\Omega$ is unbounded, the local rate $\rho^*(\alpha,L,n,p_0)$ can significantly deviate from \eqref{rate_uniforme}. This is illustrated in the following examples. \\}

\unboundedcase{
\textbf{Example 3} (Arbitrary $p_0$ with support over $\Omega' = [-1,1]^d$ and $L=1$.)
\vspace{3mm}

We recall that the support of $p:\R^d \longrightarrow \R$ is $\{x \in \R^d: p(x) \neq 0\}$. In this example, we therefore consider an arbitrary null density $p_0$ defined over $\R^d$ which is zero outside $\Omega' = [-1,1]^d$. For any such $p_0 \in \mathcal{P}(\alpha,1,\cstar)$, the rate simplifies as:
\begin{equation}\label{rate_arbitrary}
    \rho^*(\alpha, 1, n, p_0) \asymp n^{-\frac{2\alpha}{4\alpha+d}}.%\left( 1 ~ \land ~ n^{\frac{2\alpha}{\alpha+d}}\right)^\frac{(4-3t)\alpha+d}{(4\alpha+d)t}.
\end{equation}
Noticeably, this rate is independent of $p_0$ and coincides with \eqref{rate_uniforme}. Indeed, fixing the support $\Omega'=[-1,1]^d$ and $L=1$ constrains $p_0$ to have only limited variations. In this case, all its mass cannot be concentrated on a small part of the domain. This example illustrates that, for fixed \textit{bounded} support and for $L=1$, all of the testing problems \eqref{testing_pb} are equally difficult regardless of $p_0$. Conversely, when $L$ or $\Omega'$ are allowed to depend on $n$ or when the support $\Omega'$ is unbounded, the local rate $\rho^*(\alpha,L,n,p_0)$ can significantly deviate from \eqref{rate_uniforme}. This is illustrated in the following examples. \\}

\textbf{Example 4} (Spiky null).
\vspace{3mm}

Let $\Omega = \R^d$. Define $f\geq 0$ such that $f \in H(\alpha,1) \cap C^\infty$ over $\R^d$ and $f$ is nonzero only over $\left\{x \in \R^d : \|x\|<1/2 \right\}$. We here moreover assume that $f$ satisfies
\begin{align*}
    \forall x,y \in \R^d: |f(x)-f(y)| \leq \cstar f(x) + \|x-y\|^\alpha.
\end{align*}
The spiky null density is defined as follows:
\begin{equation}\label{def_spiky_null}
    p_0(x) = La^\alpha f\left(\frac{x}{a}\right)
\end{equation}
where $a = \left(\Cint L\right)^{-\frac{1}{\alpha+d}}$. Informally, this corresponds to an approximation of the Dirac distribution $\delta_0$ by a density in $\mathcal{P}(\alpha,L,\cstar)$. %Indeed, $p_0$ reaches the value $\Cint^{-\frac{\alpha}{\alpha+d}}L^\frac{d}{\alpha+d}$ which represents (up to a multiplicative  constant) the maximum value attainable by a density in $\mathcal{P}(\alpha,L,\cstar)$ (given in Lemma \ref{control_norm_t_Rd} by $cste L^\frac{d}{\alpha+d}$ \ju{modifier le lemme en conséquence}). The density \eqref{def_spiky_null} therefore shares the same minimax rate $\rho^*$ as the spikiest densities in the class $\mathcal{P}(\alpha,L,\cstar)$. 
In this case we have:
\begin{equation}\label{rate_spiky}
    \rho^*(p_0,\alpha,L,n) \asymp L^\frac{d(t-1)}{t(\alpha+d)} n^{-\frac{2\alpha}{4\alpha+d}}.
\end{equation}
%\ju{Expliquer pourquoi il n'y a qu'un seul régime parce que $n$ est toujours suffisamment grand.} 
Note that the density \eqref{def_spiky_null} is supported on $[-a,a]^d$ and the corresponding minimax rate is the same as for the uniform density over $[-a,a]^d$.
Note that there is only one regime: Indeed, by the choice of $a$, the value $|\Omega| \asymp L^{-d(\alpha+d)}$ is always smaller than $\TL^{-\frac{1}{\alpha+d}}$ up to constants.
Assume now that $L \to \infty$ as $n \to \infty$, so that $p_0$ is supported over $\widetilde \Omega = [-1,1]^d$ for $n$ large enough. Suppose moreover that $\TL \to 0$ so that, over $\widetilde\Omega$, the rate \eqref{rate_uniforme} simplifies as $\TL^\frac{1}{4\alpha+d}$. 
We then note that \eqref{rate_spiky} is faster than $\TL^\frac{1}{4\alpha+d}$ if, and only if, $r \leq 1$. It is possible to show that over a bounded domain, for all $L_t$ norms such that $r\leq 1$, the uniform null distribution has maximum separation radius whereas the spiky null has the smallest one. Conversely for $t$ such that $r >1$, the uniform distribution has minimum separation radius whereas the spiky null has the largest one.\\

Note that by rescaling (see Proposition \ref{Rescaling_proposition}), letting $L \to +\infty$ for fixed $n$ and fixed support is equivalent to letting the support size go to infinity while $L,n$ are fixed. This example illustrates that over growing domains, some compactly supported densities can substantially deviate from the uniform distribution over their support. \\% \ju{On est obligé de le montrer ? En tout cas donner l'image d'un domaine énorme.}\\

\textbf{Example 5} (Pareto null)
\vspace{3mm}

We place ourselves over \boundedcase{$\Omega = \R_+$}\unboundedcase{$\R$} (hence $d=1$) and consider for $x_0 > 0$ and $\beta \in (0,1)$ the null density $p_0(x) = \frac{\beta x_0^\beta}{x^{\beta+1}}$ over $[x_1, +\infty)$. Here $x_1>x_0$ is chosen so that $p_0$ can be extended over $(-\infty, x_1]$ to get a density in $\mathcal{P}(\alpha,L,\cstar)$ which is $0$ over $(-\infty, x_{-1}]$ for $x_{-1}< x_0$. 
For simplicity we only give the rate for $\alpha \leq 1$ and $t=1$ i.e. for the total variation distance, although the general rate can be established for all $\alpha>0$ and $t\in [1,2]$. The minimax separation radius simplifies as
\begin{equation}\label{rate_pareto}
    \rho^*(\alpha,L,n,p_0) \asymp \TL^\frac{\beta}{3\beta+1} = \left(\frac{L^d}{n^{2\alpha}}\right)^ \frac{\beta}{3\beta+\alpha+1} .
\end{equation}

Interestingly, \textit{for all} $L,n$, this rate exclusively corresponds to the dominating term $\rhot$. This example illustrates that for some heavy-tailed densities defined on unbounded domains, the separation distance substantially deteriorates compared to the spiky null with $L=1$ - and justifies the importance of establishing a tight rate in the tail regime. Noticeably, the whole scale of rates from $1$ to $n^{-\frac{2\alpha}{4+\alpha}}$ can be obtained for $\alpha \leq 1$ and are all slower than the classical non-parametric rate of testing $n^{-\frac{2\alpha}{4\alpha+1}}$. For $t>1$, a slower rate (depending on $\alpha,\beta,t$) can similarly be observed as compared to the case of a  spiky null density. \\

%\ju{Essayer de voir avec d'autres normes.}\\

\boundedcase{\textbf{Remark:} Assume $L=1$ and fix \unboundedcase{$p_0: \R^d \to \R$ any null density supported on $[0,1]^d$}\boundedcase{$p_0: [0,1]^d\to \R$}. From the previous examples, its critical radius cannot substantially deviate from that of the uniform distribution over $[0,1]^d$. 
However, this no longer holds even for $L=1$ if we consider the restriction of $p_0$ to some subset $\Omega'$ of $[0,1]^d$.
More precisely, suppose that we observe iid $X_1, \dots, X_n$ with unknown density $p$ over $\Omega$ and that for some $\Omega' \subset [0,1]^d$ we would like to test 
$$ H_0 : ~p\,\mathbb{1}_{\Omega'} = p_0\,\mathbb{1}_{\Omega'}  ~~ \text{ vs } ~~ H_1: \big\|(p-p_0)\mathbb{1}_{\Omega'}\big\|_t \geq \rho.$$ 
If $\int p_0\mathbb{1}_{\Omega'}$ is much smaller than $1$, then even with $L=1$, the critical radius can substantially deviate from that of the uniform distribution.%  \ju{A élaborer.}
%Likewise, the present results naturally extend to identity testing in $L_t$ norm where $p_0$ is no longer a density but an inhomogeneous spatial Poisson process. \ju{Pareil}
}

\subsection{Comparison with prior work}
\subsubsection{Special case of the $\|\cdot\|_1$ norm (total variation)}\label{comparison_TV}

The case of the $L_1$ norm has been studied in \cite{balakrishnan2019hypothesis}. Here we state their main result for density testing. 
Suppose that we observe $X_1, \dots, X_n$ with density $p$ over $\Omega$ and fix a particular density $p_0$ over $\Omega$. Assume that $p$ and $p_0$ are in $H(\alpha,L)$ for $\alpha \leq 1$ and consider the identity testing problem
\begin{equation}\label{testing_pb_WB}
    H_0: p=p_0 ~~ \text{ vs } ~~ H_1: \|p-p_0\|_1 \geq \rho  \text{ and $p\in H(\alpha,L)$}.
\end{equation}
Problem \eqref{testing_pb_WB} is a special case of our setting where $\alpha \leq 1$ and $t=1$. For all $\sigma>0$, let $\mathcal{B}_\sigma = \{B: \mathbb{P}_{p_0}(B) \geq 1-\sigma\}$ and define the functional
\begin{equation}\label{functional_WB}
    T_\sigma(p_0) = \inf_{B \in \mathcal{B}_\sigma}\left(\int_{B}p_0^\gamma\right)^{1/\gamma},
\end{equation}
where $\gamma = \frac{2\alpha}{3\alpha+d}$. For two explicit constants $c,C>0$, define the upper and lower critical radii as the solutions of the fixed-point equations:
\begin{equation}\label{critical_radii_WB}
    v_n(p_0) = \left(\frac{L^{d/2\alpha}T_{Cv_n(p_0)}(p_0)}{n}\right)^\frac{2\alpha}{4\alpha+d} ~~ \text{ and } ~~ w_n(p_0) = \left(\frac{L^{d/2\alpha}T_{cw_n(p_0)}(p_0)}{n}\right)^\frac{2\alpha}{4\alpha+d}.
\end{equation}

\begin{theorem}\label{rate_lipschitz_WB}[Balakrishnan, Wasserman (2017)]
    For $\alpha \in(0,1)$, there exist two constants $c,C>0$ such that the critical radius for problem \eqref{testing_pb_WB} satisfies $\rho^*(p_0, \alpha, L, n) \leq C w_n(p_0)$. Moreover, if $p_0\in H(\alpha, c'L)$ where $c' \in (0,1)$, then it holds that $\rho^*(p_0, 1, L, n) \geq c v_n(p_0)$.
\end{theorem}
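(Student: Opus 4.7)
\textbf{Proof proposal for Theorem \ref{rate_lipschitz_WB}.}

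The overall plan is to reduce the continuous Lipschitz testing problem to a multinomial identity testing problem via a careful binning of $\Omega$, and then invoke Valiant--Valiant-style multinomial testing bounds. The key quantity $T_\sigma(p_0)$, with exponent $\gamma = 2/(3+d)$, is precisely the functional that appears in multinomial identity testing when one restricts attention to a ``bulk'' set $B$ of total mass $\geq 1-\sigma$. The fixed-point structure in \eqref{critical_radii_WB} arises because the optimal bulk set $B$ itself depends on the separation radius: smaller $\rho$ allows tolerating a larger tail (since tail contributions of order $\rho$ are undetectable anyway), so the infimum defining $T_\sigma$ is taken over a larger family, yielding a smaller value of $T$.

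For the upper bound, I would proceed as follows. First, given a candidate radius $\rho$, choose $\sigma \asymp \rho$ and pick a near-optimal $B \in \mathcal{B}_\sigma$ attaining (up to constants) the infimum in \eqref{functional_WB}. Second, partition $B$ into cubes of side length $h \asymp (\rho/L)$ (this choice balances the Lipschitz bias $Lh$ with the separation $\rho$ in $L_1$ per unit volume). Third, let $N_j$ count observations falling in cube $C_j$; under $H_0$, $N_j \sim \text{Poi}(n p_j^0)$ approximately, where $p_j^0 = \int_{C_j} p_0$. Apply the multinomial identity tester of Valiant--Valiant (a $\chi^2$-type statistic that subtracts the mean and rescales by $\max(p_j^0, \text{threshold})^{1/2}$) to the discretized problem. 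The discretization error under $H_1$ is controlled because for $p$ $L$-Lipschitz with $\|p-p_0\|_1 \geq \rho$ on $B$, we have $\sum_j |p_j - p_j^0| \geq \rho - O(Lh \cdot |B|)$, and the choice of $h$ guarantees the loss is a constant fraction of $\rho$. Combining with the Valiant--Valiant threshold $n^{-1}\bigl(\sum_j (p_j^0)^\gamma\bigr)^{1/\gamma} h^{(1-\gamma)/\gamma} \asymp n^{-1} T_\sigma(p_0) \cdot h^{-d/2}$, solving for $\rho$ yields the fixed-point equation defining $w_n(p_0)$.

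For the lower bound, I would use Le Cam's two-point (or Ingster-style mixture) method adapted to the binning. Let $B$ now be a near-optimizer for $v_n(p_0)$. On $B$, form bins $C_j$ of side length $h \asymp v_n(p_0)/L$ and define a random perturbation $q_\varepsilon = p_0 + \sum_j \varepsilon_j \phi_j$ where $\varepsilon_j$ are i.i.d.\ Rademacher and $\phi_j$ is a smooth bump supported on $C_j$ with $\int \phi_j = 0$ and $\|\phi_j\|_\infty \asymp \min(p_j^0/|C_j|,\, L h)$. The Lipschitz constraint on $q_\varepsilon$ needs $\|\phi_j\|_\infty / h \lesssim L$, which fixes the amplitude. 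One then shows $\|q_\varepsilon - p_0\|_1 \gtrsim v_n(p_0)$ in expectation, and bounds the $\chi^2$-divergence between the mixture $\mathbb{E}_\varepsilon q_\varepsilon^{\otimes n}$ and $p_0^{\otimes n}$ by $\exp\bigl(n^2 \sum_j \|\phi_j\|_2^4 / (p_j^0)^2\bigr) - 1$, which is bounded by a constant precisely when the amplitudes match the definition of $v_n(p_0)$. The slight gap between $v_n$ and $w_n$ (different constants $c,C$) stems from the fact that the upper and lower bounds use different thresholds $\sigma$ in $T_\sigma$; the Lipschitz condition on $p_0$ (the $c'L$-Lipschitz hypothesis with $c'<1$) ensures room to add the perturbations while keeping $q_\varepsilon$ inside the $L$-Lipschitz class.

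The main obstacle is the self-referential nature of the fixed-point equations: one cannot fix $\sigma$ a priori, since both the test and the hard prior should adapt to the unknown optimal $\rho^*$. Handling this requires showing that $T_\sigma(p_0)$ is sufficiently regular (monotone in $\sigma$, with controlled sensitivity) so that the fixed point exists and provides matching rates up to the constants $c,C$. A second subtle point is carefully tracking the interaction between the Lipschitz bias (of order $Lh$ pointwise) and the discretization of the $L_1$ norm, to ensure no portion of the $L_1$ separation is lost when passing to the multinomial representation.
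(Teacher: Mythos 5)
This statement is not proved in the present paper at all: it is quoted as background from Balakrishnan--Wasserman (2019), cited for comparison with Theorem~\ref{main_th}. So there is no in-paper proof to match against; I will assess your sketch on its own terms against the structure of the result.

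Your high-level plan (bin the bulk, reduce to a multinomial identity test, invoke a Valiant--Valiant-type $\chi^2$ tester, Ingster-style Rademacher mixture for the lower bound, and let the fixed-point structure emerge from the self-referential choice of bulk) is the correct skeleton, and it is essentially what Balakrishnan and Wasserman do. But there is a genuine gap in the central discretization step: you bin $B$ uniformly with a single side length $h \asymp \rho/L$. With a uniform binwidth, the $2/3$-quasinorm of the discretized distribution scales as $\sum_j (p_0(x_j)h^d)^{2/3} \asymp h^{-d/3}\int_B p_0^{2/3}$, so the functional that emerges from the Valiant--Valiant threshold is $(\int_B p_0^{2/3})^{3/2}$, i.e.\ $\gamma = 2/3$, \emph{not} $\gamma = 2/(3+d)$ as in Equation~\eqref{functional_WB}. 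The $d$-dependence of $\gamma$ in $T_\sigma(p_0)$ comes precisely from a \emph{spatially heterogeneous} binwidth $h(x)$ that scales with $p_0(x)$; only by optimizing the bandwidth pointwise does one lower the exponent from $2/3$ to $2/(3+d)$. This is not a cosmetic detail: for non-uniform $p_0$ (e.g.\ heavy-tailed), the two exponents give genuinely different rates, and it is exactly what the present paper is pointing at when it emphasizes in its introduction and discussion that the test statistics of Balakrishnan--Wasserman ``are built using heterogeneous histograms'' and that the current paper replaces these by kernel estimators with heterogeneous bandwidth. Your lower-bound sketch has the same issue (uniform-size bumps on uniform bins). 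To repair the sketch you would need, for both directions, bins of variable side $h_j$ depending on $p_0$ on $B$, with the Lipschitz constraint tying bump amplitude to bin size locally rather than globally; the exponent $\gamma=2/(3+d)$ is a diagnostic that this has been done correctly. Your subsequent formula ``$n^{-1}\bigl(\sum_j (p_j^0)^\gamma\bigr)^{1/\gamma} h^{(1-\gamma)/\gamma} \asymp n^{-1} T_\sigma(p_0)\,h^{-d/2}$'' also does not balance dimensionally once you unwind $\gamma = 2/(3+d)$, which is a symptom of the same missing ingredient.
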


We now state our Theorem \ref{main_th} in the special case $\alpha = 1$ and $t=1$. For all $L>0$, for all cubic domain $\Omega$, for all $p_0$ $L$-Lipschitz over $\Omega$, we have:
\begin{equation}\label{rate_lipschitz_TV}
    \rho^*(p_0,L,n) ~ \asymp ~  \left(\frac{L^{d/2}}{n}\left(\int_{\B(\uI)}  p_0^r\right)^{1/r}\right)^\frac{2}{4+d} + ~ p_0\big[\T(\uA)\big] ~+~ \frac{1}{n}, ~~~~ \text{ where $r=\frac{2}{3+d} = \gamma$.} 
\end{equation}

We first note that our bulk term $\left(\frac{L^\frac{d}{2}}{n}\left(\int_{\B(\uI)}  p_0^r\right)^{1/r}\right)^\frac{2}{4+d}$ is the analog of $v_n(p_0)$ and $w_n(p_0)$ in Theorem \ref{rate_lipschitz_WB}. However, it is defined explicitly in terms of $p_0$ and does not involve solving a fixed-point equation. In Theorem \ref{rate_lipschitz_WB} the critical radius is bracketed between $v_n(p_0)$ and $w_n(p_0)$. Although these two quantities are of the same order in most usual cases, the authors in \cite{balakrishnan2019hypothesis} discuss pathological cases for which $w_n(p_0) \ll v_n(p_0)$. This non-tightness can be attributed to possibly large discrepancies between $T_{\sigma_1}(p_0)$ and $T_{\sigma_2}(p_0)$ (with $\sigma_1 \neq \sigma_2$) for some carefully chosen $p_0$. In the present paper, we bridge this gap by identifying matching upper and lower bounds in the considered class and more general classes corresponding to any $\alpha>0$.\\

Moreover, the authors of~\cite{balakrishnan2019hypothesis} need to consider the Poissonized minimax risk. 
Indeed, their upper bound involves reducing the continuous testing problem to a discrete one, by binning the domain into cubic cells to recover a multinomial distribution. 
Classically, the Poissonization trick can be used to transform a multinomial histogram into a vector with independent entries, simplifying the proof of the upper bound in~\cite{balakrishnan2019hypothesis}. 
In comparison, we use a Kernel estimator for the bulk upper bound, which does not rely on any similar notion of spacial independence. 
Our tail upper bound in fact also relies on a binning of the domain into cubic cells, but we reject $H_0$ if the \textit{total number} of observations in the tail is too large, or if at least one cell contains two observations or more. 
For the first part, controlling the sum over all cells does not require any independence across the cells. 
For the second part, we control the type I and type II errors through a union bound, which does not require any independence of the cells either.
Nevertheless, in the proof of the tail lower bound, we do use a Poissonization argument to compare the risk induced by our prior with the corresponding Poissonized risk, which is easier to control. \\

In $L_1$ separation, we identify a tail contribution given by $\rhot \asymp \int_{\T(\uA)} p_0$. 
\boundedcase{If $\Omega$ is unbounded}
\unboundedcase{Note that, since $\Omega=\R^d$}, this contribution implies that one can always pick $p_0$ depending on $L$ and $n$ so that $\rhot \asymp 1$. Indeed, for fixed $n,L$ consider a suitably smooth density $p_0$ such that $\max_\Omega p_0 \leq \cI \TL^\frac{1}{\alpha+d}$. 
Then by the definition of $\uI$ from equation \eqref{def_uI}, we have $\int_{\T(\uA)} p_0 \geq \int_{\T(\uI)} p_0 = 1$. 
This illustrates that even in the most favorable regime $L\to 0$ and $n \to \infty$, there exist smooth null densities $p_0$ over $\Omega$ associated to the trivial maximal separation radius $\rho^*(p_0,n,\alpha,L) \asymp 1$. 
These correspond to the worst case densities over the class.
On unbounded domains, it is therefore crucial to identify local results since the global problem (i.e. the worst case over the class) has a trivial rate.
The fact that \textit{estimation} of $\alpha$-Hölder densities in total variation over unbounded domains has a trivial rate was already highlighted in \cite{ibragimov1984more}, \cite{juditsky2004minimax}, \cite{goldenshluger2012adaptive}. Recalling that estimation is more difficult than testing, we therefore recover this result on density estimation.% the minimax rate of density \textit{estimation} in $\|\cdot\|_1$ over $H(\alpha,L)$ on $\R^d$ consequently has a trivial rate too. More precisely, upon observing $Z_1, \dots, Z_n$ iid with same density $p \in H(\alpha,L)$ over $\R^d$, we have: $$ \inf_{\hat{p}} \sup_{p \in H(\alpha, L)} \mathbb{E}\|p-\hat p\|_1 \asymp 1,$$
%where the infimum is taken over all estimators $\hat p$ and the supremum over all densities $p\in H(\alpha,L)$. We recover a phenomenon established in .\\

%Our results  We use one technical assumption but it is there to simplify the analysis of the upper bound. This condition is automatically met for $\alpha \leq 2$. However, from the perspective of the lower bound, we never need this assumption.  \ju{expliquer ce qui ne va pas dans le cas général}. \\

\subsubsection{Comparison with the discrete setting \cite{chhor2020sharp}}

The paper \cite{chhor2020sharp} considers a discrete analog of the present problem. 
Suppose we observe iid $X_1, \dots, X_n$ distributed as $\mathcal{M}(p)$ where $\mathcal{M}(p)$ denotes the multinomial distribution over $\{1,\dots, d\}$. 
When $X \sim \mathcal{M}(p)$, we have $\forall j \in \{1,\dots, d\}: \mathbb{P}(X = j)=p(j)$. Suppose we are given a known discrete distribution $p_0$ over $\{1,\dots,d\}$ and assume \textit{wlog} that the entries of $p_0$ are sorted in decreasing order. 
We consider the following testing problem:
\begin{equation}\label{discrete_testing_problem}
    H_0: p=p_0 ~~ \text{ vs } ~~ H_1: \|p-p_0\|_t \geq \rho,
\end{equation}
where $\|p-p_0\|_t = \Big(\sum\limits_{j=2}^d \big|p(j)-p_0(j)\big|^t\Big)^{1/t}$ and $t \in [1,2]$. Introduce the index $I$ as
\begin{equation}\label{def_I_discret}
    I = \min \Big\{j \in \{1,\dots,d\} ~\big|~ \sum_{j>I} n^2 p^2(j) \leq \cIM \Big\},
\end{equation}
for some  constant $\cIM = \cIM(\eta,t)$. Moreover, define the index $A$ as:
\begin{equation}\label{def_A_discret}
    A = \min \Big\{j\leq I ~\big|~ p^{b/2}(j) \geq \frac{\cAM}{\sqrt{n}\big(\sum\limits_{j \leq I} p_0^{r'}(j)\big)^{1/4}}\Big\}
\end{equation}
where $b = \frac{4-2t}{4-t}$ and $r'=\frac{2t}{4-t}$. 
Then one has the following result:
\begin{theorem}\label{theorem_discret}[Chhor, Carpentier (2020)]
    It holds that:
    $$ \rho^*(p_0, n, d, t) ~\asymp~ \rhobM ~+~ \rhotM ~+~ \rhorM,$$
    where $~~~~~~\rhobM = \sqrt{\frac{1}{n}\left\|(p_0)_{\leq I}^{-\max}\right\|_{r'}}$, $~~~~~~\rhotM = {n^{\frac{2}{t}-2}\|(p_0)_{>A}\|_1^{1-2/t}}$, $~~~~~~~\rhorM =  1/n$, \\$\left\|(p_0)_{\leq I}^{-\max}\right\|_{r'} = \left(\sum\limits_{j=2}^I p_0^{r'}(j)\right)^{1/r'}$ and $~~~~~\|(p_0)_{>A}\|_1 = \sum\limits_{j>A} p_0(j)$.
\end{theorem}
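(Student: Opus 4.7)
The proof parallels the structure of Sections~\ref{Bulk_regime_section} and~\ref{Tail_regime_section} for the continuous case, establishing matching upper and lower bounds separately on a bulk regime (indices $j\leq A$), a tail regime (indices $j>A$), and with a remainder of order $1/n$. First I would verify that the cut-off $A$ defined in~\eqref{def_A_discret} correctly separates the two regimes: on the bulk, the entries $p_0(j)$ are large enough for a weighted chi-square-type statistic to concentrate under $H_0$ while still detecting $L_t$-separated alternatives, whereas on the tail the sum $\sum_{j>A} n^2 p_0^2(j)$ is small enough that under $H_0$ no tail coordinate receives more than one observation with high probability, by~\eqref{def_I_discret}.

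For the upper bound, split the $n$ observations into two independent halves, obtain unbiased estimators $\hat p(j),\hat p'(j)$ of $p(j)$, and combine three tests. On the bulk I would use a weighted U-statistic $T_{bulk}=\sum_{j=2}^I \omega(j)(\hat p(j)-p_0(j))(\hat p'(j)-p_0(j))$ with weights $\omega(j)\asymp p_0(j)^{(2t-4)/(4-t)}$, which is the discrete analog of~\eqref{def_omega}. Bounding the variance of $T_{bulk}$ under $H_0$ by a direct second-moment calculation and its expectation from below under $H_1\cap\{\|(p-p_0)_{\leq I}\|_t\geq \rhobM\}$ via H\"older's inequality yields detection at scale $\rhobM$. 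On the tail I would combine two tests analogous to~\eqref{test_psi1} and~\eqref{test_psi2}: one comparing $\sum_{j>A}N_j$ to $n\sum_{j>A}p_0(j)$ using a standard Chebyshev control, and one rejecting whenever some $N_j$ with $j>A$ is $\geq 2$. The $1/n$ remainder covers the case where the tail mass is itself of order $1/n$.

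For the lower bound, use Le Cam's two-point method with carefully designed mixture priors. On the bulk, build a Rademacher prior that adds $\epsilon_j\phi_j$ to coordinate $j$ and compensates on coordinate $1$ to preserve normalization, with magnitudes $\phi_j$ optimized so that $\|p_\epsilon-p_0\|_t\asymp \rhobM$ while $d_{TV}(p_0^{\otimes n},\E_\epsilon p_\epsilon^{\otimes n})\leq 1-\eta$; the latter is controlled via the standard Ingster trick and a second-order expansion of the chi-square. On the tail, use a sparse Bernoulli prior where each $j>A$ is independently selected with probability $\pi_j$ proportional to $p_0(j)$, selected coordinates receiving a large upward perturbation and non-selected ones a small downward perturbation, exactly in analogy with~\eqref{def_qbn}.

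The main difficulty will be the tight matching of upper and lower bounds, which hinges on $A$ being exactly the optimal cut-off --- any strictly smaller or larger choice would yield a suboptimal rate in at least one regime, so the definition~\eqref{def_A_discret} must be justified by carefully comparing the bulk $L_{r'}$-contribution to the tail $L_1$-contribution at the boundary. A second subtlety is the second-order chi-square control for the sparse tail prior: the defining inequality in~\eqref{def_I_discret} with constant $\cIM$ must be precisely what guarantees $\chi^2(p_0^{\otimes n},\E_b p_b^{\otimes n})\lesssim 1$, which in turn ensures the total variation stays below $1-\eta$; unlike the continuous case, no smoothness is available to ease these calculations, so the bulk and tail priors must be constructed entirely by discrete mass exchanges.
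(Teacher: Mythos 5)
The paper does not prove Theorem~\ref{theorem_discret}: it is quoted from the companion paper \cite{chhor2020sharp} purely for comparison with the continuous-case rate, and no proof of it appears in this manuscript. Your sketch sensibly mirrors the continuous-case architecture of Sections~\ref{Bulk_regime_section} and~\ref{Tail_regime_section}, and that is almost certainly the right template, but two of your statements are not correct as written.

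First, your opening sentence frames the decomposition as a single cut-off with bulk $\{j\leq A\}$ and tail $\{j>A\}$. In the theorem, however, $\rhobM$ sums over $j\leq I$ while $\rhotM$ sums over $j>A$, and by~\eqref{def_A_discret} one has $A\leq I$, so the two regimes overlap on $(A,I]$. This is the discrete analogue of the distinction between $\uI$ and $\uA$ in the continuous case, and it is the crux of the improvement over \cite{valiant2017automatic} and \cite{balakrishnan2019hypothesis}: the bulk $L_{r'}$-functional and the tail mass are evaluated at \emph{different} cut-offs, and the matching of upper and lower bounds hinges on analysing the overlap. Your later formulas ($\sum_{j=2}^I$ for the bulk statistic, $\sum_{j>A}$ for the tail tests) are consistent with the theorem, so only the framing sentence is wrong, but conflating $A$ with $I$ obscures exactly the step that makes the bounds tight.

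Second, you assert that the ``no two observations on the tail'' property follows from~\eqref{def_I_discret}. That inequality only controls $\sum_{j>I} n^2 p_0^2(j)$; since $A\leq I$, the contribution of the indices in $(A,I]$ is not covered and must be bounded separately using the explicit form of~\eqref{def_A_discret}. In the continuous case this is precisely what Lemmas~\ref{tail_dominates} and~\ref{before_int_p0squared_uA_small} do for the band of values of $p_0$ between $\uI$ and $\uA$; a discrete counterpart is needed here, and without it the type-I error analysis for the analogue of $\psi_2$ in~\eqref{test_psi2} has a gap.
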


Upper and lower bounds in $L_1$ separation for the multinomial identity testing were previously known (see \cite{valiant2017automatic,blais_et_al:LIPIcs:2017:7536,blais2019distribution} for the local problem, and~\cite{paninski2008coincidence} for the global one).\\ %, but did not match in some specific cases. The above theorem provides a new way to define the tail, leading to the matching upper and lower bounds for all $L_t$ norms for $t\in [1,2]$, which were missing in \cite{valiant2017automatic}.\\

This discrete setting and our present continuous setting involve many similar phenomena. The discrete tail, defined as $\{p_0(j) \;|\; j > A\}$, is designed so that \textit{whp} under $H_0$, no coordinate $j>A$ is observed twice among the $n$ data $X_1, \dots, X_n$. %In other words, under $H_0$, when defining the histogram of the data $(N_j)_{j>A}$ over the tail (such that $\forall j>A, N_j = \sum\limits_{i=1}^n \mathbb{1}\{X_i=j\}$), we have $N_j =0$ or $N_j=1$ for all $j>A$ \textit{whp}.
%The cut-off $A$ is defined such that for $j\leq A$, all $p_j$ are large enough to set the optimal bulk prior. 
Our approach in the present paper aims at transferring this tail definition to the continuous setting.\\

%\ju{Dire qu'on a des stats pondérées et $\psi_1$ $\psi_2$}

%\ju{Ps la même chose : discrétisation pas homogène et même après discrétisation la stat de test n'est pas la même car la pondération est différente.}

Theorem \ref{theorem_discret} identifies a three-fold contribution to $\rho^*$ (bulk, tail and remainder term) which is similar to ours. However, there are some substantial challenges in our continuous setting compared to the discrete testing problem.
\begin{itemize}
    \item First, the discretization that we adopt (for both the lower bounds and for the tail statistic), as well as the bandwidth of the kernel (for the bulk statistic) must depend on $p_0$. Finding this optimal discretization/bandwith is challenging in itself, and raises some fundamental information theoretic questions, as well as some difficult technical issues. For instance, the bulk test statistic is based on the integral of a function of an inhomogeneous kernel approximation of $p$, which is very different from what is done in the discrete setting.
    \item Second, even when this discretization/kernelisation has been done, the test statistics are not direct analogs of the discrete test statistics from~\cite{chhor2020sharp}. In both cases - discrete and continuous - the bulk test statistics is a reweighted $\chi^2$ test statistic with inhomogeneous weights, depending on each coordinate of $p_0$. However in the continuous case, the reweighting factor $\omega(x)$ cannot be directly deduced from the discrete setting. Indeed, there is a distortion in the integral coming from the non-homogeneity of the Kernel bandwidth, whose effect has to be taken into account on top of the non-homegeneity coming from $\omega(x)$.
    %When it comes to the tail regime, the test statistics involve, in both cases - discrete and continuous - a combination of two test statistics $\psi_1$ (rejecting whenever the total mass of the observations on the tail is substantially different from its expectation under $H_0$) and $\psi_2$ (rejecting $H_0$ whenever one of the tail coordinates is observed at least twice).
\end{itemize}

\hfill

\textbf{Acknowledgements} We would like to thank warmly Alexandre Tsybakov for insightful comments and a careful rereading of the present manuscript, as well as Cristina Butucea and Rajarshi Mukherjee for enlightening discussions and ideas.\\
The work of A. Carpentier is partially supported by the Deutsche Forschungsgemeinschaft (DFG) Emmy Noether grant MuSyAD (CA 1488/1-1), by the DFG - 314838170, GRK 2297 MathCoRe, by the FG DFG
, by the DFG CRC 1294 'Data Assimilation', Project A03, by the Forschungsgruppe 5381 "Mathematische Statistik im Informationszeitalter – Statistische Effizienz und rechentechnische Durchführbarkeit", and by the UFA-DFH through the French-German Doktorandenkolleg CDFA 01-18 and by the SFI Sachsen-Anhalt for the project RE-BCI.

\bibliographystyle{plain}

\appendix

\section{Relations between the cut-offs}\label{relations_btw_cutofss}

We will also use the following notation:
\begin{equation}\label{def_widetilde_T}
    \widetilde{\mathcal{T}}(\uI) = \left\{x \in \Omega: p_0(x) \leq \uI \right\}.
\end{equation}

Note that in the above definition, the inequality $p_0(x) \leq \uI$ is not strict, whereas the inequalities in the definitions of $\T(\uI)$ and $\T(\uA)$ are strict. Furthermore, we define three different lengths $\hT(\uI), \hT(\uA)$ and $\barhT$ as follows:
\begin{equation}\label{def_htail_uI}
    \hT(\uI) = \Big(n^2 L \; \int_{\T(\uI)} \; p_{0}\Big)^{-\frac{1}{\alpha + d}}.
\end{equation}

\begin{equation}\label{def_htail_uA}
    \hT(\uA) = \Big(n^2 L \; \int_{\T(\uA)} \; p_{0}\Big)^{-\frac{1}{\alpha + d}}.
\end{equation}

\begin{equation}\label{def_barhtail_uI}
    \barhT(\uI) = \Big(n^2 L \; \int_{\widetilde{T}(\uI)} \; p_{0}\Big)^{-\frac{1}{\alpha + d}}.\;
\end{equation}
and prove in Subsection \ref{Technical_results_UB_Tail} that they all differ at most by a multiplicative constant. \\

\begin{lemma}\label{uA>uI_then_=}

Let $\widetilde{\mathcal{T}}(\uI)$ be defined as in \eqref{def_widetilde_T}. \begin{itemize}
    \item\label{T(uI)_leq_cI} We have $$\int_{\T(\uI)} \; p_0^2 \leq \frac{\cI}{n^2 \hT^d(\uI)}.$$
    \item Moreover, if $\max\limits_{\Omega} p_0 \geq \uI$, then it holds:
$$ \frac{\int_{\widetilde{\mathcal{T}}(\uI)} \; p_0^2}{\left(\int_{\widetilde{\mathcal{T}}(\uI)} \; p_0\right)^{d/(\alpha+d)}} \geq \cI \frac{L^{d/(\alpha+d)}}{n^{2\alpha/(\alpha+d)}},$$ implying:
$$\int_{\widetilde{\mathcal{T}}(\uI)} \; p_0^2 \geq \frac{\cI}{n^2 \barhT^d}.$$
\end{itemize}

\end{lemma}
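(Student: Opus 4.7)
The plan is to derive both bullets directly from the definition of $\uI$ as the supremum in \eqref{def_uI}, passing the defining inequality to the limit from the \emph{left} for the first bullet and from the \emph{right} for the second. The algebraic content reduces, in each case, to the identity $(n^2 L\, p_0[\T(\uI)])^{d/(\alpha+d)} = \hT(\uI)^{-d}$ (and analogously for $\barhT$), so that
\[
\cI\,\TL^{1/(\alpha+d)}\, (p_0[\T(\uI)])^{d/(\alpha+d)} \;=\; \frac{\cI}{n^2\,\hT(\uI)^d}.
\]
This is just bookkeeping given \eqref{def_htail_uI} and \eqref{def_barhtail_uI}; all the substance lies in the two limiting arguments.

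For the first bullet, I would fix any $u < \uI$; by definition of the supremum, such a $u$ satisfies $p_0^2[\T(u)]/(p_0[\T(u)])^{d/(\alpha+d)} \le \cI\,\TL^{1/(\alpha+d)}$. Since $\T(u)=\{p_0<u\}$ is nondecreasing in $u$ and $\bigcup_{u<\uI}\T(u) = \T(\uI)$ (the \emph{strict} inequality in the definition of $\T$ is essential here), monotone convergence gives $p_0^k[\T(u)]\uparrow p_0^k[\T(\uI)]$ for $k=1,2$ as $u\uparrow \uI$. Passing the inequality to the limit and applying the algebraic identity above yields $\int_{\T(\uI)} p_0^2 \le \cI/(n^2\hT(\uI)^d)$.

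For the second bullet, I would argue symmetrically using the other defining property of the sup: every $u > \uI$ \emph{violates} the condition, so $p_0^2[\T(u)]/(p_0[\T(u)])^{d/(\alpha+d)} > \cI\,\TL^{1/(\alpha+d)}$ for all such $u$. Picking $u_k \downarrow \uI$, the sets $\T(u_k)$ decrease to $\{p_0 \le \uI\} = \widetilde{\mathcal{T}}(\uI)$, and dominated convergence (using $p_0\in L^1(\Omega)$ and $p_0^2 \le u_1\, p_0$ on $\T(u_1)$) gives convergence of both integrals to their values on $\widetilde{\mathcal{T}}(\uI)$. Passing the inequality to the limit yields the ratio bound, and the identity analogous to the one above (with $\barhT$ in place of $\hT(\uI)$) converts it into the stated bound on $\int_{\widetilde{\mathcal{T}}(\uI)} p_0^2$.

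The main (and only) subtle point is a non-degeneracy check: for the limit ratio in the second bullet to be well-defined one needs $p_0[\widetilde{\mathcal{T}}(\uI)]>0$, which is what the hypothesis $\max_\Omega p_0 \ge \uI$ is designed to guarantee---informally, it rules out the degenerate case $\widetilde{\mathcal{T}}(\uI)\cap\mathrm{supp}(p_0)=\emptyset$. One must also be careful to distinguish $\T(\uI)=\{p_0<\uI\}$ from $\widetilde{\mathcal{T}}(\uI)=\{p_0\le \uI\}$ in the two one-sided limits, which is precisely why the two bullets involve different sets and two different bandwidths $\hT(\uI)$ and $\barhT$.
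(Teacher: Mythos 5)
Your route mirrors the paper's own proof: pass to the limit from below for the first bullet and from above for the second, using the defining property of $\uI$ as a supremum, together with the algebraic identity converting the ratio bound into the stated form. The bookkeeping with $\hT(\uI)$ and $\barhT(\uI)$ is correct, and the second bullet (dominated convergence from above plus the sup definition implying failure of the condition for $u>\uI$) is exactly the paper's argument.

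There is, however, one unjustified step in the first bullet. You assert that any $u<\uI$ satisfies the defining inequality ``by definition of the supremum.'' That is not a consequence of the definition of supremum alone: the set
$S=\{u\ge 0: p_0^2[\T(u)]/(p_0[\T(u)])^{d/(\alpha+d)} \le \cI\TL^{1/(\alpha+d)}\}$
could a priori fail to be an interval, in which case some $u<\sup S$ would lie outside $S$. All the supremum gives you for free is a sequence $u_j\uparrow\uI$ with $u_j\in S$, which is precisely what the paper uses. As it happens your stronger claim is true, because the map $u\mapsto p_0^2[\T(u)]/(p_0[\T(u)])^{d/(\alpha+d)}$ is nondecreasing: the increments over $\{u\le p_0<u'\}$ satisfy $\Delta\bigl(p_0^2[\T(\cdot)]\bigr)\ge u\,\Delta\bigl(p_0[\T(\cdot)]\bigr)$ while $p_0^2[\T(u)]\le u\,p_0[\T(u)]$, so the relative increment of the numerator dominates that of the denominator, and raising the denominator to the power $d/(\alpha+d)<1$ only helps. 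But you would need to record this monotonicity (or simply take a sequence, as the paper does) to make the step legitimate. Your reading of the role of $\max_\Omega p_0\ge\uI$ is in the right spirit but slightly off-target: its primary purpose is to ensure that $\uI$ is a genuine (finite) threshold so that for $u$ just above $\uI$ the defining inequality fails, rather than to guarantee positivity of $p_0[\widetilde{\T}(\uI)]$ per se.
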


\begin{proof}[Proof of Lemma \ref{uA>uI_then_=}]
By definition of $\uI$, %if $\frac{\int_{\mathcal{T}(\uI)} \; p_0^2}{\left(\int_{\mathcal{T}(\uI)} \; p_0\right)^{d/(\alpha+d)}} \leq \cI \TL^\frac{1}{\alpha+d}$ then the result is clear, otherwise 
there exists a sequence $(u_j)_{j \in \N}$ such that $u_j \uparrow \uI$ and $\frac{\int_{\mathcal{T}(u_j)} \; p_0^2}{\left(\int_{\mathcal{T}(u_j)} \; p_0\right)^{d/(\alpha+d)}} \leq \cI \TL^\frac{1}{\alpha+d}$. The dominated convergence theorem yields the result.

For the second part, when $u \downarrow \uI$, we have by the dominated convergence theorem that $\int_{\mathcal{T}(u)} \; p_0^2 \to \int_{\widetilde{\mathcal{T}}(\uI)} \; p_0^2$ and $\int_{\mathcal{T}(u)} \; p_0 \to \int_{\widetilde{\mathcal{T}}(\uI)} \; p_0$ so that $\frac{\int_{\mathcal{T}(u)} \; p_0^2}{\left(\int_{\mathcal{T}(u)} \; p_0\right)^{d/(\alpha+d)}}  \longrightarrow \frac{\int_{\widetilde{\mathcal{T}}(\uI)} \; p_0^2}{\left(\int_{\widetilde{\mathcal{T}}(\uI)} \; p_0\right)^{d/(\alpha+d)}} $. Moreover, by definition of $\uI$, for all $u > \uI$ we have $\frac{\int_{\mathcal{T}(u)} \; p_0^2}{\left(\int_{\mathcal{T}(u)} \; p_0\right)^{d/(\alpha+d)}} > \cI \frac{L^{d/(\alpha+d)}}{n^{2\alpha/(\alpha+d)}}$ (since $\uI \leq \max\limits_\Omega p_0$), which yields the result. 
\end{proof}

We now define 
\begin{equation}\label{def_rho_bar_uI}
    \large{\overline{ \rhot} =   \left[\TL^\frac{t-1}{\alpha+d}\frac{\left(\int_{ \T(\uA)}p_{0}\right)^\frac{(2-t)\alpha+d}{\alpha}}{\left(\int_{\widetilde \T(\uI)}p_{0}\right)^{\frac{(2-t)\alpha+d}{\alpha }\frac{d}{\alpha + d} }}\right]^{1/t} .}
\end{equation}

\begin{lemma}\label{uA>uI_then_tail}
If $\uA > \uI$ and $\max\limits_{\Omega} p_0 \geq \uI$ then $\overline{ \rhot} \geq \CL{\ref{uA>uI_then_tail}} \; \rhob$ where $\CL{\ref{uA>uI_then_tail}} = \, \cI^\frac{(2-t)\alpha + d}{\alpha t}\,\cA^{-\frac{(4-t)\alpha + d}{\alpha t}}$.
\end{lemma}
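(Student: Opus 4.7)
The proof is a direct calculation combining the second part of Lemma \ref{uA>uI_then_=} with the explicit definition of $\uA$ in the case $\uA>\uI$. There is no conceptual difficulty; the challenge is simply to keep track of the exponents and to verify that they collapse to match the exponents appearing in $\rhob$.

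The plan is as follows. First, since $\uA>\uI$, any $x$ with $p_0(x)\le\uI$ satisfies $p_0(x)<\uA$, so $\widetilde{\mathcal T}(\uI)\subset\mathcal T(\uA)$ and in particular $\int_{\mathcal T(\uA)}p_0\ge \int_{\widetilde{\mathcal T}(\uI)}p_0$. Substituting this into the definition \eqref{def_rho_bar_uI} of $\overline{\rhot}$, the exponent on the numerator and denominator combine as $\tfrac{(2-t)\alpha+d}{\alpha}\bigl(1-\tfrac{d}{\alpha+d}\bigr)=\tfrac{(2-t)\alpha+d}{\alpha+d}$, yielding
\[
\overline{\rhot}^{\,t}\;\ge\;\TL^{\frac{t-1}{\alpha+d}}\Bigl(\textstyle\int_{\widetilde{\mathcal T}(\uI)}p_0\Bigr)^{\frac{(2-t)\alpha+d}{\alpha+d}}.
\]

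Next, I use the second bullet of Lemma \ref{uA>uI_then_=} (whose hypothesis $\max_\Omega p_0\ge\uI$ is provided by assumption), which reads
\[
\int_{\widetilde{\mathcal T}(\uI)}p_0^{\,2}\;\ge\;\cI\,\TL^{1/(\alpha+d)}\Bigl(\textstyle\int_{\widetilde{\mathcal T}(\uI)}p_0\Bigr)^{d/(\alpha+d)}.
\]
Since $p_0\le\uI$ on $\widetilde{\mathcal T}(\uI)$, the left-hand side is at most $\uI\int_{\widetilde{\mathcal T}(\uI)}p_0$, so rearranging gives the lower bound $\int_{\widetilde{\mathcal T}(\uI)}p_0\ge \cI^{(\alpha+d)/\alpha}\TL^{1/\alpha}\uI^{-(\alpha+d)/\alpha}$. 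Plugging this into the previous display, the exponent of $\TL$ becomes $\tfrac{t-1}{\alpha+d}+\tfrac{(2-t)\alpha+d}{\alpha(\alpha+d)}=\tfrac{1}{\alpha}$ after combining, and one obtains
\[
\overline{\rhot}^{\,t}\;\ge\;\cI^{\frac{(2-t)\alpha+d}{\alpha}}\,\TL^{1/\alpha}\,\uI^{-\frac{(2-t)\alpha+d}{\alpha}}.
\]

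Finally, I invoke the definition \eqref{def_uA} of $\uA$: in the regime $\uA>\uI$, the second term in the maximum is strictly greater than $\uI$, which gives the upper bound $\uI\le\bigl[\cA L^{d/(4\alpha+d)}(n^2\I)^{-\alpha/(4\alpha+d)}\bigr]^{((4-t)\alpha+d)/((2-t)\alpha+d)}$. Substituting this into the lower bound on $\overline{\rhot}^{\,t}$, the exponent of $\uI^{-((2-t)\alpha+d)/\alpha}$ cancels the fractional exponent and leaves
\[
\overline{\rhot}^{\,t}\;\ge\;\cI^{\frac{(2-t)\alpha+d}{\alpha}}\cA^{-\frac{(4-t)\alpha+d}{\alpha}}\,\TL^{1/\alpha}\,\Bigl[\cA L^{\frac{d}{4\alpha+d}}(n^2\I)^{-\frac{\alpha}{4\alpha+d}}\Bigr]^{-\frac{(4-t)\alpha+d}{\alpha}}\!\!\bigg/\cA^{-\frac{(4-t)\alpha+d}{\alpha}},
\]
which after collecting the powers of $L$ and $n$ (using $\TL^{1/\alpha}=L^{d/\alpha}n^{-2}$) reduces exactly to $\cI^{\frac{(2-t)\alpha+d}{\alpha}}\cA^{-\frac{(4-t)\alpha+d}{\alpha}}\rhob^{\,t}$, since $\rhob^{\,t}=L^{dt/(4\alpha+d)}n^{-2\alpha t/(4\alpha+d)}\I^{((4-t)\alpha+d)/(4\alpha+d)}$. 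Taking $t$-th roots yields the claimed constant $\CL{\ref{uA>uI_then_tail}}=\cI^{\frac{(2-t)\alpha+d}{\alpha t}}\cA^{-\frac{(4-t)\alpha+d}{\alpha t}}$.

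The only subtle point is the bookkeeping: one must verify that the two exponents involving $\TL$ combine to $1/\alpha$ and that, after using the definition of $\uA$, the residual exponents of $L$, $n$ and $\I$ reassemble precisely into $\rhob^{\,t}$. Everything else is a direct application of the hypotheses and of Lemma \ref{uA>uI_then_=}.
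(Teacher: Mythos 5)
Your proof is correct and follows the same chain of estimates as the paper's: (i) $\widetilde{\mathcal T}(\uI)\subset\mathcal T(\uA)$ since $\uI<\uA$, (ii) the lower bound on $\int_{\widetilde{\mathcal T}(\uI)}p_0^2$ from the second part of Lemma~\ref{uA>uI_then_=}, (iii) the trivial bound $p_0\le\uI$ on $\widetilde{\mathcal T}(\uI)$, and (iv) the explicit form of $\uA$ in the regime $\uA>\uI$. The only stylistic difference is that you substitute the explicit formula for $\uI$'s upper bound (namely $\uA$) at the point where you have $\uI^{-(\alpha+d)/\alpha}$, whereas the paper carries $\uA$ symbolically in the inequality $\uA\int_{\T(\uA)}p_0\ge \cI\,[\TL(\int_{\widetilde{\mathcal T}(\uI)}p_0)^d]^{1/(\alpha+d)}$ and then raises directly to the power $\frac{(2-t)\alpha+d}{\alpha t}$; the two bookkeeping routes are algebraically equivalent and yield the identical constant. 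The middle display in your write-up is awkwardly typeset (the standalone $\cA^{-((4-t)\alpha+d)/\alpha}$ factor and the trailing $/\cA^{-((4-t)\alpha+d)/\alpha}$ cancel, which is harmless but confusing), but the arithmetic underneath is right.
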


\begin{proof}[Proof of Lemma \ref{uA>uI_then_tail}]

If $\uA > \uI$ then by definition of $\uA$ given in \eqref{def_uA}, we have $$\uA = \left[\cA \frac{ L^\frac{d}{4\alpha + d}}{ \left(n^2 \I \right)^\frac{\alpha}{4\alpha + d}}\right]^\frac{(4-t)\alpha +d}{(2-t)\alpha +d}.$$
By Lemma \ref{uA>uI_then_=}, we have: 
\begin{equation}\label{p_0_squared}
    \uA \int_{\widetilde{\mathcal{T}}(\uI)} p_0 \geq \int_{\widetilde{\mathcal{T}}(\uI)} p_0^2 \geq \frac{\cI}{n^2\barhT^d} = \cI \left[\TL \Big(\int_{\widetilde{\mathcal{T}}(\uI)} p_0\Big)^d\right]^\frac{1}{\alpha + d}.
\end{equation}
Therefore:

\vspace{-10mm}

\begin{align*}
     \uA \; \int_{\T(\uA)} p_0 \geq \cI \left[\TL \Big(\int_{\widetilde \T(\uI)} p_0\Big)^d\right]^\frac{1}{\alpha + d}.
\end{align*}
Raising this relation to the power $\frac{(2-t)\alpha + d}{\alpha t}$ and recalling the expressions of $\rhob$ and $\overline{ \rhot}$, we get $\overline{ \rhot} \; \geq \; \CL{\ref{uA>uI_then_tail}} \, \rhob$. 
\end{proof}

\begin{lemma}\label{tail_dominates}
Whenever $\uA>\uI$ and $\max\limits_{\Omega} p_0 \geq \uI$, we have 
$$ \I \uA^{2-r} \leq \frac{\CL{\ref{tail_dominates}}}{n^2} \frac{\barhT^{{d^2/\alpha}}}{\hT^{d(\alpha+d)/\alpha}},$$
where the constant $\CL{\ref{tail_dominates}} = \cA^{2\frac{(4-2t)\alpha + d}{(2-t)\alpha + d}}\CL{\ref{uA>uI_then_tail}}^{-\frac{td}{(2-t)\alpha + d}}$ can be made arbitrarily small by taking $\cA$ small enough.
\end{lemma}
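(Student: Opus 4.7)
The strategy is to (i) simplify the left-hand side $\I\uA^{2-r}$ using the defining equation of $\uA$, (ii) reformulate the target as a lower bound on $\uA\, p_0[\T(\uA)]$, and (iii) extract this lower bound from Lemma \ref{uA>uI_then_tail}. To keep the exponent algebra manageable, write $a=(4-t)\alpha+d$, $b=(2-t)\alpha+d$, $c=4\alpha+d$, $e=(4-2t)\alpha+d$; the two identities $cb-ad=2\alpha e$ and $a+t\alpha=c$ will be the engine of the proof.

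Because $\uA>\uI$, the definition \eqref{def_uA} gives $\uA^{b/a}=\cA\,\TL^{1/c}\I^{-\alpha/c}$. Solving this for $\I$ and multiplying by $\uA^{2-r}=\uA^{2e/a}$, the identity $cb-ad=2\alpha e$ causes the exponent of $\uA$ to collapse to exactly $-d/\alpha$, producing the clean identity
\begin{equation*}
\I\,\uA^{2-r} \;=\; \cA^{c/\alpha}\,\frac{L^{d/\alpha}}{n^{2}}\,\uA^{-d/\alpha}.
\end{equation*}

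Next, introducing $P_1=p_0[\widetilde\T(\uI)]$ and $P_2=p_0[\T(\uA)]$, the definitions \eqref{def_htail_uA} and \eqref{def_barhtail_uI} give $\hT^{\alpha+d}=(n^{2}LP_2)^{-1}$ and $\barhT^d=(n^{2}LP_1)^{-d/(\alpha+d)}$. Substituting these and the identity above into the target inequality and raising to the $\alpha/d$-th power, the statement reduces after direct algebraic simplification to
\begin{equation*}
\uA P_2 \;\geq\; \cA^{c/d}\,\CL{\ref{tail_dominates}}^{-\alpha/d}\,\TL^{1/(\alpha+d)}\,P_1^{d/(\alpha+d)}.
\end{equation*}

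This lower bound is exactly what Lemma \ref{uA>uI_then_tail} supplies. Expanding $\overline{\rhot}^t$ via \eqref{def_rho_bar_uI} in terms of $P_1, P_2, \TL$, and raising the inequality $\overline{\rhot}\geq \CL{\ref{uA>uI_then_tail}}\rhob$ to the $\alpha/b$-th power yields a lower bound on $P_2$. Multiplying by $\uA$ triggers the crucial cancellation: using $\rhob^t=\TL^{t/c}\I^{a/c}$, the explicit form of $\uA$ derived above, and the identity $a+t\alpha=c$, one verifies that $\uA\,\rhob^{t\alpha/b}=\cA^{a/b}\,\TL^{1/b}$, a quantity that no longer depends on $\I$. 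Combining with the residual $\TL$-factor, whose exponent simplifies to $1/(\alpha+d)$, we obtain $\uA P_2\geq \CL{\ref{uA>uI_then_tail}}^{t\alpha/b}\,\cA^{a/b}\,\TL^{1/(\alpha+d)}\,P_1^{d/(\alpha+d)}$. Matching this lower bound with the reformulated target from the previous step pins down $\CL{\ref{tail_dominates}}$, and the total exponent of $\cA$ in the resulting constant is strictly positive, which confirms that $\CL{\ref{tail_dominates}}$ can be made arbitrarily small by choosing $\cA$ small enough.

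The main obstacle is exponent bookkeeping: tracking how $\cA$, $\TL$, $\I$, $P_1$, $P_2$ and $\uA$ combine under the substitutions above, and verifying the two key identities $cb-ad=2\alpha e$ and $a+t\alpha=c$ which are responsible for the two collapses (the exponent of $\uA$ in the first step, and the $\I$-dependence in the final step). No conceptually new tool beyond Lemma \ref{uA>uI_then_tail} and the basic relations between $\uI$, $\uA$, $\hT$, $\barhT$ is required.
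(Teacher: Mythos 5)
Your proof is correct and takes essentially the same route as the paper's: both substitute the explicit formula $\uA = \big[\cA\,\TL^{1/(4\alpha+d)}\I^{-\alpha/(4\alpha+d)}\big]^{\frac{(4-t)\alpha+d}{(2-t)\alpha+d}}$ (valid because $\uA>\uI$) into $\I\uA^{2-r}$ and then invoke Lemma~\ref{uA>uI_then_tail}. The paper goes directly: it rewrites $\I\uA^{2-r}$ in terms of $\TL$ and $\I$ (their \eqref{IuA}) and bounds the $\I$-power by raising $\CL{\ref{uA>uI_then_tail}}\rhob\leq\overline{\rhot}$ to the power $\frac{td}{(2-t)\alpha+d}$. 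You take a slightly longer but equivalent detour: you first reduce to the identity $\I\uA^{2-r}=\cA^{c/\alpha}\frac{L^{d/\alpha}}{n^2}\uA^{-d/\alpha}$ (with $c=4\alpha+d$), translate the target into a lower bound on $\uA\,p_0[\T(\uA)]$, and then extract that bound from Lemma~\ref{uA>uI_then_tail} via the cancellation $\uA\,\rhob^{t\alpha/b}=\cA^{a/b}\,\TL^{1/b}$. The two key identities you use are correct, and your verification that the $\TL$-exponent collapses to $\frac{1}{\alpha+d}$ matches what the paper needs.

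One discrepancy worth flagging: your derivation yields $\CL{\ref{tail_dominates}}=\cA^{\frac{2((4-2t)\alpha+d)}{(2-t)\alpha+d}}\CL{\ref{uA>uI_then_tail}}^{-\frac{td}{(2-t)\alpha+d}}$, i.e., $\cA$-exponent $\frac{2e}{b}$ in your shorthand, whereas the paper's statement (and its \eqref{IuA}) has the extra factor $\frac{1}{4\alpha+d}$ in that exponent, i.e., $\frac{2e}{cb}$. Substituting $\uA=Q^{a/b}$ with $Q=\cA\,\TL^{1/c}\I^{-\alpha/c}$ into $\uA^{2-r}=Q^{2e/b}$ yields $\cA^{2e/b}$ directly, so your exponent appears to be the correct one and the paper likely has a typographical slip propagated from \eqref{IuA} into the lemma's statement. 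Since both exponents are strictly positive, and since after expanding $\CL{\ref{uA>uI_then_tail}}=\cI^{b/(t\alpha)}\cA^{-a/(t\alpha)}$ the total $\cA$-power of $\CL{\ref{tail_dominates}}$ is still positive (equal to $c/\alpha$), the lemma's conclusion --- that the constant can be made arbitrarily small by taking $\cA$ small --- holds under either version, so nothing downstream is affected. You could strengthen your last remark by noting explicitly that this positivity survives the substitution of $\CL{\ref{uA>uI_then_tail}}$'s own $\cA$-dependence, since you assert it without that step.
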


\begin{proof}[Proof of Lemma \ref{tail_dominates}]

We have $2-r = 2 \frac{(4-2t)\alpha + d}{(4-t)\alpha + d}$, so that
\begin{equation}\label{IuA}
     \large{\I \uA^{2-r} = \cA^{2\frac{(4-2t)\alpha + d}{(2-t)\alpha + d}} \TL^{\;\frac{2}{4\alpha + d}\frac{(4-2t)\alpha + d}{(2-t)\alpha + d}} \; \I^{\;\frac{d}{(2-t)\alpha+d}\frac{(4-t)\alpha + d}{4\alpha + d}}.}
\end{equation}

On the other hand, by Lemma \ref{uA>uI_then_=}:
\begin{equation}\label{2ndMoment}
    \int_{\widetilde{\mathcal{T}}(\uI)} p_0^2 \geq \frac{\cI}{n^2\barhT^d} = \cI \left[\TL \Big(\int_{\widetilde \T(\uI)} p_0\Big)^d\right]^\frac{1}{\alpha + d}.
\end{equation}

Moreover,  when $\uA > \uI$, we have by Lemma \ref{uA>uI_then_tail}, $\CL{\ref{uA>uI_then_tail}} \rhob \leq \overline{ \rhot}(\uI)$. We now raise this relation to the power $\frac{td}{(2-t)\alpha + d}$:
\begin{align}
     \large{\TL^{\;\frac{2}{4\alpha + d}\frac{(4-2t)\alpha + d}{(2-t)\alpha + d}} \; \I^{\;\frac{d}{(2-t)\alpha+d}\frac{(4-t)\alpha + d}{4\alpha + d}}} \leq \frac{\TL^\frac{1}{\alpha + d}}{\CL{\ref{uA>uI_then_tail}}^\frac{td}{(2-t)\alpha + d}}  \frac{ \left( \int_{ \T(\uA)} p_0\right)^\frac{d}{\alpha}}{\left(\int_{\widetilde \T(\uI)} p_0\right)^{\frac{d}{\alpha} \frac{d}{\alpha + d}}} = \frac{1}{\CL{\ref{uA>uI_then_tail}}^\frac{td}{(2-t)\alpha + d}} \,\frac{1}{n^2} \frac{\barhT^{{d^2/\alpha}}}{\hT^{d(\alpha+d)/\alpha}}. \label{rhob_leq_rhot}
\end{align}

Equations \eqref{IuA} and \eqref{rhob_leq_rhot} yield the result.
\end{proof}

\begin{lemma}\label{before_int_p0squared_uA_small}
Set $\Cmom = \cI + \CL{\ref{tail_dominates}}$. The constant $\Cmom$ can be made arbitrarily small by choosing successively $\cI$ and $\cA$ small enough. This can be done by taking $\cI^{\alpha+d} = \cA^{4\alpha+d}$. If $\uA > \uI$ and $\max\limits_{\Omega} p_0 \geq \uI$, then it holds: $$ \int_{\T(\uA)} p_0^2 \leq \frac{\Cmom}{n^2 \overline{\hT}^d}. $$ 
\end{lemma}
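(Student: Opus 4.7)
The plan is to split the integration domain into the two natural subregions $\T(\uI)$ (where $p_0 < \uI$) and the intermediate annular region $\T(\uA) \setminus \T(\uI) = \{x \in \Omega : \uI \leq p_0(x) < \uA\}$, and to handle each piece using the corresponding preceding lemma.

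On $\T(\uI)$, I would invoke Lemma~\ref{uA>uI_then_=} directly to get $\int_{\T(\uI)} p_0^2 \leq \cI/(n^2\,\hT(\uI)^d)$. The inclusion $\T(\uI) \subset \widetilde{\mathcal{T}}(\uI)$ gives $p_0[\T(\uI)] \leq p_0[\widetilde{\mathcal{T}}(\uI)]$, hence $\hT(\uI) \geq \barhT$ by the definitions \eqref{def_htail_uI} and \eqref{def_barhtail_uI}, so this first piece is bounded by $\cI/(n^2\,\barhT^d)$ without further work.

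On the annular region, the two-sided control $\uI \leq p_0 < \uA$ is the key. Since $r = 2\alpha t/((4-t)\alpha + d) \leq 2$ for every $t \in [1,2]$, the pointwise inequality $p_0^2 = p_0^{2-r}\,p_0^r \leq \uA^{2-r}\, p_0^r$ holds on this set; moreover $\T(\uA) \setminus \T(\uI) \subset \B(\uI)$ since $p_0 \geq \uI$ there, so $\int_{\T(\uA)\setminus\T(\uI)} p_0^r \leq \I$. This yields $\int_{\T(\uA)\setminus\T(\uI)} p_0^2 \leq \uA^{2-r}\,\I$, and Lemma~\ref{tail_dominates} then bounds this by $(\CL{\ref{tail_dominates}}/n^2)\,\barhT^{d^2/\alpha}/\hT^{d(\alpha+d)/\alpha}$. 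Appealing to the comparability $\hT \asymp \barhT$ promised in Subsection~\ref{Technical_results_UB_Tail} collapses this to a quantity of order $\CL{\ref{tail_dominates}}/(n^2\,\barhT^d)$; any universal multiplicative slack introduced by the comparison is absorbed because $\CL{\ref{tail_dominates}}$ can itself be driven to $0$ by shrinking $\cA$.

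Summing the two contributions then produces $\int_{\T(\uA)} p_0^2 \leq (\cI + \CL{\ref{tail_dominates}})/(n^2\,\barhT^d) = \Cmom/(n^2\,\barhT^d)$, and the prescription $\cI^{\alpha+d} = \cA^{4\alpha+d}$ makes both pieces shrink jointly as $\cA \to 0$. The decomposition is entirely natural and the main obstacle is not the present argument but rather the cross-control of the three bandwidth scales $\hT(\uI)$, $\hT$ and $\barhT$: without the comparability $\hT \asymp \barhT$, the bound from Lemma~\ref{tail_dominates} is only of order $1/\hT^d$ (which is larger than $1/\barhT^d$ since $\barhT \geq \hT$). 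Once that bandwidth comparison is available from Subsection~\ref{Technical_results_UB_Tail}, the rest of the argument reduces to the clean pair of estimates outlined above.
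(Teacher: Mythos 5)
Your decomposition into $\T(\uI)$ and $\T(\uA)\setminus\T(\uI)=\B(\uI)\cap\T(\uA)$ is exactly the paper's, and the first piece is handled correctly: $\hT(\uI)\geq\barhT$ does hold by pure monotonicity of the defining integrals, yielding $\int_{\T(\uI)}p_0^2\leq\cI/(n^2\barhT^d)$. The problem is in the second piece. The comparability $\hT\asymp\barhT$ you invoke to collapse $\barhT^{d^2/\alpha}/\hT^{d(\alpha+d)/\alpha}$ down to $\barhT^{-d}$ is Lemma~\ref{htail_all_equal}, and the proof of that lemma opens by citing Lemma~\ref{before_int_p0squared_uA_small} --- the very statement you are trying to prove. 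Your argument is therefore circular. The half of the comparison that is free, namely $\hT(\uA)\leq\barhT$ (because $\widetilde{\T}(\uI)\subset\T(\uA)$), is the useless direction here: it leaves $\barhT^{d^2/\alpha}/\hT(\uA)^{d(\alpha+d)/\alpha}$ a priori unbounded. The reverse bound $\barhT\lesssim\hT(\uA)$ is exactly what depends on the lemma at hand.

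The fix is to avoid $\hT(\uA)$ entirely in the second piece. The proof of Lemma~\ref{uA>uI_then_tail} passes from the sharp inequality $\uA\int_{\widetilde{\T}(\uI)}p_0\geq\cI\big[\TL\big(\int_{\widetilde{\T}(\uI)}p_0\big)^d\big]^{1/(\alpha+d)}$, i.e.\ $\uA\geq\cI L\barhT^\alpha$, to a weaker one by replacing $\int_{\widetilde{\T}(\uI)}p_0$ with $\int_{\T(\uA)}p_0$; it is precisely that weakening which puts $\hT(\uA)$ into the displayed form of Lemma~\ref{tail_dominates}. Keeping the sharp version instead and plugging it into the algebraic identity $\I\,\uA^{2-r}=n^{-2}\cA^{(4\alpha+d)/\alpha}L^{d/\alpha}\uA^{-d/\alpha}$ (valid whenever $\uA>\uI$) gives directly $\I\,\uA^{2-r}\leq\cA^{(4\alpha+d)/\alpha}\cI^{-d/\alpha}/(n^2\barhT^d)$, with no recourse to $\hT(\uA)$ or to Lemma~\ref{htail_all_equal}. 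Equivalently --- and this is what the paper's own proof line does --- the denominator in the applied form of Lemma~\ref{tail_dominates} should be $\hT(\uI)$, not $\hT(\uA)$, after which the always-true monotonicity $\hT(\uI)\geq\barhT$ (since $\T(\uI)\subset\widetilde{\T}(\uI)$) makes $\barhT^{d^2/\alpha}/\hT(\uI)^{d(\alpha+d)/\alpha}\leq\barhT^{-d}$ immediate. Replace the circular comparability appeal by one of these two monotone facts and your decomposition goes through.
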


\begin{proof}[Proof of lemma \ref{before_int_p0squared_uA_small}]

Suppose that $\uA > \uI$. 
\begin{align}
    \int_{\T(\uA)} p_0^2 &= \int_{\T(\uI)} p_0^2 + \int_{\B(\uI) \cap \T(\uA)} p_0^2 \leq \frac{\cI}{n^2h_{tail}^2(\uI)} + \I \uA^{2-r} \nonumber \\
    %& = \frac{\cI}{n^2h_{tail}^2} + \cA^{2\frac{(4-2t)\alpha + d}{(2-t)\alpha + d}} \TL^{\;\frac{2}{4\alpha + d}\frac{(4-2t)\alpha + d}{(2-t)\alpha + d}} \; \I^{\;\frac{d}{(2-t)\alpha+d}\frac{(4-t)\alpha + d}{4\alpha + d}}\nonumber \\
    & \leq \frac{\cI}{n^2h_{tail}^2(\uI)} +  \frac{\CL{\ref{tail_dominates}}}{n^2}\, \frac{\barhT^{{d^2/\alpha}}}{\hT^{d(\alpha+d)/\alpha}(\uI)} \text{ by Lemma \ref{tail_dominates}} \\
    & \leq \Cmom \frac{1}{n^2 \barhT^d}.\label{Finlemme}
\end{align}
%Therefore, no matter whether $\uA = \uI$ or $\uA > \uI$, we always have 
%$$ \int_{\T(\uA)} p_0^2 \leq \frac{\Cmom}{n^2\hT^d(\uA)}. $$ 

\end{proof}

\begin{lemma}\label{int_p0squared_uA_small}
   Regardless of whether $\uI > \max\limits_{\Omega} p_0$ or not, and regardless of whether $\uI=\uA$ or not, it always holds :
$$\int_{\T(\uA)} p_0^2 \leq \frac{\Cmom}{n^2\hT^d(\uA)}.$$
\end{lemma}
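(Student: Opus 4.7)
My plan is to reduce the statement to Lemma \ref{before_int_p0squared_uA_small} (which already does the hard work) by a short case analysis. Lemma \ref{before_int_p0squared_uA_small} gives the desired type of bound but only under the hypotheses $\uA > \uI$ and $\max_\Omega p_0 \geq \uI$, and moreover produces the bound in terms of $\barhT$ rather than $\hT(\uA)$. So I need to (i) convert $\barhT$ to $\hT(\uA)$ in that main case, and (ii) treat the two remaining configurations, namely $\uA = \uI$, and $\uA > \uI$ with $\max_\Omega p_0 < \uI$.

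For the main case ($\uA > \uI$ and $\max_\Omega p_0 \geq \uI$), I invoke Lemma \ref{before_int_p0squared_uA_small} to get $\int_{\T(\uA)} p_0^2 \leq \Cmom /(n^2 \barhT^d)$ and then compare $\barhT$ with $\hT(\uA)$. Since $\uA > \uI$, any $x$ satisfying $p_0(x) \leq \uI$ also satisfies $p_0(x) < \uA$, so $\widetilde \T(\uI) \subset \T(\uA)$ and consequently $\int_{\widetilde \T(\uI)} p_0 \leq \int_{\T(\uA)} p_0$. Plugging this into the definitions \eqref{def_htail_uA} and \eqref{def_barhtail_uI} flips the inequality to $\barhT \geq \hT(\uA)$, i.e.\ $1/\barhT^d \leq 1/\hT^d(\uA)$, which is exactly the replacement needed.

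For the edge cases, the argument is even simpler. If $\uA = \uI$, then $\T(\uA) = \T(\uI)$ and $\hT(\uA) = \hT(\uI)$; the first part of Lemma \ref{uA>uI_then_=} then gives $\int_{\T(\uA)} p_0^2 = \int_{\T(\uI)} p_0^2 \leq \cI/(n^2 \hT^d(\uA))$, and since $\cI \leq \Cmom$ by definition of $\Cmom$, we are done. If instead $\uA > \uI$ but $\max_\Omega p_0 < \uI$, then $p_0(x) < \uI < \uA$ everywhere on $\Omega$, so $\T(\uI) = \T(\uA) = \Omega$ and $\hT(\uI) = \hT(\uA)$; again the first part of Lemma \ref{uA>uI_then_=} yields the result.

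I do not foresee any real obstacle: this is a clean-up lemma whose role is simply to make the conclusion of Lemma \ref{before_int_p0squared_uA_small} unconditional by absorbing the degenerate configurations and rewriting the bandwidth. The only nontrivial ingredient is the set inclusion $\widetilde \T(\uI) \subset \T(\uA)$ when $\uA > \uI$, which comes for free from the strict inequality in the definitions of $\widetilde \T(\uI)$ and $\T(\uA)$.
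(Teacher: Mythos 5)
Your proof is correct and follows essentially the same route as the paper: it reduces to Lemma~\ref{before_int_p0squared_uA_small} via the inequality $\hT(\uA) \leq \barhT(\uI)$ (which you correctly derive from the inclusion $\widetilde\T(\uI) \subset \T(\uA)$), and disposes of the two degenerate configurations ($\uA = \uI$, and $\max_\Omega p_0 < \uI$) by noting that the relevant tail sets and bandwidths coincide so that item 1 of Lemma~\ref{uA>uI_then_=} applies directly with $\cI \leq \Cmom$.
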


\begin{proof}[Proof of Lemma \ref{int_p0squared_uA_small}]
If $\uI = \uA$ then $\hT(\uI) = \hT(\uA)$. Moreover, by definition of $\uI$, we have: $\int_{\B(\uI)}p_0^2 \leq \frac{\cI}{n^2h^d}$ so the result holds by recalling $\Cmom = \cI + \CL{\ref{tail_dominates}}$. Now If $\uA > \uI > \max\limits_{\Omega} p_0$, then $\hT(\uA) \leq \barhT(\uI)$, so the result holds as well. Finally, if $\uI > \max\limits_{\Omega} p_0$, then $\T(\uI) = \T(\uA)$ so the result holds as well by Lemma \ref{uA>uI_then_=} item \ref{T(uI)_leq_cI}.
\end{proof}

%\ju{Simplifier les preuves précédentes.}

We now show that $\hT(\uA) \asymp \barhT(\uI) $ when $\uA > \uI$.

\begin{lemma}\label{htail_all_equal}
If $\uA > \uI$ then $\CL{\ref{htail_all_equal}}\hT(\uA) \geq  \barhT(\uI) $ where $\hT(\uA), \; \barhT(\uI)$ are defined in \eqref{def_htail_uA}, \eqref{def_barhtail_uI} and $\CL{\ref{htail_all_equal}}$ is a constant. Hence it always holds $\hT(\uA) \asymp \barhT(\uI)$.
\end{lemma}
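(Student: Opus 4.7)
The plan is to reduce the claim to a direct comparison of $L^1$-masses. Unfolding the definitions \eqref{def_htail_uA} and \eqref{def_barhtail_uI}, an inequality of the form $\barhT(\uI)\leq C\,\hT(\uA)$ is equivalent to $\int_{\T(\uA)} p_0 \leq C^{\alpha+d}\int_{\widetilde\T(\uI)} p_0$, so it suffices to show that these two integrals differ only by a multiplicative constant. One direction is free: since $\uI<\uA$, one has $\widetilde\T(\uI)=\{p_0\leq \uI\}\subset\{p_0<\uA\}=\T(\uA)$, hence $\int_{\widetilde\T(\uI)} p_0\leq \int_{\T(\uA)} p_0$ and therefore $\barhT(\uI)\geq \hT(\uA)$. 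All the work is in the reverse inequality.

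For the reverse, I would decompose
\[
    \int_{\T(\uA)} p_0 = \int_{\widetilde\T(\uI)} p_0 + \int_{\{\uI<p_0<\uA\}} p_0,
\]
and control the second piece. On $\{p_0>\uI\}$ one has $1\leq p_0/\uI$, so
\[
    \int_{\{\uI<p_0<\uA\}} p_0 \leq \frac{1}{\uI}\int_{\T(\uA)} p_0^2 \leq \frac{\Cmom}{\uI\, n^2\,\barhT(\uI)^d},
\]
using Lemma \ref{before_int_p0squared_uA_small} in the last step.

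Next I would derive a lower bound on $\uI$ in terms of $\barhT(\uI)$. Because $p_0\leq \uI$ on $\widetilde\T(\uI)$, we get $\int_{\widetilde\T(\uI)} p_0^2 \leq \uI\int_{\widetilde\T(\uI)} p_0 = \uI/(n^2 L\,\barhT(\uI)^{\alpha+d})$; combined with the lower bound $\int_{\widetilde\T(\uI)}p_0^2\geq \cI/(n^2\,\barhT(\uI)^d)$ from Lemma \ref{uA>uI_then_=} this rearranges to
\[
    \uI \geq \cI\, L\, \barhT(\uI)^\alpha.
\]
Plugging this into the previous display yields
\[
    \int_{\{\uI<p_0<\uA\}} p_0 \leq \frac{\Cmom}{\cI}\cdot\frac{1}{n^2 L\,\barhT(\uI)^{\alpha+d}} = \frac{\Cmom}{\cI}\int_{\widetilde\T(\uI)} p_0,
\]
so $\int_{\T(\uA)}p_0 \leq (1+\Cmom/\cI)\int_{\widetilde\T(\uI)}p_0$. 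Taking $(\alpha+d)$-th roots of the reciprocals gives the lemma with $\CL{\ref{htail_all_equal}}=(1+\Cmom/\cI)^{1/(\alpha+d)}$.

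The main potential obstacle is the implicit requirement $\max_\Omega p_0\geq \uI$ when invoking Lemma \ref{uA>uI_then_=}; this is free once $\uA>\uI$ (otherwise $\T(\uI)=\T(\uA)$ and $\uI$ saturates trivially). Everything else is an algebraic consequence of pairing the upper bound on $\int_{\T(\uA)}p_0^2$ from Lemma \ref{before_int_p0squared_uA_small} with the lower bound $\uI\gtrsim L\,\barhT(\uI)^\alpha$ extracted from Lemma \ref{uA>uI_then_=}, and I expect no further subtlety.
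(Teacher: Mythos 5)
Your proof is correct. It takes a somewhat more circuitous route than the paper's, although both arguments rest on the same two lemmas. You decompose the first moment $\int_{\T(\uA)}p_0$ and bound the gap $\int_{\{\uI<p_0<\uA\}}p_0$ by passing through the second-moment bound of Lemma~\ref{before_int_p0squared_uA_small}, combined with a scaling relation $\uI\geq \cI\,L\,\barhT(\uI)^\alpha$ that you extract from Lemma~\ref{uA>uI_then_=}. The paper's proof instead sandwiches the second moment directly,
\[
\frac{\cI}{n^2\,\hT(\uA)^d}\;\leq\;\int_{\T(\uA)}p_0^2\;\leq\;\frac{\Cmom}{n^2\,\barhT(\uI)^d},
\]
where the lower bound follows from the defining property of $\uI$ evaluated at $u=\uA>\uI$ (which is nothing but the inequality $p_0^2[\T(\uA)]/p_0[\T(\uA)]^{d/(\alpha+d)}>\cI\TL^{1/(\alpha+d)}$ unfolded) and the upper bound is again Lemma~\ref{before_int_p0squared_uA_small}. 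Dividing the two bounds gives $\barhT(\uI)^d\leq(\Cmom/\cI)\hT(\uA)^d$ in one line, which skips both your first-moment decomposition and the auxiliary relation $\uI\gtrsim L\barhT^\alpha$. Your final constant $(1+\Cmom/\cI)^{1/(\alpha+d)}$ differs from the paper's $1+\CL{\ref{tail_dominates}}/\cI=\Cmom/\cI$, but that is immaterial since both only need to be finite. Your observation about the implicit requirement $\max_\Omega p_0\geq\uI$ when invoking Lemma~\ref{uA>uI_then_=} is correct and worth the care; note that Lemma~\ref{before_int_p0squared_uA_small} carries the same hypothesis, so the paper's proof needs it too, and it holds for the reason you give: if $\max_\Omega p_0<\uI$ then $\T(u)=\Omega$ for all $u$ above the maximum, forcing $\uI=+\infty$ and therefore $\uA=\uI$, contradicting the standing assumption $\uA>\uI$.
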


\begin{proof}[Proof of Lemma \ref{htail_all_equal}]
Suppose that $\uA > \uI$. Then by Lemma \ref{before_int_p0squared_uA_small}, we have $\int_{\T(\uA)} p_0^2 \leq \Cmom \frac{1}{n^2 \barhT^d}$. Moreover, by definition of $\uI$, since $\uA > \uI$ we can write: $\int_{\T(\uA)} p_0^2 \geq \cI \frac{1}{n^2 \hT(\uA)^d}$, hence: $(1+\frac{\CL{\ref{tail_dominates}}}{\cI})\hT(\uA) \geq \barhT(\uI)$. Moreover, it directly follows from the definition of $\hT(\uA)$ and $\barhT(\uI)$ that $\hT(\uA) \leq \barhT(\uI)$. Hence $\hT(\uA) \asymp \barhT(\uI)$.
\end{proof}

\begin{lemma}\label{Bulk_asymp_BulkCases}
Assume $\CBT\rhob \geq \rhot$. There exists a constant $\CL{\ref{Bulk_asymp_BulkCases}}>1$ depending only on $\CBT$, $\cI$ and $\cA$, such that: $\int_{\B(\frac{\uA}{2})}p_0^r \leq \CL{\ref{Bulk_asymp_BulkCases}}\, \I$. 
\end{lemma}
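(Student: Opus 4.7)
My approach would decompose $\int_{\B(\uA/2)} p_0^r = \I + \int_E p_0^r$ where $E := \B(\uA/2)\setminus \B(\uI)$. If $\uA \geq 2\uI$ then $\B(\uA/2) \subseteq \B(\uI)$, so $E = \emptyset$ and the lemma holds with constant $1$. Otherwise $\uI \leq \uA < 2\uI$, in which case $E = \{\uA/2 \leq p_0 < \uI\}$, and since $p_0 < \uI \leq \uA$ on $E$, we have $E \subseteq \T = \T(\uA)$. The task reduces to proving $\int_E p_0^r \lesssim \I$ with the implicit constant depending only on $\cI, \cA, \CBT$; I would handle this in two subcases according to the value of $r$.

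For $r \in [1,2]$ I would first enlarge $E$ to $\T$ and apply H\"older's inequality with conjugate exponents $1/(r-1)$ and $1/(2-r)$ to obtain $\int_\T p_0^r \leq \bigl(\int_\T p_0^2\bigr)^{r-1}\, p_0[\T]^{2-r}$. Applying Lemma \ref{int_p0squared_uA_small} then yields
\[
\int_\T p_0^r \leq \Cmom^{r-1}\, \TL^{(r-1)/(\alpha+d)}\, p_0[\T]^{[d+(2-r)\alpha]/(\alpha+d)}.
\]
The bulk-dominates hypothesis $\rhot \leq \CBT \rhob$ can be raised to an appropriate power so as to replace $p_0[\T]^{[d+(2-r)\alpha]/(\alpha+d)}$ by a product of explicit powers of $\CBT$, $\TL$ and $\I$; after multiplication the powers of $\TL$ cancel exactly and the power of $\I$ is exactly $1$, so $\int_E p_0^r \leq \Cmom^{r-1}\,\CBT^{t(4\alpha+d)/[(4-t)\alpha+d]}\, \I$.

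For $r \in (0,1)$ H\"older in this form is not available, so I would instead argue pointwise on $E$: since $p_0 \in [\uA/2, \uA)$ and $x \mapsto x^{r-1}$ is decreasing, $p_0^r \leq 2^{1-r}\uA^{r-1} p_0$, whence $\int_E p_0^r \leq 2^{1-r}\uA^{r-1} p_0[\T]$. Because $\uA \geq \um$ (recall $\uA = \uI \vee \um$) and $r-1<0$, we have $\uA^{r-1} \leq \um^{r-1}$, and $\um$ is an explicit function of $\cA, \TL, \I$ via \eqref{def_uA}. Combining with the same bulk-dominates bound on $p_0[\T]$ yields $\uA^{r-1} p_0[\T] \leq \cA^{\beta(r-1)}\, \CBT^{t(\alpha+d)/[(2-t)\alpha+d]}\, \I$, where $\beta = [(4-t)\alpha+d]/[(2-t)\alpha+d]$.

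The principal obstacle common to both subcases is a purely algebraic verification: the identity $(d+(2-r)\alpha)\,[(4-t)\alpha+d] = (4\alpha+d)\,[(2-t)\alpha+d]$, which follows from $r = 2\alpha t/[(4-t)\alpha+d]$ by factoring $(8-4t)\alpha^2 + (6-t)\alpha d + d^2 = (4\alpha+d)[(2-t)\alpha+d]$. Together with the dual identity $r-1 = -[(4-3t)\alpha+d]/[(4-t)\alpha+d]$, this is precisely what ensures that, after combining the exponents coming from H\"older (or from the explicit expression for $\um$), Lemma \ref{int_p0squared_uA_small}, and the bulk-dominates inequality, the resulting power of $\TL$ vanishes and the power of $\I$ collapses to exactly one, producing the desired linear bound $\int_{\B(\uA/2)} p_0^r \leq \CL{\ref{Bulk_asymp_BulkCases}}\, \I$ with $\CL{\ref{Bulk_asymp_BulkCases}}$ depending only on $\CBT, \cI, \cA$.
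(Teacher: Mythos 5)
Your proof is correct, but it takes a genuinely different and more laborious route than the paper. The paper decomposes $\B(\uA/2)$ at level $\uA$ rather than $\uI$: on the band $\B(\uA/2)\setminus\B(\uA) = \{\uA/2 \leq p_0 < \uA\}$ one has simultaneously $p_0^r \leq \uA^r$ (from the upper cut) and $1 \leq 4p_0^2/\uA^2$ (from the lower cut), hence $p_0^r \leq 4\uA^{r-2}p_0^2$ pointwise. Since $r-2 < 0$ for every admissible $(\alpha,d,t)$, the bound $\uA^{r-2} \leq \um^{r-2}$ is always in the right direction, and a single application of Lemma \ref{int_p0squared_uA_small} plus the bulk-dominates inequality finishes in four lines, uniformly in $r$. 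Your approach forces a case split precisely because your pointwise bound uses exponent $1$ rather than $2$: the resulting factor $\uA^{r-1}$ has the wrong sign of exponent when $r \geq 1$, and since $\uA = \uI\vee\um$ only yields a \emph{lower} bound $\uA\geq\um$, you cannot control $\uA^{r-1}$ from above that way; this is what pushes you to H\"older on the enlarged set $\T$ for $r\in[1,2]$. Both routes hinge on the same algebraic cancellation $(d+(2-r)\alpha)\bigl[(4-t)\alpha+d\bigr] = (4\alpha+d)\bigl[(2-t)\alpha+d\bigr]$, so the bookkeeping you describe does close; it is simply that the paper's choice of comparison exponent ($p_0^2$ instead of $p_0$) eliminates the dichotomy and avoids H\"older entirely.
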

%We apply his Lemma with $\CRatio = 1/\CBT$ where $\CBT$ is defined in Lemma \ref{htail_leq_hbulk}.

\begin{proof}[Proof of Lemma \ref{Bulk_asymp_BulkCases}]
\begin{align}
    \int_{\B(\frac{\uA}{2})\setminus \B(\uA)} p_0^r &\leq \uA^r \big|\B\big(\frac{\uA}{2}\big)\setminus \B(\uA) \big| \leq \uA^{r-2}  \int_{\B(\frac{\uA}{2})\setminus \B(\uA)} 4 p_0^2\nonumber \\
    & \leq 4 \uA^{r-2} \frac{\Cmom}{n^2 \hT(\uA)^d} ~~~~ \text{ by Lemma \ref{int_p0squared_uA_small}.} \label{majorant_BuA_sur_2}
\end{align}
Moreover, the condition $\CBT\rhob \geq \rhot$ exactly rewrites 
\begin{align*}
    \mbox{\large\(\frac{\uA^{r-2}}{n^2 \hT(\uA)^d} \leq \CBT^{\frac{td}{(2-t)\alpha + d} }\cA^{-2 \frac{(4-2t)\alpha + d}{(4-t)\alpha + d}} \cdot \I,\)}
\end{align*}
so that \eqref{majorant_BuA_sur_2} gives:
\begin{align*}
    \int_{\B(\frac{\uA}{2})\setminus \B(\uA)}\mbox{\large\( p_0^r \leq 4\Cmom \CBT^{\frac{td}{(2-t)\alpha + d} }\cA^{-2 \frac{(4-2t)\alpha + d}{(4-t)\alpha + d}} \cdot \I\)}
\end{align*}
%&\leq 4\Cmom \CRatio^{-\frac{td}{(2-t)\alpha + d} }\cA^{-2 \frac{(4-2t)\alpha + d}{(4-t)\alpha + d}} \frac{1}{ \CL{\ref{I(uA)=I(uI)}}} \int_{\B(\uA)}p_0^r \text{ by Lemma \ref{I(uA)=I(uI)}}\\
    %& =: \CL{\ref{Bulk_asymp_BulkCases}}\int_{\B(\uA)}p_0^r.
so that $$ \int_{\B(\frac{\uA}{2})}\mbox{\large\( p_0^r \leq \Big(1+4\Cmom \CBT^{\frac{td}{(2-t)\alpha + d} }\cA^{-2 \frac{(4-2t)\alpha + d}{(4-t)\alpha + d}} \Big)\cdot \I=: \CL{\ref{Bulk_asymp_BulkCases}} \I.\)}$$
%Taking $\CRatio$ large enough yields $ 4\Cmom \CRatio^{-\frac{td}{(2-t)\alpha + d} }\cA^{-2 \frac{(4-2t)\alpha + d}{(4-t)\alpha + d}} \leq 1$, hence the result.
\end{proof}

%\ju{Il y a pas une confusion entre $\int_{\B(\uA)}p_0^r$ et $\I$ ?}

\begin{lemma}\label{T(2uA)Moment1}
If the tail dominates, i.e. if $\rhot \geq \CBT \rhob$ where $\CBT$ is a large constant, then there exists a small constant $\CL{\ref{T(2uA)Moment1}}$ depending only on $\CBT$ and decreasing with respect to $\CBT$, such that $$\int_{\T(2\uA)}p_0 \leq (1+ \CL{\ref{T(2uA)Moment1}}) \int_{\T(\uA)}p_0.$$
\end{lemma}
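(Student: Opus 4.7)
Since $\T(\uA) \subset \T(2\uA)$, the claim is equivalent to bounding the annular integral $\int_{\T(2\uA)\setminus\T(\uA)}p_0 = \int_{\uA \leq p_0 < 2\uA}p_0$ by $\CL{\ref{T(2uA)Moment1}}(\CBT) \cdot p_0[\T(\uA)]$ with $\CL{\ref{T(2uA)Moment1}}$ small and decreasing in $\CBT$. The annular set is contained in $\B(\uI)$ since $\uA \geq \uI$. On it, the pointwise inequalities $p_0 \leq p_0^2/\uA$ (from $p_0 \geq \uA$) and $p_0^2 \leq (2\uA)^{2-r}p_0^r$ (from $p_0 < 2\uA$ together with $r<2$) combine to give
\[
\int_{\uA \leq p_0 < 2\uA} p_0 \;\leq\; \frac{(2\uA)^{2-r}}{\uA}\int_{\B(\uI)}p_0^r \;=\; 2^{2-r}\,\uA^{1-r}\,\I.
\]
It thus suffices to prove $\uA^{1-r}\I \lesssim \CL{\ref{T(2uA)Moment1}}(\CBT)\,p_0[\T(\uA)]$.

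The tail-dominates condition $\rhot \geq \CBT \rhob$, rearranged by raising to a common power and isolating $\I$, yields an upper bound of the form $\I \leq \CBT^{-c_1}\,\TL^{A}\,p_0[\T(\uA)]^{B}$ for explicit exponents $c_1 > 0$, $A$, $B$. On the other hand, $\uA \geq \um$, and plugging the explicit formula \eqref{def_uA} for $\um$ into $\um^{1-r}\I$ and then substituting the bound on $\I$ produces a direct but notationally heavy calculation in which the powers of $\TL$ cancel exactly and the exponent of $p_0[\T(\uA)]$ simplifies to $1$, giving
\[
\um^{1-r}\I \;\lesssim\; \cA^{c_2}\,\CBT^{-c_3}\,p_0[\T(\uA)]
\]
for positive constants $c_2,c_3$. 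When $\uA = \um$, this is already the desired inequality, and the calculation mirrors the cancellations that drive the proof of Lemma \ref{tail_dominates}.

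The remaining subcase is $\uA = \uI > \um$. If $r \geq 1$, then $\uA \geq \um$ yields $\uA^{1-r} \leq \um^{1-r}$ and the previous bound carries over. If $r < 1$, the monotonicity goes the wrong way, and a separate argument is needed: Lemma \ref{uA>uI_then_=} (applied in the limit as $u\downarrow\uI$) combined with $p_0 \leq \uI$ on $\widetilde{\T}(\uI)$ gives the lower bound $\uA\cdot p_0[\T(\uA)]^{\alpha/(\alpha+d)} \gtrsim \cI\,\TL^{1/(\alpha+d)}$, while the strict inequality $\uI > \um$ converts, via \eqref{def_uA}, into a quantitative lower bound on $\I$ in terms of $\uI,\cA,\TL$. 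Balancing these three inequalities together with the tail-dominates upper bound on $\I$, and exploiting the same exponent-matching as above, yields the desired bound with $\CL{\ref{T(2uA)Moment1}}(\CBT) \to 0$ as $\CBT \to \infty$. The primary obstacle is precisely this last subcase, where three constraints (the defining relation of $\uI$, the strict inequality $\uI > \um$, and the tail-dominates inequality) must be simultaneously reconciled while preserving the $\CBT^{-c}$ dependence.
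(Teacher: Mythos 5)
Your proposal follows the same overall route as the paper's proof: reduce to the annular integral, bound $p_0$ pointwise on the annulus by $\uA^{1-r}p_0^r$ (your factor $2^{2-r}$ versus the paper's sharper $2^{1-r}\lor 1$ is immaterial), integrate to get $\uA^{1-r}\I$, and then show that the tail-dominates hypothesis makes the powers of $\TL$ cancel and leaves $p_0[\T(\uA)]$ with a decaying $\CBT$-prefactor. Your exponent-matching computation in the $\uA=\um$ case is exactly what the paper does in its single displayed chain, where the equality $\uA^{1-r}\I = \I^{\frac{(\alpha+d)((4-t)\alpha+d)}{(4\alpha+d)((2-t)\alpha+d)}}L^{\cdots}/n^{\cdots}$ is (up to the absorbed $\cA$ factor) precisely the substitution of the formula for $\um$ in place of $\uA$.

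Where you part ways with the paper is that you explicitly notice the substitution $\uA \mapsto \um$ is only an \emph{upper} bound for $\uA^{1-r}\I$ when $r\geq 1$ (since $\uA\geq\um$), and you flag the case $\uA=\uI>\um$ with $r<1$ as needing separate treatment. That is a fair concern, and the paper's proof does not address it — the paper writes an equality that silently presupposes $\uA=\um$ (or $r\geq 1$). However, your proposed fix for this subcase is the weak point of your write-up: the final paragraph amounts to a promise that ``balancing three inequalities'' closes the gap, without actually carrying out the computation or identifying which inequality supplies the crucial \emph{upper} bound on $\uI$. The inequalities you cite — the limiting form of Lemma~\ref{uA>uI_then_=} and the condition $\uI>\um$ — both give \emph{lower} bounds on $\uI$, not upper bounds, so as written they cannot by themselves give $\uI^{1-r}\I\lesssim p_0[\T(\uA)]$ when $r<1$. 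Some additional input (for instance the complementary inequality $\int_{\T(\uI)}p_0^2\leq \cI\TL^{1/(\alpha+d)}p_0[\T(\uI)]^{d/(\alpha+d)}$ applied together with $\uA=\uI\Rightarrow p_0[\T(\uA)]=p_0[\T(\uI)]$, or an argument that tail-dominance forces $\uI\lesssim\um$) would need to be worked out explicitly. So: same method, partial credit for spotting the subtlety, but the argument in the problematic subcase is incomplete.
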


\begin{proof}[Proof of Lemma \ref{T(2uA)Moment1}]
We show that $\int_{\T(2\uA)}p_0 - \int_{\T(\uA)}p_0 \leq \CL{\ref{T(2uA)Moment1}} \int_{\T(\uA)}p_0$. Note that $r$ can be greater or smaller than $1$, so that on $\T(2\uA)\setminus \T(\uA)$ we have: $p_0 \leq (2^{1-r}\lor 1)\uA^{1-r} p_0^{r}$. We therefore have
\begin{align*}
    &(2^{r-1}\land 1)\int_{\T(2\uA)\setminus \T(\uA)} p_0 \leq \uA^{1-r}\int_{\T(2\uA)\setminus \T(\uA)} p_0^r \leq \uA^{1-r} \I \\
    & = \I^{\frac{(\alpha+d)((4-t)\alpha + d)}{(4\alpha + d)((2-t)\alpha+d)}} \frac{L^{\frac{d}{4\alpha+d} \frac{(4-3t)\alpha+d}{(2-t)\alpha+d}}}{n^{\frac{2\alpha}{4\alpha+d} \frac{(4-3t)\alpha+d}{(2-t)\alpha+d}}} \leq \CBT^{-\frac{\alpha+d}{(2-t)\alpha+d}} \int_{\T(\uA)}p_0,
\end{align*}
where the last inequality is obtained by using the assumption $\rhot \geq \CBT \rhob$ and the expressions of $\rhob$ and $\rhot$.
\end{proof}

\begin{lemma}\label{T(2uA)Moment2}
If the tail dominates, i.e. if $\rhot \geq \CBT \rhob$ where $\CBT$ is a large enough constant, then there exists a small constant $\CL{\ref{T(2uA)Moment2}}$ depending only on $\CBT$ and decreasing with respect to $\CBT$, such that $$\int_{\T(2\uA)}p_0^2 \leq \frac{\CL{\ref{T(2uA)Moment2}}}{n^2\hT^d}.$$
\end{lemma}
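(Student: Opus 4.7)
The plan is to mirror the argument of Lemma~\ref{T(2uA)Moment1}: split $\int_{\T(2\uA)} p_0^2 = \int_{\T(\uA)} p_0^2 + \int_{\T(2\uA)\setminus \T(\uA)} p_0^2$, bound the first term by Lemma~\ref{int_p0squared_uA_small}, and control the increment using the elementary inequality $p_0^2 \leq (2\uA)^{2-r}\, p_0^r$ on $\T(2\uA)\setminus\T(\uA)$ combined with the tail--dominates hypothesis.

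By Lemma~\ref{int_p0squared_uA_small} one has immediately $\int_{\T(\uA)} p_0^2 \leq \Cmom/(n^2\hT^d)$, which already has the desired form (the constant $\Cmom$ being small thanks to the choice $\cI^{\alpha+d}=\cA^{4\alpha+d}$). For the increment, since $\uA \leq p_0 < 2\uA$ on $\T(2\uA)\setminus\T(\uA)$ and $r=\frac{2\alpha t}{(4-t)\alpha+d} \in (0,2]$, we have $p_0^{2-r} \leq (2\uA)^{2-r}$ and hence $p_0^2 \leq (2\uA)^{2-r} p_0^r$. Using the inclusion $\T(2\uA)\setminus\T(\uA) \subset \B(\uA) \subset \B(\uI)$ this yields $\int_{\T(2\uA)\setminus\T(\uA)} p_0^2 \leq 2^{2-r}\, \uA^{2-r}\, \I$.

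It therefore remains to prove $\uA^{2-r}\I \leq c(\CBT)/(n^2\hT^d)$ with $c(\CBT)\to 0$ as $\CBT\to\infty$. As in the proof of Lemma~\ref{T(2uA)Moment1}, I substitute the explicit expression of $\uA$ from \eqref{def_uA} in the case $\uA=\um$ (which can be assumed in the same way as in the neighbouring lemmas) and rewrite $\uA^{2-r}\I = \cA^{a_1}\, \TL^{a_2}\, \I^{a_3}$ with $a_1 = \frac{2((4-2t)\alpha+d)}{(2-t)\alpha+d}$, $a_2 = \frac{2((4-2t)\alpha+d)}{(4\alpha+d)((2-t)\alpha+d)}$ and $a_3 = \frac{d((4-t)\alpha+d)}{(4\alpha+d)((2-t)\alpha+d)}$. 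I then invoke the tail--dominates inequality $\rhot\geq\CBT\rhob$ raised to the power $d/((2-t)\alpha+d)$, which yields $\TL^{d[(4-3t)\alpha+d]/[(\alpha+d)(4\alpha+d)((2-t)\alpha+d)]}\, \I^{a_3} \leq \CBT^{-td/((2-t)\alpha+d)}\, p_0[\T]^{d/(\alpha+d)}$. After substitution the total power of $\TL$ collapses to exactly $1/(\alpha+d)$, so that the expression recombines into $\TL^{1/(\alpha+d)} p_0[\T]^{d/(\alpha+d)} = 1/(n^2\hT^d)$. This gives $\uA^{2-r}\I \leq \CBT^{-td/((2-t)\alpha+d)}\, \cA^{a_1}/(n^2\hT^d)$, and hence $\CL{\ref{T(2uA)Moment2}}=\Cmom + 2^{2-r}\CBT^{-td/((2-t)\alpha+d)}\cA^{a_1}$ is small and decreasing in $\CBT$, as required.

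The main obstacle is this last algebraic step: verifying that the numerator $2((4-2t)\alpha+d)(\alpha+d) - d[(4-3t)\alpha+d]$ of the combined $\TL$--exponent simplifies precisely to $(4\alpha+d)((2-t)\alpha+d)$, so that after invoking tail--dominates the target form $\TL^{1/(\alpha+d)}p_0[\T]^{d/(\alpha+d)} = (n^2\hT^d)^{-1}$ comes out cleanly. The bookkeeping is entirely analogous to (but slightly more involved than) the one performed for Lemma~\ref{T(2uA)Moment1}, with $1-r$ replaced by $2-r$, and the smallness of $\cI,\cA$ (linked by $\cI^{\alpha+d}=\cA^{4\alpha+d}$) together with the largeness of $\CBT$ ensure that $\CL{\ref{T(2uA)Moment2}}$ tends to $\Cmom$ as $\CBT\to\infty$.
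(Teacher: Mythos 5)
Your proof is correct, and the algebra checks out: the key simplification you single out, namely that the numerator $2\big((4-2t)\alpha+d\big)(\alpha+d) - d\big((4-3t)\alpha+d\big)$ factors exactly as $(4\alpha+d)\big((2-t)\alpha+d\big)$, is indeed true, so the combined $\TL$-exponent collapses to $1/(\alpha+d)$ and the bound recombines into $(n^2\hT^d(\uA))^{-1}$ as you claim. Your route and the paper's start identically: split $\int_{\T(2\uA)}p_0^2$ at $\T(\uA)$, bound the first piece by Lemma~\ref{int_p0squared_uA_small}, and control the increment with $p_0^2 \leq (2\uA)^{2-r}p_0^r$ on $\T(2\uA)\setminus\T(\uA)\subset\B(\uA)\subset\B(\uI)$. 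Where you genuinely diverge is in how $\uA^{2-r}\I$ is related to $(n^2\hT^d(\uA))^{-1}$. The paper reaches $\uA^{2-r}\I \lesssim (n^2\barhT^d)^{-1}$ by reusing Lemmas~\ref{tail_dominates} and~\ref{before_int_p0squared_uA_small} (which rest on $\uA>\uI$, not on tail dominance), and only then converts $\barhT$ to $\hT(\uA)$ through the chain $\barhT(\uI)\geq\hT(2\uA)\geq(1+\CL{\ref{T(2uA)Moment1}})^{-1/(\alpha+d)}\hT(\uA)$, with the tail-dominates hypothesis feeding in via Lemma~\ref{T(2uA)Moment1}. You instead substitute the explicit expression $\uA=\um$, rewrite $\uA^{2-r}\I$ in terms of $\cA,\TL,\I$, and feed the inequality $\rhot\geq\CBT\rhob$ directly into the exponents, bypassing $\barhT$ altogether; this is more self-contained and makes the $\CBT$-dependence of the final constant (through $\CBT^{-td/((2-t)\alpha+d)}$) explicit, whereas the paper's version is more modular but hides that dependence inside $\CL{\ref{T(2uA)Moment1}}$. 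One shared caveat: both your proof and the paper's treat only the case $\uA=\um>\uI$, since \eqref{IuA} and the explicit substitution presuppose it; you correctly flag this as the same implicit reduction the paper makes.
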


\begin{proof}[Proof of Lemma \ref{T(2uA)Moment2}]
We have, recalling equations \eqref{IuA}, \eqref{rhob_leq_rhot}, \eqref{Finlemme}:
\begin{align*}
    2^{r-2}\int_{T(2\uA) \setminus \T(\uA)} p_0^2 \leq \uA^{2-r} \int_{\B(\uA)} p_0^r \leq \frac{\Cmom}{n^2\barhT^d}.
\end{align*}
Moreover, $\frac{\Cmom}{n^2\barhT^d} \leq \frac{\Cmom}{n^2\hT^d(2\uA)}$ and $\hT(2\uA) \geq (1+\CL{\ref{T(2uA)Moment1}})^{-\frac{1}{\alpha+d}} \hT(\uA)$ by Lemma \ref{T(2uA)Moment1}. Now:
\begin{align*}
    \int_{\T(2\uA)}p_0^2 \leq \frac{\Cmom}{n^2\hT^d} + \frac{\Cmom 2^{2-r}(1+\CL{\ref{T(2uA)Moment1}})^{\frac{d}{\alpha+d}}}{n^2\hT^d}.
\end{align*}

which yields the result. 
\end{proof}

\begin{lemma}\label{htail_leq_hbulk}
In the case $\int_{\T} p_0 \geq  \frac{\ctail}{n}$, there exists constants $\CBT, \CBT^{(2)}$ such that we have $$ \rhot \geq \CBT \rhob \Longleftrightarrow \hm  \geq  \CBT^{(2)} \hT(\uA),$$
where $\hm$ is defined in Equation \eqref{def_hm}. In particular we have 
$$\rhot \geq \CBT \rhob \Longrightarrow \inf_{x \in \B} \hB(x) \geq  \CBT^{(2)} \hT(\uA),$$ %$$~~ \text{ where } \CBT^{(2)} = 2\frac{(1-\cstar)\left[\sqrt{d}^\alpha + (1+\cstar)\cu\right]^{1/\alpha}}{\cA^{1/\alpha}}. $$
where $\CBT^{(2)}$ can be made arbitrarily large by choosing $\CBT$ large enough.
\end{lemma}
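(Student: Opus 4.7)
The plan is to reduce both sides of the equivalence to explicit monomial expressions in $L$, $n$, $P := p_0[\T(\uA)]$, and $I := \I$, and then verify that they are linked by a power and a multiplicative constant depending only on $\alpha, t, d$ and on the structural constants $\cm, \cA, \cI$. The hypothesis $\int_{\T}p_0 \geq \ctail/n$ ensures $P > 0$, so that $\hT(\uA)$ is finite and $\rhot > 0$, making all ratios well-defined.

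First I would substitute the definition
$\um = \big[\cA L^{d/(4\alpha+d)}/(n^2\I)^{\alpha/(4\alpha+d)}\big]^{((4-t)\alpha+d)/((2-t)\alpha+d)}$
(from the paragraph following \eqref{def_hm}) into the expression for $\hm$ and expand, obtaining
$\hm \asymp L^{-a_L}\, I^{-a_I}\, n^{-a_n}$
with the explicit positive exponents
$a_L = 2((4-2t)\alpha+d)/((4\alpha+d)((2-t)\alpha+d))$,
$a_I = ((4-t)\alpha+d)/((4\alpha+d)((2-t)\alpha+d))$, and
$a_n = 2((4-t)\alpha+d)/((4\alpha+d)((2-t)\alpha+d))$. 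Together with $\hT(\uA) = (n^2 L P)^{-1/(\alpha+d)}$ and the identities $\rhob^{4\alpha+d} = L^d n^{-2\alpha} I^{((4-t)\alpha+d)/t}$ and $\rhot^{t(\alpha+d)} = L^{d(t-1)} n^{-2\alpha(t-1)} P^{(2-t)\alpha+d}$ derived from \eqref{def_rho_bulk_and_tail}, every quantity is now monomial in $L, n, P, I$.

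The crux is then to verify the algebraic identity
\[
\left(\rhot/\rhob\right)^{t(\alpha+d)(4\alpha+d)} \;\asymp\; \left(\hm/\hT(\uA)\right)^{(\alpha+d)(4\alpha+d)((2-t)\alpha+d)},
\]
by matching, on both sides, the exponents of $L$, $n$, $P$ and $I$. A direct calculation yields on both sides the common exponents $d((3t-4)\alpha - d)$ on $L$, $2\alpha((4-3t)\alpha + d)$ on $n$, $((2-t)\alpha+d)(4\alpha+d)$ on $P$, and $-(\alpha+d)((4-t)\alpha+d)$ on $I$. Taking the $t(\alpha+d)(4\alpha+d)$-th root yields the clean form $(\rhot/\rhob)^t \asymp (\hm/\hT(\uA))^{(2-t)\alpha+d}$, and since both $t$ and $(2-t)\alpha+d$ are strictly positive, the claimed equivalence follows with $\CBT^{(2)}$ growing as a positive power of $\CBT$ (times the implicit universal constant), which in particular implies $\CBT^{(2)} \to \infty$ as $\CBT \to \infty$.

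For the ``in particular'' implication, note that $\hB(x) = p_0(x)^{2/((4-t)\alpha+d)} (n^2L^4\I)^{-1/(4\alpha+d)}$ is strictly increasing in $p_0(x)$, and $p_0 \geq \uA \geq \um$ on $\B$, so $\inf_{x\in\B}\hB(x) \geq \hm/\cm$. Combining with $\hm \geq \CBT^{(2)} \hT(\uA)$ and absorbing the fixed constant $\cm$ into $\CBT^{(2)}$ yields the stated implication. The main obstacle is purely the exponent bookkeeping: the identity $(\rhot/\rhob)^t \asymp (\hm/\hT(\uA))^{(2-t)\alpha+d}$ is not obvious a priori, and verifying that the exponents of $L, n, P, I$ agree after a somewhat ungainly rational-arithmetic computation is the real content of the proof.
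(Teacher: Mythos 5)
Your proposal is correct and takes exactly the approach the paper intends: the paper's own proof of Lemma~\ref{htail_leq_hbulk} is a one-liner deferring to ``direct calculation'' using the expressions of $\hT(\uA)$, $\rhob$, $\rhot$, and the observation $\inf_{x\in\B}\hB(x)\geq\hm$, and you have simply carried out that calculation. I checked your monomial expansions of $\hm$, $\hT(\uA)$, $\rhob$ and $\rhot$, and the exponent-matching for $L$, $n$, $P=p_0[\T(\uA)]$ and $\I$ on the two sides of $(\rhot/\rhob)^t \asymp (\hm/\hT(\uA))^{(2-t)\alpha+d}$; they agree, and the conclusion follows since $t>0$ and $(2-t)\alpha+d>0$. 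The ``in particular'' step is also right: $\hB$ is increasing in $p_0$ and on $\B(\uA)$ we have $p_0\geq\uA\geq\um$, with $\cm\,\hB(\cdot)=\hm$ at the level $\um$, giving $\inf_{\B}\hB\geq\hm/\cm$, and the fixed constant $\cm$ is harmlessly absorbed into $\CBT^{(2)}$.
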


\begin{proof}[Proof of Lemma \ref{htail_leq_hbulk}]
%On the bulk, $p_0 \geq \uA$, so that we have \al{ici il faut pas utiliser lhypothese (*) sur la densite au voisinnage de 0?} $$\inf_{\B}\hB \geq \frac{\uA^\frac{2}{(4-t)\alpha +d}}{\big(n^2 L^4 \I \big)^\frac{1}{4\alpha + d}}.$$
The result can be proved by direct calculation, recalling the expression of $\hT(\uA)$ from \eqref{def_htail_uA}, the expressions of $\rhob$ and $\rhot$ from \eqref{def_rho_bulk_and_tail} and that $\inf_{x \in \B} \hB(x) \geq \hm$.
\end{proof}
\section{Partitioning algorithm}\label{Algo_appendix}

We now introduce the recursive partitioning scheme, inspired from \cite{balakrishnan2019hypothesis}. 
For any cube $A \subset \Omega$, denote by $e(A)$ its edge length. 
For any function $h: \Omega \rightarrow \R_+$, denoting by $x_A$ the center of $A$, define $h(A) = h(x_A)$. 
The partitionning algorithm takes as input a cubic domain $\widetilde \Omega \subset \Omega$, a parameter $\beta \geq \alpha$, a value $u>0$ and a constant $\cbeta >0$. 
Defining the bandwidth function $h : \widetilde \Omega \to \R_+$ such that $p_0(x) = \cbeta \, h^\beta(x)$ over $\widetilde \Omega$, as well as the set $\D(u) = \{x \in \widetilde \Omega: p_0(x) \geq u\}$ %\al{faut il aussi indicer $\D$ avec $\tilde \Omega$?}
to be split into cubes, the algorithm returns a family $P = \{A_1, \dots, A_N\}$ of \textit{disjoints} cubes of $\widetilde \Omega$ \textit{covering} $\D(u)$ (i.e. such that $\D(u) \subset \cup_{j=1}^N A_j$), and such that, for all $j=1, \dots, N$, $h(A_j) \geq e(A_j) \geq \frac{1}{2^{\beta+1}} h(A_j)$. 
%\ju{vérifier la constante}.
Note that the center of $A_j$ need not belong to $\D(u)$. Algorithm \ref{Algo_recursif} corresponds to an auxiliary algorithm called by the actual partitioning algorithm defined in Algorithm \ref{Algo_partitionnement}.\\

\begin{algorithm}[H]\label{Algo_recursif}
\SetAlgoLined
\begin{enumerate}
    \item \textbf{Input:} $A,h, D,P$. 
    \item  \begin{itemize}
            \item \textbf{If} $A \cap D = \emptyset$: \textbf{return} $P$.
            \item  \textbf{If} $e(A) \leq h(A)$: \textbf{return} $P \cup \{A\}$.
            \item\textbf{Else:}
            \begin{enumerate}
                \item Split $A$ into $2^d$ cubes $A_1, \dots, A_{2^d}$ obtained by halving $A$ along each of its axes.
                \item \textbf{return} $\bigcup_{i=1}^{2^d} \text{Algorithm \ref{Algo_recursif}}(A_i,h,D,P)$.
            \end{enumerate}
        \end{itemize}
\end{enumerate}
\caption{Recursive auxiliary algorithm}
\end{algorithm}

\hfill

\begin{algorithm}[H]\label{Algo_partitionnement}
\SetAlgoLined
\begin{enumerate}
    \item \textbf{Input:} $\widetilde \Omega, \beta, u, \cbeta$. 
    \item \textbf{Initialization:} $~P = \emptyset$, $~\D(u) = \{x \in \widetilde \Omega: p_0(x) \geq u\}$, $~h=\left(\frac{p_0}{\cbeta}\right)^\frac{1}{\beta}$.
    \item \textbf{Return} $\text{Algorithm \ref{Algo_recursif}}\big(\widetilde \Omega, h, \D(u), P\big)$.
\end{enumerate}
\caption{Adaptive partition}
\end{algorithm}
%\al{La ya des problemes d'indicage dans lalgo je pense... en fait tu as besoin de definir un arbre et a la fin de prendre les feuilles, mais je pense que il faut tout reindicer. Le mieux est peut etre de definir la chose de facon recursive? je ne sais pas ce qui est le plus simple}

\hfill

We have the following guarantees for Algorithm \ref{Algo_partitionnement}.
\begin{proposition}\label{guarantees_algo}
Algorithm \ref{Algo_partitionnement} terminates. Assume moreover that Algorithm \ref{Algo_partitionnement} splits the domain at least once and that there exists a constant $\calpha > 0$ such that $\cstar + \frac{\sqrt{d}^\alpha}{\calpha} (2^{1-\alpha} \lor 1) \leq 1/2$ and such that $\forall x \in \D(u): p_0(x) \geq \calpha L h^\alpha(x)$. Then:
%If for some constant $\Calg>0$ it holds $p_0 \geq \Calg L h^\alpha$ over $\D$ \ju{Vérifier}, then:
\begin{enumerate}
    \item Denoting by $P$ the output of Algorithm \ref{Algo_partitionnement} with inputs  $\widetilde \Omega, \beta, u, \cbeta$, it holds: $\D(u) \subset \cup_{A \in P} A$;
    \item\label{cases_h} For all cube $A \in P$, it holds: $h(A) \geq e(A) \geq \frac{1}{2^{\beta+1}} h(A)$. %\ju{vérifier la constante, ne devrait-elle pas dépendre de $\calpha$ ?}
    \item\label{min_geq_max} For all cell $A\in P$ we have $\min_{A} p_0 \geq \frac{1}{2}\max_{A}p_0$. Consequently, it holds $\cup_{A \in P}\, A \subset \D(\frac{u}{2})$.
\end{enumerate}
\end{proposition}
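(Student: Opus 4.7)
The plan is to establish the three items in order: termination and covering, then the two-sided bound on $e(A)$, and finally the min/max comparison on cells. For termination, whenever Algorithm~\ref{Algo_recursif} recurses on a cube $A$ with $A \cap \D \neq \emptyset$, some $x \in A \cap \D$ satisfies $p_0(x) \geq u$, so $h(x) = (p_0(x)/\cbeta)^{1/\beta} \geq (u/\cbeta)^{1/\beta}$ is bounded below; the H\"older regularity of $p_0$ keeps $h$ bounded away from $0$ on a fixed Euclidean neighborhood of $x$, while $e(A)$ is halved at each recursive call, so the stopping criterion $e(A) \leq h(A)$ is eventually met and $A$ is added to $P$. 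Hence the recursion has bounded depth and terminates. For the covering property, any $y \in \D$ lies at each step in exactly one of the $2^d$ sub-cubes (boundaries being measure zero), and that sub-cube intersects $\D$ and so is not pruned; by termination, $y$ ends up in some leaf $A \in P$, which gives $\D \subset \bigcup_{A \in P} A$.

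For item (2), the upper bound $e(A) \leq h(A)$ is just the stopping criterion. For the lower bound, the hypothesis that the algorithm splits at least once ensures that every $A \in P$ has a parent $A'$ that was split, so $e(A') > h(A')$ and $e(A) = e(A')/2 > h(A')/2$. It thus suffices to prove $h(A) \leq 2^\beta h(A')$, which via $h^\beta = p_0/\cbeta$ reduces to controlling $p_0(c_A)/p_0(c_{A'})$. This follows from \eqref{simplifyingAssumption} applied at the two centers (which are at distance $\sqrt{d}\,e(A')/4$), combined with the $\D$-condition $p_0 \geq \calpha L h^\alpha$ applied at a point $x_{A'} \in A' \cap \D$, which exists because $A'$ was processed. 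The quantitative assumption $\cstar + (\sqrt{d}^\alpha/\calpha)(2^{1-\alpha}\lor 1) \leq 1/2$ on the constants is precisely calibrated so that this chain of estimates closes.

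For item (3), pick $y,z \in A$ attaining $\max_A p_0$ and $\min_A p_0$. From \eqref{simplifyingAssumption} and $\|y-z\| \leq \sqrt{d}\,e(A) \leq \sqrt{d}\,h(A)$ (using item~(2)), one gets
\[
\max_A p_0 - \min_A p_0 \;\leq\; \cstar \min_A p_0 \;+\; \sqrt{d}^\alpha\, L\, h(A)^\alpha.
\]
Rewriting $L h(A)^\alpha = L\, p_0(c_A)^{\alpha/\beta}/\cbeta^{\alpha/\beta}$ and using the equivalent form of the $\D$-condition $L/\cbeta^{\alpha/\beta} \leq p_0(x_A)^{1-\alpha/\beta}/\calpha$ at any $x_A \in A \cap \D$ (valid since $\beta \geq \alpha$), together with the trivial bounds $p_0(c_A), p_0(x_A) \leq \max_A p_0$, one obtains $L h(A)^\alpha \leq \max_A p_0/\calpha$. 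Substituting and rearranging yields $\max_A p_0/\min_A p_0 \leq (1+\cstar)/(1 - \sqrt{d}^\alpha/\calpha)$, which is at most $2$ under the quantitative assumption on $(\cstar,\calpha)$, after a short case split on whether $\alpha \leq 1$ or $\alpha \geq 1$. The inclusion $\bigcup_{A \in P} A \subset \D(u/2)$ is then immediate, since any $y \in A \in P$ satisfies $p_0(y) \geq \min_A p_0 \geq \tfrac{1}{2}\max_A p_0 \geq \tfrac{1}{2} p_0(x_A) \geq u/2$.

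The main obstacle I anticipate is the lower bound in item~(2). The stopping criterion controls $e(A)/h(A)$ only from above; controlling it from below requires comparing $h$ at two different centers, which through H\"older continuity introduces a term $L\,e(A')^\alpha$ with no direct a priori upper bound (the splitting rule only tells us $e(A') > h(A')$). The resolution is to transfer a lower bound on $p_0$ from $x_{A'} \in A' \cap \D$ to $c_{A'}$ by means of the $\D$-condition, and then to exploit the precise form of the assumption on $(\cstar,\calpha)$ to absorb the remainder term, in the same spirit as the argument for item~(3).
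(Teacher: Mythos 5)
Your proof follows the paper's overall strategy for all three items, but with one genuinely cleaner ingredient for item (3): you avoid the paper's case split on whether the center $c_A$ lies in $\D$ by observing that the $\D$-condition is equivalent to $L/\cbeta^{\alpha/\beta} \leq p_0(x_A)^{1-\alpha/\beta}/\calpha$ at any $x_A \in A\cap\D$, then interpolating $p_0(x_A)^{1-\alpha/\beta}p_0(c_A)^{\alpha/\beta}\leq \max_A p_0$ (valid precisely because $\beta\geq\alpha$). The paper instead handles $c_A\in\D$ and $c_A\notin\D$ separately inside the proof of item (2) and reuses that work for item (3); your version makes item (3) self-contained given only $e(A)\leq h(A)$, which is a slight simplification. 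The termination argument is also essentially the paper's contradiction argument recast quantitatively (bounded recursion depth from H\"older-propagated positivity of $p_0$ near any $x\in\D$); both are fine.

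The one place your write-up is a plan rather than a proof is the lower bound $e(A)\geq h(A)/2^{\beta+1}$ in item (2), which you explicitly flag. The paper devotes the bulk of its proof to exactly this, via the two-case argument at $c_A$. Your proposed resolution — ``same spirit as item (3)'', i.e.\ prove the $\min/\max$ comparison first using only $e(A)\leq h(A)$, then derive $p_0(c_{A})\leq 2\,p_0(c_{A'})$ from Assumption \eqref{simplifyingAssumption} applied at the two centers, bounding $L(\sqrt{d}\,e(A)/2)^\alpha$ by $2^{1-\alpha}\sqrt{d}^\alpha\max_A p_0/\calpha \leq 2^{2-\alpha}\sqrt{d}^\alpha\,p_0(c_A)/\calpha$ — does close, and it is here that the factor $2^{1-\alpha}\lor 1$ in the hypothesis on $(\cstar,\calpha)$ is consumed. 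You should just be aware that this reorders the paper's logic (item (3) before item (2)'s lower bound), and you must check that item (3) really only uses the stopping-criterion upper bound $e(A)\leq h(A)$ and not the lower bound, which your interpolation argument does ensure, so there is no circularity.
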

%\ju{Note à propos de $\calpha$ : il faut que $\calpha$ soit très grand, mais il faut garder en tête que l'on a envie d'appliquer les garanties avec $\calpha = \cA/c \gg 1$ même si $\cA \ll 1$. Ajuster $c$ en conséquence.}

\begin{proof}[Proof of Proposition \ref{guarantees_algo}]
Fix a cube $\widetilde \Omega \subset \Omega$, $\beta > 0 $, $u > 0$ $\cbeta>0$.\\

\vspace{-4mm}

\begin{enumerate}
\item[] \underline{\textbf{Termination:}}
Suppose that Algorithm \ref{Algo_partitionnement} does not terminate. Then, among the cubes defined at some step by the algorithm, there would exist an infinite sequence $(A_l)_{l \in \N}$ of nested cubes of $\widetilde \Omega$ satisfying:
\begin{enumerate}
    \item[(i)] $A_0 = \widetilde \Omega$,
    \item[(ii)] $\forall l \in \N$: $A_{l+1} \subset A_l$,
    \item[(iii)] $\forall l \in \N: e(A_{l+1}) = \frac{1}{2} e(A_l)$,
    \item[(iv)] $\forall l \in \N: A_l \cap \D(u) \neq \emptyset$.
\end{enumerate}
Denote by $x_l$ the center of $A_l$ for all $l \in \N$. Then by (ii) and (iii), $(x_l)_l$ is a Cauchy sequence of $[0,1]^d$ and thus converges to some $x_\infty \in \Omega$. Moreover, denoting by $d_{\|\cdot\|_2}(x,\D(u))$ the Euclidean distance of $x$ to $\D(u)$, we have: $d_{\|\cdot\|_2}(x_l,\D(u)) \leq e(A_l) \frac{\sqrt{d}}{2} = 2^{-l} e(\widetilde \Omega) \frac{\sqrt{d}}{2} \to 0$. Since $\D(u)$ is closed by continuity of $p_0$, it holds $p_0(x_\infty) \geq u >0$. However, at each step, $A_l$ is split, imposing $e(A_l) \geq h(x_l) \to 0$, hence $h(x_\infty) = 0$, yielding $p_0(x_\infty) = 0$ since $p_0 = \cbeta h^\beta$ over $\widetilde \Omega$. This leads to a contradiction.

    \item It is straightforward to check that when the algorithm terminates, $\D(u) \subset \cup_{A \in P} A$.
    
    \item Let $A \in P$. Denote by $A'$ the parent of $A$ in the hierarchical splitting performed by Algorithm \ref{Algo_recursif}. Since by assumption the domain is split at least once, $A'$ exists. Since $A'$ was split and $A$ was kept we necessarily have: $2h(A) \geq 2 e(A) = e(A') > h(A')$. Denote by $x_A$ and $x_{A'}$ the respective centers of $A$ and $A'$. Since by definition of $A$, $x_{A'}$ is a vertex of $A$, we have $\|x_A - x_{A'}\| = e(A)\frac{\sqrt{d}}{2} \leq h(A)\frac{\sqrt{d}}{2}$. By Assumption \eqref{simplifyingAssumption}:
    \begin{align}
        |p_0(x_A) - p_0(x_{A'})| &\leq \cstar p_0(x_A) + L\|x_A - x_{A'}\|^\alpha = \cstar p_0(x_A) + L h(A)^\alpha \left(\frac{\sqrt{d}}{2} \right)^\alpha. \label{Control_A'_A}
    \end{align}
    There are two cases:
    \begin{itemize}
        \item If $x_A \in \D(u)$ then $|p_0(x_A) - p_0(x_{A'})| \leq \cstar p_0(x_A) + \frac{1}{\calpha} p_0(x_A) \left(\frac{\sqrt{d}}{2} \right)^\alpha \leq p_0(x_A) / 2$,  hence $p_0(x_A') \geq p_0(x_A)/2$.
        \item Otherwise, $x_A \notin \D(u)$. Let $x_u \in A \cap \D(u)$. Since $A$ was kept by Algorithm  \ref{Algo_partitionnement}, $x_u$ exists. By the definition of $D(u)$, it holds $p_0(x_A) < p_0(x_u)$, hence $h(x_A) < h(x_u)$. We have $\|x_A - x_u\| \leq e(A) \frac{\sqrt{d}}{2} \leq h(A) \frac{\sqrt{d}}{2} \leq h(x_u) \frac{\sqrt{d}}{2}$. By Assumption \eqref{simplifyingAssumption}: $|p_0(x_A) - p_0(x_u)| \leq \cstar p_0(x_u) + L h(x_u)^\alpha \big(\frac{\sqrt{d}}{2} \big)^\alpha \leq \frac{p_0(x_u)}{2}$, hence $p_0(x_A) \geq \frac{p_0(x_u)}{2}$. Therefore, it holds:
            \begin{align*}
                L h^\alpha(x_A) \leq L h^\alpha(x_u) \leq \frac{1}{\calpha} p_0(x_u) \leq \frac{2}{\calpha} p_0(x_A).
            \end{align*}
        Injecting this relation into \eqref{Control_A'_A}, we get: $|p_0(x_A) - p_0(x_{A'})| \leq \frac{1}{2} p_0(x_A)$ hence $p_0(x_{A'}) \geq p_0(x_A) /2$.
    \end{itemize}
    In both cases, it holds $p_0(x_{A'}) \geq p_0(x_A) /2$, hence $h(x_{A'}) \geq h(x_A) / 2^{1/\beta}$. We therefore get $h(A) \geq e(A) \geq \frac{h(A')}{2} \geq h(A) / 2^{1+1/\beta}$.
    \item Let $A\in P$ and $x= \arg\max_{A} p_0$, $y= \arg\min_{A} p_0$. We have by Assumption \eqref{simplifyingAssumption}:
    \begin{align*}
        |p_0(x)-p_0(y)| \leq \cstar p_0(x) + L(e(A)\sqrt{d})^\alpha \leq \Big(\cstar + \frac{\sqrt{d}^\alpha}{\calpha} \Big) p_0(x) \leq \frac{p_0(x)}{2}.
    \end{align*}
\end{enumerate}
\end{proof}
\section{Upper bound in the bulk regime}\label{UB_Bulk_appendix}

\subsection{Technical lemmas in the bulk regime}

Throughout the Appendix, we will denote by $B(x,h)$ the Euclidean ball of $\R^d$ centered at $x$ and of radius $h$. When no ambiguity arises, $\| \cdot \|$ will denote the Euclidean norm over $\R^d$. \textbf{In Appendix \ref{UB_Bulk_appendix} only,} we will denote by $h(x)$ the quantity $\cch \hB(x)$ where the constant $\cch$ can be chosen arbitrarily small. We first prove the following result, stating that for any $p$ satisfying Assumption \eqref{simplifyingAssumption}, $p$ can be considered as approximately constant over the balls $B(x, h(x))$ for all $x \in \TB$ where $\TB = \B(\frac{\uA}{2})$ if the bulk dominates and $\TB = \B$ if the tail dominates. 

\begin{lemma}\label{const_balls} Recall that $h(x) = \cch \hB(x) = (\tcA/4)^{1/\alpha} \hB(x)$. Let $x \in \TB$ and $y \in B(x,h(x))$. For all $p$ satisfying Assumption \eqref{simplifyingAssumption}, we have:
$$ \frac{p(y)}{p(x)} \in [\cp, \Cp] ~~~~ \text{ where } ~~~~ \Cp, \cp = 1 \pm \left(\cstar +\frac{\cch^\alpha(1+\delta)}{\tcA}\right).$$ It follows that:
\vspace{-10mm}

\begin{align*}
    \frac{\omega(y)}{ \omega(x)}  \in [\comega, \Comega] ~~~~~~ \text{ and } ~~~~~~  \frac{\hB(y)}{\hB(x)}  \in [\ch, \Ch],
\end{align*}
where $\Ch = {\Cp}^\frac{2}{(4-t) \alpha + d}$, $\ch = {\cp}^\frac{2}{(4-t) \alpha + d}$, $\Comega = {\Cp}^{\frac{2\alpha t - 4 \alpha}{(4-t) \alpha + d}}$ and $\comega = {\Cp}^{\frac{2\alpha t - 4 \alpha}{(4-t) \alpha + d}}$.
\end{lemma}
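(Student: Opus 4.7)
The plan is to derive the ratio bound $p(y)/p(x) \in [\cp, \Cp]$ as a direct consequence of Assumption \eqref{simplifyingAssumption} combined with the defining lower bound on $p_0$ that characterizes the enlarged bulk $\TB$. First, I would apply the H\"older-type inequality in \eqref{simplifyingAssumption} to $p$ at the points $x$ and $y$, using the H\"older constant of the class containing $p$, namely $L' = (1+\delta)L$. Since $y \in B(x, h(x))$ with $h(x) = \cch \hB(x)$, this yields
$$|p(y) - p(x)| \leq \cstar\, p(x) + L'\,\|x-y\|^\alpha \leq \cstar\, p(x) + (1+\delta) L\, \cch^\alpha\, \hB(x)^\alpha.$$

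Next, I would invoke the key property of $\TB$: by the choice of $\tuA$ in \eqref{def_tuA} together with the definition of $\tcA$, every $x \in \TB$ satisfies $p_0(x) \geq \tcA L\, \hB(x)^\alpha$, equivalently $L\, \hB(x)^\alpha \leq p_0(x)/\tcA$. Substituting (and using that $p(x) \asymp p_0(x)$ in the regime of interest, or first specializing the argument to $p=p_0$ and then transferring to $p$), I obtain
$$|p(y) - p(x)| \leq \left(\cstar + \frac{(1+\delta)\cch^\alpha}{\tcA}\right) p(x),$$
which is precisely the claimed bound $p(y)/p(x) \in [\cp, \Cp]$. The choice $\cch^\alpha = \tcA/4$ makes the added term equal to $(1+\delta)/4 < 1/2$, and combined with $\cstar \in (0,1/2)$ this ensures $\cp > 0$ so that the interval is meaningful.

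For the consequences on $\hB$ and $\omega$, I would simply specialize the first part to $p=p_0$ (which satisfies \eqref{simplifyingAssumption} with constants $L, \cstar$ by membership in $\mathcal{P}(\alpha,L,\cstar)$). The explicit power-law forms in \eqref{def_hbulk} and \eqref{def_omega}, namely $\hB \propto p_0^{2/((4-t)\alpha+d)}$ and $\omega \propto p_0^{(2\alpha t - 4\alpha)/((4-t)\alpha+d)}$, imply that $\hB(y)/\hB(x)$ and $\omega(y)/\omega(x)$ equal the corresponding powers of $p_0(y)/p_0(x) \in [\cp, \Cp]$. Raising this interval to the relevant exponents immediately yields $[\ch, \Ch]$ and $[\comega, \Comega]$.

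The main point requiring care is the consistent tracking of constants, in particular that the factor $(1+\delta)$ originates from $L'$ rather than from $\cstar'$ (which is why only $\cstar$ appears in the stated bound), and the step translating the lower bound $p_0(x) \geq \tcA L \hB(x)^\alpha$ on $\TB$ into an analogous statement in terms of $p(x)$. Apart from this book-keeping the argument is essentially a one-line manipulation of \eqref{simplifyingAssumption}.
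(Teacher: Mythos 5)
Your argument reproduces the paper's proof structure exactly: apply Assumption \eqref{simplifyingAssumption} at $x,y$, replace $\|x-y\|^\alpha$ by $\cch^\alpha \hB^\alpha(x)$, and absorb the bias term $L\hB^\alpha(x)$ into a constant multiple of $p(x)$ via the defining property of $\TB$. The deductions for $\hB$ and $\omega$ are also what the paper does: since both are explicit powers of $p_0$ (not of $p$), applying the first part with $p=p_0$ and raising the resulting interval to the given exponents is exactly right, and this half of the lemma is airtight.

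The step you hedge on --- justifying $L\hB^\alpha(x)\lesssim p(x)$ on $\TB$ when $p\neq p_0$ --- is a genuine issue, and the paper's own proof has the same one: it writes ``since on $\TB$: $\tcA L\hB(x)^\alpha\le p(x)$'' with no justification, and then (revealingly) the concluding sentence displays $p_0$ in place of $p$. What actually follows from the definition of $\TB$ is $\tcA L\hB^\alpha(x)\le p_0(x)$; for a generic $p\in\mathcal{P}(\alpha,L',\cstar')$ no such pointwise lower bound is available. Your two proposed fixes do not close this. ``$p(x)\asymp p_0(x)$'' is not implied by $L_t$-closeness of the densities, and ``specializing to $p_0$ then transferring'' begs the question, since Assumption \eqref{simplifyingAssumption} only controls the oscillation of $p$ on the scale $(p(x)/L')^{1/\alpha}$ relative to $p(x)$ itself, which can be far smaller than $\hB(x)$. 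So neither your proof nor the paper's establishes the lemma in the stated generality; the argument is complete only when $p=p_0$ (sufficient for the $\omega,\hB$ conclusions and for Lemma \ref{control_xy_far}), or under the extra hypothesis $p\ge\tcA L\hB^\alpha$ on $\TB$. For the application in Lemma \ref{cov_close} one should instead bound $\sup_{B(x,h(x))}p$ by a constant times $p(x)+p_0(x)$. You were right to single out the transfer step as the one requiring care.
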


\hfill

\begin{proof}
Let $x \in \TB$ and $y \in B(x,h(x))$ and assume that $p$ satisfies Assumption \eqref{simplifyingAssumption} with the constants $\cstar'$ and $L'$. We have 
\begin{align*}
    |p(x) - p(y)| &\leq \cstar' p(x) + L'\|x-y\|^\alpha \leq \cstar' p(x) + L' \, \cch^\alpha \, \hB^\alpha(x)\\
    &\leq \cstar' p(x) + \frac{(1+\delta)\cch^\alpha}{\tcA} p(x) ~~~~ \text{ since on } \TB: \tcA L \hB(x)^\alpha \leq p(x).
\end{align*}

We then take $\cch$ small enough to guarantee that $\frac{(1+\delta)\cch^\alpha}{\tcA} \leq \frac{\cstar}{2}$. Therefore, $\Cp\, p_0(x) \geq p_0(y) \geq \cp\, p_0(x)$. The analogous relations for $\hB$ and $\omega$ directly follow from their definitions \eqref{def_hbulk}, \eqref{def_omega}.
\end{proof}

\hfill

In the sequel, we define the following notation:
\vspace{-3mm}
\begin{equation}\label{def_TL}
    \TL = \frac{L^d}{n^{2\alpha}},
\end{equation}
and for all $u\geq 0$:
\begin{equation}\label{def_I_u}
    \I(u) := \int_{\B(u)} p_0^{\frac{2\alpha t}{(4-t) \alpha +d}}.
\end{equation}
so that $\I = \I(\uI)$.

\begin{lemma}\label{lemma_uA}
It holds that
\vspace{-9mm}
\begin{equation}\label{\uA}
    \tuA \geq \left[\tcA \frac{\TL}{\I^\alpha}\right]^{\frac{1}{4\alpha+d}\frac{(4-t)\alpha + d}{(2-t)\alpha + d}}.
\end{equation}
\end{lemma}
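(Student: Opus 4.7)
The lemma is a pure algebraic consequence of the definition of $\uA$ in \eqref{def_uA}; no probabilistic or analytic content is involved, so the plan is to unfold that definition and regroup the exponents until the bracketed expression $[\tcA\,\TL/\I^\alpha]^{\frac{1}{4\alpha+d}\cdot\frac{(4-t)\alpha+d}{(2-t)\alpha+d}}$ appears on the right-hand side. Set $\gamma := \frac{(4-t)\alpha+d}{(2-t)\alpha+d}$ for convenience. Since $\uA$ is defined as the maximum of $\uI$ and the explicit second term in \eqref{def_uA}, the bound
$$\uA \;\geq\; \Bigl[\cA \frac{L^{d/(4\alpha+d)}}{(n^2 \I)^{\alpha/(4\alpha+d)}}\Bigr]^{\gamma}$$
holds unconditionally and is the only estimate we need; the $\uI$ branch is never used.

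The first step is to recognise that the bracket inside is, up to the factor $\cA$, a pure power of $\TL/\I^\alpha$. Concretely, using $\TL=L^d/n^{2\alpha}$, I would rewrite
$$\cA \frac{L^{d/(4\alpha+d)}}{(n^2 \I)^{\alpha/(4\alpha+d)}} \;=\; \cA\left(\frac{\TL}{\I^\alpha}\right)^{1/(4\alpha+d)},$$
and then raise to the power $\gamma$ to obtain
$$\uA \;\geq\; \cA^{\gamma}\left(\frac{\TL}{\I^\alpha}\right)^{\gamma/(4\alpha+d)}.$$
Since by \eqref{def_tuA} we have $\tuA \in \{\uA,\uA/2\}$, in either case $\tuA \geq \tfrac{1}{2}\cA^{\gamma}\left(\TL/\I^\alpha\right)^{\gamma/(4\alpha+d)}$.

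To finish, I would absorb the constant prefactor into the bracket by writing
$$\tfrac{1}{2}\cA^{\gamma} \;=\; \Bigl(2^{-(4\alpha+d)/\gamma}\,\cA^{\,4\alpha+d}\Bigr)^{\gamma/(4\alpha+d)},$$
which gives
$$\tuA \;\geq\; \Bigl[\,2^{-(4\alpha+d)/\gamma}\,\cA^{4\alpha+d}\cdot\frac{\TL}{\I^\alpha}\,\Bigr]^{\gamma/(4\alpha+d)}.$$
The conclusion then follows provided the generic constant $\tcA$ appearing in the statement is chosen no larger than $2^{-(4\alpha+d)/\gamma}\cA^{4\alpha+d}$, which is compatible with the standing convention (stated in the paragraph after \eqref{def_uA}) that the constants $\cA,\cI,\tcA$ can all be taken arbitrarily small, subject only to relations of the form $\cI^{\alpha+d}=\cA^{4\alpha+d}$. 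The only mildly delicate step is the last one: tracking the correct exponent of $\cA$ (namely $4\alpha+d$ rather than $1$) when passing from the linear factor $\cA^{\gamma}$ outside the bracket to a multiplicative factor inside a bracket raised to power $\gamma/(4\alpha+d)$; everything else is a direct rewriting.
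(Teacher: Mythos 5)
Your algebraic reduction is correct up to the penultimate line: starting from \eqref{def_uA} and the identity $\cA\, L^{d/(4\alpha+d)}(n^2\I)^{-\alpha/(4\alpha+d)}=\cA\,(\TL/\I^\alpha)^{1/(4\alpha+d)}$, you obtain
\[
  \uA \;\geq\; \cA^{\gamma}\,\Bigl(\tfrac{\TL}{\I^\alpha}\Bigr)^{\gamma/(4\alpha+d)},
  \qquad \gamma := \tfrac{(4-t)\alpha+d}{(2-t)\alpha+d},
\]
and $\tuA\geq\uA/2$ by \eqref{def_tuA}. That part is fine.

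The gap is the last step. You treat $\tcA$ as a free constant that can be shrunk to $2^{-(4\alpha+d)/\gamma}\cA^{4\alpha+d}$ so as to absorb the prefactor into the bracket. But $\tcA$ is not a free parameter: it is pinned down just above the lemma as $\tcA = 2^{\frac{2\alpha}{(4-t)\alpha+d}-1}\cA$, and since $\frac{2\alpha}{(4-t)\alpha+d}-1 = -\frac{(2-t)\alpha+d}{(4-t)\alpha+d}=-1/\gamma$, this is precisely $\tcA = 2^{-1/\gamma}\cA$, i.e.\ $\tcA^\gamma = \tfrac12\cA^\gamma$. So the tight statement your computation delivers is
\[
  \tuA \;\geq\; \tcA^{\gamma}\,\Bigl(\tfrac{\TL}{\I^\alpha}\Bigr)^{\gamma/(4\alpha+d)}
  \;=\; \Bigl[\tcA\,\bigl(\tfrac{\TL}{\I^\alpha}\bigr)^{1/(4\alpha+d)}\Bigr]^{\gamma},
\]
with equality in the branch $\tuA=\uA/2$ when $\uA$ equals the explicit term of the max. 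This is \emph{not} the same as the displayed claim $\tuA\geq[\tcA\,\TL/\I^\alpha]^{\gamma/(4\alpha+d)}=\tcA^{\gamma/(4\alpha+d)}(\TL/\I^\alpha)^{\gamma/(4\alpha+d)}$: because $0<\tcA<1$ and $\gamma>\gamma/(4\alpha+d)$, we have $\tcA^\gamma<\tcA^{\gamma/(4\alpha+d)}$, so the stated right-hand side is strictly larger than what the definitions actually give. In other words, the lemma as printed cannot follow "directly from \eqref{def_uA} and \eqref{def_tuA}''; the $1/(4\alpha+d)$ power has evidently slipped onto $\tcA$ rather than acting only on $\TL/\I^\alpha$, and the correct form is the one you (almost) derived. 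Your proof, however, masks rather than detects this by asserting that $\tcA$ can be taken as small as needed, which contradicts its explicit definition.
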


The proof follows directly from the definition of $\uA$ in \eqref{def_uA} and \eqref{def_tuA}.

\begin{lemma}\label{control_xy_far}
We have: $\frac{1}{n}\int_{\TB} \omega(x) p_0(x)^2 dx\leq t_n$.
\end{lemma}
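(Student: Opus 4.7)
The starting observation is the algebraic identity $\omega(x) p_0(x)^2 = p_0(x)^s$ with $s := 2(2\alpha+d)/((4-t)\alpha+d)$, so after multiplying by $n$ the claim becomes
\[
\int_\TB p_0^s \;\leq\; n\,t_n \;=\; \Ctn\, L^{\frac{2d}{4\alpha+d}} \,\I^{\frac{2\alpha+d}{4\alpha+d}}\, n^{\frac{d}{4\alpha+d}}.
\]
The plan is to bound $\int_\TB p_0^s$ via Lyapunov's (H\"older) interpolation between $L^r(\TB)$ and $L^q(\TB)$ with $q := (2\alpha+d)/\alpha = 2+d/\alpha$. For $t\in[1,2]$ one verifies $r\leq s\leq q$ by direct computation, and the choice $\theta := \alpha t/(4\alpha+d)\in(0,1)$ satisfies $\tfrac{1}{s} = \tfrac{\theta}{r} + \tfrac{1-\theta}{q}$, so Lyapunov gives
\[
\int_\TB p_0^s \;\leq\; \Bigl(\int_\TB p_0^r\Bigr)^{\theta s/r}\,\Bigl(\int_\TB p_0^q\Bigr)^{(1-\theta)s/q}.
\]
A small miracle then occurs: using the identity $\tau(4\alpha+d)-(2\alpha+d)r = d$ (where $\tau = s-1 = (\alpha t+d)/((4-t)\alpha+d)$), the two exponents collapse to $\theta s/r = (2\alpha+d)/(4\alpha+d)$ and $(1-\theta)s/q = 2\alpha/(4\alpha+d)$, precisely reproducing the $\I$- and (after the sup-bound below) $L$-exponents appearing in $n\,t_n$.

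For the first factor, $\int_\TB p_0^r \leq C_1\,\I$: Lemma~\ref{Bulk_asymp_BulkCases} handles the bulk-dominated case ($\TB = \B(\uA/2)$), while in the tail-dominated case $\TB = \B(\uA)\subseteq \B(\uI)$ (as $\uA\geq \uI$) and the bound is trivial.

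For the second factor, the key input is the global sup bound $\sup_\Omega p_0 \leq C_0\, L^{d/(\alpha+d)}$. I would derive this from Assumption~\eqref{simplifyingAssumption}: at a near-maximizer $x^*$, the assumption gives $p_0 \geq (1-\cstar)\,p_0(x^*)/2$ on a Euclidean ball of radius $\asymp (p_0(x^*)/L)^{1/\alpha}$; integrating against $\int p_0 = 1$ forces $p_0(x^*)\lesssim L^{d/(\alpha+d)}$. Then since $q-1 = (\alpha+d)/\alpha$,
\[
\int_\TB p_0^q \;\leq\; (\sup p_0)^{q-1}\int p_0 \;\leq\; C_0^{(\alpha+d)/\alpha}\, L^{d/\alpha},
\]
which, after being raised to the power $(1-\theta)s/q = 2\alpha/(4\alpha+d)$, contributes exactly $L^{2d/(4\alpha+d)}$.

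Combining the two bounds yields $\int_\TB p_0^s \leq C\, L^{2d/(4\alpha+d)}\,\I^{(2\alpha+d)/(4\alpha+d)}$ for a constant $C$ depending only on $\eta, t, \alpha, d$ (through $\cstar$ and the constants of Lemma~\ref{Bulk_asymp_BulkCases}). Since $n^{d/(4\alpha+d)}\geq 1$ for $n\geq 1$, taking $\Ctn \geq C$ completes the proof after dividing by $n$. The hard part is pinning down the interpolation exponents and verifying the algebraic cancellation $\tau(4\alpha+d)-(2\alpha+d)r = d$; the global sup bound, while a well-known consequence of H\"older regularity plus integrability, also requires some care under Assumption~\eqref{simplifyingAssumption} in the regime $\alpha>1$.
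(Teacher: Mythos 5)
Your proof is correct, but it takes a genuinely different route from the paper's.

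The paper's proof keeps the bandwidth $\hB$ in the picture: it factors $\omega\,p_0^2 = \bigl(n^2L^4\I\bigr)^{\frac{d/2}{4\alpha+d}}\,p_0\,\hB^{d/2}\cdot p_0^{r/2}$, applies Cauchy--Schwarz to split this into $\bigl(\int_\TB p_0^2\hB^d\bigr)^{1/2}\bigl(\int_\TB p_0^r\bigr)^{1/2}$, bounds the second factor via Lemma~\ref{Bulk_asymp_BulkCases}, and bounds the first by a constant using the pointwise inequality $p_0(x)\,\hB^d(x)\lesssim \int_{B(x,h(x))}p_0 \le 1$ from Lemma~\ref{const_balls} (local constancy of $p_0$ on the kernel's support ball). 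You instead bypass $\hB$ entirely: you write $\omega\,p_0^2 = p_0^s$, interpolate between $L^r$ and $L^q$ with $q=(2\alpha+d)/\alpha$, and use the \emph{global} sup bound $\sup_\Omega p_0 \lesssim L^{d/(\alpha+d)}$, which is exactly Lemma~\ref{control_norm_t_Rd}(2) in the appendix (a consequence of Assumption~\eqref{simplifyingAssumption} and $\int p_0 = 1$, by the ball argument you sketch). I checked your algebra: $s=2(2\alpha+d)/((4-t)\alpha+d)$, the interpolation exponents do collapse to $(2\alpha+d)/(4\alpha+d)$ and $2\alpha/(4\alpha+d)$, and your treatment of the $\int_\TB p_0^r$ factor (Lemma~\ref{Bulk_asymp_BulkCases} when $\TB=\B(\uA/2)$, the trivial inclusion $\B(\uA)\subseteq\B(\uI)$ otherwise) matches the paper's. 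One feature worth noting: your bound is strictly stronger — it shows $\int_\TB\omega p_0^2 \lesssim L^{\frac{2d}{4\alpha+d}}\I^{\frac{2\alpha+d}{4\alpha+d}}$, without spending the $n^{d/(4\alpha+d)}$ slack that $n\,t_n$ allows. What the paper's route buys in exchange is that it stays inside the bandwidth-and-kernel framework used throughout Appendix~\ref{UB_Bulk_appendix}, reusing Lemma~\ref{const_balls} rather than importing a global sup bound proved in the tail section; structurally both rest on the same pillar, Assumption~\eqref{simplifyingAssumption}, just applied locally versus globally.
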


\begin{proof}[Proof of Lemma \ref{control_xy_far}]
We have by the Cauchy-Schwarz inequality:
\begin{align*}
    \frac{1}{n}\int_\TB \omega p_0^2 &= \frac{1}{n} \int_\TB p_0 \, \hB^{d/2}\, p_0^{r/2} \times \big(n^2 L^4 \I \big)^{\frac{d/2}{4\alpha + d}} \leq \frac{L^\frac{2d}{4\alpha + d}\I^\frac{d/2}{4\alpha + d}}{n^\frac{4\alpha}{4\alpha + d}}\Big( \int_\TB p_0^2 \hB^d \int_\TB p_0^r \Big)^{1/2} \\
    &\leq \frac{\sqrt{\CL{\ref{Bulk_asymp_BulkCases}}}}{\Ctn} t_n \Big( \int_\TB p_0^2 \hB^d \Big)^{1/2}.
\end{align*}

Moreover by Lemma \ref{const_balls},
\begin{align*}
    p_0(x) \hB^d(x) &= p_0(x) \frac{h(x)^d}{\cch^d} \leq \frac{1}{\cch^d \cp}\int_{B(x,h(x))}p_0 \leq \frac{1}{\cp \cch},
\end{align*}
so that
\begin{align*}
    \int_\TB p_0^2 \hB^d \leq \frac{1}{\cp\cch} \int_\TB p_0 \leq \frac{1}{\cp \cch}.
\end{align*}
Taking $\Ctn \geq \sqrt{\frac{\CL{\ref{Bulk_asymp_BulkCases}}}{\cp \cch}}$ yields the result.
\end{proof}

\begin{lemma}\label{omega_h_2alpha} 
It holds that: $ \int_{\TB} L^2 \omega(x) \hB^{2 \alpha}(x) dx \leq t_n$.
\end{lemma}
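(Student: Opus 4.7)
The plan is a direct computation using the explicit definitions of $\omega$ and $\hB$, followed by an appeal to Lemma \ref{Bulk_asymp_BulkCases}.

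First I would substitute the definitions \eqref{def_omega} and \eqref{def_hbulk} into the integrand. Observe that the two exponents of $p_0$ combine cleanly:
\begin{equation*}
    \omega(x)\,\hB(x)^{2\alpha}
    = p_0(x)^{\frac{2\alpha t - 4\alpha}{(4-t)\alpha+d}} \cdot \frac{p_0(x)^{\frac{4\alpha}{(4-t)\alpha+d}}}{\big(n^2 L^4 \I\big)^{\frac{2\alpha}{4\alpha+d}}}
    = \frac{p_0(x)^{r}}{\big(n^2 L^4 \I\big)^{\frac{2\alpha}{4\alpha+d}}},
\end{equation*}
since $\frac{2\alpha t - 4\alpha + 4\alpha}{(4-t)\alpha+d} = \frac{2\alpha t}{(4-t)\alpha+d} = r$. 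Multiplying by $L^2$ and collecting the powers of $L$ (namely $2 - \frac{8\alpha}{4\alpha+d} = \frac{2d}{4\alpha+d}$), I get
\begin{equation*}
    \int_\TB L^2\,\omega(x)\,\hB(x)^{2\alpha}\,dx
    \;=\; \frac{L^{\frac{2d}{4\alpha+d}}}{n^{\frac{4\alpha}{4\alpha+d}}\,\I^{\frac{2\alpha}{4\alpha+d}}}\,\int_\TB p_0^{\,r}.
\end{equation*}

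Comparing this with $t_n = \Ctn \dfrac{L^{\frac{2d}{4\alpha+d}}\I^{\frac{2\alpha+d}{4\alpha+d}}}{n^{\frac{4\alpha}{4\alpha+d}}}$, the desired inequality reduces to
\begin{equation*}
    \int_\TB p_0^{\,r} \;\leq\; \Ctn\,\I.
\end{equation*}
If the tail dominates then $\TB = \B(\uA) \subset \B(\uI)$ by definition, so trivially $\int_\TB p_0^r \leq \I$. If the bulk dominates then $\TB = \B(\uA/2)$, and this is exactly the content of Lemma \ref{Bulk_asymp_BulkCases}, which gives $\int_{\B(\uA/2)} p_0^{\,r} \leq \CL{\ref{Bulk_asymp_BulkCases}}\,\I$. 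In either case, choosing the constant $\Ctn$ in the definition of $t_n$ large enough so that $\Ctn \geq \CL{\ref{Bulk_asymp_BulkCases}}$ completes the argument.

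There is no real obstacle here: the lemma is essentially a bookkeeping exercise designed to show that the squared bias contribution $L^2 \omega \hB^{2\alpha}$ fits under the threshold $t_n$ (which was chosen to match the standard deviation of $\Tb$ under $H_0$). The only subtlety is that when the bulk dominates we enlarge the test domain from $\B(\uA)$ to $\B(\uA/2)$, and we must rely on Lemma \ref{Bulk_asymp_BulkCases} to control the excess mass of $p_0^r$ gained by this enlargement; this is exactly why $\Ctn$ was left as a free large constant in the definition of $t_n$.
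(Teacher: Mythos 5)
Your proof is correct and takes essentially the same route as the paper: both substitute the definitions of $\omega$ and $\hB$ to reduce the claim to $\int_\TB p_0^r \leq \Ctn\,\I$, and both split into the two cases for $\tuA$, using monotonicity of $\I(\cdot)$ when the tail dominates and Lemma \ref{Bulk_asymp_BulkCases} when the bulk dominates, then choose $\Ctn \geq \CL{\ref{Bulk_asymp_BulkCases}}$. The only difference is that the paper presents the computation more compactly (writing the integral directly as $\I(\tuA)$), but the content is identical.
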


\begin{proof}[Proof of Lemma \ref{omega_h_2alpha}]
We have:
$$\int_{\TB} L^2 \omega(x) \hB^{2 \alpha}(x) dx = \frac{L^\frac{2d}{4\alpha + d}\I(\tuA)}{n^\frac{4\alpha}{4 \alpha +d} \I(\uI)^{\frac{2\alpha}{4 \alpha +d}}} \leq \frac{\CL{\ref{Bulk_asymp_BulkCases}}}{\Ctn}t_n,$$
recalling the expressions of $\omega(x)$ from \eqref{def_omega}, $\hB(x)$ from \eqref{def_hbulk}, $t_n$ from \eqref{def_test_bulk}, and using at the last step (if $\tuA = \uA$) $\I(\uA) \leq \I(\uI)$ since $\uA \geq \uI$. Taking $\Ctn \geq \CL{\ref{Bulk_asymp_BulkCases}}$ yields the result.
\end{proof}

\hfill

In the remaining of the analysis of the upper bound in the bulk regime, we fix a family $X_1, \dots, X_n$ of iid random variables with density either $p_0 \in \mathcal P(\alpha, L,\cstar)$ or $p\in \mathcal{P}(\alpha,L',\cstar')$. In the whole analysis of the upper bound, we will use the following notation:
\begin{equation}\label{def_Delta}
    \Delta(x) := p(x) - p_0(x) ~~~~~~~~\hat \Delta(x) := \hat p(x) - p_0(x) ~~~~ \text{ and } ~~~~ \hat \Delta'(x) := \hat p'(x) - p_0(x).
\end{equation} 
We also define 
\begin{equation}\label{def_J}
    J := \int_\TB \omega(x) \Delta(x)^2 dx.
\end{equation}
We will denote by $\CK'$ the constant $(1+\delta)\CK$ and by $\CKprime$ the constant $\int_{\R^d} K^2$.

\begin{lemma}\label{int_omega_p2}
For $p \in \mathcal{P}(\alpha,L',\cstar')$, It holds that $\frac{1}{n} \int_\TB \omega p^2 \leq \AL{\ref{int_omega_p2}} t_n + \frac{\BL{\ref{int_omega_p2}}}{n}J$, where $\AL{\ref{int_omega_p2}}$ and $\BL{\ref{int_omega_p2}}$ are two constants.
\end{lemma}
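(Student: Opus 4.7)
The plan is to expand $p^2$ via the decomposition $p = p_0 + \Delta$ and then apply Lemma~\ref{control_xy_far} to control the $p_0^2$ part. Using the elementary inequality $(a+b)^2 \le 2a^2 + 2b^2$, I would write
\begin{equation*}
    \int_\TB \omega\, p^2 \;\le\; 2\int_\TB \omega\, p_0^2 \;+\; 2\int_\TB \omega\, \Delta^2 \;=\; 2\int_\TB \omega\, p_0^2 \;+\; 2J,
\end{equation*}
where the last equality uses the definition \eqref{def_J} of $J$. Note that $\omega \ge 0$ everywhere, so the inequality goes in the right direction, and the splitting is valid for any $p$, whether or not it belongs to $\mathcal{P}(\alpha,L',\cstar')$ (the class assumption is irrelevant here since we only use nonnegativity of $\omega$).

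Dividing by $n$ and invoking Lemma~\ref{control_xy_far}, which states $\frac{1}{n}\int_\TB \omega\, p_0^2 \le t_n$, I would conclude
\begin{equation*}
    \frac{1}{n}\int_\TB \omega\, p^2 \;\le\; 2 t_n + \frac{2}{n} J,
\end{equation*}
so the lemma holds with $\AL{\ref{int_omega_p2}} = 2$ and $\BL{\ref{int_omega_p2}} = 2$. There is no real obstacle in this lemma: the whole content has been packaged into Lemma~\ref{control_xy_far}, whose proof controls $\int_\TB \omega\, p_0^2$ via the Cauchy-Schwarz inequality and the definitions of $\omega$, $\hB$, and $t_n$; the present statement only needs to absorb the cross term $2\int \omega p_0 \Delta$ and the square $\int \omega \Delta^2$ into $J$ via the trivial quadratic bound above.
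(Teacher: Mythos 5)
Your proof is correct and follows essentially the same argument as the paper: the elementary quadratic bound $(a+b)^2 \le 2a^2 + 2b^2$ applied to $p = p_0 + \Delta$, followed by an appeal to Lemma~\ref{control_xy_far} and the definition of $J$, yielding $\AL{\ref{int_omega_p2}} = \BL{\ref{int_omega_p2}} = 2$. Your observation that the membership of $p$ in $\mathcal{P}(\alpha,L',\cstar')$ is not actually used in this step is also accurate.
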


\begin{proof}[Proof of Lemma \ref{int_omega_p2}]
Using $(a+b)^2 \leq 2a^2 + 2b^2$ and the triangle inequality, we get:
\begin{align*}
    \frac{1}{n} \int_\TB \omega p^2 \leq \frac{1}{n} \int_\TB \omega [2p_0^2 + 2 \Delta^2] \leq 2 t_n + \frac{2}{n}J =: \AL{\ref{int_omega_p2}} t_n + \frac{\BL{\ref{int_omega_p2}}}{n}J ~~ \text{ by Lemma \ref{control_xy_far}.}
\end{align*}
\end{proof}

\begin{lemma}\label{Expectation_Tb_H1}
If $J \geq t_n$ then we have:
$\mathbb{E}\Tb \geq \big(\sqrt{J} - \sqrt{t_n} \big)^2.$
\end{lemma}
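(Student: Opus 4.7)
The plan is to compute $\mathbb{E}[\Tb]$ directly, identify it as the squared $L^2(\omega)$-norm of a bias-shifted version of $\Delta$, and then use the reverse triangle inequality to separate the ``signal'' part $\sqrt{J}$ from a bias correction bounded by $\sqrt{t_n}$.

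First I will exploit the independence of the two half-samples. Since $\hat p$ and $\hat p'$ are built from disjoint halves of the data, they are independent, so for each fixed $x \in \TB$,
\[
\mathbb{E}\bigl[\hat\Delta(x)\hat\Delta'(x)\bigr] = \mathbb{E}[\hat\Delta(x)] \cdot \mathbb{E}[\hat\Delta'(x)] = \bigl(\mathbb{E}[\hat\Delta(x)]\bigr)^2.
\]
Writing $\mathbb{E}[\hat p(x)] = (K_{h(x)} * p)(x)$ and setting $b(x) := (K_{h(x)}*p)(x) - p(x)$, we have $\mathbb{E}[\hat\Delta(x)] = b(x) + \Delta(x)$. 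Fubini therefore gives
\[
\mathbb{E}[\Tb] \;=\; \int_{\TB} \omega(x)\bigl(b(x) + \Delta(x)\bigr)^2 dx \;=\; \bigl\|\sqrt{\omega}\,(b+\Delta)\bigr\|_2^2.
\]

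Next I will bound the bias in the weighted $L^2$ norm. Since $p \in H(\alpha, L')$ and $K$ is of order $\alpha$, for every fixed $x$ the function $y\mapsto (K_{h(x)}*p)(y)$ approximates $p$ uniformly with error at most $\CK L' h(x)^\alpha$, so $|b(x)| \leq \CK L' \cch^\alpha \hB(x)^\alpha$. Hence
\[
\bigl\|\sqrt{\omega}\,b\bigr\|_2^2 \;\leq\; (\CK')^2 \cch^{2\alpha} \int_{\TB} \omega(x) L^2 \hB(x)^{2\alpha}\,dx \;\leq\; (\CK')^2 \cch^{2\alpha}\, t_n
\]
by Lemma~\ref{omega_h_2alpha}. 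Choosing the constant $\cch$ small enough (recall $\cch$ is at our disposal) to guarantee $(\CK')^2 \cch^{2\alpha} \leq 1$, we obtain $\|\sqrt{\omega}\,b\|_2 \leq \sqrt{t_n}$.

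Finally, the reverse triangle inequality in $L^2(\omega\,dx)$ yields
\[
\bigl\|\sqrt{\omega}(b+\Delta)\bigr\|_2 \;\geq\; \bigl\|\sqrt{\omega}\,\Delta\bigr\|_2 - \bigl\|\sqrt{\omega}\,b\bigr\|_2 \;\geq\; \sqrt{J} - \sqrt{t_n}.
\]
The assumption $J \geq t_n$ ensures this lower bound is non-negative, so squaring it preserves the inequality and gives $\mathbb{E}[\Tb] \geq (\sqrt{J} - \sqrt{t_n})^2$, as desired. The only substantive step is the bias control, which is handled cleanly by Lemma~\ref{omega_h_2alpha}; the rest is routine manipulation once the independence of $\hat p$ and $\hat p'$ is exploited.
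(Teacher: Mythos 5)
Your proof is correct and is essentially the paper's proof: you decompose $\mathbb{E}[\hat\Delta] = b + \Delta$ with $b$ the kernel bias, bound $\|\sqrt{\omega}\,b\|_2 \le \sqrt{t_n}$ via Lemma~\ref{omega_h_2alpha}, and conclude by a triangle-inequality manipulation. The paper applies Minkowski to $\sqrt{J}$ and then rearranges, while you apply the reverse triangle inequality directly to $\sqrt{\mathbb{E}\Tb}$; and the paper absorbs the kernel constant $\CK'$ by taking $\Ctn \ge (\CK')^2$, whereas you absorb it by taking $\cch$ small (the paper itself does the latter in the proof of Proposition~\ref{ExpVar_Bulk_H0}, so both are legitimate) --- these are cosmetic variants of one and the same argument.
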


\begin{proof}[Proof of Lemma \ref{Expectation_Tb_H1}]
By the Minkowski inequality:
\begin{align*}
    &J= \int_\TB \omega(x) \Delta(x)^2 dx = \int_\TB \omega(x) \big[\Delta(x) + \mathbb{E}[\hat p(x) - p(x)] - \mathbb{E}[\hat p(x) - p(x)]\big]^2 dx \\
    & \leq \Big[\Big(\int_\TB \omega(x) \mathbb{E}^2[ \hat \Delta(x)] dx \Big)^{1/2} + \Big(\int_\TB \omega(x) \mathbb{E}^2[\hat p(x) - p(x)] dx \Big)^{1/2} \Big]^2\\
    &\leq \Big[\sqrt{\E \Tb} + \CK'\sqrt{\frac{t_n}{\Ctn}} \Big]^2 \leq \Big[\sqrt{\E \Tb} + \sqrt{t_n} \Big]^2 \text{ by choosing $\Ctn \geq {\CK'}^2$}.
\end{align*}
At the last step we used $|\E(\hat p(x) - p(x))| \leq \CK L' h^\alpha(x) = \CK' L h^\alpha(x)$, by \cite{tsybakov2008introduction} Proposition 1.2. Moreover, we used Lemma \ref{omega_h_2alpha}. This yields the result, since $J\geq t_n$.
\end{proof}

\begin{lemma}\label{splitting_variance}
We have: $\V(\Tb) \leq \Big[ \big(\sqrt{J} + \sqrt{t_n} \big)^2 + J_2^{1/2} \Big]^2 - \E^2 \Tb$ where 
\begin{equation}\label{def_J2}
    J_2 = \iint_{\TB^2} \omega(x) \omega(y) \frac{1}{k^2} \text{cov}^2\left(K_{h(x)}(x-X),K_{h(y)}(y-X)\right) dx dy.
\end{equation}
\end{lemma}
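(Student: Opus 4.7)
The plan is to decompose $\Tb$ into a deterministic mean part and three centered fluctuations, exploit the independence of the two half-samples to make all cross-covariances vanish, and then bound the remaining linear variance by a symmetric Cauchy-Schwarz. I set $\mu(x) := \E[\hat p(x)] - p_0(x)$, $\tilde\Delta(x) := \hat p(x) - \E[\hat p(x)]$, and $\tilde\Delta'(x) := \hat p'(x) - \E[\hat p'(x)]$; by construction $\tilde\Delta$ and $\tilde\Delta'$ are independent and pointwise zero-mean. Expanding $\hat\Delta\hat\Delta' = (\mu + \tilde\Delta)(\mu + \tilde\Delta')$ and integrating against $\omega$ over $\TB$ gives
\[
\Tb = \int_\TB \omega\mu^2 + \int_\TB \omega\mu\tilde\Delta + \int_\TB \omega\mu\tilde\Delta' + \int_\TB \omega\tilde\Delta\tilde\Delta',
\]
with $\E\Tb = \int_\TB \omega\mu^2$.

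Next, a direct conditioning argument (condition on the relevant half-sample and use $\E\tilde\Delta(y) = \E\tilde\Delta'(y) = 0$ pointwise) shows that each of the three pairwise covariances among the centered integrals vanishes. Hence $\V(\Tb) = 2 V_{\text{lin}} + J_2$, where $V_{\text{lin}} := \V\!\left(\int_\TB \omega\mu\tilde\Delta\right)$, and the variance of $\int_\TB \omega\tilde\Delta\tilde\Delta'$, computed via independence and pointwise zero mean, recovers exactly the defining expression of $J_2$ in \eqref{def_J2}.

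The key step will be to bound $V_{\text{lin}}$. Writing $\int_\TB \omega\mu\tilde\Delta = \tfrac{1}{k}\sum_{i=1}^k [G(X_i) - \E G(X_1)]$ with $G(z) := \int_\TB \omega(x)\mu(x) K_{h(x)}(x-z)\,dx$, one gets $V_{\text{lin}} = \V(G(X_1))/k$, and the bilinear variance identity produces
\[
\V(G(X_1)) = \iint_{\TB^2} \omega(x)\omega(y)\mu(x)\mu(y)\,\text{cov}\big(K_{h(x)}(x-X), K_{h(y)}(y-X)\big)\,dx\,dy.
\]
I then apply Cauchy-Schwarz on $dx\,dy$ with the symmetric split $\sqrt{\omega(x)\omega(y)}\,\mu(x)\mu(y) \cdot \sqrt{\omega(x)\omega(y)}\,\text{cov}(\cdot,\cdot)$: Fubini collapses the first factor to $\big(\int\omega\mu^2\big)^2 = (\E\Tb)^2$ and the second matches $k^2 J_2$, so that $V_{\text{lin}} \leq \E\Tb\,\sqrt{J_2}$.

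It then remains to control $\E\Tb = \int\omega\mu^2$. Writing $\mu = \Delta + (\E\hat p - p)$, the order-$\alpha$ kernel bias bound $|\E\hat p(x) - p(x)| \leq \CK' L\,h(x)^\alpha$ together with Minkowski and Lemma~\ref{omega_h_2alpha} yields $\sqrt{\E\Tb} \leq \sqrt{J} + \sqrt{t_n}$ once $\cch$ is chosen small enough (exactly as in the proof of Lemma~\ref{Expectation_Tb_H1}). Setting $X := (\sqrt{J} + \sqrt{t_n})^2 \geq \E\Tb \geq 0$, the algebraic identity
\[
[X + \sqrt{J_2}]^2 - \E^2\Tb - (2\E\Tb\sqrt{J_2} + J_2) = (X - \E\Tb)(X + \E\Tb + 2\sqrt{J_2}) \geq 0,
\]
combined with $\V(\Tb) = 2V_{\text{lin}} + J_2 \leq 2\E\Tb\sqrt{J_2} + J_2$, yields the announced bound $\V(\Tb) \leq [X + \sqrt{J_2}]^2 - \E^2\Tb$. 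The main obstacle will be spotting the correct symmetric Cauchy-Schwarz split: alternative splittings produce $\int\omega^2\mu^2$ or similar quantities that cannot be reduced cleanly to $\E\Tb$, so distributing the weight $\omega(x)\omega(y)$ evenly between the two factors is what makes the first collapse to $(\E\Tb)^2$ and the second match $k^2 J_2$ exactly.
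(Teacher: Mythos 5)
Your proof is correct and, once the algebra is unwound, coincides with the paper's argument. The paper writes
\[
\E\Tb^2 = \iint_{\TB^2}\omega(x)\omega(y)\,\E\big[\hat\Delta(x)\hat\Delta(y)\big]^2\,dx\,dy,
\qquad
\E\big[\hat\Delta(x)\hat\Delta(y)\big]=\mu(x)\mu(y)+\tfrac1k\,\mathrm{cov}\big(K_{h(x)}(x-X),K_{h(y)}(y-X)\big),
\]
and applies Minkowski's inequality in $L^2(\omega\otimes\omega)$ to obtain $\E\Tb^2\leq(\sqrt{J_1}+\sqrt{J_2})^2$ with $\sqrt{J_1}=\int\omega\mu^2=\E\Tb$, which is exactly the bound $\V\Tb\le 2\E\Tb\sqrt{J_2}+J_2$. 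Your orthogonal decomposition $\Tb=\E\Tb+\int\omega\mu\tilde\Delta+\int\omega\mu\tilde\Delta'+\int\omega\tilde\Delta\tilde\Delta'$ gives the identity $\V\Tb=2V_{\mathrm{lin}}+J_2$, and your symmetric Cauchy--Schwarz on $V_{\mathrm{lin}}$ is literally the inner-product step hidden inside the paper's Minkowski inequality, since $\|a+b\|^2\leq(\|a\|+\|b\|)^2$ is equivalent to $\langle a,b\rangle\leq\|a\|\,\|b\|$. The final step, $\E\Tb\le(\sqrt J+\sqrt{t_n})^2$ via the kernel bias bound, Lemma~\ref{omega_h_2alpha} and the choice of $\cch$, is also the one in the paper (it reuses the estimate \eqref{truc}). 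The only real difference is expository: you make the orthogonality of the three fluctuation terms explicit and obtain the variance as an equality before bounding, whereas the paper bounds $\E\Tb^2$ directly; mathematically the two routes are the same and yield the same intermediate inequality.
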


\begin{proof}[Proof of Lemma \ref{splitting_variance}]
\begin{align}
    \V(\Tb) & = \E(\Tb^2) - \E^2 \Tb = \E \iint_{\TB^2} \omega(x) \omega(y) \; \hat \Delta(x) \hat \Delta(y) \; \hat \Delta'(x) \hat \Delta'(y) \; dx dy - \E^2 \Tb \nonumber \\
    & = \iint_{\TB^2} \omega(x) \omega(y) \E\left[\hat \Delta(x) \hat \Delta(y)\right]^2 dx dy -\E^2 \Tb \label{var_bulk_H0}.
\end{align}

Recall that throughout Appendix \ref{UB_Bulk_appendix}, $h(x) = \cch\hB(x)$ where $\cch = (\tcA/4)^\frac{1}{\alpha}$. We now compute the term $\E\left[\hat \Delta(x) \hat \Delta(y)\right]^2$. We have:

\begin{align}
    \E\left[\hat \Delta(x) \hat \Delta(y)\right] & = \E\left\{\left[ \frac{1}{k} \sum_{i=1}^k \left(K_{h(x)}(x-X_i) - p_0(x)\right) \right]\left[ \frac{1}{k} \sum_{i=1}^k \left(K_{h(y)}(y-X_i) - p_0(y)\right) \right]\right\}\nonumber\\
    &\nonumber\\
    & = \frac{1}{k^2} \sum_{i=1}^k \E\left\{\left[K_{h(x)}(x-X_i) - p_0(x)\right]\left[K_{h(y)}(y-X_i) - p_0(y)\right] \right\}\nonumber\\
    & \;\;\;\;+ \frac{1}{k^2} \sum_{i \neq j} \E\left[K_{h(x)}(x-X_i) - p_0(x)\right]\E\left[K_{h(y)}(y-X_j) - p_0(y)\right]\nonumber\\
    &\nonumber\\
    & = \frac{1}{k} \;  \E\left\{\left[K_{h(x)}(x-X) - p_0(x)\right]\left[K_{h(y)}(y-X) - p_0(y)\right] \right\}\nonumber\\
    & \;\;\;\;+ \frac{k-1}{k}  \E\left[K_{h(x)}(x-X) - p_0(x)\right]\E\left[K_{h(y)}(y-X) - p_0(y)\right]\nonumber\\
    &\nonumber\\
    &= \E\left[K_{h(x)}(x-X) - p_0(x)\right]\E\left[K_{h(y)}(y-X) - p_0(y)\right]\nonumber\\
    & \;\;\;+ \frac{1}{k} \text{cov}\left(K_{h(x)}(x-X),K_{h(y)}(y-X)\right),\label{exp_cross_term}
\end{align}
so that, by the Minkowski inequality:

\begin{align}\label{exp_delta2_delta2}
    &\iint_{\TB^2} \omega(x) \omega(y) \E\left[\hat \Delta(x) \hat \Delta(y)\right]^2 dx dy  \leq  (J_1^{1/2} + J_2^{1/2})^2,
\end{align}

where 
\begin{align}
    J_1^{1/2} &=  \int_{\TB} \omega(x) \E^2\left[K_{h(x)}(x-X) - p_0(x)\right]dx, \label{def_J1}\\
    J_2 & = \iint_{\TB^2} \omega(x) \omega(y) \frac{1}{k^2} \text{cov}^2\left(K_{h(x)}(x-X),K_{h(y)}(y-X)\right) dx dy.\label{def_J22}
\end{align}

Moreover, by triangular inequality and by \cite{tsybakov2008introduction}, Proposition 1.1:
\begin{align*}
    \left|\E\left[K_{h(x)}(x-X) - p_0(x)\right]\right| \; \leq \; |p(x)-p_0(x)| + \big|\E\left(\hat p(x) - p(x)\right)\big| \; \leq \; |\Delta(x)| + \CK' L h(x)^\alpha,
\end{align*}
Therefore, still by the Minkowski inequality:
\begin{align}
    J_1^{1/2} &\leq \int_\TB \omega(x) \big[\; |\Delta(x)| + \CK' L h(x)^\alpha \big]^2 dx\nonumber\\
    & \leq \left[\Big(\int_\TB \omega(x) |\Delta(x)|^2 dx  \Big)^{1/2} + \Big(\int_\TB \omega(x) {\CK'}^2 L^2 h(x)^{2\alpha} dx \Big)^{1/2} \right]^2\nonumber\\
    &\leq \big(\sqrt{J} + \sqrt{t_n} \big)^2 ~~ \text{ by Lemma \ref{omega_h_2alpha}, the definition of $J$ and using $\cch \leq 1$}. \label{truc}
\end{align}
Equations \eqref{var_bulk_H0}, \eqref{exp_delta2_delta2} and \eqref{truc} yield the result.
\end{proof}

\begin{lemma}\label{cov_far}
Let $X$ be a random variable with density $p \in \mathcal P(\alpha, L',\cstar')$. If $~\|x-y\| > \frac{1}{2} [h(x) + h(y)]$, then 
$$\left| \textnormal{cov} \left(K_{h(x)}(x-X),K_{h(y)}(y-X)\right) \right| \leq \left(\CK' L h(x)^\alpha + p(x)\right) \left(\CK' L h(y)^\alpha + p(y)\right).$$
\end{lemma}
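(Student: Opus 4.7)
The plan is to exploit the compact support of $K$ to kill the joint expectation, and then bound each marginal expectation using the order-$\alpha$ property of the kernel.

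Since $K$ vanishes outside $\{\|u\|_2 \leq 1/2\}$, the function $u \mapsto K_{h(x)}(x-u)$ is supported on the Euclidean ball $B(x, h(x)/2)$, and similarly $u \mapsto K_{h(y)}(y-u)$ is supported on $B(y, h(y)/2)$. The assumption $\|x-y\| > (h(x) + h(y))/2$ ensures that these two balls are disjoint. Consequently the product $K_{h(x)}(x-X) \, K_{h(y)}(y-X)$ is identically zero, so
\begin{equation*}
    \E\bigl[K_{h(x)}(x-X)\, K_{h(y)}(y-X)\bigr] = 0,
\end{equation*}
and therefore
\begin{equation*}
    \textnormal{cov}\bigl(K_{h(x)}(x-X),K_{h(y)}(y-X)\bigr) = -\,\E[K_{h(x)}(x-X)]\,\E[K_{h(y)}(y-X)].
\end{equation*}

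It remains to bound each marginal expectation. Writing $\E[K_{h(x)}(x-X)] = (K_{h(x)}*p)(x)$ and using the triangle inequality,
\begin{equation*}
    \bigl|\E[K_{h(x)}(x-X)]\bigr| \;\leq\; p(x) + \bigl|(K_{h(x)}*p)(x) - p(x)\bigr|.
\end{equation*}
Since $p \in \mathcal{P}(\alpha, L', \cstar') \subset H(\alpha, L')$ and $K$ is a kernel of order $\alpha$ in the sense specified just before \eqref{def_CK}, the standard bias bound (see e.g.\ \cite{tsybakov2008introduction}, Proposition 1.2) gives $|(K_{h(x)}*p)(x) - p(x)| \leq \CK L' h(x)^\alpha = \CK' L h(x)^\alpha$. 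Hence
\begin{equation*}
    \bigl|\E[K_{h(x)}(x-X)]\bigr| \;\leq\; p(x) + \CK' L h(x)^\alpha,
\end{equation*}
and symmetrically for $y$. Multiplying the two bounds yields the desired inequality.

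There is no real obstacle here: the only structural point is the disjoint-support argument, after which the order-$\alpha$ bias estimate for $K_{h}*p$ handles the rest. The same proof works uniformly in $x,y \in \TB$ since the bias constant $\CK$ depends only on $K$ and $\alpha$, not on the bandwidth or on $p$.
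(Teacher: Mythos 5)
Your proof is correct and follows essentially the same route as the paper's: both use the disjoint-support argument to show the joint expectation vanishes, reduce the covariance to a product of marginal expectations, and then bound each marginal via $|\E[K_{h}(x-X)]| \leq p(x) + \CK' L h(x)^\alpha$ using the order-$\alpha$ bias estimate. The only cosmetic difference is the citation (you reference Tsybakov Proposition~1.2 for the bias bound, which is arguably the more apt reference, whereas the paper cites Proposition~1.1 here).
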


\begin{proof}[Proof of Lemma \ref{cov_far}]
We recall that by definition $K$ has bounded support $B(0,\frac{1}{2})$. In this case: $\text{Supp}K_{h(x)}(x-\cdot) \cap \text{Supp}K_{h(y)}(y-\cdot) = \emptyset$. Therefore, $K_{h(x)}(x-X)K_{h(y)}(y-X) = 0$ almost surely. Then by \cite{tsybakov2008introduction}, Proposition 1.1:
\begin{align*}
    \left|\text{cov}\left(K_{h(x)}(x-X),K_{h(y)}(y-X)\right) \right|& =  \left| \E\left[K_{h(x)}(x-X)\right]\E\left[K_{h(y)}(y-X)\right]\right| \nonumber \\
    &\leq \left(\CK' L h(x)^\alpha + p(x)\right) \left(\CK' L h(y)^\alpha + p(y)\right).
\end{align*}
\end{proof}

\begin{lemma}\label{cov_close}
Let $X$ be a random variable with density $p \in \mathcal P(\alpha, L',\cstar')$. If $\|x-y\| \leq \frac{1}{2} [h(x) + h(y)] \leq h(x) \lor h(y)$ then 
\begin{align}
    & \left|\text{cov}\left(K_{h(x)}(x-X),K_{h(y)}(y-X)\right)\right| \leq \CL{\ref{cov_close}} \frac{p(x)}{h^d(x)}. 
\end{align}
where $\CL{\ref{cov_close}}$ is a constant.
%where $\Cclose = {\Cp}^2 \CKprime$ and $\CK' = \int_{\R^d} K^2$.
\end{lemma}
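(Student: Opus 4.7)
The plan is to split the covariance as
$$\text{cov}(K_{h(x)}(x-X), K_{h(y)}(y-X)) = \mathbb{E}[K_{h(x)}(x-X) K_{h(y)}(y-X)] - \mathbb{E}[K_{h(x)}(x-X)] \mathbb{E}[K_{h(y)}(y-X)]$$
and to bound each of the two terms by a constant multiple of $p(x)/h(x)^d$. By symmetry of the covariance and by Lemma \ref{const_balls} (which gives $p(x)\asymp p(y)$ and $h(x)\asymp h(y)$ under the hypothesis $\|x-y\|\leq h(x)\lor h(y)$), one may assume \textit{wlog} that $h(x) \geq h(y)$.

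For the first term, note that $K_{h(x)}(x-z)K_{h(y)}(y-z)$ is supported on the intersection $B(x,h(x)/2) \cap B(y,h(y)/2)$, which is contained in $B(y,h(y)/2)$. Bounding $|K_{h(x)}(x-z)|$ by $\|K\|_\infty/h(x)^d$, one obtains
\begin{align*}
    \left| \mathbb{E}[K_{h(x)}(x-X) K_{h(y)}(y-X)] \right| &\leq \frac{\|K\|_\infty}{h(x)^d} \int |K_{h(y)}(y-z)| p(z)\,dz \\
    &\leq \frac{\|K\|_\infty \|K\|_1}{h(x)^d} \sup_{z \in B(y, h(y)/2)} p(z).
\end{align*}
Since $\|x-y\| \leq (h(x)+h(y))/2 \leq h(x)$ and $h(y)\leq h(x)$, the ball $B(y,h(y)/2)$ is contained in $B(x, 3h(x)/2)$, so the supremum is $\lesssim p(x)$ by (a mild extension of) Lemma \ref{const_balls} to a slightly enlarged ball.

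For the second term, by Proposition 1.1 of \cite{tsybakov2008introduction}, $|\mathbb{E}[K_{h(x)}(x-X)]| \leq \int|K_{h(x)}(x-z)|p(z)\,dz \leq \|K\|_1 \sup_{B(x,h(x)/2)} p \lesssim p(x)$, and analogously $|\mathbb{E}[K_{h(y)}(y-X)]| \lesssim p(y) \asymp p(x)$. Hence the product of expectations is $\lesssim p(x)^2$. To convert this into the desired form $p(x)/h(x)^d$, use that $p(x) h(x)^d \lesssim 1$: indeed, since $p(z) \asymp p(x)$ on $B(x,h(x))$, the mass $\int_{B(x,h(x))} p \asymp p(x)h(x)^d$ is trivially bounded by $1$, so $p(x)^2 \lesssim p(x)/h(x)^d$, completing the bound.

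The only mild obstacle is that Lemma \ref{const_balls} as stated controls $p(z)/p(x)$ on $B(x,h(x))$ whereas here one needs the control on $B(x, 3h(x)/2)$. This is handled either by rerunning the proof of Lemma \ref{const_balls} with the enlarged radius, observing that the relevant constant depends only on $\cch^\alpha/\tcA$ (which remains arbitrarily small after multiplication by $(3/2)^\alpha$), or equivalently by choosing $\cch$ smaller by a constant factor from the outset so that the current radius already absorbs this $3/2$.
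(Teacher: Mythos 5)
Your proof is correct, but it takes a genuinely different route from the paper's. The paper bounds the covariance in a single step via the Cauchy--Schwarz inequality, $|\mathrm{cov}| \le \sqrt{\V(K_{h(x)}(x-X))\, \V(K_{h(y)}(y-X))}$, then controls each variance by Tsybakov's Proposition~1.1 as $\V(K_{h(\cdot)}(\cdot-X)) \le \CKprime \Cp\, p(\cdot)/h^d(\cdot)$, and finishes by taking $h(y) = h(x)\vee h(y)$, using $h^d(y)\ge h^d(x)$ and $p(y)\le p(x)/\cp$ (the latter from Lemma~\ref{const_balls}) to land on $\CL{\ref{cov_close}}\,p(x)/h^d(x)$. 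You instead decompose $\mathrm{cov} = \E[K K'] - \E[K]\E[K']$ and bound the two terms separately, with the opposite labelling convention $h(x)\ge h(y)$. Both routes work and yield the same order of bound. What the paper's approach buys is economy: Cauchy--Schwarz collapses the cross-moment and the product-of-means into one quantity, and the only geometric fact needed is that $B(x,h(x))$ is controlled by Lemma~\ref{const_balls}; there is no need for the enlarged ball $B(x, 3h(x)/2)$ or for the inequality $p(x)h(x)^d\lesssim 1$ (which the paper does use, but elsewhere, in Lemma~\ref{control_xy_far}). What your approach buys is that it avoids Cauchy--Schwarz entirely and is arguably more transparent about where each factor of $p(x)$ and $h(x)^{-d}$ is coming from. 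The two auxiliary facts you invoke are both legitimate: the enlargement of Lemma~\ref{const_balls} to radius $3h(x)/2$ is indeed absorbed by choosing $\cch$ smaller by a fixed factor, and $p(x)h(x)^d\lesssim 1$ follows, exactly as you say, from lower-bounding $p$ on $B(x,h(x))$ via Lemma~\ref{const_balls} and integrating against the total mass one.
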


\hfill

\begin{proof}[Proof of Lemma \ref{cov_close}]
If $\|x-y\| \leq \frac{1}{2} [h(x) + h(y)] \leq h(x) \lor h(y)$, we suppose by symmetry $h(y) = h(x) \lor h(y)$. Then
\begin{align*}
    \left|\text{cov}\left(K_{h(x)}(x-X),K_{h(y)}(y-X)\right)\right| & \leq \sqrt{\V\left(K_{h(x)}(x-X)\right)\V\left(K_{h(y)}(y-X)\right)}.
\end{align*}

Now, by \cite{tsybakov2008introduction}, Proposition 1.1, the variance of the Kernel estimator $K_{h(x)}(x-X)$ is upper bounded as:
\begin{align}
    \V\left(K_{h(x)}(x-X)\right) &\leq \frac{1}{h^d(x)} \Big[\sup_{B(x,h(x))} p\Big] \int_{\R^d} K^2 \leq \frac{1}{h^d(x)} \Cp \CKprime \; p(x) .
    \label{variance_kernel_H1}
\end{align}

In the last inequality, we used Lemma \ref{const_balls}. Hence, since $h(y) \geq h(x)$ and $\|x-y\| \leq h(y)$, we have $x \in B(y,h(x))$ so that $p(y) \leq \frac{1}{\cp} p(x)$. Thus:
\begin{align}
    \left|\text{cov}\left(K_{h(x)}(x-X),K_{h(y)}(y-X)\right)\right| &\leq \frac{\sqrt{\Cp \CKprime \; p(x) \cdot \Cp \CKprime \; p(y)}}{\sqrt{ h^d(x) \cdot h^d(x)}} \nonumber\\
    \leq ~ \frac{\Cp \CKprime}{\sqrt{\cp}} \frac{p(x)}{h^d(x)} ~ &=: ~ \CL{\ref{cov_close}}\frac{p(x)}{h^d(x)}. \label{cov_close_H1} 
\end{align}
\end{proof}

\subsection{Analysis of the upper bound in the bulk regime}

In the bulk regime, we recall that $\psib$ rejects $H_0$ if, and only if: $\Tb > \ThreshB t_n$. We prove the bulk upper bound by showing that $\psib$ has small type-I and type-II errors. To do so, we show that \textit{whp} under $H_0$, $\Tb \leq \ThreshB t_n$, whereas \textit{whp} under $H_1$: $\Tb > \ThreshB t_n$. This will be proved by computing the expectation and variance of $\Tb$ under $H_0$ in Proposition \ref{ExpVar_Bulk_H0} and under $H_1$ in Proposition \ref{ExpVar_Bulk_H1}. In both cases, we then use Chebyshev's inequality to show, in Corollary \ref{Risk_Tb}, that under $H_0$ $\Tb$ is concentrated below $\ThreshB t_n$, while under $H_1$ it is concentrated above $\ThreshB t_n$. 

\hfill

\begin{proposition}\label{ExpVar_Bulk_H0}
Under $H_0$ we have:
\begin{itemize}
    \item $\big|\E(\Tb)\big| \leq  t_n$
    \item $\V(\Tb) \leq \CvarNull t_n^2$,
\end{itemize} where %$\CK$ is given in \eqref{def_CK} and %$\CvarNull = \left[\CK + \frac{2}{\sqrt{\cp}} \Big(\frac{\CK}{\cA} +1 \Big)^2 + \CL{\ref{cov_close}} \sqrt {8 \Comega} \right]^2$
$\CvarNull$ is a constant given in the proof. 
\end{proposition}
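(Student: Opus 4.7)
The plan is to handle the two bounds separately, exploiting the sample-splitting structure of $\Tb$ (so that $\hat p$ and $\hat p'$ are independent) and leveraging the technical lemmas already established in this appendix.

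For the expectation, I would use independence of the two halves to write
\[
\E_{p_0}(\Tb) \;=\; \int_{\TB} \omega(x)\,\bigl(\E\hat\Delta(x)\bigr)\bigl(\E\hat\Delta'(x)\bigr)\,dx,
\]
where $\E\hat\Delta(x) = (K_{h(x)}\! *\! p_0)(x) - p_0(x)$ is exactly the bias of a kernel of order $\alpha$ applied to a H\"older function. By the defining property of $K$ this bias is bounded in absolute value by $\CK L h(x)^\alpha \le \CK L \hB(x)^\alpha$. Squaring and plugging in,
\[
\bigl|\E\Tb\bigr| \;\le\; \CK^2 \int_{\TB} \omega(x)\,L^2 \hB(x)^{2\alpha}\,dx \;\le\; \frac{\CK^2 \CL{\ref{Bulk_asymp_BulkCases}}}{\Ctn}\,t_n,
\]
by Lemma~\ref{omega_h_2alpha}. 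Choosing $\Ctn$ large enough (depending only on $\alpha, d, t$) yields $|\E\Tb|\le t_n$.

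For the variance, I would invoke Lemma~\ref{splitting_variance}, which under $H_0$ (where $\Delta \equiv 0$ hence $J=0$) reduces to
\[
\V(\Tb) \;\le\; \bigl(t_n + J_2^{1/2}\bigr)^2 - \E^2 \Tb,
\]
so the whole task is to show $J_2 \lesssim t_n^2$. I would split the double integral defining $J_2$ into a "close" region $\{\|x-y\|\le (h(x)+h(y))/2\}$ and a "far" region. On the far region Lemma~\ref{cov_far} combined with the bulk lower bound $p_0(x)\gtrsim L\hB(x)^\alpha$ gives $|\mathrm{cov}|\lesssim p_0(x)p_0(y)$, and the contribution is at most $k^{-2}\bigl(\int_\TB \omega\, p_0^2\bigr)^2 \lesssim t_n^2$ by Lemma~\ref{control_xy_far}.

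The main work is the close region. There I would apply Lemma~\ref{cov_close} to get $|\mathrm{cov}|\lesssim p_0(x)/h(x)^d$, and use Lemma~\ref{const_balls} to deduce both $\omega(y)\asymp\omega(x)$ and $h(y)\asymp h(x)$ on the integration set, so the inner $y$-integral has volume $\lesssim h(x)^d$. This leaves
\[
\iint_{\text{close}} \;\lesssim\; \frac{1}{k^2}\int_\TB \omega(x)^2 \frac{p_0(x)^2}{h(x)^d}\,dx.
\]
The key algebraic step, which is the main obstacle and where the very definitions of $\omega$ in \eqref{def_omega} and $\hB$ in \eqref{def_hbulk} are calibrated to conspire, is to verify that
\[
\omega(x)^2 \frac{p_0(x)^2}{\hB(x)^d} \;\asymp\; \bigl(n^2 L^4 \I\bigr)^{d/(4\alpha+d)} p_0(x)^r,
\]
i.e., that after substituting the exponents $(2\alpha t-4\alpha)/((4-t)\alpha+d)$ from $\omega$, $2/((4-t)\alpha+d)$ from $\hB$, and the remaining $p_0(x)^2$, the power of $p_0$ collapses precisely to $r=2\alpha t/((4-t)\alpha+d)$. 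Granted this identity, integrating over $\TB$ gives at most $(n^2L^4\I)^{d/(4\alpha+d)}\I$, and dividing by $k^2\asymp n^2$ yields exactly $t_n^2$ up to a constant. Combining the two regions, $J_2^{1/2}\lesssim t_n$, and plugging back into Lemma~\ref{splitting_variance} gives $\V(\Tb)\le \CvarNull t_n^2$ for an explicit constant $\CvarNull$ depending only on $\eta,\alpha,t,d$.
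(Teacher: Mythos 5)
Your proposal matches the paper's own proof essentially step for step: the bias bound plus Lemma~\ref{omega_h_2alpha} for the expectation, Lemma~\ref{splitting_variance} with $J=0$, the far/near split with Lemmas~\ref{cov_far}, \ref{control_xy_far}, \ref{cov_close}, and the near-diagonal reduction via Lemma~\ref{variance_term_on_diagonal}; the algebraic identity $\omega^2 p_0^2/\hB^d \asymp (n^2L^4\I)^{d/(4\alpha+d)} p_0^r$ that you flag as the main obstacle is exactly what the paper elides as ``immediate calculation.'' The only detail you should make explicit in the final step is $\int_{\TB} p_0^r \lesssim \I$, which is either trivial (when $\TB=\B(\uA)\subset\B(\uI)$) or Lemma~\ref{Bulk_asymp_BulkCases} (when $\TB=\B(\uA/2)$).
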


\hfill

\begin{proposition}\label{ExpVar_Bulk_H1}
There exists a constant $\nbulk$ depending only on $\RadB$ and a constant $\CvarAlt$ such that, whenever $n \geq \nbulk$ and $\left(\int_{\TB}|p-p_0|^t\right)^{1/t} \geq \RadB \rhob$, it holds:
\begin{itemize}
    \item $\E(\Tb) \geq (1 - 1/\sqrt{\RadB})^{\, 2} J $
    \item $\V(\Tb) \leq \frac{\CvarAlt}{\RadB}J^2$.
\end{itemize}
\end{proposition}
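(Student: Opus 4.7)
The plan is to (i) translate the $L^t$-separation hypothesis into a quantitative lower bound on the weighted quadratic discrepancy $J := \int_\TB \omega(x)\Delta(x)^2\,dx$, (ii) obtain the expectation bound directly from Lemma~\ref{Expectation_Tb_H1}, and (iii) bound the variance via Lemma~\ref{splitting_variance} by controlling the kernel-covariance term $J_2$. For (i), H\"older's inequality with conjugate exponents $2/t$ and $2/(2-t)$, applied to the factorization $|\Delta|^t=(\omega\Delta^2)^{t/2}\omega^{-t/2}$, gives
\[\int_\TB |\Delta|^t \;\leq\; J^{t/2}\,\Bigl(\int_\TB \omega^{-t/(2-t)}\Bigr)^{(2-t)/2} \;=\; J^{t/2}\,\Bigl(\int_\TB p_0^{r}\Bigr)^{(2-t)/2}.\]
By Lemma~\ref{Bulk_asymp_BulkCases} in the bulk-dominates case (and trivially $\int_\TB p_0^r\leq \I$ in the tail-dominates case since then $\TB\subset\B(\uI)$), the last factor is $\lesssim \I^{(2-t)/2}$. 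Direct substitution of the explicit formulas $\rhob$ and $t_n$ yields $\rhob^2\,\I^{-(2-t)/t}=t_n/\Ctn$, so that $(\RadB\rhob)^t\leq \int_\TB|\Delta|^t$ forces $J\gtrsim \RadB^2 t_n$. Taking $\RadB$ large, $J \geq \RadB t_n$. Combined with Lemma~\ref{Expectation_Tb_H1}, this gives $\E(\Tb)\geq (\sqrt J-\sqrt t_n)^2 \geq J(1-1/\sqrt\RadB)^2$.

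For (iii), Lemma~\ref{splitting_variance} reads $\V(\Tb)\leq [(\sqrt J+\sqrt t_n)^2+J_2^{1/2}]^2 - \E^2(\Tb)$. Writing $a=\sqrt{t_n/J}\leq 1/\sqrt\RadB$ and $b=J_2^{1/2}/J$, this equals $J^2\bigl\{[(1+a)^2+b]^2-(1-a)^4\bigr\}=J^2\cdot O(a+b)$, so it suffices to show $J_2\lesssim J^2/\RadB$ (the remaining deficit is absorbed into $\CvarAlt$). I split the double integral defining $J_2$ according to whether $\|x-y\|>\tfrac12(h(x)+h(y))$ (far) or not (close). On the far region the kernel supports are disjoint, so combining Lemma~\ref{cov_far} with Lemmas~\ref{omega_h_2alpha} and~\ref{int_omega_p2} (exactly as in the proof of Lemma~\ref{splitting_variance}) yields
\[J_2^{\mathrm{far}} \;\leq\; \frac{1}{k^2}\Bigl(\int_\TB \omega(x)\bigl[\CK' L h(x)^\alpha + p(x)\bigr]^2\,dx\Bigr)^2 \;\lesssim\; t_n^2+\frac{J^2}{n^2}.\]
On the close region, Lemma~\ref{cov_close} gives $|\mathrm{cov}(x,y)|\leq \CL{\ref{cov_close}}\,p(x)/h^d(x)$ when $h(x)\leq h(y)$. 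Using Assumption~\eqref{simplifyingAssumption} applied to $p_0$ and the bulk lower bound $p_0\geq \tcA L\hB^\alpha$, one derives a two-sided variant of Lemma~\ref{const_balls}: $h(x)\asymp h(y)$ and $\omega(x)\asymp\omega(y)$, and for fixed $x$ the $y$-region is contained in $B(x,Ch(x))$ of volume $\lesssim h^d(x)$. Hence
\[J_2^{\mathrm{close}} \;\lesssim\; \frac{1}{k^2}\int_\TB \omega(x)^2\,\frac{p(x)^2}{h^d(x)}\,dx \;\leq\; \frac{2}{k^2}\int_\TB \omega^2\,\frac{p_0^2+\Delta^2}{h^d}\,dx.\]
The $p_0^2$-piece simplifies algebraically to $C(n^2L^4\I)^{d/(4\alpha+d)}\,p_0^r$, which integrates by Lemma~\ref{Bulk_asymp_BulkCases} to $\lesssim t_n^2$. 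The $\Delta^2$-piece is bounded by pulling $\omega/h^d$ out in $L^\infty(\TB)$: the lower bound $p_0\geq \tuA$ together with Lemma~\ref{lemma_uA} gives $\sup_\TB \omega/h^d \lesssim n^c\,t_n$ with $c<1$, and combined with $\int\omega\Delta^2=J$ and $n\geq\nbulk$ chosen large enough depending on $\RadB$, this yields $\tfrac{1}{k^2}\int\omega^2\Delta^2/h^d \lesssim t_n J/\RadB\leq J^2/\RadB^2$. Altogether $J_2\lesssim J^2/\RadB+J^2/n^2\lesssim J^2/\RadB$ once $\nbulk\gtrsim\sqrt\RadB$.

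The main technical obstacle is the close-region bound. First, the two-sided comparability $h(x)\asymp h(y)$ and $\omega(x)\asymp\omega(y)$ is \emph{not} supplied by Lemma~\ref{const_balls} as stated (which only compares $y\in B(x,h(x))$), and must be re-derived from Assumption~\eqref{simplifyingAssumption} and the bulk lower bound $p_0\geq\tcA L\hB^\alpha$; this is where the bulk structure is essential. Second, because the alternative $p$ has no a priori pointwise lower bound, one cannot compare $p(x)$ and $p(y)$ directly and must split $p^2\leq 2p_0^2+2\Delta^2$; bounding the $\Delta^2$-contribution in turn forces the supremum bound on $\omega/h^d$ via Lemma~\ref{lemma_uA} and the sample-size threshold $\nbulk$ growing with $\RadB$. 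The remaining manipulations are routine substitutions of the explicit definitions of $\omega$, $h$, $\hB$, $t_n$ and $\I$.
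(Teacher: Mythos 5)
Your proposal follows the same route as the paper: H\"older to get $J\geq\RadB^2 t_n$, Lemma~\ref{Expectation_Tb_H1} for the expectation, Lemma~\ref{splitting_variance} with a far/close split of $J_2$ via Lemmas~\ref{cov_far} and~\ref{cov_close}, and the decomposition $p^2\le 2p_0^2+2\Delta^2$ in the close region. Two points in your close-region analysis, however, need correcting.

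First, your claim that $\sup_\TB\omega/h^d\lesssim n^c t_n$ with $c<1$ is off. Writing out $\omega(x)/h(x)^d = \cch^{-d}\,(n^2L^4\I)^{d/(4\alpha+d)}\,p_0(x)^{(2\alpha t-4\alpha-2d)/((4-t)\alpha+d)}$ and using $p_0\geq\tuA$ with Lemma~\ref{lemma_uA} (as the paper does in its ``Term I'' computation) yields a supremum of order $n^2 t_n$, not $n^{c}t_n$ with $c<1$: the factor $n^{2}$ exactly cancels the $1/k^2\asymp n^{-2}$ prefactor, leaving a constant multiple of $t_n$. This is consistent with the paper's $\ctilde\, t_n J$, which is then $\leq J^2/\RadB$ via $t_n\leq J/\RadB$, and you do \emph{not} need $\nbulk$ to absorb this piece (only the far-region $J^2/n^2$ term forces $\nbulk\gtrsim\sqrt\RadB$, as in the paper). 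Your eventual inequality $J_2\lesssim J^2/\RadB$ is right, but the path you sketch would have made the argument depend on $n$ where it should not.

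Second, and more substantively, the last step ``$\V(\Tb)=J^2\cdot O(a+b)$, deficit absorbed into $\CvarAlt$'' does not deliver the stated bound $\CvarAlt J^2/\RadB$. With $a=\sqrt{t_n/J}\leq 1/\sqrt\RadB$ and $b=J_2^{1/2}/J\lesssim 1/\sqrt\RadB$, what you actually obtain is $\V(\Tb)\lesssim J^2/\sqrt\RadB$: the $8a$ term in the expansion of $[(1+a)^2+b]^2-(1-a)^4$ is of order $1/\sqrt\RadB$, which is not a constant multiple of $1/\RadB$. To be fair, the paper's own final display has the same issue (the middle term of the squared bracket should have $\sqrt\RadB$, not $\RadB$, in the denominator), and the weaker $J^2/\sqrt\RadB$ bound is still ample for Corollary~\ref{Risk_Tb}, which only needs $\V(\Tb)/\E^2(\Tb)\to 0$ as $\RadB\to\infty$. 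Still, you should either prove the bound as stated (which requires a sharper treatment of the $\E^2\Tb$ subtraction, e.g.\ using $J_1=\E^2\Tb$ exactly so that the cross term $2\sqrt{J_1J_2}$ controls the variance) or explicitly note that you only reach $1/\sqrt\RadB$ and explain why that suffices downstream.

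Finally, your worry about the ``two-sided'' comparability on the close region is handled in the paper not by re-deriving a new lemma but by symmetrizing over $\Diag_+=\{(x,y)\in\Diag: p_0(x)\geq p_0(y)\}$ in Lemma~\ref{variance_term_on_diagonal}; on that half-diagonal $h(x)\geq h(y)$ and $\|x-y\|\leq h(x)$, so Lemma~\ref{const_balls} applies directly to $y\in B(x,h(x))$. Your re-derivation from Assumption~\eqref{simplifyingAssumption} would work but is redundant.
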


\hfill

\begin{corollary}\label{Risk_Tb}
There exist three large constants $\ThreshB$, $\RadB$, $\nbulk$ where $\nbulk$ only depends on $\RadB$, such that whenever $n \geq \nbulk$ and $\int_\TB |p-p_0|^t \geq \RadB {\rhob}^t$, it holds:
\begin{enumerate}
    \item $\mathbb{P}_{p_0} \left(\psib = 1\right) = \mathbb{P}_{p_0} \left(\Tb > \ThreshB t_n\right) \leq \frac{\eta}{4},$
    \item $\mathbb{P}_{p} \left(\psib = 0\right) \;=\; \mathbb{P}_{p} \left(\Tb \leq \ThreshB t_n\right) \,\leq\; \frac{\eta}{4}$.
\end{enumerate}
\end{corollary}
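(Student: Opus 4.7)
The plan is to derive both bounds from Chebyshev's inequality, fed by the mean/variance estimates in Propositions \ref{ExpVar_Bulk_H0} and \ref{ExpVar_Bulk_H1}. For the type-I error, under $H_0$ the triangle inequality and Chebyshev give
\[
\mathbb{P}_{p_0}(\Tb > \ThreshB t_n) \leq \mathbb{P}_{p_0}\!\big(|\Tb - \E\Tb| > (\ThreshB - 1) t_n\big) \leq \frac{\V_{p_0}(\Tb)}{(\ThreshB-1)^2 t_n^2} \leq \frac{\CvarNull}{(\ThreshB-1)^2},
\]
so choosing $\ThreshB$ large enough (depending only on $\CvarNull$ and $\eta$) bounds this by $\eta/4$.

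For the type-II error, the missing piece is a comparison between the $L_t$ separation and the weighted $L_2$ functional $J$ that drives the signal under $H_1$. The natural tool is Hölder's inequality with conjugate pair $(2/t,\,2/(2-t))$:
\[
\int_\TB |p-p_0|^t \;=\; \int_\TB \bigl(\omega |p-p_0|^2\bigr)^{t/2}\,\omega^{-t/2}\,dx \;\leq\; J^{t/2}\Bigl(\int_\TB \omega^{-t/(2-t)}\,dx\Bigr)^{(2-t)/2}.
\]
A direct computation using $\omega(x)=p_0(x)^{(2\alpha t-4\alpha)/((4-t)\alpha+d)}$ gives $\omega^{-t/(2-t)} = p_0^{r}$ (with the exponent matching the definition $r = 2\alpha t /((4-t)\alpha+d)$), and then Lemma \ref{Bulk_asymp_BulkCases} bounds $\int_\TB p_0^r \lesssim \I$. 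Substituting the $H_1$ hypothesis $\int_\TB |p-p_0|^t \geq \RadB^t \rhob^t$ and recalling the expressions of $\rhob$ and $t_n$, the exponents of $\I$ and $\TL$ collapse exactly (the algebra is straightforward with $r$ as above), yielding $J \geq c\, \RadB^{\,2/t}\, t_n$ for a constant $c$ depending only on $\alpha,d,t$. The case $t=2$ is degenerate but trivial since $\omega \equiv 1$ and $J$ coincides with the squared separation.

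Equipped with this, Proposition \ref{ExpVar_Bulk_H1} gives $\E_p(\Tb) \geq (1-1/\sqrt{\RadB})^2 J \geq c'\RadB^{2/t} t_n$, which exceeds $2\ThreshB t_n$ provided $\RadB$ is chosen large enough depending on the previously fixed $\ThreshB$. Under this choice, $\E_p(\Tb) - \ThreshB t_n \geq \tfrac{1}{2}\E_p(\Tb) \geq \tfrac{1}{2}(1-1/\sqrt{\RadB})^2 J$, and Chebyshev yields
\[
\mathbb{P}_p(\Tb \leq \ThreshB t_n) \leq \frac{\V_p(\Tb)}{(\E_p\Tb - \ThreshB t_n)^2} \leq \frac{4\,\CvarAlt/\RadB \cdot J^2}{(1-1/\sqrt{\RadB})^4\, J^2} \leq \frac{\eta}{4},
\]
again for $\RadB$ large enough. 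Finally, $\nbulk$ enters only through the threshold needed in Proposition \ref{ExpVar_Bulk_H1}, and is fixed once $\RadB$ is.

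The only real subtlety is the Hölder step: one must verify that the exponent of $\I$ balances out so that $J/t_n$ depends only on $\RadB$ and not on $p_0$. This is where the particular value $r = 2\alpha t/((4-t)\alpha+d)$ and the form of the weight $\omega$ conspire, and it is the main algebraic check. Once this scaling identity is in hand, the remainder is a routine two-sided Chebyshev argument with $\ThreshB$ chosen after $\CvarNull$ and $\RadB$ chosen after $\ThreshB$.
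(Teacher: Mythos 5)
Your proof follows essentially the same route as the paper's: a two-sided Chebyshev argument fed by Propositions \ref{ExpVar_Bulk_H0} and \ref{ExpVar_Bulk_H1}, with the same H\"older step (conjugate exponents $2/t$ and $2/(2-t)$) to reduce the $L_t$ separation to the weighted functional $J$, the same use of Lemma \ref{Bulk_asymp_BulkCases} to bound $\int_\TB p_0^r$, and the same order of constant selection ($\ThreshB$ from $\CvarNull$, then $\RadB$ from $\ThreshB$ and $\CvarAlt$, then $\nbulk$ from $\RadB$). There is a small algebraic slip: raising $\RadB^t\rhob^t \le J^{t/2}\bigl(\CL{\ref{Bulk_asymp_BulkCases}}\,\I\bigr)^{(2-t)/2}$ to the power $2/t$ gives $J \gtrsim \RadB^{2}\,t_n$, not $\RadB^{2/t}t_n$, but this does not affect the conclusion since any positive power of $\RadB$ suffices for the downstream Chebyshev bound.
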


\hfill

\begin{proof}[Proof of Proposition \ref{ExpVar_Bulk_H0}] 
We place ourselves under $H_0$ and bound the expectation and variance of $\Tb$. We recall that $\hat p(x)$ and $\hat p'(x)$ are independent for all $x \in \TB$, and so are $\hat\Delta(x)$ and $\hat\Delta'(x)$.

\hfill

\underline{\textbf{Expectation:}} By the triangle inequality:
\begin{align*}
    \big|\E[\Tb]\big| = \Big|\int_{\TB} \omega(x)\; \E[\hat \Delta(x)] \; \E[\hat \Delta'(x)] dx\Big| \leq \int_{\TB} \omega(x) \big|\E[\hat \Delta(x)]\big| \; \big|\E[\hat \Delta'(x)]\big| dx.
\end{align*}

\vspace{3mm}

Now, recalling \eqref{def_Delta} and \eqref{estimates_of_p}, we have (see e.g. \cite{tsybakov2008introduction}, Prop 1.2):

\begin{equation}\label{control_biais}
    \big|\E[\hat \Delta(x)]\big| \leq \CK L \hB^\alpha(x) ~~~~~~ \text{ and } ~~~~~~ \big|\E[\hat \Delta'(x)]\big| \leq \CK L \hB^\alpha(x),
\end{equation}
\begin{align*}
    \text{so that: }  \big|\E[\Tb]\big| ~ \leq ~ \CK^2 L^2 \int_{\TB} \omega(x) h^{2 \alpha}(x) dx \leq \cch^{2\alpha} \CK^2 t_n \leq t_n,
\end{align*} 
by Lemma \ref{omega_h_2alpha} and taking $\cch$ small enough. 

\hfill\break

\underline{\textbf{Variance:}} By Lemma \ref{splitting_variance}, the variance under $H_0$ of $\Tb$ can be upper bounded as
\begin{align}
    \V(\Tb) \leq & \Big[ \big(\sqrt{J} + \CK\sqrt{t_n} \big)^2 + J_2^{1/2} \Big]^2 - \E^2 \Tb \leq [\CK t_n + J_2^{1/2}]^2.
\end{align}

\hfill 

We now analyse the covariance term in $J_2$. There are two cases. To analyse them, introduce the bulk diagonal:
\begin{equation}\label{def_diagonal}
    \Diag = \left\{(x,y) \in \TB^2 : \|x-y\| \leq \frac{1}{2}[h(x) + h(y)] \right\}.
\end{equation}

\underline{First case}: $\|x-y\|_2 > \frac{1}{2} [ h(x) + h(y)]$ i.e. $(x,y) \notin \Diag$.\\

By Lemma \ref{cov_far}, and recalling that $\forall x \in \TB: \tcA Lh^\alpha(x) \leq p_0(x)$ we have 

\begin{align}
    \frac{1}{k}\left|\text{cov}\left(K_{h(x)}(x-X),K_{h(y)}(y-X)\right) \right|& \leq \frac{1}{k} \left[\CK Lh(x)^\alpha + p_0(x)\right] \left[\CK Lh(y)^\alpha + p_0(y)\right] \nonumber \\
    &\leq \frac{1}{k} \Big(\frac{\CK}{\tcA}+1\Big)^2 p_0(x) p_0(y).\label{xy_close}
\end{align}

\hfill

\underline{Second case}: $\|x-y\| \leq \frac{1}{2}[h(x) + h(y)] \leq h(x) \lor h(y)$, i.e. $(x,y) \in \Diag$. \\

We suppose by symmetry $h(y) = h(x) \vee h(y)$. Then by Lemma \ref{cov_close}:
$$ \frac{1}{k}\left|\text{cov}\left(K_{h(x)}(x-X),K_{h(y)}(y-X)\right)\right| \leq \CL{\ref{cov_close}} \frac{p_0(x)}{k \,h^d(x)}.$$

Putting together the above equation with \eqref{exp_cross_term} and \eqref{xy_close}, we get:
\begin{align}
    \E\left[\hat \Delta(x) \hat \Delta(y)\right]  ~ \leq ~&  \CK^2 L^2h(x)^\alpha h(y)^\alpha ~ + ~  \mathbb{1}_{\Diag^c} \frac{1}{k} \Big(\frac{\CK}{\cA} +1 \Big)^2 p_0(x) p_0(y) + ~ \mathbb{1}_\Diag \CL{\ref{cov_close}} \frac{p_0(x)}{k \; h(x)^d }.\label{UB_cross_term}
\end{align}

We now combine \eqref{UB_cross_term} with \eqref{var_bulk_H0} and use Minkowski's inequality:
\begin{align}
    \V(\Tb) &\leq \left\{\Big(\iint_{\TB^2} \omega(x)\omega(y) \left[\CK^2 L^2h(x)^\alpha h(y)^\alpha \right]^2 dxdy\Big)^{1/2}+\right.\nonumber\\
    &\left. ~~~~~ \Big(\iint_{\TB^2} \omega(x)\omega(y) \left[\mathbb{1}_{\Diag^c} \frac{1}{k} \Big(\frac{\CK}{\tcA} +1 \Big)^2 p_0(x) p_0(y)  + ~ \mathbb{1}_\Diag \CL{\ref{cov_close}}  \frac{p_0(x)}{k \; h(x)^d } \right]^2 dxdy\Big)^{1/2} \right\}^2\nonumber \\
    & \leq \left\{\CK t_n + 2 \Big(\frac{\CK}{\tcA} +1 \Big)^2 t_n + \Big(\iint_{\TB^2} \omega(x)\omega(y)\left[\mathbb{1}_\Diag \CL{\ref{cov_close}}  \frac{p_0(x)}{k \; h(x)^d } \right]^2 dxdy\Big)^{1/2} \right\}^2 \label{UB_variance_H0}.
\end{align}
The last step is obtained by using Lemmas \ref{control_xy_far} and \ref{omega_h_2alpha}. We now analyse the term $ \CL{\ref{cov_close}} ^2 \iint_{D} \omega(x) \omega(y) \left[\frac{p_0(x)}{k \, h(x)^d} \right]^2 dxdy$. By Lemma \ref{variance_term_on_diagonal} (at the end of Appendix \ref{UB_Bulk_appendix}):
\begin{align*}
    \iint_{\Diag} \omega(x) \omega(y) \left[\frac{p_0(x)}{k \, h(x)^d} \right]^2 dxdy \leq  2\Comega \int_\TB \omega(x)^2 h(x)^d \left[\frac{p_0(x)}{k \; h(x)^d }\right]^2 dx = 8 \frac{\Comega}{{\ch}^d} t_n^2,
\end{align*}
by immediate calculation, recalling that $n = 2k$. Therefore, by equation \eqref{UB_variance_H0}, we have
\begin{equation}
    \V \Tb \leq \left[\CK t_n + 2 \Big(\frac{\CK}{\cA} +1 \Big)^2 t_n + \CL{\ref{cov_close}} \sqrt {8 \frac{\Comega}{{\ch}^d}} t_n \right]^2 =: \CvarNull t_n^2.
\end{equation}
\end{proof}

\textbf{Analysis of the test statistic under} $H_1$.

\hfill

\begin{proof}[Proof of Proposition \ref{ExpVar_Bulk_H1}]

\hfill\break

Suppose the data $(X_1, \dots, X_n)$ is drawn from a probability density $p$ satisfying: 
\begin{equation}\label{Assump_H1}
    \RadB^{\, t} {\rhob}^t \leq \int_\TB |p-p_0|^t.
\end{equation}

\hfill

\underline{\textbf{Expectation:}} We first prove $J\geq t_n$ in order to apply Lemma \ref{Expectation_Tb_H1}. We recall that $L' = (1+\delta)L$. From Equation \eqref{Assump_H1} we get:
\begin{align}
    \RadB^t \left[\frac{L^\frac{d}{4\alpha+d} \I^{\frac{1}{t} - \frac{\alpha}{4\alpha + d}}}{n^\frac{2\alpha}{4\alpha + d}}\right]^t &= {\RadB}^{\, t} {\rhob}^t \leq \int_\TB \left|\Delta\right|^t = \int_\TB \left[\omega(x) \Delta(x)^2\right]^\frac{t}{2}\omega(x)^{-\frac{t}{2}} dx \nonumber\\
    & \underset{\text{Hölder}}{\leq} \left[\int_\TB \omega(x) \Delta^2(x) dx\right]^\frac{t}{2} \left[\int_\TB \omega^{-\frac{t}{2-t}}\right]^\frac{2-t}{2},\label{Holder_omega}
\end{align}
where we have applied Hölder's inequality with $u = \frac{2}{t}$ and $v = \frac{2}{2-t}$ satisfying $\frac{1}{u} + \frac{1}{v} = 1$.
Hence:
\begin{align}
    J = \int_\TB \omega(x) \Delta^2(x) dx 
    &\geq \RadB^2 \left(\frac{L^\frac{d}{4\alpha+d} \I^{\frac{1}{t} - \frac{\alpha}{4\alpha + d}}}{n^\frac{2\alpha}{4\alpha + d}}\right)^2 \times \left(\int_\TB \frac{1}{\omega^\frac{t}{2-t}}\right)^{-\frac{2-t}{t}} \geq \RadB^2 \CL{\ref{Bulk_asymp_BulkCases}}^\frac{t-2}{t}\frac{t_n}{\Ctn} \geq \RadB t_n \label{L2_estim_t_norm}.
\end{align}
Taking $\RadB$ large enough yields $J \geq t_n$, hence we can apply Lemma \ref{Expectation_Tb_H1} which yields
\begin{align}\label{minoration_E_Tb}
    \E \Tb \geq \big(\sqrt{J} - \sqrt{t_n} \big)^2 \geq \big(\sqrt{J} - \sqrt{\frac{J}{\RadB}}\big)^2 = (1 - 1/\sqrt{\RadB})^{\, 2} J,
\end{align}
where we recall that $\RadB$ can be taken arbitrarily large.
%\ju{on a besoin de $\RadB \geq \CK$}

\hfill

\underline{\textbf{Variance:}}\\

We still have by Lemma \ref{splitting_variance} and by \eqref{minoration_E_Tb}:
\vspace{-2mm}
\begin{align}
    \V(\Tb) & \leq \Big[ \big(\sqrt{J} + \sqrt{t_n} \big)^2 + J_2^{1/2} \Big]^2 - \E^2 \Tb \\
    &\leq  \Big[ \big(\sqrt{J} + \sqrt{t_n} \big)^2 + J_2^{1/2} \Big]^2 - \big(\sqrt{J} - \sqrt{t_n} \big)^4, \label{var_bulk_H1}
\end{align}
where
\vspace{-8mm}

\begin{align*}
    J_2 = \iint_{\TB^2} \omega(x) \omega(y) \frac{1}{k^2} \text{cov}^2\left(K_{h(x)}(x-X),K_{h(y)}(y-X)\right) dx dy.
\end{align*}

We now compute $J_2$. We have 
\begin{equation}\label{split_J}
    J_2 = J_{\Diag} + J_{\Diag^c}
\end{equation}
where 
\begin{align}
    J_{\Diag} &= \iint_{\Diag} \omega(x) \omega(y) \frac{1}{k^2} \text{cov}^2\left(K_{h(x)}(x-X),K_{h(y)}(y-X)\right) dx dy, \label{def_JD}\\
    J_{\Diag^c} &= \iint_{\Diag^c} \omega(x) \omega(y) \frac{1}{k^2} \text{cov}^2\left(K_{h(x)}(x-X),K_{h(y)}(y-X)\right) dx dy, \label{def_JDc}
\end{align}

We examine $J_\Diag$ and $J_{\Diag^c}$ separately.\\

\textbf{\underline{Term $J_{\Diag^c}$}}. We have outside the diagonal $\Diag$: $\|x-y\| > \frac{1}{2}[h(x) + h(y)]$.\\

By Lemma \ref{cov_far}, and using $\frac{\tcA}{\cch^\alpha} L h^\alpha \leq p_0$ on $\TB$: 
\begin{align*}
    \frac{1}{k}\left| \textnormal{cov} \left(K_{h(x)}(x-X),K_{h(y)}(y-X)\right) \right| &\leq \frac{1}{k} \left(\CK' L h(x)^\alpha + p(x)\right) \left(\CK' L h(y)^\alpha + p(y)\right)\\
    & \leq \frac{1}{k} \Big[\Big(\frac{\CK'\cch^\alpha}{\tcA} + 1\Big) p_0(x) + |\Delta(x)| \Big]\Big[\Big(\frac{\CK'\cch^\alpha}{\tcA} + 1\Big) p_0(y) + |\Delta(y)| \Big]\\
    & =: \frac{1}{k} \Big[\Ccov p_0(x) + |\Delta(x)| \Big]\Big[\Ccov p_0(y) + |\Delta(y)| \Big],
\end{align*}
where $\Ccov = \frac{\CK'\cch^\alpha}{\tcA} + 1$. %Define for $x \in \TB$:
%\begin{equation}\label{Def_D(x)}
%    \mathcal{D}(x)^c = \big\{y %\in \TB: \|x-y\| > %\frac{1}{2}[h(x) + h(y)] %\big\}.
%\end{equation}
Therefore, outside the diagonal \eqref{def_diagonal}, we have:
\begin{align}
    J_{\Diag^c} &= \frac{1}{k^2}\iint_{\Diag^c} \omega(x) \omega(y) \text{cov}^2\left(K_{h(x)}(x-X),K_{h(y)}(y-X)\right)  dx dy\nonumber \\
    & \leq \frac{1}{k^2} \iint_{\Diag^c}\omega(x) \omega(y) \Big[\Ccov p_0(x) + |\Delta(x)| \Big]^2\Big[\Ccov p_0(y) + |\Delta(y)| \Big]^2dx dy\nonumber\\
    & = \left[\frac{1}{k} \int_\TB \omega(x) \Big(\Ccov p_0(x) + |\Delta(x)| \Big)^2dx \right]^2 \leq \left[\frac{1}{k} \int_\TB \omega(x) \Big({\Ccov}^2 p_0^2(x) + |\Delta(x)|^2 \Big)dx \right]^2 \nonumber\\
    & \leq \frac{4}{k^2}\left(\Ccov\, t_n + J \right)^2 ~~ \text{ by Lemma \ref{control_xy_far}}\nonumber\\
    & \leq \frac{16}{n^2} \left(\frac{\Ccov}{\RadB} +1 \right)^2 J^2 =: \frac{C_{\Diag^c}}{n^2} J^2. \label{Majorant_JDc}
\end{align}

\hfill\break

\textbf{\underline{For $J_\Diag$}:} If $\|x-y\| \leq \frac{1}{2} [h(x) + h(y)] \leq h(x) \lor h(y)$, we suppose by symmetry $h(y) = h(x) \lor h(y)$. Then by Lemma \ref{cov_close}, we have:
$$ \frac{1}{k}\left|\text{cov}\left(K_{h(x)}(x-X),K_{h(y)}(y-X)\right)\right| \leq \CL{\ref{cov_close}}   \frac{p(x)}{k \,h^d(x)}.$$
Therefore:
\begin{align*}
    J_\Diag& = \frac{1}{k^2}\iint_{\TB^2} \omega(x) \omega(y)\,  \text{cov}^2\left(K_{h(x)}(x-X),K_{h(y)}(y-X)\right)  dx dy \\
    &\leq \iint_{\Diag^2} \omega(x) \omega(y) \left[\CL{\ref{cov_close}}   \frac{p(x)}{k \,h^d(x)} \right]^2 dx dy.
\end{align*}

By Lemma \ref{variance_term_on_diagonal}, we can upper bound the term as:

\begin{align}
    \frac{J_\Diag}{\CL{\ref{cov_close}} ^2} &\leq \iint_{\Diag} \omega(x) \omega(y) \left[\frac{p(x)}{k \, h(x)^d} \right]^2 dxdy ~~ \leq ~~ 2\Comega \int_\TB \omega(x)^2 h(x)^d \left[\frac{p(x)}{k \; h(x)^d }\right]^2 dx\nonumber \\
    & \leq ~~ 2\frac{\Comega}{\cch^d} \int_\TB \frac{\omega(x)^2}{k^2 h(x)^d} [ 2p_0(x)^2 + 2\Delta^2(x)] dx  ~~ \leq ~~  16 \, \frac{\Comega}{\cch^d} t_n^2 + 16\frac{\Comega}{\cch^d} \underbrace{\int_\TB \frac{\omega(x)^2\Delta^2(x)}{n^2 \hB^d(x)} dx}_{\text{Term I}}. \label{JD_overC1}
\end{align}

\textbf{\underline{Term I}}: We recall that by definition we have $p_0 \geq \tuA$ on $\TB$ and we have $2\alpha t -2d -4\alpha <0$ since $t \in [1,2]$. Therefore:
\begin{align*}
    \frac{1}{k^2} \int_\TB \omega^2(x)  \frac{\Delta(x)^2}{\hB^d(x)} dx & = \frac{1}{n^2} \int_\TB (n^2L^4 \I)^\frac{d}{4\alpha + d} \; p_0^\frac{2\alpha t - 2d - 4 \alpha}{(4-t)\alpha + d}  \omega(x)  \Delta(x)^2 dx\\
    & \leq \frac{(n^2L^4 \I)^\frac{d}{4\alpha + d}}{n^2} \int_\TB \omega(x) \Delta(x)^2 \; \tuA^\frac{2\alpha t - 2d - 4 \alpha}{(4-t)\alpha + d} dx\\
    & \leq \frac{(n^2L^4 \I)^\frac{d}{4\alpha + d}}{n^2} \int_\TB \omega(x) \Delta(x)^2 \left[\tcA \frac{L^d}{n^{2\alpha}\I^\alpha}\right]^{\frac{1}{4\alpha+d}\frac{(4-t)\alpha + d}{(2-t)\alpha + d}\frac{2\alpha t - 2d - 4 \alpha}{(4-t)\alpha + d}}  \\
    & = \ctilde\, t_n \int_\TB \omega(x) \Delta(x)^2 = \ctilde\, t_n J.
\end{align*}
where $\ctilde = \tcA^{-\frac{1}{4\alpha+d} \frac{-2\alpha t + 2d + 4 \alpha}{(2-t)\alpha + d} }$. Therefore, by equation \eqref{JD_overC1}:
\begin{equation}\label{Majorant_JD}
    J_\Diag \leq \CL{\ref{cov_close}}  \left[16\frac{\Comega}{\cch^d}  t_n^2 + 16 \frac{\Comega}{\cch^d} \ctilde t_n J \right] =: A_{\Diag} t_n^2 + B_{\Diag}t_n J,
\end{equation}
for two constants $A_{\Diag}$ and $B_{\Diag}$. By equations \eqref{Majorant_JDc} and \eqref{Majorant_JD}, it holds:
\begin{equation}
    J_2 \leq \AJtwo\, t_n^2 + \BJtwo\, t_n J + \CJtwo \frac{J^2}{n^2},
\end{equation}
for three constants $\AJtwo, \BJtwo, \CJtwo >0$. Recalling Equation \eqref{L2_estim_t_norm}, we can further upper bound $J_2$ as $J_2 \leq \AJtwo J^2/\RadB^2 + \BJtwo\,J^2/\RadB + \CJtwo \frac{J^2}{n^2}$, hence taking $\nbulk := \left\lceil\sqrt{\RadB}\right\rceil$, we get:
\begin{equation}\label{Majorant_J_2}
    J_2 \leq \frac{\AJtwo+\BJtwo+\CJtwo}{\RadB} J^2 =: \frac{\DJtwo^2}{\RadB} J^2.
\end{equation}
\end{proof}
It then follows, from Equation \eqref{var_bulk_H1}:
\begin{align*}
    \V T_b &\leq \Big[ \big(\sqrt{J} + \sqrt{t_n} \big)^2 + J_2^{1/2} \Big]^2 - \big(\sqrt{J} - \sqrt{t_n} \big)^4\\
    & \leq \Big[ \big(\sqrt{J} + \sqrt{\frac{J}{\RadB}} \big)^2 + \frac{\DJtwo}{\sqrt{\RadB}} J \Big]^2 - \big(\sqrt{J} - \sqrt{J/\RadB}\big)^4 ~~ \text{ by Equation \eqref{L2_estim_t_norm} and \eqref{Majorant_J_2}}\\
    & = J^2 \left\{\left[1+ \frac{2+\DJtwo}{\RadB} + \frac{1^{\,2}}{\RadB}\right]^2 - \left[1-4\frac{1}{\sqrt{\RadB}} + O\left(\frac{1}{\RadB}\right) \right] \right\}\\
    & \leq J^2 \left[\frac{8 + 2\DJtwo +1}{\RadB} \right] ~~ \text{ for $\RadB$ large enough}\\
    & =:\frac{\CvarAlt}{\RadB} J^2.
\end{align*}

\hfill

\subsection{Proof of Corollary \ref{Risk_Tb}}

\begin{proof}[Proof of Corollary \ref{Risk_Tb}]

\hfill

 \begin{enumerate}
     \item Set $\ThreshB> 1$. It holds:
     \begin{align*}
         \mathbb{P}_{p_0}\left(\Tb>\ThreshB t_n\right) &\leq \mathbb{P}_{p_0}\left(\left|\Tb-\mathbb{E}\Tb\right|>(\ThreshB-1) t_n\right) ~~ \text{ by Proposition \ref{ExpVar_Bulk_H0}}\\
         & \leq \frac{\CvarNull t_n^2}{(\ThreshB - 1)^2 t_n^2}  ~~ \text{ by Proposition \ref{ExpVar_Bulk_H0} and Chebyshev's inequality}\\
         &\leq \frac{\eta}{4} ~~ \text{ for $\ThreshB$ larger than a suitable constant.}
     \end{align*}
     \item Assume $\RadB$ is large enough to ensure
     \begin{equation}\label{C'large_enough}
         (1-1/\sqrt{\RadB})^2 > \frac{\ThreshB}{\RadB}.
     \end{equation}
     The value of the constant $\RadB$ being given, assume moreover that $n \geq \nbulk$. We then have:
     \begin{align*}
         \mathbb{P}_{p}\left(\Tb\leq \ThreshB t_n\right) &\leq \mathbb{P}_{p}\left(\Tb-\mathbb{E} \Tb\leq \ThreshB t_n - (1-1/\sqrt{\RadB})^2 J\right) ~~ \text{ by Proposition \ref{ExpVar_Bulk_H1}}\\
         &\leq \mathbb{P}_{p}\left(\Tb-\mathbb{E} \Tb\leq \frac{\ThreshB}{\RadB} J - (1-1/\sqrt{\RadB})^2 J\right) ~~ \text{ by Equation \eqref{L2_estim_t_norm}}\\
         & \leq \mathbb{P}_{p}\left(|\Tb-\mathbb{E} \Tb|\leq  (1-1/\sqrt{\RadB})^2 J- \frac{\ThreshB}{\RadB} J\right) ~~ \text{ by Equation \eqref{C'large_enough}}\\
         &\leq \frac{\CvarAlt J^2 / \RadB}{\left((1-1/\sqrt{\RadB})^2- \frac{\ThreshB}{\RadB}\right)^2J^2} ~~ \text{ by Chebyshev's inequality}\\
         & \leq \frac{\eta}{4} ~~ \text{ for $\RadB$ large enough.}
     \end{align*}
 \end{enumerate}
\end{proof}

\begin{lemma}\label{variance_term_on_diagonal}
For any $p \in \mathcal{P}(\alpha,L)$ it holds that: $$\iint_{\Diag} \omega(x) \omega(y) \left[\frac{p(x)}{k \, h(x)^d} \right]^2 dxdy \leq 2\Comega \int_\TB \omega(x)^2 h(x)^d \left[\frac{p(x)}{k \; h(x)^d }\right]^2 dx.$$
\end{lemma}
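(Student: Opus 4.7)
The plan is to apply Fubini's theorem to integrate $y$ out first for each fixed $x$, and to show that
\[ I(x) \;:=\; \int_{\{y \,:\, (x,y)\in\Diag\}} \omega(y)\,dy \;\le\; C\,\omega(x)\,h(x)^d \]
for a suitable constant $C$. Once this is established, since the integrand on the left-hand side factors as $\omega(y)$ times a function of $x$ alone, pulling $\omega(x)\bigl[p(x)/(k h(x)^d)\bigr]^2$ out of the $y$-integral immediately yields the stated inequality (with $C$ playing the role of $2\Comega$, up to the customary absorption of volume and bandwidth-ratio factors).

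To bound $I(x)$, I would split the slice into
\[ \Diag_1(x) = \bigl\{y : (x,y)\in\Diag,\; h(y)\le h(x)\bigr\}, \qquad \Diag_2(x) = \bigl\{y : (x,y)\in\Diag,\; h(y) > h(x)\bigr\}. \]
On $\Diag_1(x)$, the defining inequality $\|x-y\| \le \tfrac{1}{2}(h(x)+h(y))$ forces $\|x-y\|\le h(x)$, so $y\in B(x,h(x))$ and Lemma \ref{const_balls} applies directly, giving $\omega(y)\le \Comega\,\omega(x)$. Integrating over $B(x,h(x))$ yields $\int_{\Diag_1(x)}\omega(y)\,dy \le \Comega\,\omega(x)\,V_d\,h(x)^d$.

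On $\Diag_2(x)$ the roles of $x$ and $y$ must be reversed: here $\|x-y\|\le h(y)$, i.e.\ $x\in B(y,h(y))$, and we apply Lemma \ref{const_balls} with $y$ as the base point. This gives the reverse comparisons $h(x)/h(y)\in[\ch,\Ch]$ and $\omega(x)/\omega(y)\in[\comega,\Comega]$, so that $h(y)\le h(x)/\ch$ and $\omega(y)\le \omega(x)/\comega$. Consequently $\Diag_2(x)\subset B(x,h(x)/\ch)$ and
\[ \int_{\Diag_2(x)}\omega(y)\,dy \;\le\; \frac{\omega(x)}{\comega}\,V_d\,\bigl(h(x)/\ch\bigr)^d. \]
Summing the two cases, $I(x)\le V_d\bigl(\Comega + 1/(\comega\,\ch^d)\bigr)\,\omega(x)\,h(x)^d$. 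Since $\Cp$ can be made arbitrarily close to $1$ by taking $\cch$ sufficiently small, the constants $\Comega,\,1/\comega,\,1/\ch^d$ are all close to $1$, so the resulting prefactor is bounded by $2\Comega$ after absorbing the dimensional volume constant and the bandwidth-ratio factor into $\Comega$ in the usual loose sense used throughout the paper.

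The main obstacle is the Case 2 analysis: since the integrand is not symmetric in $(x,y)$, one cannot simply relabel to assume $h(y)\le h(x)$. The resolution is to exploit $x\in B(y,h(y))$ so that Lemma \ref{const_balls} can be invoked at $y$, providing two-sided comparison of both $h$ and $\omega$. This is what keeps $\Diag_2(x)$ contained in a ball of radius comparable to $h(x)$ despite $h(y)$ being a priori uncontrolled, and is the only non-routine ingredient of the proof.
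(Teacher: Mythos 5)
Your proof is correct and follows the same overall strategy as the paper---Fubini the $y$-variable out and use Lemma~\ref{const_balls} to compare $\omega$ and $h$ at nearby points---but your handling of the asymmetry is actually more careful than what is written in the paper. The paper's proof sets $\Diag_+=\{(x,y)\in\Diag: p_0(x)\ge p_0(y)\}$ (which, since $h$ is monotone in $p_0$, is exactly your case $h(y)\le h(x)$) and then writes $\iint_{\Diag} g = 2\iint_{\Diag_+} g$ for $g(x,y)=\omega(x)\omega(y)p^2(x)/(k^2h(x)^{2d})$. This is not an equality: since $g$ is not symmetric, the swap $(x,y)\mapsto(y,x)$ gives $\iint_\Diag g=\iint_{\Diag_+}\bigl(g(x,y)+g(y,x)\bigr)$, and $g(y,x)$ is only \emph{comparable} to $g(x,y)$ on $\Diag_+$ (via Lemma~\ref{const_balls}, $g(y,x)\le(\Cp/\ch^d)^2\,g(x,y)$). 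Your explicit two-case treatment of the $y$-slice---applying Lemma~\ref{const_balls} at $x$ when $h(y)\le h(x)$ and at $y$ when $h(y)>h(x)$, after noting that in the latter case $x\in B(y,h(y))$ with $y\in\TB$---is the clean way to get around this, and is the key idea you correctly identified. Both versions share the same harmless imprecision regarding the final constant: the paper's $2\Comega$ silently drops the unit-ball volume $V_d$ (they bound $\int_{B(x,h(x))}\omega(y)\,dy\le\Comega\,\omega(x)\,h(x)^d$, which is off by $V_d$), and your prefactor $V_d\bigl(\Comega+1/(\comega\ch^d)\bigr)$ is not literally $\le 2\Comega$, but since the lemma's only downstream use is to produce \emph{some} constant multiple of $t_n^2$ in the variance bound, this discrepancy is immaterial.
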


\begin{proof}[Proof of Lemma \ref{variance_term_on_diagonal}]
 We set:
\begin{equation}\label{def_D+}
    \Diag_+ = \left\{(x,y) \in \Diag: p_0(x) \geq p_0(y) \right\}.
\end{equation}
On $\Diag_+$, we have $\|x-y\| \leq h(x) \lor h(y) = h(x)$ so in particular: $y \in B(x,h(x))$. We therefore have:
\begin{align*}
    & \iint_{\Diag}  \frac{\omega(x) \omega(y) p^2(x)}{k^2 \, h(x)^{2d}}  dxdy = 2 \iint_{\Diag_+} \frac{\omega(x) \omega(y) p^2(x)}{k^2 \, h(x)^{2d}}  dxdy \\
    &\leq 2\int_{\TB} \omega(x)  \left[\frac{p(x)}{k \, h(x)^d} \right]^2 \left[ \int_{ B(x,h(x))} \omega(y) dy\right]dx \leq 2\int_{x \in \TB} \omega(x)  \left[\frac{p(x)}{k \, h(x)^d} \right]^2 \left\{ h(x)^d \Comega \omega(x) \right\}dx \\
    &= 2\Comega \int_\TB \omega(x)^2 h(x)^d \left[\frac{p(x)}{k \; h(x)^d }\right]^2 dx.
\end{align*}
\end{proof}
\section{Lower bound in the bulk regime: Proof of Proposition \ref{Lower_bound_bulk_proposition}}\label{Lower_bound_bulk_appendix}

We define here the bulk prior. 
Fix $c$ a constant, allowed to be arbitrarily small. 
We apply Algorithm \ref{Algo_partitionnement}, with $\widetilde \Omega$, $\beta = \frac{2}{(4-t)\alpha + d}$, $u = \uA$, $\cbeta = c^{-\beta}\left(n^2L^4\I\right)^\frac{\beta}{4\alpha+d}$ and set $\calpha = \cA c^{-\alpha}$. Following the notation from Algorithm \ref{Algo_partitionnement}, let $h=(p_0/\cbeta)^{1/\beta}$. 
The choice of the constants ensures $h \leq c \hB$ and $p_0 \geq \calpha Lh^\alpha$ over $\B(\uA)$. 
Moreover, $c$ is chosen small enough to ensure $\calpha \geq \frac{\sqrt{d}^\alpha(2^{1-\alpha}\lor1)}{1/2-\cstar}$. 
\unboundedcase{Since}\boundedcase{If} $\Omega$ is unbounded, we can moreover choose a subset $\widetilde \Omega$ large enough to ensure that it is split at least once by Algorithm \ref{Algo_partitionnement}.
Therefore, the guarantees of Proposition \ref{guarantees_algo} are ensured.  Let $j\in \{1, \dots, N\}$ and consider the cell $B_j$. Its center is denoted by $x_j$ and we also set $h_j = c \hB(x_j)/4$ where $c$ is the constant used to define the constants $\calpha, \cbeta$ taken as inputs for Algorithms \ref{Algo_partitionnement}. We also set $\overset{\rightarrow}{\mathbb{1}} = (1,\dots, 1)$. 
Define the perturbation function $f\geq 0$ over $\R^d$, such that $f \in H(\alpha,1)\cap C^\infty$, $f$ is supported over $\left\{x \in \R^d : \|x\|<1/2 \right\}$.
We define the perturbations $(\phi_j)_{j=1}^N$ as follows:
\begin{equation}\label{perturbation}
     \phi_j(x) = \Cphi L h_j^\alpha~ f\bigg(\frac{x-x_j-\frac{h_j}{\sqrt{d}}\overset{\rightarrow}{\mathbb{1}}}{h_j} \bigg) -  \Cphi L h_j^\alpha ~ f\bigg(\frac{x-x_j+\frac{h_j}{\sqrt{d}}\overset{\rightarrow}{\mathbb{1}}}{h_j} \bigg),
\end{equation}%\al{Est ce vraiment $\alpha$-Holder, ou faut il faire qqchose ici? (cf partie positive plus m!)}
where $\Cphi$ is a small enough constant. For $\epsilon = (\epsilon_1, \dots, \epsilon_N)$ where $\epsilon_j \overset{iid}{\sim} Rad(\frac{1}{2})$, the prior is defined as follows:
\begin{equation}\label{prior_bulk}
    p_\epsilon^{(n)} = p_0 + \sum_{j=1}^N \epsilon_j \phi_j.
\end{equation}

For clarity, we give the probability density over $\Omega^n$ corresponding to data drawn from this prior distribution. Assume that $(X'_1, \dots, X'_n)$ are drawn from \eqref{prior_bulk}. The data is therefore $iid$ with the \textit{same} density $q$, itself uniformly drawn in the set $\{p_\epsilon ~|~  \epsilon \in \{\pm 1\}^n\}$. In other words, the density of $(X'_1, \dots, X'_n)$ corresponds to the mixture 
$$ \widetilde p = \frac{1}{2^N} \sum_{\epsilon \in \{\pm 1\}^N} \Big( p_0 + \sum_{j=1}^N \epsilon_j \phi_j \Big)^{\otimes n}, $$
where, for any $q \in \mathcal{P}(\alpha, L)$, $q^{\otimes n}$ is defined by $q^{\otimes n}(x_1, \dots, x_n) = q(x_1) \dots q(x_n)$ and represents the density of $(Y_1, \dots, Y_n)$ when $Y_i \overset{iid}{\sim} q$. %\al{tout ca est vraiment tres detaille, peut etre trop? De toutes facons cest peut etre a mettre dans lappendice.}
\\

The lower bound will be proved by showing that there exist no test with risk smaller than $\eta$ for the testing problem $H_0 : (X'_1, \dots, X'_n) \sim p_0^{\otimes n}$ vs $H'_1 : (X'_1, \dots, X'_n) \sim \widetilde p$. Whenever no ambiguity arises, we will just write $p_\epsilon$ instead of $p_\epsilon^{(n)}$. Recalling that $L' = (1+\delta)L$ and $\cstar' = (1+\delta)\cstar$, the following proposition states that this prior is admissible, \textit{i.e.} that each one of these densities belongs to $\mathcal{P}(\alpha,L',\cstar')$.

\begin{proposition}\label{prior_bulk_admissible}
For all $\epsilon = (\epsilon_1, \dots, \epsilon_N) \in \{\pm 1\}$: $p_\epsilon \in \mathcal{P}(\alpha,L', \cstar')$.
\end{proposition}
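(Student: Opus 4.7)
The plan is to verify four properties of $p_\epsilon$ for every $\epsilon\in\{\pm 1\}^N$: (a) $\int p_\epsilon = 1$; (b) $p_\epsilon \geq 0$; (c) $p_\epsilon \in H(\alpha, L')$; and (d) $p_\epsilon$ satisfies~\eqref{simplifyingAssumption} with constants $(\cstar', L')$. Throughout I would exploit three structural facts built into the prior construction: each $\phi_j$ is a rescaling $\phi_j(x) = \Cphi L h_j^\alpha g((x-x_j)/h_j)$ of a fixed mean-zero bump $g \in C_c^\infty(\R^d)$ (obtained from two opposite translates of $f$); by Algorithm~\ref{Algo_partitionnement} and the choice $h_j = c\hB(x_j)/4$, the supports of the $(\phi_j)_j$ are pairwise disjoint, contained in $\B(\uA/2)$, and separated from $\partial B_j$ by a distance $\gtrsim h_j$; and by Proposition~\ref{guarantees_algo} combined with $p_0 \geq \calpha L h^\alpha$ on $\B(\uA)$, one has $p_0(x) \gtrsim L h_j^\alpha$ on each $B_j$.

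Claim (a) is immediate: the two translates of $f$ defining $\phi_j$ share the same mass, so $\int \phi_j = 0$ and hence $\int p_\epsilon = \int p_0 = 1$. For (b), I would compare $\|\phi_j\|_\infty \leq 2\Cphi L h_j^\alpha \|f\|_\infty$ with the lower bound $p_0 \gtrsim L h_j^\alpha$ on $\mathrm{supp}(\phi_j)\subset B_j$; taking $\Cphi$ small enough yields $|\phi_j| \leq \delta\, p_0/(1+\delta)$ on $\mathrm{supp}(\phi_j)$, from which both $p_\epsilon \geq 0$ and the useful bound $p_\epsilon(x) \geq p_0(x)/(1+\delta)$ for every $x\in\Omega$ follow.

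For (c), I would set $\phi_\epsilon := \sum_j \epsilon_j \phi_j$ and argue that $\phi_\epsilon \in H(\alpha, \delta L)$; combined with $p_0 \in H(\alpha, L)$, this gives $p_\epsilon \in H(\alpha, L')$. By change of variables, each individual $\phi_j$ lies in $H(\alpha, \Cphi L L_g)$ for a constant $L_g$ depending only on $g$. When $x,y$ both lie in $\mathrm{supp}(\phi_j)$, the other $\phi_k$ vanish in a neighborhood of $x$, so $P_{\phi_\epsilon}(x, y-x) = P_{\phi_j}(x, y-x)$ and the H\"older remainder is inherited directly. The main obstacle is the case $x \in \mathrm{supp}(\phi_j)$ with $y \notin \mathrm{supp}(\phi_j)$: I would use the support-separation estimate $\|x-y\| \gtrsim h_j$ together with the derivative bounds $\|D^k \phi_j\|_\infty \leq \Cphi L h_j^{\alpha-k}\|D^k g\|_\infty$, yielding $|\phi_\epsilon(y)| + |P_{\phi_j}(x, y-x)| \lesssim \Cphi L \sum_{k=0}^m h_j^{\alpha-k}\|x-y\|^k + \Cphi L h_k^\alpha \lesssim \Cphi L \|x-y\|^\alpha$, using that $h_j^{\alpha-k}\|x-y\|^k \leq \|x-y\|^\alpha$ whenever $\|x-y\| \gtrsim h_j$ and $k \leq m < \alpha$. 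Choosing $\Cphi$ sufficiently small closes (c).

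Finally, for (d), I would bound $|\phi_\epsilon(x) - \phi_\epsilon(y)|$ crudely by $|\phi_\epsilon(x)| + |\phi_\epsilon(y)|$ and use $|\phi_k(z)| \leq 2\Cphi L h_k^\alpha \lesssim \Cphi\, p_0(z)/\cA$ on $\mathrm{supp}(\phi_k)$, together with the estimate $p_0(y) \leq (1+\cstar)p_0(x) + L\|x-y\|^\alpha$ provided by~\eqref{simplifyingAssumption} for $p_0$, to get $|\phi_\epsilon(x) - \phi_\epsilon(y)| \lesssim \Cphi\, p_0(x)/\cA + \Cphi L \|x-y\|^\alpha/\cA$. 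Combining this with~\eqref{simplifyingAssumption} for $p_0$ and the inequality $p_0 \leq (1+\delta) p_\epsilon$ from step (b), and taking $\Cphi$ small enough, one obtains $|p_\epsilon(x) - p_\epsilon(y)| \leq \cstar' p_\epsilon(x) + L'\|x-y\|^\alpha$. The hardest single step is (c): the H\"older control for pairs of points straddling different supports of the perturbations, which is exactly where the geometric information produced by Algorithm~\ref{Algo_partitionnement} (disjointness of supports together with a quantitative gap to $\partial B_j$) plays its essential role.
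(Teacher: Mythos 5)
Your proposal is correct and follows essentially the same route as the paper's proof. The paper's argument is: (i) observe that the $\phi_j$ have disjoint supports, so $\sum_j \epsilon_j\phi_j \in H(\alpha,\Cphi L)$ and hence $p_\epsilon\in H(\alpha,(1+\Cphi)L)$; (ii) bound $|\phi_j|\leq\lambda p_0$ on each $B_j$ via $L\hB^\alpha\lesssim p_0/\cA$ on $\B(\uA/2)$ and $\hB(x)\asymp\hB(x_j)$ on $B_j$; and (iii) invoke Lemma~\ref{simplifying_assumpt_satisfied} (if $p$ satisfies \eqref{simplifyingAssumption}, $\phi\in H(\alpha,\mu L)$, and $|\phi|\leq\lambda p$, then $p+\phi\in\mathcal{P}(\alpha,(1+\lambda\lor\mu)L,\tfrac{\cstar+2\lambda+\lambda\cstar}{1-\lambda})$). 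Your steps (b) and (d) are precisely what that lemma packages — you re-derive its content inline via the triangle inequality $|\phi_\epsilon(x)-\phi_\epsilon(y)|\leq|\phi_\epsilon(x)|+|\phi_\epsilon(y)|$ and the bound $|\phi_\epsilon|\lesssim\Cphi p_0/\cA$ — so there is no genuine divergence in method. Where you add value is step (c): the paper asserts the disjoint-support sum lies in $H(\alpha,\Cphi L)$ without spelling out the H\"older remainder estimate for a pair $x\in\mathrm{supp}(\phi_j)$, $y\in\mathrm{supp}(\phi_k)$ with $j\neq k$, which for $\alpha>1$ is not entirely trivial since one must control both $|P_{\phi_j}(x,y-x)|$ and $|\phi_k(y)|$. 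Your derivative-bound argument works; an equally clean alternative is to bound $|\phi_k(y)|=|\phi_k(y)-P_{\phi_k}(y',y-y')|\leq\Cphi L L_g\|y-y'\|^\alpha\leq\Cphi L L_g\|x-y\|^\alpha$ using a boundary point $y'$ on the segment $[x,y]$, which avoids relying on the quantitative gap between $\mathrm{supp}(\phi_k)$ and $\partial B_k$ (a gap that, with the paper's stated $h_j=c\hB(x_j)/4$ and the lower bound $e(B_j)\geq h(x_j)/2^{1+1/\beta}$, is not guaranteed for all parameter ranges; both you and the paper implicitly assume it, so in that respect you inherit the same slight imprecision rather than introducing a new one). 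In short: the plan is sound, matches the paper's reasoning, and merely unrolls Lemma~\ref{simplifying_assumpt_satisfied} and the disjoint-support H\"older claim into explicit verifications.
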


We now prove that this prior distribution gives a lower bound on $\rhob$. This lower bound will be denoted by $\rho_{bulk}^{LB}$, defined as the $L_t$ norm of the perturbation:
\begin{equation}
    \rho_{bulk}^{LB} = \big\|\sum_{j=1}^N \phi_j\big\|_t.
\end{equation}
Then by definition, $\forall \epsilon \in \{\pm 1\}^N: p_\epsilon \in H_1(\rho_{bulk}^{LB})$. Moreover, the following Proposition states that the prior we consider yields a lower bound of order $ \rhob$:

\begin{proposition}\label{separation_prior_bulk}
There exists a constant $\CbulkLB$ given in the Appendix, such that 
$$\rho_{bulk}^{LB} = \CbulkLB \,  \rhob.$$
\end{proposition}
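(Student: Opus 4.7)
The plan is to exploit the fact that the perturbations $(\phi_j)_j$ have pairwise disjoint support and to recognize the resulting sum as a Riemann sum for $\int p_0^r$ over $\cup_j B_j$. Since each $\phi_j$ is supported in two translates of the ball $B(x_j,h_j)$ that lie inside the disjoint cell $B_j$, one has
\begin{equation*}
    \Bigl\|\sum_j \phi_j\Bigr\|_t^t = \sum_j \|\phi_j\|_t^t.
\end{equation*}
A change of variable applied to the explicit expression \eqref{perturbation} gives $\|\phi_j\|_t^t = 2\Cphi^t \|f\|_t^t \, L^t h_j^{\alpha t + d}$, so the task reduces to evaluating $\sum_j h_j^{\alpha t + d}$.

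Substitute $h_j = c\hB(x_j)/4$ and split $h_j^{\alpha t + d} = h_j^{\alpha t} \cdot h_j^d$. Using the definition \eqref{def_hbulk} of $\hB$ and the identity $r = 2\alpha t/((4-t)\alpha+d)$, the first factor equals, up to a constant depending only on $\alpha, t, d$,
\begin{equation*}
    h_j^{\alpha t} \asymp \frac{p_0(x_j)^r}{(n^2 L^4 \I)^{\alpha t/(4\alpha+d)}}.
\end{equation*}
By Proposition \ref{guarantees_algo} item \ref{cases_h}, $|B_j| \asymp h_j^d$, and by item \ref{min_geq_max}, $p_0 \asymp p_0(x_j)$ on $B_j$. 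Therefore $p_0(x_j)^r h_j^d \asymp \int_{B_j} p_0^r$, and summing over $j$ yields
\begin{equation*}
    \sum_j h_j^{\alpha t + d} \asymp (n^2 L^4 \I)^{-\alpha t/(4\alpha+d)} \int_{\cup_j B_j} p_0^r.
\end{equation*}

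The final step, which I expect to be the main obstacle, is to show that $\int_{\cup_j B_j} p_0^r \asymp \I$. The upper bound follows from the inclusion $\cup_j B_j \subset \B(\uA/2)$ (Proposition \ref{guarantees_algo} item \ref{min_geq_max}) combined with Lemma \ref{Bulk_asymp_BulkCases}, which is applicable because this subsection assumes the bulk dominates. For the lower bound, $\B(\uA) \subset \cup_j B_j$ gives $\int_{\cup_j B_j} p_0^r \geq \int_{\B(\uA)} p_0^r$, which equals $\I$ outright when $\uA = \uI$. When $\uA > \uI$, one adapts the calculation of Lemma \ref{Bulk_asymp_BulkCases} to the annulus $\B(\uI) \setminus \B(\uA)$: bound $\int_{\B(\uI)\setminus \B(\uA)} p_0^r$ via Lemma \ref{int_p0squared_uA_small} and the bulk-dominates inequality rewritten as in the proof of Lemma \ref{Bulk_asymp_BulkCases}, showing that this piece is at most a small constant fraction of $\I$ once $\cA$ is taken small enough. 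Combining the three steps and using $4\alpha + d - \alpha t = (4-t)\alpha + d$ produces
\begin{equation*}
    \Bigl\|\sum_j \phi_j\Bigr\|_t^t \asymp L^t \cdot \frac{\I}{(n^2 L^4 \I)^{\alpha t/(4\alpha+d)}} = \TL^{t/(4\alpha+d)} \I^{((4-t)\alpha+d)/(4\alpha+d)} = \rhob^t,
\end{equation*}
which is exactly the claim $\rho_{bulk}^{LB} = \CbulkLB\, \rhob$ with a constant $\CbulkLB$ that one can track through the argument.
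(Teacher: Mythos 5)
Your steps 1--5 are correct and match the paper's computation in spirit: the disjoint supports give $\|\sum_j \phi_j\|_t^t = \sum_j \|\phi_j\|_t^t$, the change of variables gives $\|\phi_j\|_t^t = 2(\Cphi L\|f\|_t)^t h_j^{\alpha t+d}$, and since $h_j \asymp \hB(x_j)$, $e(B_j)\asymp \hB(x_j)$ (Proposition \ref{guarantees_algo} item \ref{cases_h}) and $p_0 \asymp p_0(x_j)$ on $B_j$ (Lemma \ref{p0_const_B(uA/2)}), you correctly reduce to $\sum_j h_j^{\alpha t + d} \asymp (n^2L^4\I)^{-\alpha t/(4\alpha+d)}\int_{\cup_j B_j}p_0^r$.

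The gap is in your step 6 lower bound for the case $\uA > \uI$. You want $\int_{\B(\uI)\setminus \B(\uA)}p_0^r \leq c\, \I$ for a small constant $c$, and propose to ``adapt'' the calculation of Lemma \ref{Bulk_asymp_BulkCases}. That calculation converts the Lebesgue measure of $\B(\uA/2)\setminus\B(\uA)$ into $\uA^{-2}\int p_0^2$ precisely because $p_0 \geq \uA/2$ there; on the annulus $\B(\uI)\setminus\B(\uA)$ you only know $p_0 \geq \uI$, so the same manipulation produces $\int_{\B(\uI)\setminus\B(\uA)}p_0^r \leq \uA^r\,\uI^{-2}\int_{\T(\uA)}p_0^2$, and combining with Lemma \ref{int_p0squared_uA_small} and the bulk-dominates rewrite you end up with a bound of the form $(\uA/\uI)^{2-r}\cdot(\text{constant})\cdot \I$. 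Since $\uA/\uI$ is not controlled under the bulk-dominates assumption, this does not give what you need.

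More to the point, the paper's own proof deliberately avoids asserting $\int_{\B(\uA)}p_0^r \gtrsim \I$. It stops at $\rho_{bulk}^{LB} \geq \CbulkLB^{1/t}\,\overline{\rhob}$, where $\overline{\rhob}^t$ carries $\int_{\B(\uA)}p_0^r$ (not $\I$), and then concludes via the softer observation that $\overline{\rhob}+\rhot \asymp \rhob + \rhot$: when $\uA=\uI$ this is trivial, and when $\uA>\uI$, Lemmas \ref{uA>uI_then_tail} and \ref{htail_all_equal} give $\rhot \asymp \overline{\rhot}\gtrsim \rhob$, so under bulk-dominates $\rhob\asymp\rhot$ and the $\overline{\rhob}$ term is absorbed by $\rhot$. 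Your route would prove a strictly stronger fact than the paper uses, and the cited lemmas do not in fact support it. If you want to stay closer to the paper, replace your step 6 lower bound by this two-case argument.
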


We now introduce the \textit{Bayes risk} associated with the prior distribution \eqref{prior_bulk}:
\begin{definition}\label{bayesian_risk_bulk_prior}
Define
$$ R_B^{\; bulk} = \inf_{\psi \text{ test}} \Big\{\mathbb{P}_{p_0}(\psi = 1) + \mathbb{E}_{\epsilon} \Big[ \mathbb{P}_{p_\epsilon}(\psi = 0)\Big]\Big\},$$
where the expectation is taken with respect to the realizations of $\epsilon$ and $\mathbb{P}_{p_e}$ denotes the probability distribution when the data is drawn with density \eqref{prior_bulk}.
%\al{Cette definition est pas tout a fait claire, il faut definir les proba et esperance. Je pense que tout ca serait mieux en appendice.}
\end{definition}

As classical in the minimax framework, we have $R^*(\rho_{bulk}^{LB}) \geq R_B^{\; bulk}$ (indeed, the supremum in \eqref{def_minimax_risk} can be lower bounded by the expectation over $\epsilon$). The following proposition states that $\rho_{bulk}^{LB}$ is indeed a lower bound on $\rhob$:

\begin{proposition}\label{lower_bound_bulk}
It holds $R_B^{\; bulk} > \eta.$ 
\end{proposition}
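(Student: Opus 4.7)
\textbf{Plan for the proof of Proposition \ref{lower_bound_bulk}.} I would follow the standard two-point reduction and then reduce the chi-squared divergence to a product over cells using the disjointness of the supports of the $(\phi_j)$. By Le Cam's inequality,
\begin{equation*}
R_B^{\,bulk} \;\geq\; 1 - d_{TV}\big(\widetilde{p},\, p_0^{\otimes n}\big) \;\geq\; 1 - \tfrac{1}{2}\sqrt{\chi^2\big(\widetilde{p}\,\|\,p_0^{\otimes n}\big)},
\end{equation*}
so it suffices to show that $\chi^2(\widetilde{p}\,\|\,p_0^{\otimes n}) \leq 4(1-\eta)^2$, where $\widetilde{p} = \mathbb{E}_\epsilon[p_\epsilon^{\otimes n}]$ is the mixture defined by the prior \eqref{prior_bulk}.

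\textbf{Chi-squared factorization.} Introducing an independent copy $\epsilon'$ of $\epsilon$ and using that by construction the perturbations $(\phi_j)$ have pairwise disjoint supports and satisfy $\int \phi_j = 0$, I would compute
\begin{equation*}
\chi^2\big(\widetilde{p}\,\|\,p_0^{\otimes n}\big) + 1 \;=\; \mathbb{E}_{\epsilon,\epsilon'}\!\left[\Big(\int \frac{p_\epsilon\, p_{\epsilon'}}{p_0}\Big)^{n}\right] \;=\; \mathbb{E}_{\epsilon,\epsilon'}\!\left[\Big(1+\sum_{j=1}^N \epsilon_j\epsilon'_j\, a_j\Big)^{n}\right], \qquad a_j := \int \frac{\phi_j^2}{p_0}.
\end{equation*}
The cross-terms $\int \phi_j\phi_k/p_0$ with $j\neq k$ vanish because $\mathrm{supp}(\phi_j)\cap \mathrm{supp}(\phi_k)=\emptyset$, and the linear terms in $\epsilon_j$ or $\epsilon'_j$ vanish because $\int \phi_j=0$. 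Then $(1+x)^n\le e^{nx}$, independence of the $\epsilon_j\epsilon'_j$, and $\cosh(u)\le e^{u^2/2}$ yield
\begin{equation*}
\chi^2+1 \;\leq\; \prod_{j=1}^N \cosh(n a_j) \;\leq\; \exp\!\Big(\tfrac12 \sum_{j=1}^N n^2 a_j^2\Big).
\end{equation*}

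\textbf{Controlling the $a_j$.} From \eqref{perturbation} we have $\|\phi_j\|_\infty \lesssim \Cphi L h_j^\alpha$ and $\mathrm{supp}(\phi_j)\subset B(x_j,h_j)$ with $h_j \asymp \hB(x_j)$. By item \ref{min_geq_max} of Proposition \ref{guarantees_algo}, $p_0 \geq \tfrac12 p_0(x_j)$ on $B_j$, hence
\begin{equation*}
a_j \;\lesssim\; \Cphi^2\, \frac{L^2 h_j^{2\alpha+d}}{p_0(x_j)}.
\end{equation*}
Substituting \eqref{def_hbulk} and converting $\sum_j$ into an integral via the cell volumes $|B_j| \asymp h_j^d$ (with $p_0 \asymp p_0(x_j)$ on $B_j$), the exponents of $p_0(x_j)$ collapse exactly to $r = 2\alpha t/[(4-t)\alpha+d]$ and those of $L$ and $n$ cancel, leaving
\begin{equation*}
\sum_{j=1}^N n^2 a_j^2 \;\lesssim\; \frac{\Cphi^4}{\I}\int_{\bigcup_j B_j} p_0^{r} \;\leq\; \frac{\Cphi^4}{\I}\int_{\B(\uI)} p_0^r \;=\; \Cphi^4,
\end{equation*}
where the last inequality uses $\cup_j B_j \subset \B(\uA/2) \subset \B(\uI)$ (by the second part of item \ref{min_geq_max} in Proposition \ref{guarantees_algo}, combined with $\uA \ge \uI$ when $\uA = \uI$, and a separate check when $\uA > \uI$ using Lemma \ref{Bulk_asymp_BulkCases}).

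\textbf{Conclusion and main obstacle.} Choosing the constant $\Cphi$ small enough (depending only on $\eta,\alpha,t,d$) gives $\tfrac12 \sum_j n^2 a_j^2 \leq \log\!\big(1+4(1-\eta)^2\big)$, hence $\chi^2 \leq 4(1-\eta)^2$ and $R_B^{\,bulk} > \eta$, as required. The main obstacle will be the exponent bookkeeping in the reduction $\sum_j n^2 a_j^2 \lesssim \Cphi^4\, \I^{-1}\int_{\B(\uA)} p_0^r$: the cancellation of all powers of $L$, $n$ and $p_0(x_j)$ into precisely the form defining $\I$ in \eqref{def_I} is what motivates the choice \eqref{def_hbulk} of $\hB$, and it must be combined with the cell-by-cell constancy of $p_0$ (Proposition \ref{guarantees_algo}) to replace discrete sums by integrals. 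The role of the cutoff $\uA$ (versus the smaller $\uI$) is a separate issue and is handled upstream in Proposition \ref{prior_bulk_admissible}, which ensures $p_\epsilon \ge 0$ so that the prior is admissible.
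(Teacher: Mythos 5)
Your proposal follows essentially the same route as the paper: Le Cam reduction to total variation and then to chi-squared via Pinsker's inequality, the product/$\cosh$ factorization obtained by introducing an independent copy of $\epsilon$ and exploiting the disjoint supports and zero mean of the $(\phi_j)$, the bound $\cosh(u)\le e^{u^2/2}$, the collapse of the exponents of $p_0$ and of $\TL$ to the quantity $\I$ (forced by the choice \eqref{def_hbulk} of $\hB$), and finally choosing $\Cphi$ small. The paper arrives at the same per-cell bound on $(\int\phi_j^2/p_0)^2$ via a Cauchy-Schwarz trick (Lemma \ref{int_phij4_hd_p02}) rather than your $\|\phi_j\|_\infty$ plus cell-volume accounting, but the two are equivalent up to constants given Proposition \ref{guarantees_algo}, item \ref{min_geq_max}.

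One slip to flag: the set inclusion $\B(\uA/2)\subset\B(\uI)$ that you invoke is false in general. It would require $\uA/2\ge\uI$, which fails precisely when $\uA=\uI$; in that case $\B(\uA/2)=\B(\uI/2)\supsetneq\B(\uI)$, i.e.\ the inclusion goes the wrong way. The correct statement is not a set inclusion but the integral inequality of Lemma \ref{Bulk_asymp_BulkCases}, which gives $\int_{\B(\uA/2)} p_0^r \le \CL{\ref{Bulk_asymp_BulkCases}}\,\I$ uniformly whenever the bulk dominates (an assumption that is in force throughout Subsection \ref{bulk_lower_bound_subsection}), possibly at the price of an extra constant $\CL{\ref{Bulk_asymp_BulkCases}}$ which is then absorbed by choosing $\Cphi$ smaller. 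You do cite Lemma \ref{Bulk_asymp_BulkCases} parenthetically, so you clearly know it is needed, but the displayed chain $\int_{\cup_j B_j}p_0^r\le\int_{\B(\uI)}p_0^r$ as written is incorrect and should be replaced by $\int_{\cup_j B_j}p_0^r\le\int_{\B(\uA/2)}p_0^r\le\CL{\ref{Bulk_asymp_BulkCases}}\,\I$.
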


Indeed, Proposition \ref{lower_bound_bulk} proves that $R^*(\rho_{bulk}^{LB}) > \eta$. Since $R^*(\rho)$ is a decreasing function of $\rho$, we therefore have $\rho^* > \rho_{bulk}^{LB}$ by the definition of $\rho^*$ in equation \eqref{separation_radius}. This ends the proof of Proposition \ref{Lower_bound_bulk_proposition}\\

\subsection{Proof of Proposition \ref{prior_bulk_admissible}}

\begin{proof}[Proof of Proposition \ref{prior_bulk_admissible}]
First, for each $j=1, \dots, N$ the functions $\Cphi L \Big(h_j - \big\|x-x_j \pm \frac{h_j}{\sqrt{d}}\overset{\rightarrow}{\mathbb{1}} \big\| \Big)_+^\alpha$ are in $H(\alpha, \Cphi L)$ and have disjoint support so that their sum also belongs to $H(\alpha, \Cphi L)$. Hence for all $\epsilon \in \{\pm 1\}^N$, $\sum\limits_{j=1}^N \epsilon_j \phi_j \in H(\alpha, \Cphi L)$, proving that $p_\epsilon \in H(\alpha, (1 + \Cphi )L)$. \\

Now, note that we have:
\begin{equation}\label{hb_leq_p0_B(uA/2)}
    \forall x \in \B\left(\frac{\uA}{2}\right): ~~ L\hB(x)^\alpha \leq \frac{2^{\frac{(2-t)\alpha+d}{(4-t)\alpha + d}}}{\cA}p_0(x).
\end{equation}
 
%Therefore, $\forall j=1, \dots, N, \; \forall x \in B_j$ $|p_0(x) - p_0(x_j)| \leq \Big(\cstar \lor \frac{1}{\cA 2^{2\alpha/((4-t)\alpha + d)}} \Big) p_0(x_j) =: \widetilde{\cp} p_0(x_j)$, so that $\hB(x) \leq (1+ \widetilde{\cp})^{2/((4-t)\alpha + d)} \hB(x_j) =: \widetilde{\ch} \hB(x_j)$.\\

Let $\epsilon \in \{\pm 1\}^N$ and $x \in B_j$, for some $j \in \{1, \dots, N\}$. Recalling that $h_j = c\,\hB(x_j)$ we have:
\begin{align*}
    \hspace{2cm} |\phi_j(x)| &\leq \Cphi \|f\|_1 L h_j^\alpha = \Cphi \|f\|_1 c^\alpha L \hB(x_j)^\alpha \\ 
    &\leq \Cphi \|f\|_1 c^\alpha \CL{\ref{p0_const_B(uA/2)}}^{(p_0)} L \hB(x)^\alpha \, \hspace{25mm} \text{ by Lemma \ref{p0_const_B(uA/2)}} \\
    &\leq \frac{2^{\frac{(2-t)\alpha+d}{(4-t)\alpha + d}}}{\cA}\Cphi \|f\|_1 c^\alpha \CL{\ref{p0_const_B(uA/2)}}^{(p_0)} p_0(x)  \hspace{2cm} \text{ by equation \eqref{hb_leq_p0_B(uA/2)}}\\
    & =: \lambda \, p_0(x).
\end{align*}

%$|\phi_j(x)|\leq \Cphi L h_j^\alpha \leq \Cphi L \frac{c^\alpha}{{\widetilde{\ch}}^\alpha} \hB(x_j)^\alpha \leq \Cphi \frac{c^\alpha}{\widetilde{\ch}^\alpha} \frac{p_0(x)}{\cA 2^{2\alpha/((4-t)\alpha + d)}} L \hB^\alpha(x) \leq p_0(x)$ by definition of the constants, hence $p_0 - \phi_j \geq 0$. \ju{Vérifier les calculs et ajuster les constantes en conséquence !}\\

Therefore, by Lemma \ref{simplifying_assumpt_satisfied}, we have $p+\sum \epsilon_j \phi_j \in \mathcal{P}(\alpha, (1+\lambda) L, \frac{\cstar + 2\lambda + \lambda\cstar}{1-\lambda})$. Choosing the constant $\lambda$ small enough (by adjusting $\Cphi$), we can ensure $p_\epsilon \in \mathcal{P}(\alpha, L', \cstar')$ where $L'= (1+\delta)L$ and $\cstar' = (1+\delta)\cstar$. 
\end{proof}

\hfill

\subsection{Proof of Proposition \ref{separation_prior_bulk}}

\begin{proof}[Proof of Proposition \ref{separation_prior_bulk}]
Since the $\phi_j$ have disjoint support, we get that: $\big\|\sum_{j=1}^N \phi_j\big\|_t = \sum_{j=1}^N \big\| \phi_j\big\|_t$. 
Now, let $j\in \{1, \dots, N\}$. We have:
\begin{align*}
    \|\phi_j\|_t^t &= 2\int_{B_j} \Big\{\Cphi L h_j^\alpha~ f\bigg(\frac{x-x_j-\frac{h_j}{\sqrt{d}}\overset{\rightarrow}{\mathbb{1}}}{h_j} \bigg)\Big\}^t dx\\
    & = 2 \big(\Cphi L \|f\|_t\big)^t ~ h_j^{\alpha t + d} \\
    & \geq  2 \big(\Cphi L \|f\|_t\big)^t \; \frac{c^{\alpha t}}{{{\CL{\ref{p0_const_B(uA/2)}}^{(h)}}}^{\alpha t}} \int_{B_j} \hB^{\alpha t} ~~ \text{ by Lemma \ref{p0_const_B(uA/2)}}\\
    & =: \CbulkLB L^t\int_{B_j} \hB^{\alpha t} 
\end{align*}
where $\CbulkLB = \frac{2\, c^{\alpha t} \,{\Cphi }^t}{{{\CL{\ref{p0_const_B(uA/2)}}^{(h)}}}^{\alpha t}} \|f\|_t^t$, so that
\begin{align*}
    \sum_{j=1}^N \|\phi_j\|_t^t= \CbulkLB L^t \int_{\cup_{j=1}^N B_j} \hB^{\alpha t} \geq \CbulkLB L^t \int_{\B(\uA)} \hB^{\alpha t} = \CbulkLB {\overline{\rhob}}^t,
\end{align*}
where 
\begin{equation}\label{def_rhob_bar}
    \overline{\rhob} = \left(\frac{L^d}{n^{2\alpha }\I^\alpha}\right)^\frac{t}{4\alpha + d} \int_{\B(\uA)} p_0^r.
\end{equation}

It follows that ${\rho_{Bulk}^{LB}}^t \geq \CbulkLB {\overline{\rhob}}^t$. 
\\

Now, if $\uA = \uI$ then $\overline{\rhob} = \rhob$. Otherwise, if $\uA > \uI$ ,we have by Lemma \ref{uA>uI_then_tail} and Lemma \ref{htail_all_equal}: $\rhot \asymp \overline{\rhot} \geq \CL{\ref{uA>uI_then_tail}} \rhob \geq \overline{\rhob}$. Therefore: $\overline{\rhob} + \rhot \asymp \rhob + \rhot$.
%Now by Lemma \ref{I(uA)=I(uI)}: $\overline{\rhob}^t \geq \CL{\ref{I(uA)=I(uI)}} {\rhob}^t$ so that our lower bound yields a rate of order $\rhob$. 

In particular, we have proved that, if the bulk dominates, then 
\begin{equation}
    \|\sum_j \phi_j\|_t \gtrsim \rhob. \label{eq:separation_LB_bulk_phij}
\end{equation}
\end{proof}

\hfill

\subsection{Proof of Proposition \ref{lower_bound_bulk}}

\begin{proof}[Proof of Proposition \ref{lower_bound_bulk}]

As classical in the minimax literature we always have $R^*(\rho_{bulk}^{LB}) \geq R_B^{\; bulk} = 1 - d_{TV}(p_0^{\otimes n}, p_\epsilon^{(n)})$. Moreover, by Pinsker's inequality (see e.g. \cite{tsybakov2008introduction}) we have $d_{TV}(p_0^{\otimes n}, p_\epsilon^{(n)}) \leq \frac{1}{2} \sqrt{\chi^2\big(p_\epsilon^{(n)} ||  p_0^{\otimes n} \big)}$, therefore: $R_B^{\; bulk} \geq 1 - \frac{1}{2} \sqrt{\chi^2\big(p_\epsilon^{(n)} ||  p_0^{\otimes n} \big)}$. To prove that $R_B^{\; bulk} > \eta$, it therefore suffices to prove that $\chi^2\big(p_\epsilon^{(n)} ||  p_0^{\otimes n} \big) < 4(1-\eta)^2$. We recall that by Proposition \ref{guarantees_algo} item \ref{min_geq_max}, we have:
\begin{equation} \label{cup_Bj_sub_B(uA/2)}
    \B(\uA) ~ \subset~ \bigcup_{j=1}^N B_j ~\subset~ \B(\frac{\uA}{2}).
\end{equation}

We now compute $1+\chi^2\big(p_\epsilon^{(n)} || p_0^{\otimes n} \big)$.

\begin{align}
    &1+\chi^2\big(p_\epsilon^{(n)} ||  p_0^{\otimes n} \big) = \int_{\Omega^n} \frac{\big(\frac{1}{2^N} \sum_{\epsilon \in \{\pm 1\}^N} \prod_{i=1}^n p_\epsilon(y_i) \big)^2}{\prod_{i=1}^n p_0(y_i)} dy_1 \dots dy_n\nonumber\\
    & = \frac{1}{4^N} \int \sum_{\epsilon, \epsilon' \in \{\pm 1\}^N} \prod_{i=1}^n \frac{p_\epsilon(y_i)p_{\epsilon'}(y_i)}{p_0(y_i)}dy_1 \dots dy_n\nonumber\\
    & = \frac{1}{4^N} \int_{\Omega^n} \sum_{\epsilon, \epsilon' \in \{\pm 1\}^N} \prod_{i=1}^n \frac{\big( p_0(y_i) + \sum_{j=1}^N \epsilon_j \phi_j(y_i) \big)\big( p_0(y_i) + \sum_{j=1}^N \epsilon'_j \phi_j(y_i) \big)}{p_0(y_i)}dy_1 \dots dy_n\nonumber\\
    & = \frac{1}{4^N} \sum_{\epsilon, \epsilon' \in \{\pm 1\}^n} \left( \int_{\Omega} p_0(x) +\sum_{j=1}^N (\epsilon_j + \epsilon'_j) \phi_j(x) +  \sum_{j=1}^N \epsilon_j \epsilon'_j \frac{\phi_j^2(x)}{p_0(x)} dx \right)^n\nonumber\\
    & = \frac{1}{4^N} \sum_{\epsilon, \epsilon' \in \{\pm 1\}^n} \left( 1 + \sum_{j=1}^N \epsilon_j \epsilon'_j \int_\Omega\frac{\phi_j^2(x)}{p_0(x)} dx \right)^n\nonumber\\
    & \leq \frac{1}{4^N} \sum_{\epsilon, \epsilon' \in \{\pm 1\}^n} \exp\big(n \sum_{j=1}^N \epsilon_j \epsilon'_j \int_\Omega\frac{\phi_j^2(x)}{p_0(x)} dx\big)\nonumber\\
    & = \prod_{j=1}^N\left(\frac{1}{4} \sum_{\epsilon_j, \epsilon'_j \in \{\pm 1\}}\exp\big(n  \epsilon_j \epsilon'_j \int_\Omega\frac{\phi_j^2(x)}{p_0(x)} dx\big) \right) = \prod_{j=1}^N \cosh \big(n\int_\Omega\frac{\phi_j^2(x)}{p_0(x)} dx \big) \nonumber\\
    & \leq \exp\Big( \frac{1}{2}\sum_{j=1}^N n^2 \big(\int_\Omega\frac{\phi_j^2(x)}{p_0(x)} dx \big)^2 \Big) \label{quantite_chi2}\\
    & \leq \exp\Big( \frac{1}{2}\sum_{j=1}^N n^2 {\Cphi}^4 \Ctilde L^4 \int_{B_j} \frac{\hB^{~~4\alpha + d}}{p_0^2} \Big) ~~~~~ \text{ by Lemma \ref{int_phij4_hd_p02}}\nonumber\\
    & \leq \exp\Big( \frac{1}{2} n^2 {\Cphi}^4 \Ctilde L^4 \frac{1}{n^2L^4\I} \int_{\B(\frac{\uA}{2})}p_0^r \Big) ~~~~~ \text{ by equation \eqref{cup_Bj_sub_B(uA/2)}}\nonumber\\
    & \leq \exp\Big( \frac{1}{2} n^2 {\Cphi}^4 \Ctilde L^4\CL{\ref{Bulk_asymp_BulkCases}} \frac{\I}{n^2L^4\I} \Big) ~~~~~~~~~~~~~~ \text{ by Lemma \ref{Bulk_asymp_BulkCases}}\nonumber\\
    &= \exp\big({\Cphi}^4 \Ctilde\big) ~\leq~ 1 + 4(1 - \eta)^2 ~~ \text{ for } \Cphi \leq \Big(\frac{1}{\Ctilde} \log(1+4(1-\eta)^2) \Big)^\frac{1}{4}.\nonumber
\end{align}
\end{proof}

\subsection{Technical results for the LB in the bulk regime}

We recall that \textbf{in this section}, $\calpha = \cA c^{-\alpha}$, where $c$ is a constant chosen small enough to ensure $\calpha \geq \frac{\sqrt{d}^\alpha(2^{1-\alpha}\lor1)}{1/2-\cstar}$.

\begin{lemma}\label{p0_const_B(uA/2)}
Let $j \in \{1,\dots, M\}$ and $x \in B_j$. Denote by $x_j$ the center of $B_j$. Then $\frac{p_0(x)}{p_0(x_j)} \in \left[\cL{\ref{p0_const_B(uA/2)}}^{(p_0)}, \CL{\ref{p0_const_B(uA/2)}}^{(p_0)}\right]$ where $\cL{\ref{p0_const_B(uA/2)}}^{(p_0)} = \frac{1}{2}$ and $\CL{\ref{p0_const_B(uA/2)}}^{(p_0)} = \frac{3}{2}$ are two  constants. It follows that $\frac{\hB(x)}{\hB(x_j)} \in \left[\cL{\ref{p0_const_B(uA/2)}}^{(h)}, \CL{\ref{p0_const_B(uA/2)}}^{(h)}\right]$ where $\cL{\ref{p0_const_B(uA/2)}}^{(h)} = \left(\frac{1}{2}\right)^\frac{2}{(4-t)\alpha+d}$ and $\CL{\ref{p0_const_B(uA/2)}}^{(h)} = \left(\frac{3}{2}\right)^\frac{2}{(4-t)\alpha+d}$ are two  constants.
\end{lemma}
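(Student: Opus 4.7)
Applying Proposition \ref{guarantees_algo} item~2 to the cell $B_j$ yields $e(B_j) \leq h(x_j)$, and the algorithm's parameter choice (the definitions of $\cbeta$ and $\calpha$ set just before equation \eqref{perturbation}) ensures $h \leq c\,\hB$ pointwise. Consequently, for any $x \in B_j$,
$$\|x - x_j\| \;\leq\; \tfrac{\sqrt{d}}{2}\, e(B_j) \;\leq\; \tfrac{c\sqrt{d}}{2}\, \hB(x_j),$$
so Assumption~\eqref{simplifyingAssumption} gives
$$|p_0(x) - p_0(x_j)| \;\leq\; \cstar\, p_0(x_j) \,+\, L\|x-x_j\|^\alpha \;\leq\; \cstar\, p_0(x_j) \,+\, \Bigl(\tfrac{c\sqrt{d}}{2}\Bigr)^{\!\alpha}\, L\,\hB(x_j)^\alpha.$$

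The main technical point is to bound $L\,\hB(x_j)^\alpha$ by a constant multiple of $p_0(x_j)$. One cannot simply quote ``$p_0 \geq \cA L\,\hB^\alpha$ on $\B(\uA)$'', because Proposition~\ref{guarantees_algo} item~3 only guarantees $x_j \in \B(\uA/2)$, not $x_j \in \B(\uA)$. Instead I will use the explicit lower bound on $\uA$ from \eqref{def_uA} together with the definition \eqref{def_hbulk} of $\hB$: since $\hB(x_j)^\alpha \propto p_0(x_j)^{2\alpha/((4-t)\alpha+d)}$ and $p_0(x_j) \geq \uA/2$, a short calculation (identical to the one already performed just below \eqref{hb_leq_p0_B(uA/2)}) yields
$$L\,\hB(x_j)^\alpha \;\leq\; \frac{2^{((2-t)\alpha+d)/((4-t)\alpha+d)}}{\cA}\, p_0(x_j).$$
The powers of $L$, $n$, and $\I$ cancel exactly, which is the whole point of the definition of $\uA$.

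Inserting this into the previous display gives
$$|p_0(x) - p_0(x_j)| \;\leq\; \Bigl(\cstar \,+\, \tfrac{2}{\cA}\bigl(\tfrac{c\sqrt{d}}{2}\bigr)^{\alpha}\Bigr)\, p_0(x_j).$$
Since $\cstar < 1/2$ and $c$ is a free constant of the lower-bound construction that is already required to be ``arbitrarily small'', I can enforce the parenthesised factor to be at most $1/2$, which produces $p_0(x)/p_0(x_j) \in [1/2, 3/2]$ as desired. The second claim follows immediately from \eqref{def_hbulk}: as $\hB \propto p_0^{2/((4-t)\alpha+d)}$, we have $\hB(x)/\hB(x_j) = (p_0(x)/p_0(x_j))^{2/((4-t)\alpha+d)}$, which raises the bracket $[1/2,3/2]$ to the same power to give the stated constants. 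The only nontrivial step is the intermediate bound on $L\hB(x_j)^\alpha/p_0(x_j)$; everything else is routine assembly of \eqref{simplifyingAssumption} with the geometric guarantees of Algorithm~\ref{Algo_partitionnement}.
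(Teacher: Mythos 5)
Your proof is correct and follows essentially the same route as the paper's: bound $\|x-x_j\|$ by the cube geometry and Proposition~\ref{guarantees_algo} item~2, invoke Assumption~\eqref{simplifyingAssumption}, convert the $L\|x-x_j\|^\alpha$ term into a small multiple of $p_0(x_j)$, and then transfer the ratio estimate to $\hB$ via the power relation $\hB \propto p_0^{2/((4-t)\alpha+d)}$. You are in fact slightly \emph{more} careful than the paper at the step where $L\hB(x_j)^\alpha$ must be bounded by a constant times $p_0(x_j)$: the paper simply writes $Lh^\alpha(x_j)\sqrt d^\alpha \leq \tfrac{\sqrt d^\alpha}{\calpha}p_0(x_j)$, which applies the condition $p_0 \geq \calpha L h^\alpha$ at the center $x_j$, even though that condition was only verified over $\B(\uA)$ and $x_j$ may lie in $\B(\uA/2)\setminus\B(\uA)$. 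You flag this explicitly and invoke the slightly weaker bound of the form \eqref{hb_leq_p0_B(uA/2)} that holds on $\B(\uA/2)$, costing only a factor $2^{((2-t)\alpha+d)/((4-t)\alpha+d)} < 2$ which is absorbed into the choice of~$c$. That extra factor is precisely what the $(2^{1-\alpha}\lor 1)$ in the paper's requirement $\calpha \geq \tfrac{\sqrt d^\alpha(2^{1-\alpha}\lor 1)}{1/2-\cstar}$ is there to soak up, so the two arguments are in substance identical.
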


\begin{proof}[Proof of Lemma \ref{p0_const_B(uA/2)}]
The proof follows from Assumption \eqref{simplifyingAssumption}:
\begin{align*}
    |p_0(x)-p_0(x_j)| &\leq \cstar p_0(x_j) + L(e(B_j)\sqrt{d})^\alpha \leq  \cstar p_0(x_j) + Lh^\alpha(x_j)\sqrt{d}^\alpha\\
    &\leq \Big(\cstar + \frac{\sqrt{d}^\alpha}{\calpha}\Big)p_0(x_j) ~ \leq ~  \frac{p_0(x_j)}{2}.
\end{align*}
\end{proof}

\begin{lemma}\label{simplifying_assumpt_satisfied}
Let $p : \Omega \longrightarrow \R_+$ satisfying Assumption \eqref{simplifyingAssumption}. Let $\phi : \Omega \rightarrow \R$ in $H(\alpha,\mu L)$ for some constant $\mu>0$ and such that $|\phi| \leq \lambda p$ over $\Omega$ for some constant $\lambda > 0$. Then $$p+\phi \in \mathcal{P}\big(\alpha, \, (1+\lambda \lor \mu)L, \, \frac{\cstar + 2\lambda + \lambda\cstar}{1-\lambda}\big).$$
\end{lemma}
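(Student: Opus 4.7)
The plan is to verify each of the four defining properties of the class $\mathcal{P}(\alpha,(1+\lambda\vee\mu)L, \frac{\cstar+2\lambda+\lambda\cstar}{1-\lambda})$ for the perturbed function $p+\phi$, with the assumption $\lambda\in(0,1)$ read off from the fact that the target constant has $1-\lambda$ in the denominator. Non-negativity is immediate: $|\phi|\le\lambda p$ gives $p+\phi\ge (1-\lambda)p\ge 0$, and the integral constraint $\int(p+\phi)=1$ will come for free from the way the lemma is used (in the proof of Proposition \ref{prior_bulk_admissible} the perturbations are built as odd pairs of bumps with vanishing integral). Membership in $H(\alpha,(1+\mu)L)\subseteq H(\alpha,(1+\lambda\vee\mu)L)$ follows from the additive stability of the H\"older class, using that $p\in H(\alpha,L)$ (implicit in the statement, since otherwise membership of $p+\phi$ in $\mathcal P$ could not be claimed) and $\phi\in H(\alpha,\mu L)$.

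The core step is verifying Assumption \eqref{simplifyingAssumption} for $p+\phi$ with the advertised constants. I would write, for any $x,y\in\Omega$, the triangle inequality
\[
|(p+\phi)(x)-(p+\phi)(y)|\le |p(x)-p(y)|+|\phi(x)|+|\phi(y)|,
\]
then apply \eqref{simplifyingAssumption} for $p$ to the first term and the bound $|\phi|\le\lambda p$ to the remaining two. This yields
\[
|(p+\phi)(x)-(p+\phi)(y)|\le \cstar p(x)+L\|x-y\|^\alpha+\lambda p(x)+\lambda p(y).
\]
The key trick is then to replace $p(y)$ on the right-hand side using \eqref{simplifyingAssumption} once more: $p(y)\le(1+\cstar)p(x)+L\|x-y\|^\alpha$. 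Substituting gives
\[
|(p+\phi)(x)-(p+\phi)(y)|\le (\cstar+2\lambda+\lambda\cstar)\,p(x)+(1+\lambda)L\|x-y\|^\alpha.
\]
Finally, I convert $p(x)$ into $(p+\phi)(x)$ via the reverse triangle inequality $p(x)\le (p+\phi)(x)/(1-\lambda)$ (valid since $\lambda<1$), which produces the claimed constants $\cstar'=(\cstar+2\lambda+\lambda\cstar)/(1-\lambda)$ and $L'=(1+\lambda)L\le (1+\lambda\vee\mu)L$.

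No part of this is particularly deep; the only step worth any care is the bookkeeping around the substitution $p(y)\to p(x)$, which must be done using \eqref{simplifyingAssumption} rather than na\"ive H\"older regularity of $p$, so as to keep the right-hand side expressed linearly in $p(x)$ rather than accruing additional $\|x-y\|^\alpha$ dependence with the wrong prefactor. Once this replacement is handled cleanly, matching the closed-form constant in the statement is straightforward arithmetic.
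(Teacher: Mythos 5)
Your proof is correct and follows essentially the same route as the paper's: triangle inequality, apply \eqref{simplifyingAssumption} to $p$ twice (once on $|p(x)-p(y)|$, once to convert the stray $p(y)$ into $p(x)$), then pass from $p(x)$ to $(p+\phi)(x)$ via $p(x)\le (p+\phi)(x)/(1-\lambda)$. Your observations that $\lambda<1$ and $p\in H(\alpha,L)$ are implicit in the statement, and that the integral constraint is not checked by the lemma itself, are accurate but do not change the argument.
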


\begin{proof}[Proof of Lemma \ref{simplifying_assumpt_satisfied}]
Clearly, $p+\phi \in H(\alpha, (1+\mu)L) \subset H(\alpha, (1+\mu \lor \lambda)L)$. Now, let $x,y \in \Omega$. By Assumption \eqref{simplifyingAssumption} and the triangular inequality, we have:
\begin{align*}
    |p(x) + \phi(x) - p(y) - \phi(y)| & \leq |p(x)-p(y)| + |\phi(x)| + |\phi(y)|\\
    & \leq \cstar p(x) + L\|x-y\|^\alpha + 2\lambda p(x) + \lambda [p(y)-p(x)]\\
    & \leq (\cstar + 2\lambda) p(x) + L\|x-y\|^\alpha + \lambda(\cstar p(x) + L\|x-y\|^\alpha)\\
    & \leq (\cstar+2\lambda+\lambda\cstar)p(x) + (1+\lambda) L\|x-y\|^\alpha\\
    & \leq \frac{\cstar+2\lambda+\lambda\cstar}{1-\lambda}[p(x) + \phi(x)] + (1+\lambda \lor \mu) L\|x-y\|^\alpha.
\end{align*}
\end{proof}

\begin{lemma}\label{int_phij4_hd_p02}
There exist two  constants $\cphi, \Cphi >0$ such that for all $j=1, \dots, N$:
$$\Big(\int_\Omega \frac{\phi_j^2}{p_0}\Big)^2 \leq \Ctilde \, {\Cphi}^4 L^4 \int_{B_j} \frac{\hB^{~~4\alpha + d}}{p_0^2},$$
where $\Ctilde$ is a constant given in the proof.
\end{lemma}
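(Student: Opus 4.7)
The strategy is a direct size comparison using the explicit form of $\phi_j$ and the fact that, on the cube $B_j$, both $p_0$ and $\hB$ are constant up to multiplicative constants (Lemma \ref{p0_const_B(uA/2)}), while $h_j$ itself is a constant multiple of $\hB(x_j)$ by definition.

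First, I would bound the left-hand side. Since $\phi_j$ is supported on the two Euclidean balls $B(x_j \pm (h_j/\sqrt{d})\vec{\mathbbm 1}, h_j/2)$, both contained in $B_j$, and $|f| \le \|f\|_\infty$, we have the pointwise bound $|\phi_j(x)| \le \Cphi L h_j^{\alpha}\|f\|_\infty$ together with $|\mathrm{supp}(\phi_j)| \le C_d h_j^d$ for a purely dimensional constant. Using Lemma \ref{p0_const_B(uA/2)} to replace $p_0$ by $p_0(x_j)$ up to the factor $1/\cL{\ref{p0_const_B(uA/2)}}^{(p_0)}=2$, this gives
\begin{equation*}
\int_\Omega \frac{\phi_j^2}{p_0} \;\le\; \frac{2 C_d \,\|f\|_\infty^2\,\Cphi^2 L^2\,h_j^{2\alpha+d}}{p_0(x_j)}.
\end{equation*}
Squaring yields a bound of the form $C\,\Cphi^4 L^4\,h_j^{4\alpha+2d}/p_0(x_j)^2$.

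Next, I would produce a matching lower bound for the right-hand side. By Proposition \ref{guarantees_algo} item \ref{cases_h}, $e(B_j)\ge \frac{1}{2^{\beta+1}}h(B_j)$, so $|B_j|\gtrsim h_j^d \gtrsim \hB(x_j)^d$ (recalling $h_j = c\,\hB(x_j)/4$). Applying Lemma \ref{p0_const_B(uA/2)} once more to bound $\hB$ and $p_0$ on $B_j$ from below by $\cL{\ref{p0_const_B(uA/2)}}^{(h)}\hB(x_j)$ and from above by $\CL{\ref{p0_const_B(uA/2)}}^{(p_0)} p_0(x_j)$ respectively, we get
\begin{equation*}
\int_{B_j} \frac{\hB^{4\alpha+d}}{p_0^2} \;\ge\; c'\,\frac{\hB(x_j)^{4\alpha+2d}}{p_0(x_j)^2}
\end{equation*}
for a positive constant $c'$ depending only on $\alpha,d,t$.

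Finally, I combine the two estimates. Since $h_j = c\,\hB(x_j)/4$, the quantity $h_j^{4\alpha+2d}/p_0(x_j)^2$ appearing in the upper bound of $(\int\phi_j^2/p_0)^2$ is bounded by a constant times $\hB(x_j)^{4\alpha+2d}/p_0(x_j)^2$, and hence by a constant times $\int_{B_j} \hB^{4\alpha+d}/p_0^2$. Tracking the constants gives the claim with $\Ctilde$ equal to an explicit product of $\|f\|_\infty^4$, $C_d^2$, $(c/4)^{4\alpha+2d}$, $1/c'$, and powers of $\cL{\ref{p0_const_B(uA/2)}}^{(p_0)}, \cL{\ref{p0_const_B(uA/2)}}^{(h)}$. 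There is no serious obstacle here; the only thing to watch is that $|\mathrm{supp}(\phi_j)|\asymp h_j^d\asymp |B_j|$ so that the squared $L^2$--integral on the left matches the volume factor on the right, and this is guaranteed by the partitioning algorithm's guarantee $e(B_j)\asymp h_j$.
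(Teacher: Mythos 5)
Your proof is correct, and it takes a genuinely different, more elementary route than the paper's. The paper applies Cauchy--Schwarz with the weight $\hB^d$, splitting $\big(\int_{B_j}\phi_j^2/p_0\big)^2 \le \int_{B_j}\frac{\phi_j^4}{p_0^2}\hB^d\cdot\int_{B_j}\frac{1}{\hB^d}$, then computes $\int_{B_j}\phi_j^4 = 2(\Cphi L)^4\|f\|_4^4 h_j^{4\alpha+d}$ exactly by a change of variables, and finally converts $h_j^{4\alpha+2d}/p_0(x_j)^2$ into $\int_{B_j}\hB^{4\alpha+d}/p_0^2$ using quasi-constancy of $p_0$ and $\hB$ on $B_j$. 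You instead bound $\int\phi_j^2/p_0$ directly via $\|\phi_j\|_\infty \le \Cphi L h_j^\alpha\|f\|_\infty$, the support size $|\mathrm{supp}(\phi_j)|\le C_d h_j^d$, and quasi-constancy of $p_0$, then square and match against an independently derived lower bound on $\int_{B_j}\hB^{4\alpha+d}/p_0^2$. Both arguments ultimately rest on exactly the same two pillars — Lemma \ref{p0_const_B(uA/2)} (quasi-constancy of $p_0, \hB$ on $B_j$) and item \ref{cases_h} of Proposition \ref{guarantees_algo} (which gives $|B_j|\asymp h_j^d$, used implicitly in the paper's final passage from $h_j^{4\alpha+2d}/p_0(x_j)^2$ to the integral) — so the Cauchy--Schwarz weight trick is, as you implicitly observe, superfluous here. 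The only price you pay is replacing the paper's exact $\|f\|_4^4$-computation with a cruder $\|f\|_\infty^2$-bound, which affects only the absolute constant $\Ctilde$.
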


\begin{proof}[Proof of Lemma \ref{int_phij4_hd_p02}]
Recall that $\phi_j$ is supported on $B_j$. By Lemma \ref{p0_const_B(uA/2)}, 
\begin{align}
    \Big(\int_\Omega \frac{\phi_j^2}{p_0}\Big)^2 &= \Big(\int_{B_j} \frac{\phi_j^2}{p_0}\Big)^2 \leq \int_{B_j} \frac{\phi_j^4}{p_0^2} \hB^d \cdot \int_{B_j} \frac{1}{\hB^d} ~~ \text{ by Cauchy-Schwarz' inequality}\nonumber \\
    & \leq \frac{h_j^d}{p_0(x_j)^2} \frac{{\CL{\ref{p0_const_B(uA/2)}}^{(h)}}^d}{c^{\,d}\,{\cL{\ref{p0_const_B(uA/2)}}^{(p_0)}}^2} \int_{B_j}\phi_j^4 \times \frac{1}{{\cL{\ref{p0_const_B(uA/2)}}^{(h)}}^d} \frac{c^d}{h_j^d} |B_j|\nonumber \\
    & \leq \frac{h_j^d}{p_0(x_j)^2} \frac{{\CL{\ref{p0_const_B(uA/2)}}^{(h)}}^d}{{\cL{\ref{p0_const_B(uA/2)}}^{(p_0)}}^2 {\cL{\ref{p0_const_B(uA/2)}}^{(h)}}^d} \int_{B_j}\phi_j^4. \label{int_phi4}
\end{align}
Moreover, by the change of variable $y = (x-x_j)/h_j$ we have
\begin{align*}
    \int_{B_j}\phi_j^4 &= 2 \int_{\R^d} \Big\{\Cphi L h_j^\alpha~ f(y) \Big\}^4  h_j^d dy = 2 \big(\Cphi L\big)^4 \|f\|_4^4 ~ h_j^{4\alpha + d}.
\end{align*}

Injecting into \eqref{int_phi4} we get:
\begin{align*}
    \int_\Omega \frac{\phi_j^2}{p_0} &\leq 2 {\Cphi}^4 \frac{{\CL{\ref{p0_const_B(uA/2)}}^{(h)}}^d}{{\cL{\ref{p0_const_B(uA/2)}}^{(p_0)}}^2 {\cL{\ref{p0_const_B(uA/2)}}^{(h)}}^d} \cdot \|f\|_4^4 ~ L^4  \frac{h_j^{4\alpha + d}}{p_0(x_j)^2} h_j^d\\
    & \leq 2 {\Cphi}^4 \frac{{\CL{\ref{p0_const_B(uA/2)}}^{(h)}}^d}{{\cL{\ref{p0_const_B(uA/2)}}^{(p_0)}}^2 {\cL{\ref{p0_const_B(uA/2)}}^{(h)}}^d} \cdot \|f\|_4^4 ~ L^4 \int_{B_j} \frac{\hB(x)^{4\alpha + d}}{p_0^2(x)}dx \; \frac{c^{4\alpha + d}{\CL{\ref{p0_const_B(uA/2)}}^{(p_0)}}^2}{{\cL{\ref{p0_const_B(uA/2)}}^{(h)}}^{4\alpha + d}}\\
    &=: {\Cphi}^4 \Ctilde L^4 \int_{B_j} \frac{\hB^{~~4\alpha + d}}{p_0^2}.
\end{align*}
\end{proof}

\section{Upper bound in the tail regime}\label{UB_tail_regime}

In the tail regime, we show that the combination of the tests $\psi_1$ and $\psi_2$ has both type-I and type-II errors upper bounded by $\eta/4$ when $\left(\int_{\T}|p-p_0|^t\right)^{1/t} \geq \ThreshT \rho^*$ for some constant $\ThreshT$. We defer to Subsection \ref{Technical_results_UB_Tail} the technical results needed for proving this upper bound. We recall that $\mathcal{T} = \T(\uA)$ but that we place ourselves over a covering of $\T(\tuA)$. \textbf{Until the end of the proof, whenever no ambiguity arises, we drop the indexation in $\bigcup_{j=1}^M \widetilde C_j$ and only write $\|p_0\|_1, \|p\|_1, \|\Delta\|_1$ to denote $\int_{\bigcup_{j=1}^M \widetilde C_j} p_0,\; \int_{\bigcup_{j=1}^M \widetilde C_j} p$, and $ \int_{\bigcup_{j=1}^M \widetilde C_j} |\Delta|$ respectively. Moreover, in Appendix \ref{UB_tail_regime} only, we will write $h$ for $\hT(\uA)$ when the tail dominates and $h = \hm$ when the bulk dominates.}\\

\subsection{Under $H_0$}

We here prove that $\psi_1 \lor \psi_2$ has a type-I error upper bounded by $\eta/2$, no matter whether the bulk or the tail dominates. \\

By Lemma \ref{proba_observed_twice}, the type-I error of $\psi_2$ is upper bounded by 
\begin{equation}\label{ErrI_psi2}
    \mathbb{P}_{p_0}(\psi_2 = 1) \leq n^2 h^d \int_{\bigcup_{j \in \Ibar} \widetilde C_j} p_0^2 \leq \CL{\ref{proba_observed_twice}} \leq \frac{\eta}{8} ~~ \text{ taking  $\CL{\ref{proba_observed_twice}}$ small enough}.
\end{equation}

\hfill

As to the type-I error of $\psi_1$, we have under $H_0$:
\begin{align*}
    &\mathbb{E}\Big[\sum_{j \in \Ibar} \frac{N_j}{n}\Big] = \|p_0\|_1 ~~~~ \text{ and } ~~~~ \mathbb{V}\Big[\sum_{j \in \Ibar} \frac{N_j}{n}\Big] \leq \frac{\|p_0\|_1}{n}.
\end{align*}

Recalling that we write $\|p_0\|_1$ for $\int_{\bigcup_{j \in \Ibar} \widetilde C_j} p_0$, we therefore have by Chebyshev's inequality:
\begin{equation}\label{ErrI_psi1}
    \mathbb{P}_{p_0}\left[\Big|\sum_{j \in \Ibar} \frac{N_j}{n} - \|p_0\|_1 \Big| > \CPsiOne \sqrt{\frac{\|p_0\|_1}{n}}\right] \leq \frac{\eta}{4}.
\end{equation}
for $\CPsiOne = 2\sqrt{2}/ \sqrt{\eta}$. Combining \eqref{ErrI_psi2} and \eqref{ErrI_psi1}, we conclude that $\psi_1 \lor \psi_2$ has type-I error upper bounded by $\eta/4$.\\

\boundedcase{\textbf{Remark:} Note that by Equation \eqref{ErrI_psi2}, we cannot have $\hT>\homega$ when the tail dominates. Otherwise, the family of cells $(\widetilde C_j)_j$ would only consist of one cube $\widetilde C_1$, covering $\Omega$. Even under $H_0$, this cell would clearly contain $n$ observations, contradicting \eqref{ErrI_psi2}.}

\subsection{Under the alternative when the tail dominates}

We now prove that when the tail dominates, $\rhot + \rhor$ is an upper bound on the minimax separation radius. To do so, we show that when $p$ is such that $\int_{\T} |p-p_0|^t \geq \ThreshT \left( {\rhot}^t + \rhor^t \right)$, one of the two tests $\psi_1$ or $\psi_2$ rejects $H_0$, \textit{whp}. Fix a density $p$ satisfying:
\begin{align}
    \int_{\T} |p-p_0|^t& \geq \ThreshT {\rhot}^t \geq \widetilde \ThreshT \Big(\frac{L^d}{n^{2\alpha}}\Big)^\frac{t-1}{\alpha + d} \left(\int_{\T}p_{0} + \frac{1}{n}\right)^\frac{(2-t)\alpha+d}{\alpha + d}\nonumber \\
    & \geq \widetilde \ThreshT \left(\int_{\T} p_0 + \frac{1}{n}\right)^{2-t} \left[\frac{L^d}{n^{2\alpha}} \Big(\int_{\T} p_0 \Big)^d \right]^\frac{t-1}{\alpha+d}. \label{Assumption_H1_tail}
\end{align}
where $\widetilde \ThreshT = \ThreshT/2^\frac{(2-t)\alpha+d}{\alpha + d}$ and $\ThreshT$ is a large enough constant.

Setting $u = 2-t$ and $v = t-1$ satisfying $u+v = 1$ and $u+2v = t$, we have by Hölder's inequality:
\begin{align*}
     \int_{\T} |p-p_0|^t &= \int_{\T} |p-p_0|^{u+2v} \leq \Big[\int_{\T} |p-p_0|\; \Big]^u \Big[\int_{\T} |p-p_0|^2\; \Big]^v \\
     &\leq \Big[\int_{\bigcup_{j=1}^M \widetilde C_j} |p-p_0|\; \Big]^u \Big[\int_{\bigcup_{j=1}^M \widetilde C_j} |p-p_0|^2\; \Big]^v.
\end{align*}

Then by \eqref{Assumption_H1_tail}, one of the following two inequalities must hold:
\begin{align*}
    &(\text{i})~~~ \int_{\bigcup_{j=1}^M \widetilde C_j} |p-p_0| ~\geq ~ \ThreshT_1 \left(\int_{\T} p_0 + \frac{1}{n}\right)\\
    &(\text{ii})~~ \int_{\bigcup_{j=1}^M \widetilde C_j} |p-p_0|^2 \geq ~ \ThreshT_2 \left[\frac{L^d}{n^{2\alpha}} \Big(\int_{\T} p_0 \Big)^d \right]^\frac{1}{\alpha+d} = \; \frac{\ThreshT_2}{n^2 \hT^d(\uA)}.
\end{align*}
where $\ThreshT_1$ and $\ThreshT_2$ are two constants given in the proof, such that $\ThreshT_1\ThreshT_2 = \widetilde \ThreshT$.\\

\underline{\textbf{First case:}} Suppose that
$ (\text{i})$ holds, i.e. $ \|\Delta\|_1 \geq  \ThreshT_1 \left(\int_{\T} p_0 + \frac{1}{n}\right)$. We then have \\ $\|\Delta\|_1 \geq \ThreshT_1\left(\frac{1}{n}+ \frac{1}{1+\CL{\ref{T(2uA)Moment1}}} \int_{\T(2\uA)} p_0 \right)\geq \CL{\ref{Erreur_II_psi1}}(\|p_0\|_1 + 1/n)$ for $\ThreshT_1$ large enough. Therefore, by Lemma \ref{Erreur_II_psi1}, we have $\mathbb{P}_p(\psi_1 = 0) \leq \frac{\eta}{8}$.\\

\hfill

\underline{\textbf{Second case:}} Suppose (i) does not hold. Then (ii) holds. By Lemma \ref{link_2ndMoment__sumSquares}, we can write:
$$ \int_{\bigcup_{j=1}^M \widetilde C_j} (p-p_0)^2 \leq \frac{\AL{\ref{link_2ndMoment__sumSquares}}}{h^d}\sum_{j=1}^M \Big( \int_{\widetilde C_j} p  \Big)^2 +  \frac{\BL{\ref{link_2ndMoment__sumSquares}}}{n^2h^d} + \CL{\ref{link_2ndMoment__sumSquares}} L h^\alpha \int_{\bigcup_{j=1}^M \widetilde C_j} |p-p_0|,$$ %recalling that $h = \hT(\uA)$. 
Since (i) does not hold we can further upper bound this expression as:
\begin{align*}
    \int_{\bigcup_{j=1}^M \widetilde C_j} (p-p_0)^2 & \leq \frac{\AL{\ref{link_2ndMoment__sumSquares}}}{h^d}\sum_{j=1}^M \Big( \int_{\widetilde C_j} p \Big)^2 + \frac{\BL{\ref{link_2ndMoment__sumSquares}}}{n^2h^d} + \CL{\ref{link_2ndMoment__sumSquares}} L h^\alpha \cdot \ThreshT_1 \int_{\T} p_0\\
    & \leq \frac{\AL{\ref{link_2ndMoment__sumSquares}}}{h^d}\sum_{j=1}^M \Big( \int_{\widetilde C_j} p  \Big)^2 + \frac{\BL{\ref{link_2ndMoment__sumSquares}}}{n^2h^d} + \CL{\ref{link_2ndMoment__sumSquares}} \frac{\ThreshT_1}{n^2 h^d}.
\end{align*}
By (ii) we therefore have:
\begin{align*}
    \frac{\AL{\ref{link_2ndMoment__sumSquares}}}{h^d}\sum_{j=1}^M \Big( \int_{\widetilde C_j} p \Big)^2 +  \frac{\BL{\ref{link_2ndMoment__sumSquares}}}{n^2h^d} + \frac{\CL{\ref{link_2ndMoment__sumSquares}} \ThreshT_1 }{n^2 h^d} \geq \frac{\ThreshT_2}{n^2 h^d}
\end{align*}
hence:
\vspace{-9mm}

\begin{align}
    &\frac{\AL{\ref{link_2ndMoment__sumSquares}}}{h^d}\sum_{j=1}^M \Big( \int_{\widetilde C_j} p  \Big)^2  \geq \frac{\ThreshT_2 - \CL{\ref{link_2ndMoment__sumSquares}} \ThreshT_1 - \BL{\ref{link_2ndMoment__sumSquares}}}{n^2 h^d} =:  \frac{\AL{\ref{link_2ndMoment__sumSquares}}\ThreshT_3}{n^2 h^d} \nonumber \\
    & \nonumber \\
    \text{i.e. } ~~~~ & \sum_{j=1}^M \Big( \int_{\widetilde C_j} p  \Big)^2 \geq \frac{\ThreshT_3}{n^2} \label{sum_observed_twice}
\end{align}

Taking $\ThreshT_2$ large enough ensures that $\ThreshT_3 \geq \CL{\ref{Erreur_II_psi2}}$, so that by Lemma \ref{Erreur_II_psi2}, we have $\mathbb{P}_p(\psi_2=0) \leq \frac{\eta}{8}$.

%Setting $\ThreshT_2 = 4 \AL{\ref{link_2ndMoment__sumSquares}}(9/100 \lor \log(16/\eta)^2) + \CL{\ref{link_2ndMoment__sumSquares}} \ThreshT_1 + \BL{\ref{link_2ndMoment__sumSquares}}$ ensures $\ThreshT_3 \geq 4\left(9/100 \lor \log(16/\eta)^2\right)$.\\

%We can now examine the probability that, under $H_1(\ThreshT \rhot)$ and assuming $\neg$(i), one of the cells $(C_j)_j$ contains at least two observations. 

\subsection{Under $H_1(\ThreshT \rhob)$ when the bulk dominates}

We now suppose that $\CBT\rhob \geq \rhot$. Moreover, we suppose $\int_{\Omega} |p-p_0|^t\geq \ThreshT\left(\rhob^t + \rhor^t \right)$ for $\ThreshT$ large enough. If $\int_{\B(\uA/2)} |p-p_0|^t\geq \frac{\ThreshT}{2} \left(\rhob^t + \rhor^t \right)$, then for $\ThreshT$ large enough, $\mathbb{P}_p(\psib =0)\leq \frac{\eta}{4}$ by the analysis of the upper bound. Therefore, \textit{wlog}, suppose that $\int_{\B(\uA/2)} |p-p_0|^t\leq \frac{\ThreshT}{2}\left(\rhob^t + \rhor^t \right)$, hence that $\int_{\T(\uA/2)} |p-p_0|^t\geq \frac{\ThreshT}{2} \left(\rhob^t + \rhor^t \right)$.\\

Assume first that $\|p_0\|_1 \leq \frac{1}{n}$. Then we have $\|p\|_t \geq \left( \frac{\ThreshT}{2}\right)^{1/t} \rhor - \|p_0\|_t \geq \left(\frac{\ThreshT}{2}-\AL{\ref{control_norm_t_Rd}}(1)\right) \rhor$ by Lemma \ref{control_norm_t_Rd}. Taking $\ThreshT$ large enough imposes $\|p\|_1 \geq 2  \frac{\CL{\ref{Erreur_II_psi1}}}{n} \geq \CL{\ref{Erreur_II_psi1}} \left(\frac{1}{n} + \|p_0\|_1\right)$ hence $\mathbb{P}_p(\psi_1 = 0) \leq \frac{\eta}{8}$ by Lemma \ref{Erreur_II_psi1}.\\

Now, in the remaining of the proof, assume $\|p_0\|_1 > \frac{1}{n}$. Again, there are two cases.\\

\underline{\textbf{First case:}} If $\|\Delta\|_1 \geq 2 \CL{\ref{Erreur_II_psi1}}\|p_0\|_1 \geq \CL{\ref{Erreur_II_psi1}}(\|p_0\|_1+\frac{1}{n})$, then by Lemma \ref{Erreur_II_psi1}: $\mathbb{P}_p(\psi_1 = 0) \leq \frac{\eta}{8}$. \\

\underline{\textbf{Second case:}} Assume now that $\|\Delta\|_1 \leq 2 \CL{\ref{Erreur_II_psi1}}\|p_0\|_1$, hence that $\|p\|_1 \leq (2\CL{\ref{Erreur_II_psi1}} + 1)\|p_0\|_1$. By Assumption \eqref{simplifyingAssumption}, the definition of $\hm$ from \eqref{def_hm} and the choice of $\cm$, we can immediately check that $\bigcup\limits_{j \in \Ibar} \widetilde C_j \subset \T(\uA)$. Hence $\|p\|_1 \leq (2\CL{\ref{Erreur_II_psi1}} + 1)\int_{\T(\uA)} p_0 = (2\CL{\ref{Erreur_II_psi1}} + 1)\frac{1}{L\hT^{\alpha+d}}$. We can now lower bound $\sum\limits_{j \in \Ibar} n^2 q_j^2$ using Lemma \ref{link_2ndMoment__sumSquares}:
\begin{align}
    \int_{\bigcup_{j=1}^M \widetilde C_j} (p-p_0)^2 &\leq \frac{\AL{\ref{link_2ndMoment__sumSquares}}}{\hm^d}\sum_{j \in \Ibar} \Big( \int_{\widetilde C_j} p  \Big)^2 +  \frac{\BL{\ref{link_2ndMoment__sumSquares}}}{n^2\hm^d} + \CL{\ref{link_2ndMoment__sumSquares}}(2\CL{\ref{Erreur_II_psi1}} + 1)  \frac{L \hm^\alpha}{L\hT^{\alpha+d}}\noindent\\
    & \leq \frac{\AL{\ref{link_2ndMoment__sumSquares}}}{\hm^d}\sum_{j \in \Ibar} \Big( \int_{\widetilde C_j} p  \Big)^2 +  \left(\BL{\ref{link_2ndMoment__sumSquares}} + \frac{ \CL{\ref{link_2ndMoment__sumSquares}}(2\CL{\ref{Erreur_II_psi1}} + 1) }{{\CBT^{(2)}}^{\alpha + d}}\right)\frac{1}{n^2\hm^d} ~~ \text{ by Lemma \ref{htail_leq_hbulk}}\noindent \\
    & =: \frac{\Apsitwo}{\hm^d}\sum_{j \in \Ibar} \Big( \int_{\widetilde C_j} p  \Big)^2 +  \frac{\Bpsitwo}{n^2\hm^d} \label{sum_qj2_geq_moment_2}
\end{align}
where $\Apsitwo$ and $\Bpsitwo$ are two constants. We recall that \textbf{in this section,} we respectively denote by $\|\Delta\|_2^2$ and $\|\Delta\|_t^t$ the quantities $\int_{\bigcup_{j=1}^M \widetilde C_j} (p-p_0)^2$ and $\int_{\bigcup_{j=1}^M \widetilde C_j} |p-p_0|^t$. We now lower bound the term $\|\Delta\|_2^2$. By Hölder's inequality:
\begin{align*}
    \|\Delta\|_2^2 &\geq \left(\|\Delta\|_t^t\|\Delta\|_1^{t-2}\right)^\frac{1}{t-1} \geq \left(\frac{\ThreshT}{2} \rhob^t \big\{(2\CL{\ref{Erreur_II_psi1}}+1)\|p_0\|_1\big\}^{t-2}\right)^{\frac{1}{t-1}}\\
    & = \left(\frac{\ThreshT}{2} \rhob^t \left\{(2\CL{\ref{Erreur_II_psi1}}+1)\left(\frac{\rhot^t}{\TL^\frac{t-1}{\alpha+d}}\right)^\frac{\alpha+d}{(2-t)\alpha+d}\right\}^{t-2}\right)^{\frac{1}{t-1}}\\
    &\geq \left(\frac{\ThreshT}{2} \rhob^t \left\{(2\CL{\ref{Erreur_II_psi1}}+1)\left(\frac{\CBT^t\rhob^t}{\TL^\frac{t-1}{\alpha+d}}\right)^\frac{\alpha+d}{(2-t)\alpha+d}\right\}^{t-2}\right)^{\frac{1}{t-1}}  \text{ recalling $t-2 \leq 0$}\\
    &=: \CDelta \rhob^\frac{td}{(2-t)\alpha+d}\TL^\frac{2-t}{(2-t)\alpha+d},
\end{align*}
where $\CDelta$ is a constant \llarge{$\ThreshT$}. We therefore have $\hm^d\|\Delta\|_2^2 \geq \cm \CDelta$, hence combining with equation \eqref{sum_qj2_geq_moment_2}, we get 
\begin{align*}
    n^2\sum_{j \in \Ibar} q_j^2 \geq \frac{1}{\Apsitwo} \left(\cm \CDelta - \Bpsitwo\right) \geq \CL{\ref{Erreur_II_psi2}}, 
\end{align*}
by choosing $\ThreshT$ large enough, which yields $\mathbb{P}_p(\psi_2 =0) \leq \frac{\eta}{8}$.

\subsection{Technical results}\label{Technical_results_UB_Tail}

\begin{lemma}\label{proba_observed_twice}
The following result holds no matter whether the bulk or the tail dominates. Under $H_0$, the probability that at least one of the cells $(\widetilde C_j)_{j=1,\dots,M}$ contains at least two observations is upper bounded as
\begin{align*}
    \mathbb{P}_{p_0}[\, \exists j \in \Ibar: N_j \geq 2\, ] \leq n^2 h^d \int_{\bigcup_{j \in \Ibar} \widetilde C_j} p_0^2 \leq \CL{\ref{proba_observed_twice}},
\end{align*}
where $\CL{\ref{proba_observed_twice}}$ is a constant which \slarge{$\CBT$}.  %$\CL{\ref{T(2uA)Moment2}}$ is defined in Lemma \ref{T(2uA)Moment2}.
\end{lemma}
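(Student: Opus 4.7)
The plan is to combine a union bound with a binomial tail bound, then convert the resulting sum of squares of cell probabilities into a continuous $L_2$ integral via Cauchy--Schwarz, and finally bound that integral by applying the moment estimates from Appendix~\ref{relations_btw_cutofss} on a slight enlargement of $\T(\uA)$.

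The first two steps are essentially routine. Under $H_0$, each $N_j$ is $\mathrm{Bin}(n, q_j)$ with $q_j = \int_{\widetilde C_j} p_0$, so $\mathbb{P}_{p_0}[N_j \geq 2] \leq \binom{n}{2} q_j^2 \leq n^2 q_j^2/2$ and a union bound yields $\mathbb{P}_{p_0}[\exists j \in \Ibar: N_j \geq 2] \leq \frac{n^2}{2} \sum_{j \in \Ibar} q_j^2$. Each $\widetilde C_j$ is a cube of edge length $\tilde h \leq h$, so by Cauchy--Schwarz $q_j^2 \leq |\widetilde C_j| \int_{\widetilde C_j} p_0^2 \leq h^d \int_{\widetilde C_j} p_0^2$, and summing over the (a.s. disjoint) cells gives $\sum_j q_j^2 \leq h^d \int_{\cup_j \widetilde C_j} p_0^2$. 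This already proves the first inequality of the lemma.

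The substance of the proof is the second inequality, which splits into the two regimes. In both, I would first use Assumption~\eqref{simplifyingAssumption} to show that $\bigcup_j \widetilde C_j$ is contained in a mild enlargement $\T(C\uA)$ of the tail: for any $x$ in a cell that meets $\T(\tuA)$ there exists $y$ in the same cell with $p_0(y) < \tuA$ and $\|x-y\| \leq \tilde h \sqrt{d}$, whence $(1-\cstar) p_0(x) \leq \tuA + L (\tilde h \sqrt{d})^\alpha$. In the tail-dominates case, where $\tilde h = \hT(\uA)$ and $\tuA=\uA$, the second term is controlled by $\uA$ thanks to the definition of $\hT$ and Lemma~\ref{uA>uI_then_=}, so the union lies in $\T(2\uA)$. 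Applying Lemma~\ref{T(2uA)Moment2} then gives $n^2 h^d \int_{\cup_j \widetilde C_j} p_0^2 \leq \CL{\ref{T(2uA)Moment2}}$, which is arbitrarily small when $\CBT$ is chosen large.

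The main obstacle is the bulk-dominates case, where $h = \hm$ and Lemma~\ref{htail_leq_hbulk} gives $\hm \geq \CBT^{(2)} \hT(\uA)$, so that the naive use of Lemma~\ref{int_p0squared_uA_small} (which is tuned to the bandwidth $\hT(\uA)$) goes the wrong way. The remedy is to use that on the union $p_0$ is bounded by a constant multiple of $\uA$, which follows from the relation $\hm = \cm \hB(\um)$, the inequality $\um \leq \uA$, and the bulk identity $\um \asymp \cA L \hB(\um)^\alpha$ together with the explicit choice $\cm^\alpha \sqrt{d}^\alpha \leq (\tfrac12 - \tfrac{\cstar}{2})\cA$. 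This yields $L \hm^\alpha \lesssim \um \leq \uA$, so that (i) $\bigcup_j \widetilde C_j \subset \T(C\uA)$ and (ii) $\int_{\cup_j \widetilde C_j} p_0^2 \leq C\uA \int_\Omega p_0 \leq C\uA$. Substituting the explicit forms of $\hm$ and $\uA$ (which share a common factor $[\cA L^{d/(4\alpha+d)}/(n^2 \I)^{\alpha/(4\alpha+d)}]$) and collecting exponents, the product $n^2 \hm^d \uA$ reduces to a polynomial in $\cm$, $\cA$, $\cI$ (with no $n$ dependence), so that it can be made arbitrarily small by choosing these constants small — equivalently, by choosing $\CBT$ large in view of the relation $\cI^{\alpha+d} = \cA^{4\alpha+d}$ and the definition of $\cm$. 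This yields the desired bound $\CL{\ref{proba_observed_twice}}$ small.
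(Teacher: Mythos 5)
Your first two steps (union bound plus binomial tail, then Cauchy--Schwarz across cells) and your treatment of the tail--dominates regime (use the inclusion $\bigcup_j \widetilde C_j \subset \T(2\uA)$ from Lemma~\ref{T(2uA)} and apply Lemma~\ref{T(2uA)Moment2}) match the paper's argument. The gap is in the bulk--dominates case, and it originates in a misreading of Lemma~\ref{htail_leq_hbulk}. That lemma asserts $\rhot \geq \CBT \rhob \Longleftrightarrow \hm \geq \CBT^{(2)} \hT(\uA)$, so in the bulk--dominates regime ($\rhot \leq \CBT\rhob$) the contrapositive gives $\hm \leq \CBT^{(2)} \hT(\uA)$, \emph{not} $\hm \geq \CBT^{(2)}\hT(\uA)$ as you assert. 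Consequently the ``naive'' route you dismiss is exactly the paper's route and it works: since $\bigcup_j \widetilde C_j \subset \T(\uA)$ (which you correctly argue), Lemma~\ref{int_p0squared_uA_small} gives $\int_{\bigcup_j \widetilde C_j} p_0^2 \leq \Cmom/(n^2\hT^d(\uA))$, so that
$n^2 \hm^d \int_{\bigcup_j\widetilde C_j} p_0^2 \leq \Cmom\,(\hm/\hT(\uA))^d \leq \Cmom\,(\CBT^{(2)})^d$, a constant.

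Your replacement argument does not close the gap. Bounding $\int_{\bigcup_j\widetilde C_j} p_0^2 \leq C\,\uA \int_\Omega p_0 \leq C\uA$ throws away the tail mass (one has the much sharper $\int_{\bigcup_j \widetilde C_j} p_0 \leq p_0[\T(\uA)]$, which is precisely what Lemma~\ref{int_p0squared_uA_small} encodes after squaring), and the resulting quantity $n^2\hm^d\uA$ is \emph{not} ``a polynomial in $\cm,\cA,\cI$ with no $n$ dependence.'' Substituting $\hm$ from \eqref{def_hm} and $\uA = X^{\frac{(4-t)\alpha+d}{(2-t)\alpha+d}}$ with $X = \cA L^{d/(4\alpha+d)}(n^2\I)^{-\alpha/(4\alpha+d)}$ and collecting exponents, the $n$--exponent of $n^2\hm^d\uA$ comes out to
\begin{equation*}
2 - \frac{2(\alpha+d)}{4\alpha+d}\cdot\frac{(4-t)\alpha+d}{(2-t)\alpha+d}
= \frac{2\alpha\big((4-3t)\alpha+d\big)}{(4\alpha+d)\big((2-t)\alpha+d\big)},
\end{equation*}
which is strictly positive for $t=1$ (it equals $\tfrac{2\alpha}{4\alpha+d}$), and also nonzero for most other $(t,\alpha,d)$; moreover a nontrivial power of $\I$ and of $L$ survives. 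So the bound diverges with $n$ and is $p_0$--dependent. Finally, the claimed ``equivalence'' between taking $\cm,\cA,\cI$ small and taking $\CBT$ large is not one: $\CBT$ is a threshold for the bulk/tail dichotomy, chosen independently of $\cm,\cA,\cI$; the relation $\cI^{\alpha+d}=\cA^{4\alpha+d}$ ties $\cI$ to $\cA$ but says nothing about $\CBT$. To repair the bulk case, use the correct direction of Lemma~\ref{htail_leq_hbulk} together with Lemma~\ref{int_p0squared_uA_small}, as in the paper.
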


\begin{proof}[Proof of Lemma \ref{proba_observed_twice}]
We place ourselves under $H_0$. For all ${j \in \Ibar}$, let $p_j = \int_{\widetilde C_j} p_0$. By the definition of $N_j = \sum_{i=1}^n \mathbb{1}\{X_i \in \widetilde C_j\}$, we have $N_j \sim Bin(p_j,n)$ for all $j=1, \dots, M$. Therefore the probability that for a fixed $j$ we have $N_j\geq 2$ is upper bounded as:
\begin{align*}
    1 - (1-p_j)^n - np_j(1-p_j)^{n-1} \leq 1 - (1-np_j) - np_j[1-(n-1)p_j] \leq n^2p_j^2.
\end{align*}
The probability that at least one of the $N_j$ is at least $2$ is therefore upper bounded by $\sum_{j \in \Ibar} n^2p_j^2$.

Now, by the Cauchy-Schwarz inequality:
\begin{align*}
    \sum_{j \in \Ibar} n^2p_j^2 = \sum_{j \in \Ibar} n^2 \left(\int_{\widetilde C_j} p_0 \right)^2 \leq \sum_{j \in \Ibar} n^2 h^d \int_{\widetilde C_j} p_0^2 = n^2 h^d \int_{\bigcup_{j \in \Ibar} \widetilde C_j} p_0^2. 
\end{align*}
If the tail dominates, then Lemma \ref{T(2uA)Moment2} proves that the last quantity is at most $\CL{\ref{T(2uA)Moment2}}$. Otherwise, since $\bigcup_{j \in \Ibar} \widetilde C_j \subset \T(\uA)$, the RHS can be further upper bounded by Lemma \ref{int_p0squared_uA_small} as $ \frac{\Cmom n^2 h^d}{n^2 \hT^d(\uA)} \leq \Cmom (\CBT^{(2)})^d$ by Lemma \ref{htail_leq_hbulk}. In both cases, the constant upper bounding the RHS \ssmall{$\Cmom$}.
\end{proof}

\begin{lemma}\label{T(2uA)}
If the tail dominates, i.e. if $\rhot \geq \CBT \rhob$ where $\CBT$ is defined in Lemma \ref{htail_leq_hbulk}, then it holds that $\bigcup_{j \in \Ibar}\widetilde C_j \subset \T(2\uA)$, provided that $\CBT$ is larger than a constant.
\end{lemma}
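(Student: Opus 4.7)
\medskip

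\textbf{Proof plan for Lemma \ref{T(2uA)}.} The plan is to bound $p_0(y)$ from above uniformly over the covering cells $\widetilde C_j$, using (i) the defining property of Algorithm \ref{splitting_tail} (each kept cell meets $\T(\uA)$), (ii) the H\"older/\eqref{simplifyingAssumption} control on $p_0$, and (iii) the tail-dominance bound on $L\hT^\alpha/\uA$ coming from Lemma \ref{htail_leq_hbulk}.

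First, I would fix $j \in \Ibar$ and pick any $y \in \widetilde C_j$. Since the cell $\widetilde C_j$ was \emph{not} removed by Algorithm \ref{splitting_tail} with inputs $u=\uA$ and $\tilde h = \hT=\hT(\uA)$, there exists a point $x_j \in \widetilde C_j \cap \T(\uA)$, so $p_0(x_j) < \uA$. Moreover, $\widetilde C_j$ has edge length $\hT$, hence $\|y-x_j\| \le \sqrt d\,\hT$. Applying Assumption \eqref{simplifyingAssumption} to the pair $(x_j,y)$ yields
\begin{equation*}
  p_0(y) \;\le\; p_0(x_j) + \cstar p_0(x_j) + L\,\|y-x_j\|^\alpha \;\le\; (1+\cstar)\,\uA \;+\; (\sqrt d)^\alpha \, L\,\hT^\alpha .
\end{equation*}
The goal $p_0(y) < 2\uA$ will therefore follow as soon as $(\sqrt d)^\alpha L\hT^\alpha < (1-\cstar)\uA$, which is a genuine constraint (recall $\cstar < 1/2$).

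The main step is then to exploit tail dominance to convert the assumption $\rhot \ge \CBT \rhob$ into this inequality. Lemma \ref{htail_leq_hbulk} gives exactly $\inf_{x\in\B}\hB(x) \ge \CBT^{(2)}\hT(\uA)$, where $\CBT^{(2)}$ can be made arbitrarily large by choosing $\CBT$ large. Since $\hB(x)$ is an increasing function of $p_0(x)$ (see \eqref{def_hbulk}), the infimum is attained at the level set $\{p_0=\uA\}$; combining this with the bulk inequality $p_0(x)\ge \cA L\hB(x)^\alpha$ valid on $\B=\B(\uA)$, we obtain, at the boundary of the bulk,
\begin{equation*}
  \uA \;\ge\; \cA\, L\,\bigl(\inf_{x\in\B}\hB(x)\bigr)^\alpha \;\ge\; \cA\,(\CBT^{(2)})^\alpha\, L\,\hT^\alpha,
\end{equation*}
so that $L\hT^\alpha \le \uA / \bigl(\cA (\CBT^{(2)})^\alpha\bigr)$. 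Picking $\CBT$ large enough to guarantee $(\sqrt d)^\alpha/\bigl(\cA (\CBT^{(2)})^\alpha\bigr) < 1-\cstar$ then gives $p_0(y) < 2\uA$, i.e.\ $y\in\T(2\uA)$, and the conclusion follows by taking the union over $j$.

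The only delicate point is step (iii), the quantitative comparison between $L\hT^\alpha$ and $\uA$ in the tail-dominated regime; everything else is either the defining property of the algorithm or a direct application of \eqref{simplifyingAssumption}. This comparison is essentially forced by the design of the cut-off $\uA$, which was chosen precisely so that the bulk inequality $p_0\ge\cA L\hB^\alpha$ is saturated at $p_0=\uA$, and by Lemma \ref{htail_leq_hbulk}, which translates tail dominance into $\hT\ll \hB$ on the bulk.
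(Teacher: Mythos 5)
Your proof is correct and follows essentially the same route as the paper's: take a point $x$ in $\widetilde C_j \cap \T(\uA)$ (which exists because the cell was kept by Algorithm \ref{splitting_tail}), apply Assumption \eqref{simplifyingAssumption} over the cell of diameter $\sqrt d\,\hT$, and then use Lemma \ref{htail_leq_hbulk} together with the defining property of $\uA$ to make the bias term $L(\sqrt d\,\hT)^\alpha$ small relative to $\uA$.

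One remark worth making: your write-up is actually more careful than the paper's own proof, which as printed skips a step that does not quite close. The paper writes
\[
p_0(y)\le(1+\cstar)p_0(x)+L(h\sqrt d)^\alpha\le(1+\cstar)p_0(x)+L\sqrt d^{\,\alpha}\inf_{x\in\B}\hB(x)^\alpha\le 2\uA,
\]
i.e.\ it enlarges $L\hT^\alpha$ to $L(\inf_\B\hB)^\alpha$ and then claims the result. But the bulk relation $\uA\ge\cA L(\inf_\B\hB)^\alpha$ only yields $L\sqrt d^{\,\alpha}(\inf_\B\hB)^\alpha\le\sqrt d^{\,\alpha}\uA/\cA$, and since $\cA$ is \emph{small} this bound can exceed $(1-\cstar)\uA$. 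The missing ingredient is precisely the one you keep: $\hT\le\inf_\B\hB/\CBT^{(2)}$ with $\CBT^{(2)}$ arbitrarily large, which gives $L\hT^\alpha\le\uA/\bigl(\cA(\CBT^{(2)})^\alpha\bigr)$ and lets you absorb the $1/\cA$ by choosing $\CBT$ large. So your argument is the intended (and correct) version of the paper's proof.
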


\begin{proof}[Proof of Lemma \ref{T(2uA)}]
Let $y\in \bigcup_{j \in \Ibar}\widetilde C_j$ and $x \in \T(\uA)$ such that $x$ and $y$ belong to the same cell $\widetilde C_j$. By Assumption \eqref{simplifyingAssumption} and Lemma \ref{htail_leq_hbulk} we have:
\begin{align*}
   p_0(y) \leq (1+\cstar)p_0(x) + L(h \sqrt{d})^\alpha \leq (1+\cstar)p_0(x) + L\sqrt{d}^\alpha \inf_{x \in \B} \hB(x)^\alpha \leq 2\uA.
\end{align*}
\end{proof}

\begin{lemma}\label{sum_to_norm}
Recall that $\|\Delta\|_1 = \int_{\T(\uA)} |\Delta|$ and $\|p_0\|_1 = \int_{\T(\uA)} |p_0|$. If $\|\Delta\|_1 \geq 3\|p_0\|_1$, then: $$\left| \int_{\T(\uA)} \Delta \right| \geq \frac{1}{2} \|\Delta\|_1.$$
\end{lemma}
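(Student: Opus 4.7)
The plan is to decompose $\Delta = p - p_0$ into its positive and negative parts, $\Delta^+ = (p-p_0)_+$ and $\Delta^- = (p_0-p)_+$, so that $|\Delta| = \Delta^+ + \Delta^-$ and $\Delta = \Delta^+ - \Delta^-$. Integrating over $\T(\uA)$ gives the two identities
\[
    \|\Delta\|_1 = \int_{\T(\uA)} \Delta^+ + \int_{\T(\uA)} \Delta^-, \qquad \int_{\T(\uA)} \Delta = \|\Delta\|_1 - 2\int_{\T(\uA)} \Delta^-.
\]
The whole argument then rests on one elementary pointwise observation: since $p \geq 0$, we have $\Delta^-(x) = (p_0(x) - p(x))_+ \leq p_0(x)$ for every $x$, and therefore $\int_{\T(\uA)} \Delta^- \leq \|p_0\|_1$.

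Under the hypothesis $\|\Delta\|_1 \geq 3\|p_0\|_1$, this pointwise bound yields $\int_{\T(\uA)} \Delta^- \leq \|p_0\|_1 \leq \tfrac{1}{3}\|\Delta\|_1$, so $\int_{\T(\uA)} \Delta^+ \geq \tfrac{2}{3}\|\Delta\|_1$ and hence $\int_{\T(\uA)} \Delta \geq \tfrac{1}{3}\|\Delta\|_1 \geq 0$. In particular $\big| \int_{\T(\uA)} \Delta \big|$ is a substantial fraction of $\|\Delta\|_1$, which is the content of the lemma. There is no real obstacle here — the only non-trivial ingredient is the nonnegativity of $p$, used to bound $\Delta^-$ by $p_0$; everything else is rewriting $\int\Delta$ in terms of $\|\Delta\|_1$ and $\int \Delta^-$. (A stricter hypothesis of the form $\|\Delta\|_1 \geq 4\|p_0\|_1$ would tighten the constant from $1/3$ to the stated $1/2$; either way the proof is the same two-line calculation.)
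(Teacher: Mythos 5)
Your proof is correct, and it is essentially the same decomposition the paper uses: you split $\Delta$ into positive and negative parts and use $p\geq 0$ to bound $\int\Delta^-=\int(p_0-p)_+\leq\|p_0\|_1$, which is exactly the paper's observation ``$s_-\leq 1$''. But you have in fact caught an error in the paper's proof. After defining $s=\int_\T\Delta/\int_\T p_0$, $s_\pm$, the paper writes ``by assumption: $s_+-s_-=s\geq 3$,'' whereas the stated hypothesis $\|\Delta\|_1\geq 3\|p_0\|_1$ only yields $s_++s_-\geq 3$, not $s_+-s_-\geq 3$. With the correct reading of the hypothesis one obtains, exactly as you did, $\int_\T\Delta=\|\Delta\|_1-2\int\Delta^-\geq(1-\tfrac23)\|\Delta\|_1=\tfrac13\|\Delta\|_1$, and the constant $\tfrac13$ is tight: take $p_0\equiv1$ on $\T=[0,1]$, $p=0$ on $[0,1-\epsilon]$ and $p=2/\epsilon+2$ on $(1-\epsilon,1]$; then $\|\Delta\|_1=3\|p_0\|_1$ but $\int_\T\Delta/\|\Delta\|_1=(1+2\epsilon)/3\to\tfrac13$. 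So the claimed $\tfrac12$ cannot be derived from the hypothesis as stated. Your remark that strengthening the hypothesis to $\|\Delta\|_1\geq 4\|p_0\|_1$ recovers $\tfrac12$ is correct (the constant is $1-2/c$ under $\|\Delta\|_1\geq c\|p_0\|_1$); alternatively one can simply keep the hypothesis and replace $\tfrac12$ by $\tfrac13$ in the statement, which only shifts the absolute constant $\CL{\ref{Erreur_II_psi1}}$ in the downstream application (Lemma~\ref{Erreur_II_psi1}), where the lemma is invoked precisely after establishing $\|\Delta\|_1\geq 3\|p_0\|_1$. Either fix is harmless; what matters is that the paper's line ``$s_+-s_-=s\geq 3$'' should read ``$s_++s_-\geq 3$.''
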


\hfill

\begin{proof}[Proof of Lemma \ref{sum_to_norm}]
Define $J_+ = \{x \in \mathcal{T}: p(x) \geq p_0(x)\}$ and $J_- = \{x \in \mathcal{T}: p(x) < p_0(x)\}$. Define also:
$$ s = \frac{\int_{\mathcal{T}} \Delta }{\int_{\mathcal{T}} p_0}, ~~~~~~ s_+ = \frac{\int_{J_+} \Delta}{\int_{\mathcal{T}} p_0}, ~~~~~~ s_- = - \frac{\int_{J_-} \Delta }{\int_{\mathcal{T}} p_0} $$

Then by assumption: $s_+-s_- = s\geq 3$. Moreover, $s_- =  \frac{\int_{J_-} p_0 - p}{\int_{\mathcal{T}} p_0}  \leq 1$. Thus, $s_+ \geq3 \geq 3s_-$ so that $2(s_+-s_-) \geq s_++s_-$, which yields the result.
\end{proof}

%\ju{On a peut-être besoin d'un lemme qui dit que 
%$$ u \mapsto \frac{\int_{<u} p_0^2}{ \left( \int_{<u}p_0 \right)^\frac{\alpha}{\alpha + d}}$$ est $C^0$ à gauche et que pour $\leq u$ c'est $C^0$ à droite. En effet, ça permettrait d'avoir $\hT(\uI) \lesssim \left(n^2 L \int_{< \uI} p_0 \right)^\frac{1}{\alpha + d}$ et $\bar \hT(\uI) \gtrsim \left(n^2 L \int_{< \uI} p_0 \right)^\frac{1}{\alpha + d}$.}

\begin{lemma}\label{Erreur_II_psi1}
The  following result holds no matter whether the bulk or the tail dominates. There exists a constant $\CL{\ref{Erreur_II_psi1}}$ such that, whenever $\|\Delta\|_1 \geq \CL{\ref{Erreur_II_psi1}} (\|p_0\|_1+1/n)$, then $\mathbb{P}_p(\psi_1 = 0) \leq \frac{\eta}{8}$.
\end{lemma}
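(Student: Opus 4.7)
The plan is to control the random variable $S := \tfrac{1}{n} \sum_{j \in \Ibar} N_j$ under $p$ via Chebyshev's inequality, and then combine with Lemma \ref{sum_to_norm} to show that $|S - \|p_0\|_1|$ exceeds the threshold $\CPsiOne \sqrt{\|p_0\|_1/n}$ with probability at least $1 - \eta/8$.

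First I would observe that $\sum_{j \in \Ibar} N_j$ is the total number of $X_i$'s falling in the union $\bigcup_{j \in \Ibar} \widetilde C_j$, so it is a $\mathrm{Bin}(n, \|p\|_1)$ random variable (where $\|p\|_1 = \int_{\cup \widetilde C_j} p$). Hence $\mathbb{E}_p(S) = \|p\|_1$ and $\mathbb{V}_p(S) \leq \|p\|_1/n$, and Chebyshev gives that on an event $\mathcal{E}$ of probability at least $1 - \eta/8$,
\[
    |S - \|p\|_1| \;\leq\; \sqrt{\tfrac{8\, \|p\|_1}{\eta\, n}}.
\]

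Next, choose $\CL{\ref{Erreur_II_psi1}} \geq 3$ so that the hypothesis $\|\Delta\|_1 \geq \CL{\ref{Erreur_II_psi1}}(\|p_0\|_1 + 1/n)$ implies $\|\Delta\|_1 \geq 3 \|p_0\|_1$. Lemma \ref{sum_to_norm} then yields
\[
    \bigl|\|p\|_1 - \|p_0\|_1\bigr| \;=\; \Bigl|\int_{\cup \widetilde C_j} \Delta \Bigr| \;\geq\; \tfrac{1}{2}\, \|\Delta\|_1.
\]
Moreover $\|p\|_1 \leq \|p_0\|_1 + \|\Delta\|_1 \leq \tfrac{4}{3}\|\Delta\|_1$.

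On $\mathcal{E}$, by the triangle inequality,
\[
    |S - \|p_0\|_1| \;\geq\; \tfrac{1}{2}\|\Delta\|_1 \;-\; \sqrt{\tfrac{8\, \|p\|_1}{\eta\, n}} \;\geq\; \tfrac{1}{2}\|\Delta\|_1 \;-\; \sqrt{\tfrac{32\, \|\Delta\|_1}{3\eta\, n}}.
\]
It remains to show this exceeds $\CPsiOne \sqrt{\|p_0\|_1/n}$. Using the hypothesis in the forms $\|p_0\|_1 \leq \|\Delta\|_1/\CL{\ref{Erreur_II_psi1}}$ and $1/n \leq \|\Delta\|_1/\CL{\ref{Erreur_II_psi1}}$, one gets $\sqrt{\|p_0\|_1/n} \leq \|\Delta\|_1 / \CL{\ref{Erreur_II_psi1}}$ and $\sqrt{\|\Delta\|_1/n} \leq \|\Delta\|_1/\sqrt{\CL{\ref{Erreur_II_psi1}}}$. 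Substituting, it suffices that
\[
    \tfrac{1}{2} \;\geq\; \tfrac{\CPsiOne}{\CL{\ref{Erreur_II_psi1}}} \;+\; \sqrt{\tfrac{32}{3\,\eta\,\CL{\ref{Erreur_II_psi1}}}},
\]
which is achieved by taking $\CL{\ref{Erreur_II_psi1}}$ large enough (depending only on $\CPsiOne$ and $\eta$). On $\mathcal{E}$ we then have $\psi_1 = 1$, so $\mathbb{P}_p(\psi_1 = 0) \leq \mathbb{P}(\mathcal{E}^c) \leq \eta/8$.

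There is no real obstacle here: the only subtlety is to verify that the deterministic separation $\|p\|_1 - \|p_0\|_1$ really does get inflated to order $\|\Delta\|_1$ (handled by Lemma \ref{sum_to_norm} thanks to the extra factor $\CL{\ref{Erreur_II_psi1}}$), and that both the threshold $\CPsiOne \sqrt{\|p_0\|_1/n}$ and the stochastic fluctuation $\sqrt{\|p\|_1/n}$ can be absorbed into a small constant multiple of $\|\Delta\|_1$ by choosing $\CL{\ref{Erreur_II_psi1}}$ large.
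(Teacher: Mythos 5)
Your proof is correct and follows essentially the same route as the paper: Chebyshev's inequality to concentrate the empirical count $S$ around $\|p\|_1$, Lemma~\ref{sum_to_norm} (applicable since $\CL{\ref{Erreur_II_psi1}}\geq 3$ forces $\|\Delta\|_1\geq 3\|p_0\|_1$) to convert the assumed $L_1$ separation into a deterministic gap $|\|p\|_1-\|p_0\|_1|\geq\tfrac12\|\Delta\|_1$, and then absorbing both the test threshold $\CPsiOne\sqrt{\|p_0\|_1/n}$ and the stochastic fluctuation $\sqrt{\|p\|_1/n}$ into small multiples of $\|\Delta\|_1$ via the hypothesis $\|p_0\|_1,1/n\leq\|\Delta\|_1/\CL{\ref{Erreur_II_psi1}}$. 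The only divergence is presentational: you argue on a fixed high-probability event $\mathcal{E}$, while the paper phrases the same estimate as a single Chebyshev ratio bound and then studies the monotonicity of $x\mapsto x/\bigl(n(x/2-z)^2\bigr)$ to conclude; the paper's proof also inserts an unnecessary detour through $\int_{\T(\uA)}p_0\geq\ctail/n$ that your direct use of the hypothesis avoids.
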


\begin{proof}[Proof of Lemma \ref{Erreur_II_psi1}]

Choose $\CL{\ref{Erreur_II_psi1}} \geq 10$ and $\ctail \geq 1$ so that by %(i) and
the triangular inequality and recalling $\int_{\T(\uA)} p_0 \geq \frac{\ctail}{n}$ we have: $\|p\|_1 + \|p_0\|_1 \geq 5\|p_0\|_1$, hence $\|\Delta\|_1 \geq \int p - \int p_0 \geq 3 \|p_0\|_1$. Therefore, the assumptions of Lemma \ref{sum_to_norm} are met.
    \begin{align*}
        \mathbb{P}_p(\psi_1 = 0) &= \mathbb{P}_p\Big(\big|\sum_{j \in \Ibar} \frac{N_j}{n} - \|p_0\|_1\big| \leq \CPsiOne \sqrt{\frac{\|p_0\|_1}{n}}\Big)\\
        &\leq \mathbb{P}_p\Big(\big|\int_{\bigcup_{j \in \Ibar} \widetilde C_j} p-p_0\big| - \big|\sum_{j \in \Ibar} \frac{N_j}{n} - \|p\|_1\big| \leq \CPsiOne \sqrt{\frac{\|p_0\|_1}{n}}\Big) \text{  by the triangular inequality}\\
        & \leq \mathbb{P}_p\Big(\frac{1}{2}\|\Delta\|_1 -   \CPsiOne \sqrt{\frac{\|p_0\|_1}{n}}\leq  \big|\sum_{j \in \Ibar} \frac{N_j}{n} - \|p\|_1\big| \Big) ~~ \text{ by Lemma \ref{sum_to_norm}}\\
        & \leq \frac{\frac{1}{n} \|p\|_1}{\left(\frac{1}{2}\|\Delta\|_1 -  \CPsiOne \sqrt{\frac{\|p_0\|_1}{n}}\right)^2} ~~ \text{ by Chebyshev's inequality}\\
        & \leq \frac{ \|p\|_1/n}{\left(\frac{1}{2}\|p\|_1 - \frac{1}{2}\|p_0\|_1 -   \CPsiOne \sqrt{\frac{\|p_0\|_1}{n}}\right)^2} \text{ by the triangular inequality}\\
        & \leq \frac{ \|p\|_1/n}{\left(\frac{1}{2}\|p\|_1 - \frac{1}{2}\|p_0\|_1 -  \CPsiOne (\|p_0\|_1 + 1/n)\right)^2} \text{ using } \sqrt{xy} \leq x + y\\
        & \leq \frac{ \|p\|_1/n}{\left(\frac{1}{2}\|p\|_1 -  (\CPsiOne + 1) (\|p_0\|_1 + 1/n)\right)^2}.
    \end{align*}
    Choose $\CL{\ref{Erreur_II_psi1}}  \geq 4(\CPsiOne + 1) +1$, so that the quantity $\frac{1}{2}\|p\|_1 -  (\CPsiOne + 1) (\|p_0\|_1 + 1/n)$ is strictly positive. This ensures that all of the above operations are valid. Now set $z = (\CPsiOne + 1) (\|p\|_1 + 1/n)$. The function $f: x \mapsto \frac{ x}{n\left(x/2 -  z\right)^2}$ is decreasing over $(2z, \infty)$. 
    For $x \geq 20z/ \eta$, since $nz > 1$ and $\eta \leq 1 $, we have:
    \begin{align*}
        f(x) \leq \frac{20z/ \eta}{n (10z / \eta - z)^2} = \frac{20 \eta}{nz (10-\eta)^2} \leq \frac{20\eta}{81} \leq \eta/4.
    \end{align*}
    
    %Moreover, for $x \geq 20z/ \eta$, we have:
    %\begin{align*}
        %f(x) \leq \frac{20z/ \eta}{n (10z / \eta - z)^2} = \frac{20 \eta}{nz (10-\eta)^2} \underset{\scalebox{.7}{$\begin{array}{cc}
             %nz > 1  \\
             %\eta \leq 1 
        %\end{array}$} }{\leq } \frac{20\eta}{81} \leq \eta/4.
    %\end{align*}
    which proves that, whenever $\|p\|_1 \geq \frac{20}{\eta}(\CPsiOne + 1) (\|p_0\|_1 + 1/n)$, we have $\mathbb{P}_p(\psi_1=0) \leq \eta/4$. This condition is guaranteed whenever $\|\Delta\|_1 \geq \big(1+\frac{20}{\eta}(\CPsiOne + 1) \big)(\|p_0\|_1 + 1/n) = \CL{\ref{Erreur_II_psi1}} (\|p_0\|_1 + 1/n)$ for $\CL{\ref{Erreur_II_psi1}}  = 1+ \frac{20}{\eta}(\CPsiOne + 1)$.
\end{proof}

\begin{lemma}\label{link_2ndMoment__sumSquares}
We have: 
$$ \int_{\bigcup_{j \in \Ibar} \widetilde C_j} (p-p_0)^2 \; \leq \;\; \frac{\AL{\ref{link_2ndMoment__sumSquares}}}{h^d}\sum_{j \in \Ibar} \left(\int_{C_j} p \right)^2  +\;  \frac{\BL{\ref{link_2ndMoment__sumSquares}}}{n^2h^d} \; + \; \CL{\ref{link_2ndMoment__sumSquares}} L h^\alpha \int_{\bigcup_{j \in \Ibar} \widetilde C_j} |p-p_0|,$$
where $\AL{\ref{link_2ndMoment__sumSquares}}, \BL{\ref{link_2ndMoment__sumSquares}}, \CL{\ref{link_2ndMoment__sumSquares}}$ are constants given in the proof.
\end{lemma}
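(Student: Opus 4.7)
The plan is to work cell-by-cell on the covering $(\widetilde{C}_j)_{j \in \Ibar}$: each cell is a cube of edge length $\tilde h \asymp h$, so $|\widetilde{C}_j| \asymp h^d$, and both $p$ and $p_0$ can be shown to be ``essentially constant'' on it up to an error of order $Lh^\alpha$. Introduce the notation $q_j := \int_{\widetilde{C}_j} p$, $p_{0,j} := \int_{\widetilde{C}_j} p_0$, $\bar p_j := q_j/|\widetilde{C}_j|$, $\bar p_{0,j} := p_{0,j}/|\widetilde{C}_j|$, and $\Delta_j := \int_{\widetilde{C}_j}|p-p_0|$.

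The first step is to turn Assumption \eqref{simplifyingAssumption} into an ``average-to-sup'' control. For any $p \in \mathcal P(\alpha,L',\cstar')$ and $x,y \in \widetilde{C}_j$, the assumption gives $p(y) \leq (1+\cstar')p(x) + L'(\sqrt{d}\,h)^\alpha$; averaging in $x$ over $\widetilde{C}_j$ yields $\sup_{\widetilde{C}_j} p \leq (1+\cstar')\bar p_j + L'\sqrt{d}^{\,\alpha} h^\alpha$, and an analogous inequality holds for $p_0$ with $\cstar, L$. Combining this with the pointwise bound $\int_{\widetilde{C}_j}(p-p_0)^2 \leq \|p-p_0\|_{L^\infty(\widetilde{C}_j)}\Delta_j$ and summing over $j$ produces
\[
\int_{\bigcup_j \widetilde{C}_j}(p-p_0)^2 \leq (1+\cstar')\sum_j \bar p_j\Delta_j + (1+\cstar)\sum_j \bar p_{0,j}\Delta_j + 2L'\sqrt{d}^{\,\alpha} h^\alpha \int_{\bigcup_j \widetilde{C}_j}|p-p_0|,
\]
which already supplies the third term of the lemma.

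The two remaining sums will be processed by Young's inequality $ab \leq \frac{a^2}{2\epsilon} + \frac{\epsilon b^2}{2}$ with $a \in \{q_j, p_{0,j}\}$, $b = \Delta_j$, and $\epsilon$ small. The $q_j^2/|\widetilde{C}_j|$ and $p_{0,j}^2/|\widetilde{C}_j|$ contributions inherit $|\widetilde{C}_j|\asymp h^d$: the first accumulates to the target $\frac{\AL{\ref{link_2ndMoment__sumSquares}}}{h^d}\sum_j q_j^2$, and the second is handled via Cauchy--Schwarz ($p_{0,j}^2 \leq |\widetilde{C}_j|\int_{\widetilde{C}_j}p_0^2$) combined with the uniform bound $n^2 h^d \int_{\bigcup_j \widetilde{C}_j}p_0^2 \leq \CL{\ref{proba_observed_twice}}$ from Lemma \ref{proba_observed_twice}---valid in both the bulk- and tail-dominates regimes---yielding the $\frac{\BL{\ref{link_2ndMoment__sumSquares}}}{n^2 h^d}$ term. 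The residual $\Delta_j^2/|\widetilde{C}_j|$ pieces collapse, via one more Cauchy--Schwarz $\Delta_j^2 \leq |\widetilde{C}_j|\int_{\widetilde{C}_j}(p-p_0)^2$, to a multiple of $\int_{\bigcup_j \widetilde C_j}(p-p_0)^2$, which is then reabsorbed into the left-hand side.

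The only real subtlety is this self-absorption: one needs the coefficient $\frac{\epsilon}{2}[(1+\cstar)+(1+\cstar')]$ to be strictly less than $1$. Since $\cstar < 1/2$ and $\cstar' = (1+\delta)\cstar$ with $\delta$ small, choosing e.g.\ $\epsilon = 1/8$ leaves a coefficient of at least $3/4$ in front of $\int(p-p_0)^2$ on the left, and rearranging yields the claim with constants $\AL{\ref{link_2ndMoment__sumSquares}}, \BL{\ref{link_2ndMoment__sumSquares}}, \CL{\ref{link_2ndMoment__sumSquares}}$ depending only on $\alpha, d, \cstar, \delta$. The only bookkeeping that requires care is ensuring that the same bandwidth $h$ (either $\hT(\uA)$ or $\hm$, according to the regime) appears consistently both in the $|\widetilde{C}_j|\asymp h^d$ geometric estimate and in the $\int p_0^2$ bound of Lemma \ref{proba_observed_twice}---but this is precisely the compatibility that the cut-off analysis of Appendix \ref{relations_btw_cutofss} is engineered to deliver.
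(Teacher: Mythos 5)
Your proof is correct, and it takes a genuinely different route from the paper's. The paper's proof sidesteps $|p-p_0|$ entirely: it applies the crude pointwise bound $(p-p_0)^2 \leq 2p^2 + 2p_0^2$, controls $\int_{\widetilde C_j}p^2$ via the ``average-to-sup'' estimate $p(x) \leq \frac{(1+\cstar)^2}{h^d}\int_{\widetilde C_j}p + (2+\cstar)L(h\sqrt d)^\alpha$ (your same mechanism, applied once), and then reintroduces the $Lh^\alpha\|\Delta\|_1$ term only at the very end by splitting $Lh^\alpha\|p\|_1 \leq Lh^\alpha\|\Delta\|_1 + Lh^\alpha\|p_0\|_1$. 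Nothing is absorbed; the left-hand side appears only once. Your version instead keeps the factor $|p-p_0|$ from the start via $\int_{\widetilde C_j}(p-p_0)^2 \leq \|p-p_0\|_{L^\infty(\widetilde C_j)}\Delta_j$, applies the average-to-sup control to \emph{both} $p$ and $p_0$, then Young's inequality, and closes the argument by self-absorption. Both routes are valid; yours is slightly longer because of the absorption bookkeeping, but it makes the role of the $L_1$ discrepancy more transparent, whereas the paper's is more economical. One small thing worth flagging: you invoke Lemma~\ref{proba_observed_twice} for the bound $n^2 h^d\int_{\bigcup_j\widetilde C_j}p_0^2 \lesssim 1$, which is indeed stated to hold in both regimes; the paper instead writes the constant as $\CL{\ref{T(2uA)Moment2}}$ (whose source lemma is phrased only for the tail-dominates case), so your citation is actually the cleaner one here.
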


\begin{proof}[Proof of Lemma \ref{link_2ndMoment__sumSquares}]

Let $j\in\{1, \dots, M\}$. Assume that each cube $\widetilde C_j$ is centered at $x_j$. By Assumption \eqref{simplifyingAssumption} we have for all $j=1, \dots, M$ and $x \in \widetilde C_j$:
\begin{align}
    p(x) &\leq (1+\cstar)p(x_j) + L(h\sqrt{d})^\alpha, \label{p(x)_leq_p(xj)}  
\end{align}
hence by exchanging $x$ and $x_j$ and integrating:
\begin{align}
    p(x_j) & \leq \frac{1+\cstar}{h^d} \int_{\widetilde C_j}p + L(h\sqrt{d})^\alpha \label{p(xj)_leq_p(x)},
\end{align}
and by equations \eqref{p(x)_leq_p(xj)} and  \eqref{p(xj)_leq_p(x)}, we have:
\begin{align}
    p(x) &\leq \frac{(1+\cstar)^2}{h^d} \int_{\widetilde C_j}p + (2+ \cstar)L(h\sqrt{d})^\alpha. \label{p(x)_leq_int_p}
\end{align}

Therefore, fixing any ${j \in \Ibar}$ it holds that:
\begin{align}
    \int_{\widetilde C_j} p^2 & \leq \int_{\widetilde C_j} p(x)dx \left[\frac{(1+\cstar)^2}{h^d} \int_{\widetilde C_j}p + (2+ \cstar)L(h\sqrt{d})^\alpha \right]\nonumber \\
    & = \frac{(1+\cstar)^2}{h^d} \left(\int_{\widetilde C_j} p \right)^2 + (2+ \cstar)L(h\sqrt{d})^\alpha \int_{\widetilde C_j}p. \label{int_p2_leq_int_p_2}
\end{align}

Now, we have for all ${j \in \Ibar}$:
\begin{align*}
    \int_{\widetilde C_j} (p-p_0)^2 \leq 2 \int_{\widetilde C_j} p^2 + 2 \int_{\widetilde C_j} p_0^2 \leq 2\frac{(1+\cstar)^2}{h^d} \left(\int_{\widetilde C_j} p \right)^2 + 2(2+ \cstar)L(h\sqrt{d})^\alpha \int_{\widetilde C_j}p + 2 \int_{\widetilde C_j} p_0^2,
\end{align*}
and summing for ${j \in \Ibar}$:
\begin{align*}
    \int_{\bigcup_{j \in \Ibar} \widetilde C_j} (p-p_0)^2 \leq \AL{\ref{link_2ndMoment__sumSquares}} \sum_{j \in \Ibar} \left(\int_{\widetilde C_j} p\right)^2 + \CL{\ref{link_2ndMoment__sumSquares}} Lh^\alpha \|p\|_1 + \frac{\CL{\ref{T(2uA)Moment2}}}{n^2h^d}.
\end{align*}

Now, 
\begin{align*}
    Lh^\alpha \|p\|_1 \leq Lh^\alpha \left( \|\Delta\|_1 + \|p_0\|_1\right) \leq Lh^\alpha \|\Delta\|_1 + \frac{\CL{\ref{T(2uA)Moment1}}}{n^2h^d}.
\end{align*}
Therefore, setting $\BL{\ref{link_2ndMoment__sumSquares}} = \CL{\ref{T(2uA)Moment1}} + \CL{\ref{T(2uA)Moment2}}$ yields the result.
\end{proof}

\begin{lemma}\label{Erreur_II_psi2}
The following result holds no matter whether the bulk or the tail dominates. Assume that $\sum_j n^2 q_j^2 \geq \CL{\ref{Erreur_II_psi2}}$ where $\CL{\ref{Erreur_II_psi2}}$ is a large constant and $q_j = \int_{\widetilde C_j} p$ for all $j$. Then $\mathbb{P}_p(\psi_2 = 0) \leq \frac{\eta}{8}$.
\end{lemma}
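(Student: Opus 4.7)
\textbf{Proof proposal for Lemma \ref{Erreur_II_psi2}.}

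The plan is to introduce the quadratic ``collision'' statistic
\[ Y \;=\; \sum_{j \in \Ibar} \binom{N_j}{2} \;=\; \sum_{1 \leq i < l \leq n} W_{i,l}, \qquad W_{i,l} := \mathbb{1}\{X_i \text{ and } X_l \text{ lie in a common cell } \widetilde C_j\}, \]
and observe that $\{\psi_2 = 0\}$ coincides with $\{\forall j \in \Ibar : N_j \leq 1\} = \{Y = 0\}$. Setting $Q_k := \sum_{j \in \Ibar} q_j^k$, the disjointness of the cells gives $\mathbb{P}(W_{i,l} = 1) = Q_2$, whence $\mathbb{E}[Y] = \binom{n}{2} Q_2 \geq \frac{n^2}{4} Q_2 \geq \frac{\CL{\ref{Erreur_II_psi2}}}{4}$ by the hypothesis $n^2 Q_2 \geq \CL{\ref{Erreur_II_psi2}}$. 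Since $\{Y = 0\} \subseteq \{|Y - \mathbb{E} Y| \geq \mathbb{E} Y\}$, Chebyshev's inequality reduces the task to showing $\V(Y)/(\mathbb{E} Y)^2 \leq \eta/8$ once $\CL{\ref{Erreur_II_psi2}}$ is taken large enough.

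For the variance, I will expand $Y^2 = \sum_{i<l,\, i'<l'} W_{i,l} W_{i',l'}$ and group pairs of pairs by $|\{i,l\} \cap \{i',l'\}|$. The diagonal ($(i,l)=(i',l')$) contributes $\binom{n}{2} Q_2 = \mathbb{E}[Y]$; pairs with four distinct indices are independent and contribute $\binom{n}{2}\binom{n-2}{2} Q_2^2$, which is strictly smaller than $(\mathbb{E} Y)^2 = \binom{n}{2}^2 Q_2^2$ and so drops out as a nonpositive remainder once $(\mathbb{E} Y)^2$ is subtracted; the $6\binom{n}{3}$ pairs sharing exactly one index satisfy $W_{i,l}\, W_{l,l'} = \mathbb{1}\{X_i, X_l, X_{l'} \text{ all in the same cube}\}$ and contribute $6\binom{n}{3}\, Q_3$. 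Combining these yields $\V(Y) \leq \binom{n}{2} Q_2 + 6\binom{n}{3} Q_3$.

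The last ingredient is the crude bound $Q_3 \leq (\max_j q_j)\, Q_2 \leq Q_2^{3/2}$, which uses only $(\max_j q_j)^2 \leq \sum_j q_j^2 = Q_2$. Substituting,
\[ \frac{\V(Y)}{(\mathbb{E} Y)^2} \;\leq\; \frac{1}{\binom{n}{2} Q_2} \;+\; \frac{6 \binom{n}{3}\, Q_2^{3/2}}{\binom{n}{2}^2\, Q_2^2} \;\lesssim\; \frac{1}{n^2 Q_2} \;+\; \frac{1}{n\sqrt{Q_2}} \;\leq\; \frac{4}{\CL{\ref{Erreur_II_psi2}}} \;+\; \frac{8}{\sqrt{\CL{\ref{Erreur_II_psi2}}}}, \]
where the last inequality uses $n^2 Q_2 \geq \CL{\ref{Erreur_II_psi2}}$ and $n\sqrt{Q_2} = \sqrt{n^2 Q_2}$. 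Picking $\CL{\ref{Erreur_II_psi2}}$ large enough, depending only on $\eta$, renders the right-hand side $\leq \eta/8$, which completes the argument. I do not foresee any genuine obstacle: the argument is a routine second-moment computation. The one mildly delicate point is the combinatorial bookkeeping in the variance expansion, in particular the observation that the ``four distinct indices'' contribution cancels against $(\mathbb{E} Y)^2$ with a negative sign and thus disappears from the upper bound on $\V(Y)$, leaving only the self-term $\binom{n}{2} Q_2$ and the three-index interaction $6\binom{n}{3} Q_3$.
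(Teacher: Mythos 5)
Your argument is correct: the collision statistic $Y=\sum_j\binom{N_j}{2}$ does satisfy $\{\psi_2=0\}=\{Y=0\}$, the expectation and variance computations are right (in particular the cancellation of the four-distinct-indices contribution against $(\mathbb{E}Y)^2$, and the combinatorial count $6\binom{n}{3}$ of pairs sharing one index), and the bound $Q_3 \leq (\max_j q_j)\,Q_2 \leq Q_2^{3/2}$ closes the loop with Chebyshev.

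This is, however, a genuinely different route from the paper's. The paper poissonizes: it draws $\widetilde k \sim \mathrm{Poi}(k)$ with $k=n/2$, observes $\widetilde k$ iid points, so that the cell counts $N_j'$ become independent, and then evaluates $\mathbb{P}(\forall j: N_j'\leq 1)$ as an explicit product; the product is then bounded by splitting indices according to whether $kq_j \leq 1/2$ or $>1/2$ and optimizing a convex function of the partial second moment. Finally it undoes the poissonization at the cost of a factor $1/\mathbb{P}(\widetilde k \leq n)$, which is controlled only for $n$ larger than a constant. Your second-moment argument on the collision count avoids the poissonization device entirely, requires no explicit product formula or case split on the magnitudes of $kq_j$, and works uniformly in $n\geq 2$ without invoking an auxiliary Poisson variable. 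The trade-off is that it yields only an $O(1/\sqrt{\CL{\ref{Erreur_II_psi2}}})$ rate for the failure probability, whereas the paper's explicit product gives the stronger (exponential in $\sqrt{\CL{\ref{Erreur_II_psi2}}}$) decay; for the purposes of this lemma, where one only needs $\leq \eta/8$ for some fixed constant, either suffices.
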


\begin{proof}
We draw $\widetilde k \sim \Poi(k)$ where we recall that $k = \frac{n}{2}$. We consider the setting where we observe $\widetilde X_1, \dots, \widetilde X_{\widetilde k}$ iid drawn from the density $p$ and define $\forall j \in \Ibar, N_j' = \sum_{i=1}^{\widetilde k} \mathbb{1}_{\widetilde X_i = j}$ the histogram of the tail in this modified setting. We recall that by the classical poissonization trick, the random variables $(N'_j)_j$ are independent and distributed as $\Poi(kq_j)$ respectively. We first notice that
\begin{align}
    \mathbb{P}_{p^{\otimes \widetilde k}}(\forall j \in \Ibar: N_j' = 0 \text{ or } 1) &\geq \mathbb{P}_{p^{\otimes \widetilde k}}(\forall j \in \Ibar: N_j' = 0 \text{ or } 1| \widetilde k \leq n)\mathbb{P}(\widetilde k \leq n)\nonumber \\
    & \geq \mathbb{P}_{p^{\otimes n}}(\forall j \in \Ibar: N_j' = 0 \text{ or } 1)\mathbb{P}(\widetilde k \leq n)\label{histogram_poissonized}
\end{align}
Moreover, 
\begin{align*}
    &\mathbb{P}_{p^{\otimes \widetilde k}}(\forall j\in \Ibar : N_j = 0 \text{ or } N_j = 1)  \; = \prod_{j \in \Ibar} e^{-kq_j}\left(1+kq_j\right).
\end{align*}
    Let $I_{-} = \{j\in \Ibar: kq_j \leq \frac{1}{2}\}$ and $I_{+} = \{j\in \Ibar: kq_j > \frac{1}{2} \}$. Recall that for $x\in (0,1/2], \; \log(1+x) \leq x - x^2/3$. Then, for $j \in I_-$: 
    \begin{align*}
        e^{-kq_j}\left(1+kq_j\right) &= \exp\left\{-kq_j +\log(1+kq_j)\right\} \leq \exp\left(- \frac{k^2q_j^2}{3}\right)
    \end{align*}
    
    Now, for $j \in I_+$, we have: $  -kq_j + \log(1+kq_j) \leq -kq_j + \log(1+kq_j) \leq -\frac{1}{10} kq_j
$ using the inequality $-0.9x + \log(1+x) \leq 0$ true for all $x \geq \frac{1}{2}$. 
Therefore, we have upper bounded the type-II error of $\psi_2$ by:
\begin{align*}
    \mathbb{P}_{p^{\otimes \widetilde k}}(\forall j\in& \Ibar : N_j = 0 \text{ or } N_j = 1) \leq \exp\Big(-\frac{1}{3} \sum_{j \in I_-} k^2q_j^2 - \frac{1}{10} \sum_{j\in I_+} kq_j\Big)\\
    & \leq \exp\Big(-\frac{1}{3} \sum_{j \in I_-} k^2q_j^2 - \frac{1}{10}\big(\sum_{j\in I_+} k^2q_j^2\big)^{1/2}\Big)\\
    & = \exp\Big(-\frac{1}{3} (S-S_+) - \frac{1}{10}\left(S_+\right)^{1/2}\Big) \text{ for $S = \sum_{j\in \Ibar} k^2q_j^2$ and $S_+ = \sum_{j\in I_+} k^2q_j^2$.} 
\end{align*}
Now, $S_+ \mapsto -\frac{S}{3} + \frac{1}{3}S_+ - \frac{\sqrt{S_+}}{10}$ is convex over $[0,S]$ so its maximum is reached on the boundaries of the domain and is therefore equal to $(-\frac{\sqrt{S}}{10}) \vee -\frac{S}{3} = -\frac{\sqrt{S}}{3}$ for $S \geq 9/100$. Now, since $\|q\|_2^2 \geq 4\CL{\ref{Erreur_II_psi2}}/k^2 \geq \CL{\ref{Erreur_II_psi2}}/k^2$, we have $S = k^2 \|q\|_2^2 \geq \log(16/\eta)^2 \vee 9/100$ which ensures $\mathbb{P}_{p^{\otimes \widetilde k}}(\forall j\in \Ibar : N_j = 0 \text{ or } N_j = 1) \leq \eta/16$, hence, by equation \eqref{histogram_poissonized}, $\mathbb{P}_p(\psi_2 = 0) \leq \frac{\eta}{16}/\mathbb{P}(\widetilde k \leq n) \leq \frac{\eta}{8}$ if $n$ is larger than a constant.% \ju{Vérifier pour les constantes et choisir $n$ assez grand pour que $\mathbb{P}(\widetilde k \leq n) \geq \frac{1}{2}$.}
\end{proof}

\section{Lower bound in the tail regime}\label{LB_tail_appendix}

%This Proposition allows us to assume \textit{wlog} that $\int_\T p_0 \geq\frac{\ctail}{n}$. Indeed, otherwise, there are two cases. If the bulk dominates, then we have already established that $\rhob+\rhor$ is both an upper- and a lower bound over $\rho^*$. If the tail dominates, we have so far only established that $\rhob+\rhor$ is an upper bound over $\rho^*$, but $\rhot \lesssim \rhor$ when $\int_\T p_0 < \frac{\ctail}{n}$. The upper bound therefore simplifies as $\rho^* \lesssim \rhor$ and Proposition \ref{remainder_term} yields the matching lower bound.\\

%Moreover, assume \textit{wlog} that the tail dominates, i.e. $\rhot \geq \CBT \rhob$, where $\CBT$ is a constant, since the converse case has already been  Indeed, in the converse case, Proposition~\ref{lower_bound_bulk} provides a lower bound of order $\rhob$ and we have $\rhob \gtrsim \rhot$. We consider two separate cases.\\

% Whenever the bulk regime dominates over $\rhot$, we therefore do not need to establish a lower bound. 
%By Lemmas \ref{pU_small_sum_large} and \ref{sum_pj_geq_int}, we therefore necessarily have $\sum_{j\geq U} p_j =: \Cpzero \int_{\T(\uA)} p_0$ where $\Cpzero$ is comprised between two constants $\Done$ and $\Dtwo$ given in these Lemmas. 

%\al{Je ne sais pas si ca vaut le coup, ici aussi, de donner les priors precis pour le tail. Il vaut peut etre mieux mettre les constructionsprecises en appendices, et expliquer un peu de facon plus intuitive ce quelles sont?}

We now define $(\widetilde C_j)_{j \in \Ibar}$ as the covering of $\T(\uA)$ given by Algorithm \ref{splitting_tail} with inputs $u = \uA$ and $h = \cch \hT(\uA)$ for a small constant $\cch$. %\ju{Attention il faut introduire tous les $\cch$ dans la preuve !} 
For all $j\in \Ibar$, define $p_j = \int_{\widetilde C_j }p_0$. We recall that \boundedcase{$\Ibar = \N^*$ or $\Ibar = \{1,\dots,M\}$ for some $M \in \N$ and that }the cells $(\widetilde C_j)_j$ are ordered such that the nonnegative real numbers $(p_j)_{j \in \Ibar}$ are sorted in decreasing order. Now, we can set:
\begin{equation}\label{def_U}
    U = \min \Big\{j \in \Ibar ~\big|~ n^2\, p_j  \sum_{l\geq j} p_l \leq \cu \Big\}.
\end{equation}

Lemma \ref{sum_pj_geq_int} proves that, when (a) and (b) hold, \boundedcase{$\{j \in \Ibar: j\geq U\} \neq \emptyset$. Therefore, }the union $D(U) := \bigcup\limits_{j \geq U} \widetilde C_j$ is not empty and that $S_U = \sum\limits_{j\geq U} p_j > 0$. For $j \geq U$ and for a sufficiently small constant $\cu>0$, we now set
\begin{equation}\label{def_pi}
    \pi_j = \frac{p_j}{\bar \pi} ~~ \text{ and } ~~ \bar \pi = \frac{2\cu}{n^2 \sum\limits_{j\geq U} p_j}.
\end{equation}

Index $U$ has no further meaning than to guarantee that $\pi_j \in [0,\frac{1}{2}]$ for all $j \geq U$. In particular, $\pi_j$ is a Bernoulli parameter.% We therefore define the family of independent Bernoulli random variables $b_j \sim Ber(\pi_j)$ for all $j \geq U$. 
\\

We now give high-level explanations regarding the construction of the tail prior. To start with, this prior will be supported over $D(U)$ rather than $\T(\uA)$. First, one sparse subset of indices $J_S \subset \{U,\dots,M\}$ is drawn by setting $J_S = \{j: b_j=1\}$ where for each $j \geq U$, $b_j \sim Ber(\pi_j)$ are independent Bernoulli random variables with parameter $\pi_j$. %\al{cest plus usuel de noter $B_j$ - pourquoi ne pas le faire? Cest deja pris comme notation?}. 
The random elements of $J_S$ represent the indices of the cubes $\widetilde C_j$ denoted here as the \textit{selected cubes}. On each selected cube $\big(\widetilde C_j\big)_{j \in J_S}$ one large (deterministic) perturbation $\gammaup_j\in H(\alpha, \delta' L)$ is added to $p_0$. Conversely, on each non-selected cube $\big(\widetilde C_j\big)_{j \notin J_S}$, one small perturbation $\gammadown_j \in H(\alpha, \delta' L)$ is removed from $p_0$. We consider the random function defined by
\begin{equation}\label{def_qbn}
    q_b:= p_0 + \sum_{j \geq U} \Big[b_j \gammaup_j - (1-b_j)\gammadown_j\Big].
\end{equation} Since $q_b$ may not necessarily be a probability density, we rescale $q_b$ to define the prior as follows:
\begin{equation}\label{def_prior_tail}
   p_b^{(n)} = \frac{q_b~}{\big\| q_b\big\|_1}.
\end{equation} 
The definitions of $\gammadown_j$ and $\gammaup_j$ are given in Equations \eqref{def_gammadown} and \eqref{def_gamma}. 
We show in Proposition \ref{separation_prior_tail} that with high probability, our prior satisfies $\|p_0 - p_b\|_t \geq \CtailLB \rhot$ for a constant $\CtailLB$. 

%We can now give precisely the definitions of the above quantities.

%Recall that $ p_b^{(n)} = \frac{q_b}{\| q_b\|_1} $ where $q_b:= p_0 + \sum_{j \geq U} \Big[b_j \gammaup_j - (1-b_j)\gammadown_j\Big]$ and that $p_j = \int_{\widetilde C_j} p_0$.\\

We now give the precise definitions of $(\gammaup_j)_j$ and $(\gammadown_j)_j$. The perturbations $(\gammadown_j)_j$ are designed to guarantee the following condition: $\forall j \geq U, \int_{\widetilde C_j} \gammadown_j \geq c\, p_j$ for some small constant $c>0$. 
To do this, we split $\widetilde C_j$ into smaller cells $(\Elj)_{l=1}^{M_j}$ on which $p_0$ can be considered as "approximately constant", in the sense that $\max\limits_{\Elj}p_0 / \max\limits_{\Elj}p_0 \in [c',c'']$ for two constants $c',c''>0$. By Assumption \eqref{simplifyingAssumption}, this condition is satisfied if all $\Elj$ have edge length $\asymp \left(\frac{p_0(x)}{L}\right)^{1/\alpha}$. 
We now remove on each $\Elj$ a small deterministic function $\philj$ whose total mass is at least $c\int_{\Elj} p_0$. The role of the $\gammadown_j$ is therefore to remove a small fraction of the mass of $p_0$ on each cell where $b_j=0$.  To formally define $\gammadown_j$, we first let for all $j\geq U$:
\begin{align}
    u_j &= \inf \Big\{u>0: \int_{\widetilde C_j} \mathbb{1}_{p_0(x)\geq u}\, p_0 \; \geq \frac{1}{2}p_j\Big\}, \label{def_uj}\\    
    D_j &= \; \big\{x \in \widetilde C_j: p_0(x) \geq u_j \big\}  \label{Def_Dj}.
\end{align}
We therefore apply Algorithm \ref{Algo_partitionnement} with inputs $\widetilde \Omega = \widetilde C_j$, $\beta = \alpha$, $u = u_j$ and $\cbeta = \cbeta'L$ for some large constant $\cbeta'$, and we set $\calpha = \cbeta'$. Taking $\cbeta'$ large enough ensures $\cstar + \frac{\sqrt{d}^\alpha}{\calpha} (2^{1-\alpha} \lor 1) \leq 1/2$, hence, the guarantees of Proposition \ref{guarantees_algo} are satisfied. 
For each cube $\widetilde C_j$, Algorithm \ref{Algo_partitionnement} defines the family of smaller cells $(E_1^{(j)}, \dots, E_{M_j}^{(j)})$ for some $M_j \in \N$. We denote the center of each cube $E_l^{j}$ by $z_l^{(j)} \in \widetilde C_j$ and its edge length by $h_l^{(j)}\asymp \left(\frac{1}{L \; \cbeta'} p_0(z_l^{(j)})\right)^{1/\alpha}$. Moreover, each cube has non empty intersection with $D_j$ and  $D_j \subset \bigcup\limits_{l=1}^{M_j} \Elj$. For some constant $\cdown$ small enough, define on each cell $\Elj$: 
\begin{equation}\label{def_phi_lj}
    \philj(x) = \cdown L \Big({\hlj}\Big)^\alpha f\Big(\frac{x-\zlj}{\hlj}\Big),
\end{equation}
where we recall that $f\geq 0$ over $\R^d$, $f \in H(\alpha,1) \cap C^\infty$, and $f$ is supported over $\left\{x \in \R^d : \|x\|<1/2 \right\}$. We here moreover assume that $f$ satisfies
\begin{equation}\label{simplifying_assymption_f}
    \forall x,y \in \R^d: |f(x)-f(y)| \leq \cstar f(x) + \|x-y\|^\alpha.
\end{equation}
The perturbation $\gammadown_j$ is defined as:
\begin{equation}\label{def_gammadown}
    \gammadown_j = \sum_{l=1}^{M_j} \philj.
\end{equation}

We now move to the definition of $\gammaup_j$. Assuming that each cube $\widetilde C_j$ is centered at $z_j$, $\gammaup_j$ is defined as:
\begin{equation}\label{def_gamma}
    \gammaup_j(x) = \cupward_j L h^\alpha ~ f\left(\frac{x-z_j}{h}\right), ~~~~ j \geq U,
\end{equation}
where $h:= \hT(\uA)$ is defined in \eqref{def_htail} and $\cupward_j$ is chosen so as to ensure that $\pi_j \int \gammaup_j = (1-\pi_j) \int \gammadown_j$. 
In other words, $\cupward_j$ is chosen so that the total mass of the prior is equal to $\int_{D(U)} p_0$ in expectation over the $(b_j)_{j\geq U}$. 
%This condition is satisfied if $\cupward_j =  \Big(p_j \int_{\R^d}(1-\|x\|)_+^\alpha \Big)^{-1} 2\cu (1-\pi_j) \int \gammadown_j$. 
Noticeably, when setting $\cupward := \cu \cdown$, Proposition \ref{Properties_gammadown} shows that $\forall j \geq U, \cupward_j \in \left[\cupward, \; 2\cupward \right]$ i.e. $\cupward_j$ is lower- and upper bounded by two strictly positive constants. \\

The functions $(\gammadown_j)_{j \geq U}$ and $(\gammaup_j)_{j \geq U}$ are chosen to ensure the following properties:

\begin{proposition}\label{Properties_gammadown}
\begin{enumerate}
    \item For all $(b_j)_{j\geq U}: \; p_b^{(n)} \in \mathcal{P}(\alpha, L',\cstar')$ over the whole domain $[0,1]^d$. %\ju{Il faut aussi vérifier \eqref{simplifyingAssumption} !} \al{??? Tu veux dire whp $1-\delta'$? Et pourquoi ca cest important, au fond tu as besoin de q non?}
    \item There exists a constant $\Cdown>0$ independent of $p_0$ such that for all $j \geq U: \; \Cdown\int_{\widetilde C_j} \gammadown_j \; \geq \;  \int_{\widetilde C_j} p_0$ where $\Cdown = \cdown (1-\cstar) \Cint$.
\end{enumerate}
\end{proposition}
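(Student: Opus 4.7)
My plan is to handle the two items in sequence, using Algorithm \ref{Algo_partitionnement} applied inside each tail cube $\widetilde C_j$ with $\beta = \alpha$, $u = u_j$ and $\cbeta = \cbeta' L$ as the main structural input. By Proposition \ref{guarantees_algo}, the resulting subcells $(\Elj)_l$ satisfy $e(\Elj) \in [\hlj/2^{\alpha+1}, \hlj]$ and $L\hlj^\alpha = p_0(\zlj)/\cbeta'$, and $p_0 \asymp p_0(\zlj)$ uniformly on $\Elj$. Since $f$ is supported in the open ball of radius $1/2$, the bump $\philj$ has support inside $B(\zlj,\hlj/2)\subset \Elj$, so its entire mass lies in $\Elj$ and the cells partition the supports cleanly.

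For item 2 I will compute directly $\int_{\Elj}\philj = \cdown L \hlj^{\alpha+d}\Cint = (\cdown\Cint/\cbeta')\,p_0(\zlj)\hlj^d$ from \eqref{def_phi_lj}. Applying $(\star)$ on $\Elj$ and taking $\cbeta'$ large enough to render the $L\hlj^\alpha$ term negligible compared to $p_0(\zlj)$ gives $\int_{\Elj} p_0 \lesssim (1+\cstar)\, p_0(\zlj)\hlj^d$, so $\int_{\Elj}\philj \gtrsim \cdown(1-\cstar)\Cint \int_{\Elj} p_0$. Summing over $l$ and using $\sum_l\int_{\Elj}p_0 \geq \int_{D_j} p_0 \geq p_j/2$ (which follows directly from the definition of $u_j$) yields the claimed inequality $\Cdown\int_{\widetilde C_j}\gammadown_j\geq p_j$ with $\Cdown$ of the stated form.

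For item 1 four admissibility conditions must be verified for $p_b^{(n)} = q_b/\|q_b\|_1$. Positivity of $q_b$ reduces to $\philj \leq p_0/2$ on $\Elj$, which follows from $\philj \leq (\cdown\|f\|_\infty/\cbeta')\,p_0(\zlj)$ by choosing $\cdown$ small; normalization is automatic. For H\"older smoothness, a direct scaling of $f\in H(\alpha,1)\cap C^\infty$ gives $\philj \in H(\alpha,\cdown L\|f\|_{H(\alpha,1)})$ and $\gammaup_j \in H(\alpha,2\cupward L\|f\|_{H(\alpha,1)})$; since the bump supports are pairwise disjoint and $f$ vanishes smoothly at the boundary of its support, the sum $\phi_b := \sum_j[b_j\gammaup_j-(1-b_j)\gammadown_j]$ has H\"older constant of the same order, and $q_b \in H(\alpha,(1+\delta/2)L)$ for $\cdown$ small enough. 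For $(\star)$ on $q_b$, I will scale \eqref{simplifying_assymption_f} to obtain pointwise bounds $|\philj(y)-\philj(x)| \leq \cstar\philj(x) + \cdown L\|x-y\|^\alpha$ and $|\gammaup_j(y)-\gammaup_j(x)| \leq \cstar\gammaup_j(x) + \cupward_j L\|x-y\|^\alpha$; the smooth vanishing of $f$ at its support boundary allows these bounds to be extended across bump boundaries, giving $|q_b(y)-q_b(x)|\leq \cstar q_b(x) + (1+O(\cdown\vee\cupward))L\|x-y\|^\alpha$.

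The main obstacle is controlling the normalization $Z_b := \|q_b\|_1$ uniformly in $b$, since the division $p_b^{(n)}=q_b/Z_b$ amplifies both the H\"older constant and the $(\star)$ constant when $Z_b<1$. I will combine item 2 with the matching identity $\pi_j\int\gammaup_j = (1-\pi_j)\int\gammadown_j$ (built into the definition of $\cupward_j$) to show $\sum_{j\geq U}\int\gammadown_j = O(\cdown)$, and exploit the constraint $n^2 p_j\sum_{l\geq j} p_l \leq \cu$ defining $U$ to bound the worst-case total $\sum_{j\geq U}\int\gammaup_j = O(\cdown)$ as well --- a delicate point, since $\int\gammaup_j$ individually may be large when $\pi_j$ is small. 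Provided $\cdown$, and hence $\cupward=\cu\cdown$, is chosen small enough in function of $\delta$, $Z_b$ will lie in $[1-\delta/4, 1+\delta/4]$ for every $b$, and the induced perturbations of $L'$ and $\cstar'$ will be absorbed into the class parameters.
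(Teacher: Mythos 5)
Item 2 of your proposal is essentially the paper's argument and is correct: the computation $\int_{\Elj}\philj=(\cdown\Cint/\cbeta')\,p_0(\zlj)\hlj^d$, the comparison $p_0(\zlj)\hlj^d\gtrsim\int_{\Elj}p_0$ via Proposition \ref{guarantees_algo}, and the summation over the covering of $D_j$ with $\int_{D_j}p_0\geq p_j/2$ all match the paper. The individual components of item 1 (positivity, H\"older class membership, and the propagation of $(\star)$ across the disjoint bump supports via the scaling identity \eqref{simplifying_assymption_f}) are also sound.

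The genuine gap is in your treatment of the normalization $Z_b=\|q_b\|_1$. Your claim that $\sum_{j\geq U}\int\gammaup_j=O(\cdown)$ is false: by \eqref{def_gamma}, $\int_{\widetilde C_j}\gammaup_j=\cupward_j\,Lh^{\alpha+d}\,\Cint$ with $\cupward_j\in[\cupward,2\cupward]$, so each term is bounded away from zero by a quantity independent of $j$, and on $\Omega=\mathbb{R}^d$ there are infinitely many nonempty tail cubes, so the series diverges. Consequently, the assertion that $Z_b\in[1-\delta/4,1+\delta/4]$ uniformly in $b$ cannot hold --- indeed for $b$ with infinitely many ones, $\|q_b\|_1=\infty$ and $p_b^{(n)}$ is not even well-defined. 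The constraint $n^2p_j\sum_{l\geq j}p_l\leq\cu$ controls $\pi_j$ (the probability that $b_j=1$), not the deterministic magnitude of $\Gammaup_j$, and provides no deterministic upper bound on $Z_b$. The paper sidesteps this by not attempting a uniform-in-$b$ bound: it uses Lemma \ref{norm_qb} (which gives $\mathbb{E}[\|q_b\|_1]=1$ and $\mathbb{V}[\|q_b\|_1]\lesssim 1/n^2$) together with Chebyshev's inequality to conclude that $|\|q_b\|_1-1|$ is small only with high probability over $b$, which is all that is needed downstream (cf. Lemma \ref{LB_prior_restricted}).

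Two further remarks that may help you repair the argument. First, you do not actually need an upper bound on $Z_b$: dividing by $Z_b>1$ only decreases the H\"older constant and the affine term in $(\star)$, so the only problematic direction is $Z_b<1$. Your observation that $\sum_{j\geq U}\int\gammadown_j\leq\AL{\ref{gammadown_lesssim_intp0}}\sum_jp_j\leq\AL{\ref{gammadown_lesssim_intp0}}=O(\cdown)$ is correct and does give a deterministic lower bound $Z_b\geq1-O(\cdown)$. If you instead kept only the lower bound and dropped the upper-bound claim entirely, you would obtain a clean uniform argument for all $b$ having only finitely many ones (which is all that occurs almost surely). Second, as stated you would still need to observe this asymmetry explicitly --- as written your proof relies on a two-sided bound and is therefore incorrect.
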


For clarity, we now give the the probability density over the space $\Omega^n$ of the data when they are generated from prior \eqref{def_prior_tail}. 
Assume that we observe $(X''_1, \dots, X''_n)$ generated from $p_b^{(n)}$. 
Then $X''_1, \dots, X''_n $ are all $iid$ with the \textit{same} density $q$, which is itself (not uniformly) drawn in the set $\{p_b ~|~  b_j \in \{0,1\} ~ \forall j \geq U\}$. 
In other words, the density of $(X''_1, \dots, X''_n)$ corresponds to the mixture 
\begin{equation}\label{def_pbar}
     \overline p^{(n)} = \sum_{\substack{b_j \in \{0,1\}\\ j \geq U}}~ \prod_{j \geq U} \pi_j^{b_j} (1 - \pi_j)^{1-b_j} ~ \left(\frac{q_b}{\|q_b\|_1}\right)^{\otimes n},
\end{equation}
where $q_b$ is defined in \eqref{def_qbn}. The lower bound will be proved by showing that there exists no test with risk $\leq \eta$ for the testing problem $H_0' : (X''_1, \dots, X''_n) \sim p_0^{\otimes n}$ vs $H'_1 : (X''_1, \dots, X''_n) \sim \overline p^{(n)}$. The following Proposition states that the prior concentrates \textit{whp} on a zone separated away from $p_0$ by an $L_t$ distance of order $\rhot$.

\begin{proposition}\label{separation_prior_tail} 
There exists a constant $\CtailLB$ such that, when $\int_{\T} p_0 \geq \ctail/n$, we have with probability at least $1-\frac{\eta}{4}$ (over the realizations of $b=(b_U, \dots, b_M)$):
$$ \|p_b^{(n)} - p_0\|_t \geq \CtailLB\, \rhot.$$
\end{proposition}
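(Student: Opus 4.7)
The strategy is to first lower bound $\|q_b - p_0\|_t$ with high probability over $b=(b_U,\dots,b_M)$, and then transfer this bound to the normalized prior $p_b^{(n)} = q_b/\|q_b\|_1$ via a control on $\|q_b\|_1$.

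Because $\gammaup_j$ and $\gammadown_j$ are each supported in the corresponding cube $\widetilde C_j$, and the cubes $(\widetilde C_j)_{j\geq U}$ are pairwise disjoint, the $t$-th power of the $L_t$ norm decomposes as an independent sum:
\begin{equation*}
\|q_b - p_0\|_t^{\,t} \;=\; \sum_{j\geq U}\bigl[b_j\,\|\gammaup_j\|_t^{\,t} + (1-b_j)\,\|\gammadown_j\|_t^{\,t}\bigr] \;\geq\; \sum_{j\geq U} b_j\,\|\gammaup_j\|_t^{\,t},
\end{equation*}
where the nonnegativity of $\|\gammadown_j\|_t^t$ removes the need to concentrate that half of the sum. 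The right-hand side is a weighted sum of independent Bernoullis with weights $a_j := \|\gammaup_j\|_t^{\,t} \asymp L^t h^{\alpha t + d}$, all of the same order by Proposition \ref{Properties_gammadown} which provides $\cupward_j \asymp 1$. Using the definitions $\pi_j = p_j/\bar\pi$ and $\bar\pi = 2\cu/(n^2 S_U)$ with $S_U = \sum_{j\geq U} p_j$, together with $h = \hT(\uA) \asymp (n^2 L\,p_0[\T])^{-1/(\alpha+d)}$ and Lemma \ref{sum_pj_geq_int} to identify $S_U \asymp p_0[\T]$ under hypotheses (a)--(b), a direct substitution yields
\begin{equation*}
\mathbb E\Bigl[\sum_{j\geq U} b_j\,a_j\Bigr] \;=\; \sum_{j\geq U} \pi_j\,a_j \;\asymp\; n^2 S_U^{\,2}\,\cdot\,L^t h^{\alpha t + d} \;\asymp\; \rhot^{\,t}.
\end{equation*}

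For concentration, $\mathrm{Var}(\sum_j b_j a_j) = \sum_j \pi_j(1-\pi_j) a_j^{\,2} \leq a_{\max}\cdot \mathbb E[\sum_j b_j a_j]$, hence $\mathrm{Var}/\mathbb E^{\,2} \lesssim L^t h^{\alpha t + d}/\rhot^{\,t}$; substituting the values of $h$ and $\rhot$ shows this ratio equals $(n\,p_0[\T])^{-2} \leq 1/\ctail^{\,2}$ by hypothesis (b), so Chebyshev gives $\|q_b - p_0\|_t \geq c_0\,\rhot$ with probability at least $1-\eta/8$ for $\ctail$ large enough. The normalization is treated in the same spirit: the centering identity $\pi_j \int \gammaup_j = (1-\pi_j)\int \gammadown_j$ imposes $\mathbb E\|q_b\|_1 = 1$, and the variance satisfies
\begin{equation*}
\mathrm{Var}(\|q_b\|_1) \;=\; \sum_{j\geq U} \pi_j(1-\pi_j)\Bigl({\textstyle\int}\gammaup_j + {\textstyle\int}\gammadown_j\Bigr)^{\!2} \;\lesssim\; \sum_{j\geq U}\frac{p_j^{\,2}}{\pi_j} \;=\; \bar\pi\,S_U \;=\; \frac{2\cu}{n^2},
\end{equation*}
where I used $\int\gammadown_j \asymp p_j$ from Proposition \ref{Properties_gammadown} and the centering relation to evaluate $\int\gammaup_j = \frac{1-\pi_j}{\pi_j}\int\gammadown_j$. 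Chebyshev then gives $|\|q_b\|_1 - 1| \lesssim 1/n$ with probability at least $1-\eta/8$. Combining through the localized reverse triangle inequality
\begin{equation*}
\|q_b\|_1\cdot \|p_b^{(n)} - p_0\|_t \;\geq\; \|q_b - p_0\|_t - |1 - \|q_b\|_1|\cdot\|p_0\,\mathbb{1}_{\cup_j \widetilde C_j}\|_t,
\end{equation*}
and using $p_0 \leq \uA$ on the tail so that $\|p_0\,\mathbb{1}_{\cup_j \widetilde C_j}\|_t \leq (\uA^{t-1} p_0[\T])^{1/t}$, a union bound over the two good events yields $\|p_b^{(n)} - p_0\|_t \geq \CtailLB\,\rhot$ with probability at least $1-\eta/4$, provided the correction is absorbed in the leading term.

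The main obstacle is precisely the last step: showing $(\uA^{t-1}p_0[\T])^{1/t}/n \ll \rhot$. This comparison is not automatic; it requires exploiting the sharp defining relations \eqref{def_uI}--\eqref{def_uA} of the cut-off $\uA$ in both regimes $\uA = \uI$ and $\uA = \um$, combined with hypothesis (b), to match the exponents of $L, n$ and $p_0[\T]$ on the two sides. The fact that the design of $\uA$ was made precisely to make this matching work is where the payoff of the careful bulk/tail splitting becomes visible in the lower bound.
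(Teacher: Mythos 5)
Your proof structure is close to the paper's, but it contains a genuine gap that you flag yourself without filling, and the fix is not the one you gesture at. The problematic step is the final comparison: you bound $\|p_0\,\mathbb{1}_{\cup_j \widetilde C_j}\|_t \leq (\uA^{t-1}p_0[\T])^{1/t}$ via $p_0 < \uA$, and then need $(\uA^{t-1}p_0[\T])^{1/t}/n \ll \rhot$. This does not follow from the definitions of $\uI$ and $\uA$ in the general form you invoke, because $p_0 < \uA$ is a much cruder bound than what actually holds on the cells $\widetilde C_j$ with $j \geq U$. The right ingredient is that on each such cell one has $p_0(x) \lesssim L\hT^\alpha$ (not merely $p_0 < \uA$); this is precisely the content of Lemma \ref{gammadown_lesssim_intp0} item 3, which gives $\int_{\widetilde C_j} p_0^t \leq \CL{\ref{gammadown_lesssim_intp0}}^{-1} \|\gammaup_j\|_{t}^t$, and summing yields $\|p_0\,\mathbb{1}_{\cup_j \widetilde C_j}\|_t^t \lesssim (L\hT^\alpha)^{t-1} p_0[\T] = \rhot^t$. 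With that sharper bound, the correction in your global reverse triangle inequality becomes $\lesssim \rhot/n$, which closes the argument. Without it, your stated bound $(\uA^{t-1}p_0[\T])^{1/t}/n$ does not reduce to $\rhot$ by exponent-matching in \eqref{def_uI}--\eqref{def_uA} alone, since $\uA$ can substantially exceed $L\hT^\alpha$.

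Beyond the gap, the paper takes a structurally cleaner route that is worth noting. Instead of your global decomposition $\|q_b\|_1\,\|p_b^{(n)}-p_0\|_t \geq \|q_b-p_0\|_t - |1-\|q_b\|_1|\,\|p_0\|_{t,\cup\widetilde C_j}$, the paper applies the reverse triangle inequality \emph{cell-by-cell}: on each $\widetilde C_j$ with $b_j=1$ it compares $\frac{\|\gammaup_j\|_{t,\widetilde C_j}}{\|q_b\|_1}$ to $\left|1-\frac{1}{\|q_b\|_1}\right|\,\|p_0\|_{t,\widetilde C_j}$, and Lemma \ref{gammadown_lesssim_intp0}(3) guarantees the latter is a controlled fraction of the former \emph{on each cell}. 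This means only the \emph{constant-level} control $|1-\|q_b\|_1| \leq \CL{\ref{gammadown_lesssim_intp0}}$ is needed, which follows from Lemma \ref{norm_qb} and Chebyshev for $n$ large. Your global version requires the sharper $O(1/n)$ control, which is fine (also provided by Lemma \ref{norm_qb}) but is unnecessary and obscures where the decisive cancellation happens. The remaining parts of your argument — the restriction to the $b_j=1$ contributions, $\mathbb{E}[\sum_j b_j a_j]\asymp\rhot^t$, the variance-to-mean-squared ratio $\asymp (n\,p_0[\T])^{-2}$ controlled by hypothesis (b), and the variance bound $\V[\|q_b\|_1]\lesssim n^{-2}$ — are all sound and parallel the paper's Lemma \ref{norm_qb} and the latter half of the paper's proof.
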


%Note that the degenerate case $\rhot(\uI) \ll \rhot$ can occur. However, Lemma \ju{Donner le lemme} guarantees that $\rhob + \rhot(\uI) \asymp \rhob + \rhot$, in words, that we can replace $\rhot(\uI)$ by $\rhot$ in the expression of the rate.\\

We now introduce the \textit{Bayes risk} associated with the prior distribution \eqref{def_prior_tail}: 
\begin{definition}\label{bayesian_risk_tail_prior}
Define
$$ R_B^{\; tail} = \inf_{\psi \text{ test}} \Big\{\mathbb{P}_{p_0}(\psi = 1) + \mathbb{E}_{b} \Big[ \mathbb{P}_{p_b}(\psi = 0)\Big]\Big\},$$
where the expectation is taken with respect to the realizations of $(b_j)_{j \geq U}$ and $\mathbb{P}_{p_b}$ denotes the probability distribution when the data is drawn with density \eqref{def_pbar}.
\end{definition}

The Proposition below states that when $\int_{\mathcal{T}(\uA)} p_0 \geq \ctail/n$ and when the tail dominates, the prior \eqref{def_prior_tail} is indistinguishable from $p_0^{\otimes n}$, in the sense that there exists no test with risk $\leq \eta$ for the testing problem $H'_0 : (X''_1, \dots, X''_n) \sim p_0^{\otimes n}$ vs $H'_1 : (X''_1, \dots, X''_n) \sim \overline p^{(n)}$.\\

\begin{proposition}\label{lower_bound_tail}
$R_B^{\; tail} > \eta.$
\end{proposition}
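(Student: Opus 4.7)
The plan is to apply Le Cam's two-point method. Since $R_B^{tail} \geq 1 - d_{TV}(p_0^{\otimes n}, \overline p^{(n)})$ and by Pinsker's inequality $d_{TV}(p_0^{\otimes n}, \overline p^{(n)}) \leq \tfrac{1}{2}\sqrt{\chi^2(\overline p^{(n)}\|p_0^{\otimes n})}$, it suffices to bound the chi-square divergence $\chi^2(\overline p^{(n)}\|p_0^{\otimes n}) \leq 4(1-\eta)^2$. The strategy is to reduce the computation to an independent coordinate-wise chi-square via Poissonization, exploit the calibration of $\gammaup_j$ and $\gammadown_j$ which forces $\mathbb{E}_b[q_b]=p_0$, and close with the defining property of the cutoff $U$.

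First I would dispose of the normalization factor $\|q_b\|_1$ in $p_b^{(n)} = q_b/\|q_b\|_1$. Since $\pi_j \int \gammaup_j = (1-\pi_j)\int \gammadown_j$ for each $j \geq U$, one has $\mathbb{E}_b[\|q_b\|_1] = 1$, and a concentration argument together with Poissonization (draw $\widetilde n \sim \Poi(n)$, so the counts $N_j$ on the cells $\widetilde C_j$ become independent) compares $\overline p^{(n)}$ with an auxiliary Poissonized mixture $\widehat p^{(n)}$ built directly from the un-normalized $q_b$ at total-variation cost $o(1)$ in $n$. After this reduction, under $H_0$ the counts satisfy $N_j \sim \Poi(np_j)$ independently across $j$, while under the prior $N_j \sim \Poi(n\mu_{b_j})$ with $\mu_0 = p_j - \int \gammadown_j$, $\mu_1 = p_j + \int \gammaup_j$, and $b_j \sim \text{Ber}(\pi_j)$ independent.

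Next, the chi-square factorizes as a product over $j \geq U$. A standard Poisson computation gives
\begin{equation*}
1 + \chi^2\bigl(\widehat p^{(n)} \,\|\, p_0^{\otimes \widetilde n}\bigr) \;\leq\; \prod_{j \geq U} \mathbb{E}_{b_j, b'_j}\!\left[\exp\!\Big(\tfrac{n(\mu_{b_j}-p_j)(\mu_{b'_j}-p_j)}{p_j}\Big)\right].
\end{equation*}
Expanding each factor and using $\mathbb{E}_{b_j}[\mu_{b_j}-p_j] = 0$ (i.e.\ the calibration on $\cupward_j$) together with $\cosh(x) \leq e^{x^2/2}$ leads to
\begin{equation*}
1 + \chi^2 \;\leq\; \exp\!\left(C \sum_{j \geq U} n^2\,\pi_j(1-\pi_j)\,\tfrac{(\int \gammaup_j)^2}{p_j}\right).
\end{equation*}
By Proposition \ref{Properties_gammadown} and the relation $\pi_j \int \gammaup_j = (1-\pi_j)\int \gammadown_j \asymp \cdown\, p_j$, the quantity $\int \gammaup_j$ is of order $p_j/\pi_j$ up to constants depending only on $\cupward, \cdown$, so the exponent simplifies to $C' \sum_{j \geq U} n^2 p_j^2$.

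Finally, I would invoke the definition \eqref{def_U} of $U$ and monotonicity of $(p_j)_j$: since $n^2 p_U \sum_{l \geq U} p_l \leq \cu$ and $p_j \leq p_U$ for $j \geq U$, we get $\sum_{j \geq U} n^2 p_j^2 \leq n^2 p_U \sum_{l \geq U} p_l \leq \cu$. Choosing $\cu$ sufficiently small (depending only on $\eta$ and the absolute constants $\cupward, \cdown$) therefore yields $\chi^2 \leq 4(1-\eta)^2$, which gives the desired lower bound on $R_B^{tail}$. The main obstacle will be the clean removal of the $\|q_b\|_1$ normalization, because $q_b$ is not exactly a density and the possibly infinite index set $\Ibar$ requires careful truncation; in particular, one must argue that the Poissonization step, the truncation to $\{j \geq U\}$, and the tail of the product (convergent thanks to $\sum_j p_j \leq 1$) each contribute at most a small fraction of $\eta$ to the total variation.
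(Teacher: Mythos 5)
Your chi-square route fails at a structural level, not just a technical one: for the tail prior, $\chi^2\bigl(\overline p^{(n)}\,\|\,p_0^{\otimes n}\bigr) = +\infty$, so Pinsker's inequality gives nothing.

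To see why, consider the contribution of a single cell $j\geq U$ after Poissonization. Under $H_0$ the count $N_j$ is $\Poi(np_j)$; under the prior it is $\pi_j\,\Poi(n\mu_1)+(1-\pi_j)\,\Poi(n\mu_0)$ with $\mu_1 = p_j+\Gammaup_j$, $\mu_0 = p_j - \Gammadown_j$ and $\pi_j \mu_1 + (1-\pi_j)\mu_0 = p_j$. Crucially $\Gammaup_j \asymp L\hT^{\alpha+d}$ is essentially \emph{constant} in $j$, while $\pi_j \asymp n^2 p_j \int_{\T}p_0 \to 0$ and $p_j\to 0$. Writing $a := n\Gammaup_j$ (a fixed constant of order $1/\ctail$) and noting $n\mu_0\to 0$, the $k$-th term of the cell-wise chi-square behaves like
\begin{equation*}
\frac{\bigl[\pi_j\, e^{-a}a^k/k!\bigr]^2}{e^{-np_j}(np_j)^k/k!}\;\asymp\;\pi_j^{\,2-k}\,e^{-2a}\,\frac{a^k}{k!},
\end{equation*}
using $np_j \asymp \pi_j a$. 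For $k=2$ this is a strictly positive constant $\asymp e^{-2a}a^2/2$ independent of $j$; since there are infinitely many cells, the sum over $j$ alone already diverges. Worse, for $k\geq 3$ the factor $\pi_j^{2-k}\to\infty$ as $\pi_j\to 0$, so even a single far-out cell has $\chi_j^2 = +\infty$. Your parenthetical claim that the tail of the product converges ``thanks to $\sum_j p_j \leq 1$'' is therefore incorrect: $\sum_j p_j \leq 1$ controls $\pi_j\to 0$, but the factors $1+\chi_j^2$ are bounded away from $1$ (and eventually infinite) so the product diverges.

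The underlying phenomenon is that the prior is calibrated so that the \emph{first moment} matches $p_0$ on each cell, but the likelihood ratio is enormous on the rare event that a tail cell receives two or more observations — and chi-square is exactly the $L^2$-norm of that likelihood ratio, so it sees those tails. Total variation does not. This is why the paper abandons chi-square entirely for the tail and instead bounds total variation \emph{restricted} to the high-probability event $\Aonce = \{\forall j\geq U: \widetilde N_j \leq 1\}$ (Lemmas \ref{TV_over_A}, \ref{Tensorization_Poisson}, \ref{TV_on_each_cell}), paying only the small probability of $\Aonce^c$ under each measure (Lemma \ref{Control_Ac}). On $\Aonce$ the cell-wise counts are restricted to $\{0,1\}$, where the prior and the null are genuinely close, and the per-cell restricted TV bound $\AL{\ref{TV_on_each_cell}}n^2 p_j^2 + \BL{\ref{TV_on_each_cell}}p_j/\sum_l p_l$ sums to a small constant using the defining property of $U$ and $\sum_{j\geq U}p_j/\sum_l p_l = 1$. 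This conditioning step is the essential idea of the tail lower bound and cannot be recovered from any chi-square bound.
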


\hfill

\textbf{Remark:} Our prior concentrates only with high probability on the zone $\|p_0 - p_b\|_t \geq \CtailLB \rhot$. We can here justify that this is not restrictive. Indeed, we can \textit{wlog} modify Proposition \ref{lower_bound_tail} to get $R_B^{\; tail} > \eta-2\epsilon$ for any $\epsilon>0$ small enough. We moreover show in Lemma \ref{LB_prior_restricted} that if instead of our prior $\mathbb{E}\big(p_b^{(n)}\big)$, we considered as prior $p_{b,cond} = \mathbb{E}(p_b^{(n)} | \Asep)$ where $\Asep = \{b \text{ is such that } \|p_0 - p_b\|_t \geq \CtailLB \rhot\}$ and where the expectation is taken according to the realizations of $b$, then we would have $d_{TV}(p_0^{\otimes n}, p_{b,cond}) < d_{TV}(p_0^{\otimes n}, \mathbb{E}_b(p_b^{(n)})) + 2 \epsilon \leq 1-\eta-2\epsilon+2\epsilon = 1-\eta$. Now, $p_{b,cond}$ satisfies almost surely $\|p_0 - p_{b,cond}\|_t \geq \CtailLB \rhot$.

%\ju{Est-ce que c'est la peine de dire que comme le prior se concentre seulement avec grande proba dans la bonne zone, il faut encore le restreindre en prenant le prior conditionné à la zone de grande proba et montrer que le risque bayésien de ça est $>\eta$ ? Je donne la preuve de ça dans l'appendix, dans la même preuve.} \al{A mon sens tout ca serait mieux en appendice - et par contre il faut une proposition qui dit ce que tu dis en bas, a savoir que $\rhot$ est plus grand que bla}

%Propositions \ref{separation_prior_tail}, \ref{remainder_term}, \ref{lower_bound_tail} show that $\rhot + L^{\frac{d(t-1)}{t(\alpha+d)}}n^{-\frac{\alpha t + d}{t(\alpha+d)}} \asymp \rhot$ is indeed a lower bound on the separation radius. \al{Cette phrase est quand meme pas tres claire...}

\subsection{Proof of Proposition \ref{remainder_term}}

\begin{proof}[Proof of Proposition \ref{remainder_term}]
Assume that $\int_{\T(\uA)} p_0 < \frac{\ctail}{n}$ and that $n> \ctail$. \boundedcase{Recall that $Ln \homega^{\alpha+d} > \csmall$.} Since $\int_{\T(\uA)} p_0 \leq \frac{\ctail}{n} <1$, we necessarily have $\T(\uA) \subsetneq \Omega$. Set $u = \sup \{v> 0: \int_{\T(v)} p_0 \leq \frac{\ctail}{n}\}$. We therefore necessarily have $u \leq \max_{\Omega} p_0$ (since $n > \ctail$) and $\int_{\overline{T}(u)}p_0 \geq \frac{\ctail}{n}$. Choose $D(u) \subset p_0^{-1}(\{u\})$ a subset such that $\int_{D(u) \cup \T(u)} p_0 = \frac{\ctail}{n}$ and define $T'(u) = D(u) \cup \T(u)$. By the definition of $\uI$, we have $u > \uI$ so that
\begin{align}
    &\left(\max_{\Omega} p_0\right) \int_{\T'(u)} p_0 \geq \int_{\T(u)}p_0^2 \geq \cI \left[\frac{L^d}{n^{2\alpha} \left(\int_{\T'(u)} p_0 \right)^d} \right]^\frac{1}{\alpha + d}\nonumber \\
    \text{hence } ~~ & \max_{\Omega} p_0 \geq \cI \left[\frac{L^d}{n^{2\alpha } \left(\int_{T'(u)}p_0\right)^\alpha}  \right]^\frac{1}{\alpha + d} = \frac{\cI}{\ctail^\frac{\alpha}{\alpha+d}} \left[\frac{L^d}{n^\alpha} \right]^\frac{1}{\alpha + d} =: m. \label{max_p0_gros}
\end{align}
Define $h_r =  (nL/\csmall)^{-\frac{1}{\alpha+d}}\boundedcase{< \homega}$ \unboundedcase{ for some small enough constant $\csmall$} and $x_0 = \arg \max\limits_\Omega p_0$. We note that $m = \cI' Lh_r^\alpha $ where $\cI' = \csmall^\frac{\alpha}{\alpha+d} \cI/\ctail^\frac{\alpha}{\alpha+d}$. Set $B_1$ and $B_2$ two disjoint balls included in $\Omega \cap B\left(x_0, \left(\cI' \cstar \right)^{1/ \alpha}h_r\right)$ with radius  $R := \frac{1}{4\sqrt{d}}\left(\cI' \cstar \right)^{1/ \alpha}h_r$. $B_1$ and $B_2$ exist no matter how close $x_0$ is to the boundary of $\Omega$. Denote by $x_1^{(r)}$ and $x_2^{(r)}$ the respective centers of $B_1$ and $B_2$. By Assumption \eqref{simplifyingAssumption}, we have $p_0 \geq m(1-2\cstar)$ over $\Omega \cap B\left(x_0, \left(\cI' \cstar \right)^{1/ \alpha}h_r\right)$ so that it is possible to set the following prior: 
\begin{equation}\label{prior_remainder}
    p_r(x) = p_0(x) + \cR Lh_r^\alpha \; f\Big(\frac{x_1^{(r)}-x}{h_r}\Big)  - \cR Lh_r^\alpha \; f\Big(\frac{x_2^{(r)}-x}{h_r}\Big),
\end{equation}
where %$\cR \leq (1-2\cstar)\cI' \land \frac{1-\eta}{2\Cint}$.
$\cR$ is a small enough constant. This prior satisfies $\int_\Omega p_r = 1$, $p_r \geq 0$, $p_r \in H(\alpha, L(1+\cR))$ and satisfies Assumption \eqref{simplifyingAssumption} by Lemma \ref{simplifying_assumpt_satisfied} if we choose $\cR$ small enough. Moreover, the $L_t$ discrepancy between $p_0$ and $p_r$ is given by
\begin{align*}
    \|p_0 - p_r\|_t^t &= 2 \int_{\R^d} \Big\{\cR Lh_r^\alpha \; f\Big(\frac{x_1^{(r)}-x}{h_r}\Big) \Big\}^{\alpha t} dx =  2 \left(\cR L \right)^{\alpha t}\Cintt \; h_r^{\alpha t + d}  \asymp L^{\frac{d(t-1)}{t(\alpha+d)}}n^{-\frac{\alpha t + d}{t(\alpha+d)}}.
\end{align*}
Now, the total variation between $p_r$ and $p_0$ is given by:
\begin{align*}
    d_{TV}(p_0, p_r) =  L h_r^{\alpha + d} \Cint = \cR\Cint \frac{1}{n} < 1-\eta,
\end{align*}
for $\cR$ small enough, which proves the desired lower bound.
\end{proof}

\subsection{Proof of Proposition \ref{Properties_gammadown}}

\begin{lemma}\label{norm_qb} %\al{Tu en as besoin pour borner avec grande proba la normalisation non?} 
It holds $\mathbb{E}[\|q_b\|_1] = 1$ and $\mathbb{V}[\|q_b\|_1] \leq \CL{\ref{norm_qb}}/n^2$, where $\CL{\ref{norm_qb}}$ is a constant. 
\end{lemma}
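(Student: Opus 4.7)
The plan is to first observe that $q_b \geq 0$ almost surely, so that $\|q_b\|_1 = \int q_b$. This is immediate from item 1 of Proposition \ref{Properties_gammadown}: since $p_b^{(n)} = q_b/\|q_b\|_1$ is a probability density, $q_b$ must be of constant sign, and since its expectation $\int p_0 = 1$ is positive, $q_b \geq 0$. Expanding the definition of $q_b$ in \eqref{def_qbn} yields
\begin{equation*}
    \|q_b\|_1 \;=\; 1 \;+\; \sum_{j \geq U}\Big[\, b_j \int \gammaup_j \;-\; (1-b_j)\int \gammadown_j \,\Big].
\end{equation*}
By linearity of expectation and the calibration of $\cupward_j$ (chosen precisely so that $\pi_j \int \gammaup_j = (1-\pi_j)\int \gammadown_j$), one gets $\E[\|q_b\|_1] = 1$ directly.

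For the variance, the independence of the Bernoulli variables $(b_j)_{j \geq U}$ gives
\begin{equation*}
    \V[\|q_b\|_1] \;=\; \sum_{j \geq U} \Big(\int \gammaup_j + \int \gammadown_j\Big)^{2} \pi_j(1-\pi_j).
\end{equation*}
The calibration relation rewrites as $\int \gammaup_j = \tfrac{1-\pi_j}{\pi_j}\int \gammadown_j$, so $\int \gammaup_j + \int \gammadown_j = \tfrac{1}{\pi_j}\int\gammadown_j$ and each summand is bounded by $\big(\int \gammadown_j\big)^2/\pi_j$.

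The crux is then the multiplicative upper bound $\int \gammadown_j \lesssim p_j$. By \eqref{def_gammadown}--\eqref{def_phi_lj}, $\int \gammadown_j = \sum_{l=1}^{M_j}\int\philj$, and each $\int\philj$ equals $\cdown L \,(\hlj)^{\alpha + d}\,\Cint$. Algorithm \ref{Algo_partitionnement} is called with $\cbeta = \cbeta' L$, so $L(\hlj)^\alpha \asymp p_0(\zlj)/\cbeta'$, and Proposition \ref{guarantees_algo}(3) guarantees that $p_0$ is comparable to $p_0(\zlj)$ on the whole cell $\Elj$. Therefore $\int \philj \lesssim \int_{\Elj}p_0$; summing over $l$ and noting that the $\Elj$ are disjoint and contained in $\widetilde C_j$ yields $\int \gammadown_j \lesssim p_j$. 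Plugging this into the variance formula and using $\pi_j = p_j / \bar\pi$ with $\bar\pi = 2\cu/(n^2 \sum_{j \geq U}p_j)$, one concludes
\begin{equation*}
    \V[\|q_b\|_1] \;\lesssim\; \sum_{j \geq U}\frac{p_j^{2}}{\pi_j} \;=\; \bar\pi \sum_{j \geq U} p_j \;=\; \frac{2\cu}{n^{2}},
\end{equation*}
which gives the desired bound with an explicit constant $\CL{\ref{norm_qb}}$.

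The main obstacle is the comparability step $\int \gammadown_j \lesssim p_j$. It is morally transparent (the $\gammadown_j$ are by design small bumps that each remove a controlled fraction of the mass of $p_0$ on $\Elj$) but requires carefully tracking the constants produced by Algorithm \ref{Algo_partitionnement} and invoking the near-constancy of $p_0$ on each piece of the partition given by Proposition \ref{guarantees_algo}. Everything else is elementary expectation and variance computations with independent Bernoulli variables.
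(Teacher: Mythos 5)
Your proof is correct, and it takes a genuinely different (and arguably cleaner) route through the variance computation than the paper's own argument. Both start from the observation that $b_j\Gammaup_j - (1-b_j)\Gammadown_j = b_j(\Gammaup_j + \Gammadown_j) - \Gammadown_j$, so the $j$-th summand has variance $\pi_j(1-\pi_j)(\Gammaup_j + \Gammadown_j)^2$. From there the two arguments diverge. The paper drops the factor $(1-\pi_j)$, uses Lemma \ref{gammadown_lesssim_intp0} to bound $\Gammaup_j + \Gammadown_j \lesssim \Gammaup_j$, bounds each $\Gammaup_j$ uniformly by a multiple of $Lh^{\alpha+d}$, and then controls $\sum_j \pi_j$ separately via Lemma \ref{Toute_la_case_OK_moment1}. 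You instead exploit the exact calibration identity $\pi_j\Gammaup_j = (1-\pi_j)\Gammadown_j$ to write $\Gammaup_j + \Gammadown_j = \Gammadown_j/\pi_j$, reducing the summand to $(1-\pi_j)\Gammadown_j^2/\pi_j \le \Gammadown_j^2/\pi_j \lesssim p_j^2/\pi_j = \bar\pi\, p_j$, which telescopes instantly to $\bar\pi \sum_j p_j = 2\cu/n^2$. This avoids both the uniform upper bound on $\Gammaup_j$ and the appeal to Lemma \ref{Toute_la_case_OK_moment1}; you only need $\Gammadown_j \lesssim p_j$, which is item 1 of Lemma \ref{gammadown_lesssim_intp0} (you re-derive it, which is fine, but you could simply cite it). One small stylistic point: your justification that $q_b \ge 0$ almost surely is phrased a bit circularly (``$p_b^{(n)}$ is a density, hence $q_b$ has constant sign''); the direct justification is that Proposition \ref{Properties_gammadown}(1) establishes $p_0 - \gammadown_j \ge 0$ on each $\widetilde C_j$ and $\gammaup_j \ge 0$, whence $q_b \ge 0$ pointwise and $\|q_b\|_1 = \int q_b$. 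That is the same implicit fact the paper relies on when asserting $\E[\|q_b\|_1] = 1$.
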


\begin{proof}[Proof of Lemma \ref{norm_qb}]
First, $\mathbb{E}[\|q_b\|_1] = 1$ is true by the definition of $\cupward_j$ and $\cdown$. As to the variance, we recall that for all $j\geq U : \Gammaup_j = \int_{\widetilde C_j} \gammaup_j$ and $\Gammadown = \int_{\widetilde C_j} \gammadown_j$. We have:
\begin{align*}
    \mathbb{V}[\|q_b\|_1] &= \sum_{j \geq U} \mathbb{V}\left(b_j(\Gammaup_j + \Gammadown_j) \right) \leq  \sum_{j\geq U} \pi_j \left(\frac{\AL{\ref{gammadown_lesssim_intp0}}}{1+\BL{\ref{gammadown_lesssim_intp0}}}\right)^2{\Gammaup_j}^2 ~~ \text{ by Lemma \ref{gammadown_lesssim_intp0}}\\
    & \leq \big(2{\cupward}\big)^2 \left(\frac{\AL{\ref{gammadown_lesssim_intp0}}}{1+\BL{\ref{gammadown_lesssim_intp0}}}\right)^2\Big(\sum_{j\geq U} \pi_j\Big) \left(Lh^{\alpha+d}\right)^2.
\end{align*}
Moreover:
\begin{align*}
    \sum_{j\geq U} \pi_j = \frac{\left(n \sum_{j\geq U} p_j\right)^2}{2\cu} \leq \frac{\left(n \int_{\T(\uA)} p_0\right)^2}{2\cu} {\CL{\ref{Toute_la_case_OK_moment1}}}^2 ~~\text{ by Lemma \ref{Toute_la_case_OK_moment1}},
\end{align*}
and $ \left(Lh^{\alpha+d}\right)^2 = n^{-4} \left(\int_{\T(\uA)} p_0\right)^{-2}$. Hence: $\mathbb{V}[\|q_b\|_1] \leq \CL{\ref{norm_qb}}/n^2$, for some constant $\CL{\ref{norm_qb}}$.
\end{proof}

\begin{lemma}\label{gammaup_simplifying_assumption_cases_diff}
Let $I$ be a countable set of indices and $(x_l)_{l \in I} \in \Omega$ and $(h_l)_{l \in I} >0$ such that the balls $(B(x_l, h_l))_l$ are disjoint. Set moreover $(\epsilon_l)_{l \in I} \in \{\pm 1\}^I$ and let $\Calpha =1 \lor 2^{1-\alpha}$ and $\gamma(x) = \sum\limits_{l \in I} \epsilon_l a_l\, Lh_l^\alpha \;f\big(\frac{x-x_l}{h_l}\big)$ where $(a_l)_l \geq 0$. Then $\forall x,y \in \Omega, |\gamma(x) - \gamma(y)| \leq \cstar |\gamma(x)| +  \bar a \, \Calpha L \|x-y\|^\alpha$ where $\bar a = \sup\limits_{l \in I} a_l$.
\end{lemma}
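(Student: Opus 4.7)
The plan is to exploit the fact that each term $\epsilon_l a_l L h_l^\alpha f((\cdot - x_l)/h_l)$ has support in $B(x_l, h_l/2)$, and since the larger balls $B(x_l, h_l)$ are pairwise disjoint, the supports of the individual terms are separated by an annulus on which $\gamma$ vanishes. I would split the argument into four cases according to whether $x$ and $y$ lie in the support of one of the terms (call it $\mathrm{supp}_l := B(x_l, h_l/2)$) or not.

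First, the \textbf{same-support case}: if $x, y \in \mathrm{supp}_l$ for the same $l$, only the $l$-th term contributes at both points, and a direct application of \eqref{simplifying_assymption_f} to $f$, combined with a rescaling by $h_l$, gives
\[
|\gamma(x)-\gamma(y)| = a_l L h_l^\alpha \bigl|f\bigl(\tfrac{x-x_l}{h_l}\bigr) - f\bigl(\tfrac{y-x_l}{h_l}\bigr)\bigr| \leq \cstar |\gamma(x)| + a_l L\|x-y\|^\alpha,
\]
which is the desired bound (with constant $1 \le \Calpha$ in front of $L\|x-y\|^\alpha$ and $\bar a \ge a_l$). Next, the \textbf{trivial case} $x, y \notin \bigcup_l \mathrm{supp}_l$ is immediate since $\gamma(x)=\gamma(y)=0$.

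For the \textbf{mixed case} where $x \in \mathrm{supp}_l$ and $y$ is outside every support, I introduce the exit point $y_1 \in \partial \mathrm{supp}_l$ of the segment $[x,y]$ and note that $\gamma(y_1)=0$ (because $f$ vanishes on $\|z\|=1/2$, and no other term is active in $B(x_l,h_l)$ by disjointness). Then $|\gamma(x)-\gamma(y)| = |\gamma(x)| = |\gamma(x)-\gamma(y_1)|$ and we reduce to the same-support case on $[x,y_1]$, using $\|x-y_1\|\le\|x-y\|$. The \textbf{fully mixed case} $x \in \mathrm{supp}_l$, $y \in \mathrm{supp}_m$ with $l \ne m$ is handled by inserting \emph{both} boundary crossing points $y_1 \in \partial\mathrm{supp}_l$ and $y_2 \in \partial\mathrm{supp}_m$; by the triangle inequality and the same reasoning,
\[
|\gamma(x)-\gamma(y)| \le |\gamma(x)| + |\gamma(y)| \le \cstar|\gamma(x)| + \bar a L\bigl(\|x-y_1\|^\alpha + \|y_2-y\|^\alpha\bigr).
\]
Since $\|x-y_1\|+\|y_2-y\|\le\|x-y\|$ (all three segments lie on the line $[x,y]$), the numerical inequality $a^\alpha+b^\alpha \le \Calpha (a+b)^\alpha$ (concavity of $t\mapsto t^\alpha$ when $\alpha\le 1$ giving factor $2^{1-\alpha}$, or convexity when $\alpha\ge 1$ giving factor $1$) closes the bound.

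The one subtle step, and what I expect to be the main obstacle, is cleanly controlling $|\gamma(y)|$ in the fully mixed case: the assumption \eqref{simplifying_assymption_f} on $f$ is \emph{not} symmetric in its two arguments, so a naive application would produce a $\cstar|\gamma(y)|$ term that cannot be absorbed. The trick is to invoke \eqref{simplifying_assymption_f} in the order $(y_2, y)$ rather than $(y, y_2)$: because $f((y_2-x_m)/h_m)=0$ on the boundary, the $\cstar$ term drops out entirely and one obtains the clean estimate $|\gamma(y)| \le \bar a L \|y-y_2\|^\alpha$ with no residual $\cstar|\gamma(y)|$ on the right. This asymmetric use of \eqref{simplifying_assymption_f} is what makes the final bound involve only $\cstar|\gamma(x)|$ as required.
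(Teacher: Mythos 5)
Your proof is correct and mirrors the paper's argument: a case split on the location of $x$ and $y$ relative to the supports, inserting boundary points of the segment $[x,y]$ where $\gamma$ vanishes, the asymmetric application of \eqref{simplifying_assymption_f} to kill the unwanted $\cstar$ term, and the elementary inequality $a^\alpha + b^\alpha \le \Calpha(a+b)^\alpha$. The only difference is cosmetic (you partition by the support balls $B(x_l, h_l/2)$ and treat the ``one-in, one-out'' case explicitly, whereas the paper partitions by the outer balls $B(x_l, h_l)$ and leaves that subcase implicit), so the substance is the same.
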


\begin{proof}[Proof of Lemma \ref{gammaup_simplifying_assumption_cases_diff}]
Set for all $l \in I: A_l = B(x_l, h_l)$ and $A_{0} = \Omega \setminus \left(\bigcup_{l \in I} A_l \right)$. Let $x,y \in \Omega$. The result is direct if $x,y \in A_{0}$. If $x,y$ are in the same set $A_l$ where $l \neq 0$ then by equation \eqref{simplifying_assymption_f} we have:
\begin{align*}
    |\gamma(x) - \gamma(y)| &=  a_l L h_l^\alpha \; \Big|f\Big(\frac{x-x_l}{h_l}\Big)-f\Big(\frac{y-x_l}{h_l}\Big)\Big| \leq  a_l L h_l^\alpha \;\Big[\cstar f\Big(\frac{x-x_l}{h_l}\Big) + \Big\|\frac{y-x}{h_l}\Big\|^\alpha\Big]\\
    & = \cstar\gamma(x) +  a_l L \|y-x\|^\alpha.
\end{align*}
Assume now there exist $i \neq l$ such that $x \in A_i$ and $y \in A_l$. For $x'\in A_i$ and $y' \in A_l$ such that $d_{\|\cdot\|}(x',A_{0}) =0$ and $d_{\|\cdot\|}(y',A_{0}) =0$ we have by equation \eqref{simplifying_assymption_f}: $$\big|\gamma(x)\big| = \big|\gamma(x) - \gamma(x')\big| \leq \cstar \big|\gamma(x')\big| + a_i \, Lh_i^\alpha \big\|\frac{x-x'}{h_i}\big\|^\alpha =  a_i L\|x-x'\|^\alpha,$$ $$\big|\gamma(y)\big| = \big|\gamma(y) - \gamma(y')\big| \leq \cstar \big|\gamma(y')\big| +  a_l \, Lh_l^\alpha \big\|\frac{y-y'}{h_l}\big\|^\alpha =  a_l L\|y-y'\|^\alpha.$$
\hfill

Moreover, we have $\|x-y\| \geq \|x-x'\| + \|y-y'\|$ since $x$ and $y$ are in two different sets, and $\Calpha = \max \{\lambda^\alpha + (1-\lambda)^\alpha: \lambda \in [0,1]\}$ so that: $\Calpha\|x-y\|^\alpha \geq \|x-x'\|^\alpha + \|y-y'\|^\alpha$. This yields the result.
\end{proof}

\begin{proof}[Proof of Proposition \ref{Properties_gammadown}]
\begin{enumerate}
    \item We first show that $p_0 - \gammadown_j \geq 0$ for all $j \in \Ibar$. By Lemma \ref{pU_leq_Lh^alpha+d}, we have $p_U \leq \CL{\ref{pU_leq_Lh^alpha+d}}Lh^{\alpha+d}$ where $\CL{\ref{pU_leq_Lh^alpha+d}}$ is a constant, so that by Assumption \eqref{simplifyingAssumption}:
    \begin{equation}\label{p0_upper_bounded_j_geq_U}
        \forall j \geq U,~ \forall x \in \widetilde C_j,~ p_0(x) \leq \left(\CL{\ref{pU_leq_Lh^alpha+d}}(1+\cstar) + \sqrt{d}^\alpha\right)Lh^\alpha =: CLh^\alpha.
    \end{equation}
    Recall that $e(\widetilde C_j) = \hT$. Therefore, the condition that Algorithm \ref{Algo_partitionnement} splits $\widetilde C_j$ at least once rewrites:
    \begin{align*}
        e(\widetilde C_j) = \hT > \left(\frac{p_0(x_j)}{\cbeta' L}\right)^{1/\alpha} \Longleftarrow \hT > \left(\frac{C}{\cbeta'}\right)^{1/\alpha}\hT
    \end{align*}
    by equation \eqref{p0_upper_bounded_j_geq_U}, which is true if we choose $\cbeta'$ large enough. This ensures that for all cell $\Elj$ and for all $x \in \Elj$, we have by the properties of the partitioning scheme (Proposition \ref{guarantees_algo} item \ref{min_geq_max}), that $p_0(x)\geq \frac{1}{2}p_0\big(z_l^{(j)}\big) \geq L (\hlj)^\alpha$ by taking $\cbeta = \calpha \geq 2$. %\ju{A clarifier. Il faut jouer sur $\cbeta'$ pour le faire}. 
    Therefore, 
    $$p_0 - \philj \geq p_0 - \cdown L\big(\hlj\big)^\alpha \geq \frac{1}{2} p_0\big(z_l^{(j)} \big) - \cdown L\big(\hlj\big)^\alpha \geq \frac{1\,-\,\cdown}{2} p_0\big(z_l^{(j)} \big) \geq 0.$$ % has edge length $\asymp (p_0(z_l^{(j)})/L)^{1/\alpha}$.
    Moreover, it is clear that $\widetilde C_j$, $p_0 - \gammadown_j \in H(\alpha, L(1+ \cdown)) \subset H(\alpha, L')$ and $\widetilde C_j$, $p_0 + \gammaup_j \in H(\alpha, L(1+ \cupward_j)) \subset H(\alpha, L')$ for $\cdown$ small enough. 
    To finish, by Lemma \ref{gammaup_simplifying_assumption_cases_diff}, we have that for all $(b_j)_{j \geq U}$, $p_b^{(n)}$ satisfies Assumption \eqref{simplifyingAssumption} with the constants $\cstar'$ and $L'$ by choosing $\cdown$ small enough. Indeed, set $\gamma = \sum\limits_{l \geq U} b_j \gammaup_j - (1-b_j) \gammadown_j$ and $\bar a = \sup ~ \left( \{\cdown\}\cup \{\cupward_j: j \geq U\}\right)$.
    \begin{align*}
        |q_b(x) - q_b(y)| &\leq |p_0(x) - p_0(y)| + |\gamma(x) - \gamma(y)|\\
        &\leq \cstar p_0(x) + \cstar|\gamma(x)| + (1+\bar a \, \Calpha) L \|x-y\|^\alpha.
    \end{align*}
    Let $j \geq U$ such that $x \in \widetilde C_j$. If $b_j = 1$ then $\gamma(x) = \gammaup_j(x)\geq 0$ hence $p_0(x) + |\gamma(x)| = p_0(x) + \gamma(x)$ which proves that \eqref{simplifyingAssumption} is satisfied. Otherwise, $\gamma(x) = - \gammadown(x)$. We have already shown that $\gammadown_j \leq p_0$ over $\widetilde C_j$. Taking $\cdown$ small enough, we can therefore impose, for any $\lambda >0 : \gammadown_j \leq \lambda p_0$ over $\widetilde C_j$. Therefore,
    \begin{align*}
        p_0(x) + |\gamma(x)| = p_0(x) + \gammadown_j(x) \leq \frac{1+\lambda}{1-\lambda} \big(p_0(x) - \gammadown_j(x)\big). 
    \end{align*}
    Taking $\lambda$ and $\bar a$ small enough (which can be done by taking $\cdown$ small enough), we get in both cases that $q_b$ satisfies Assumption \eqref{simplifyingAssumption} with the constants $\cstar(1+\delta/2)$ and $L(1+\delta/2)$ instead of $\cstar$ and $L$. 
    Now, by Lemma \ref{norm_qb} and the Chebyshev inequality, the event $\left\{\left|\, \|q_b\|_1-1 \,\right| \leq \CL{\ref{gammadown_lesssim_intp0}}\right\}$ can have arbitrarily high probability when $n$ is larger than a suitably chosen constant.
    Taking $n$ large enough ensures that with probability arbitrarily close to $1$, $p_b^{(n)}$ satisfies \eqref{simplifyingAssumption} with the constants $\cstar'$ and $L'$.
    %For all $x\in \widetilde C_j$, we have already shown that $p_0 - \philj \geq \frac{1\,-\,\cdown}{1\,-\,\cstar} p_0\big(z_l^{(j)} \big)$, hence  $p_0 - \philj \geq (1-\cdown)p_0(x)$, therefore $\philj \leq \cdown p_0(x)/2$. It follows that for all $x,y \in [0,1]^d$ $\big|\big(p_0-\gammadown\big)(x)- \big(p_0-\gammadown\big)(y)\big| \leq \cstar p_0(x) \lor L\|x-y\|^\alpha + \philj(x)\lor \cdown L\|x-y\|^\alpha \leq \big(1+\cdown \big) \big[\cstar p_0(x) \lor L\|x-y\|^\alpha \big]$.
    \item By Proposition \ref{guarantees_algo}, we have $h_l^{(j)} \geq \frac{1}{2^{\beta+1}} \left(\frac{1}{L \; \cbeta'} p_0(z_l^{(j)})\right)^{1/\alpha}$ and that for all $l \in \{1,\dots, M_j\}: p_0  \geq \frac{1}{2}\; p_0(z_l^{(j)})$ over $\Elj$.
    Now, for all $j\geq U$ and $l \in \{1, \dots, M_j\}$ we have
    \begin{align*}
        \int_{\widetilde C_j} \gammadown_j &= \sum_{j=1}^{M_j} \int_{\Elj} \philj = \sum_{j=1}^{M_j} \cdown L \big(\hlj\big)^{\alpha+d} \|f\|_1\\
        & \geq \cdown \|f\|_1 \sum_{j=1}^{M_j} L \frac{1}{2^{\beta+1}}\frac{1}{L \; \cbeta'} p_0(z_l^{(j)}) \big(\hlj\big)^d\\
        & \geq \cdown \|f\|_1 ~\frac{1}{2^{\beta+2}\cbeta'}  \int_{D_j} p_0  ~ \geq ~ \frac{1}{\Cdown} \int_{D_j} p_0,
    \end{align*}
    where $\Cdown$ is a constant, which ends the proof.
\end{enumerate}
\end{proof}

\subsection{Proof of Proposition \ref{separation_prior_tail}}

In what follows we set for all $j\geq U$:
\begin{equation}\label{Def_Gamma_up_down}
     \Gammaup_j = \int_{\widetilde C_j} \gammaup_j ~~ \text{ and } ~~ \Gammadown = \int_{\widetilde C_j} \gammadown_j.
\end{equation}

\begin{lemma}\label{gammadown_lesssim_intp0}
There exist three constants  $\AL{\ref{gammadown_lesssim_intp0}}, \BL{\ref{gammadown_lesssim_intp0}}$ and $\CL{\ref{gammadown_lesssim_intp0}}$ such that for all $j \geq U$, it holds:
\begin{enumerate}
    \item $\Gammadown_j \leq \AL{\ref{gammadown_lesssim_intp0}}\; p_j$,
    \item $\Gammaup_j \geq \BL{\ref{gammadown_lesssim_intp0}}\; p_j$ 
    \item $\|\gammaup_j\|_t^t \geq \CL{\ref{gammadown_lesssim_intp0}} \; \int_{\widetilde C_j} p_0^t$ ~ where $\CL{\ref{gammadown_lesssim_intp0}} < 1$.
\end{enumerate}
\end{lemma}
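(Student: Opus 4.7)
The plan is to establish the three bounds in order, treating each as a direct computation from the construction of the perturbations $\philj$, $\gammadown_j$, $\gammaup_j$ and the constraints already derived in the previous lemmas.

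For item 1, I would first rewrite $\Gammadown_j = \sum_{l=1}^{M_j} \int_{\Elj}\philj = \cdown L\|f\|_1 \sum_l (\hlj)^{\alpha+d}$. The partitioning guarantees from Proposition \ref{guarantees_algo}, applied to the sub-partition of $\widetilde C_j$ with parameters $\beta=\alpha$, $u=u_j$, $\cbeta=\cbeta' L$, $\calpha=\cbeta'$, yield two crucial facts on every cell $\Elj$: $L(\hlj)^\alpha \leq p_0(\zlj)/\calpha$ (since the cell was kept) and $p_0 \geq p_0(\zlj)/2$ (item \ref{min_geq_max}). Combining these, the pointwise bound $\philj \leq \cdown L (\hlj)^\alpha \|f\|_\infty \leq \tfrac{2\cdown \|f\|_\infty}{\calpha} p_0$ holds on $\Elj$. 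Integrating and summing over $l$, and using that the $\Elj$ are pairwise disjoint and contained in $\widetilde C_j$, gives $\Gammadown_j \leq \tfrac{2\cdown \|f\|_\infty}{\calpha}\, p_j$, so we can take $\AL{\ref{gammadown_lesssim_intp0}} = 2\cdown \|f\|_\infty/\calpha$.

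For item 2, the identity $\pi_j \Gammaup_j = (1-\pi_j)\Gammadown_j$ defining $\cupward_j$ together with $\pi_j \leq 1/2$ (a consequence of the choice of $U$ in \eqref{def_U}) immediately yields $\Gammaup_j \geq \Gammadown_j$. Then item 2 of Proposition \ref{Properties_gammadown} provides $\Cdown\, \Gammadown_j \geq \int_{D_j} p_0$, and by definition of $u_j$ and $D_j$ in \eqref{def_uj}--\eqref{Def_Dj}, $\int_{D_j} p_0 \geq p_j/2$. Chaining these gives $\Gammaup_j \geq p_j/(2\Cdown) =: \BL{\ref{gammadown_lesssim_intp0}}\, p_j$.

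For item 3, a change of variables $y = (x-z_j)/h$ computes $\|\gammaup_j\|_t^t = (\cupward_j L)^t \|f\|_t^t\, h^{\alpha t + d}$, and the lower bound $\cupward_j \geq \cupward = \cu \cdown$ gives a lower bound of the same form with $\cupward_j$ replaced by $\cupward$. For the matching upper bound on $\int_{\widetilde C_j} p_0^t$, I would invoke Lemma \ref{pU_leq_Lh^alpha+d} together with Assumption \eqref{simplifyingAssumption}, exactly as already done in equation \eqref{p0_upper_bounded_j_geq_U} of the proof of Proposition \ref{Properties_gammadown}, to obtain the pointwise bound $p_0(x) \leq CLh^\alpha$ for all $x \in \widetilde C_j$ and all $j\geq U$. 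Integrating yields $\int_{\widetilde C_j} p_0^t \leq C^t L^t h^{\alpha t + d}$, so the ratio gives the required constant $\CL{\ref{gammadown_lesssim_intp0}} = (\cupward \|f\|_t / C)^t$, which is less than $1$ provided $\cdown$ (hence $\cupward$) is chosen small enough relative to the fixed constant $C$ coming from Assumption \eqref{simplifyingAssumption}.

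The only nontrivial obstacle is item 1, where one must align the intrinsic bandwidth $\hlj$ of each sub-cell with the local value of $p_0$ so as to convert a bound on the $L^\infty$ norm of $\philj$ into a bound on its integral in terms of $\int_{\Elj} p_0$; the dual guarantees of the partitioning algorithm handle this cleanly. Items 2 and 3 are then essentially bookkeeping, relying on constants that have already been introduced and controlled in the construction.
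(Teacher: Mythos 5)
Your proof is correct, and items~1 and~3 follow essentially the paper's route: for item~1, you integrate a pointwise bound $\philj \lesssim p_0$ (via $\|f\|_\infty$) where the paper uses the integral identity $\int\philj = \cdown L(\hlj)^{\alpha+d}\|f\|_1$; for item~3, you invoke the upper bound $p_0 \leq C L h^\alpha$ on each $\widetilde C_j$ via Lemma~\ref{pU_leq_Lh^alpha+d} and Equation~\eqref{p0_upper_bounded_j_geq_U}, where the paper derives the same bound from its own Equation~\eqref{pour_item2}. These are cosmetic differences with the same computation underneath.

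Item~2 is where you genuinely diverge. The paper's proof chains $p_j \leq p_U$, the defining inequality for $U$, and Lemma~\ref{sum_pj_geq_int} to obtain $p_j \leq \tfrac{\cu}{D}Lh^{\alpha+d}$, and then compares directly with the explicit formula $\Gammaup_j = \cupward_j\|f\|_1 Lh^{\alpha+d}$, which requires invoking $\cupward_j \geq \cupward$. Your route instead uses the mass-balance identity $\pi_j\Gammaup_j = (1-\pi_j)\Gammadown_j$ together with $\pi_j \leq \tfrac12$ to get $\Gammaup_j \geq \Gammadown_j$, then feeds in Proposition~\ref{Properties_gammadown} item~2 (in the form $\Cdown\Gammadown_j \geq \int_{D_j}p_0$) and $\int_{D_j}p_0 \geq \tfrac12 p_j$ from the definition of $u_j, D_j$. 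This avoids both Lemma~\ref{sum_pj_geq_int} and the lower bound on $\cupward_j$ for this item, transferring the burden to the already-established bound on $\Gammadown_j$; the trade-off is you reuse a separate proposition rather than keeping the lemma self-contained. Since the proof of Proposition~\ref{Properties_gammadown} item~2 is an independent computation from the partitioning guarantees, no circularity arises. One small note: you still need $\cupward_j \geq \cupward$ in item~3, so your reorganization localizes rather than eliminates that dependency.
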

\begin{proof}[Proof of Lemma \ref{gammadown_lesssim_intp0}] 
Fix $j \geq U$.
\begin{enumerate}
    \item We have:
    \begin{align*}
    \int_{\widetilde C_j} \gammadown_j &= \sum_{l=1}^{M_j} \cdown  L {\hlj}^{\alpha+d} \|f\|_1 \leq \|f\|_1 \sum_{l=1}^{M_j}  \frac{\cdown}{\cbeta'}p_0\big(\zlj\big) {\hlj}^{d} \\
    &\leq 2\|f\|_1 \frac{\cdown}{\cbeta'} \sum_{l=1}^{M_j} \int_{\Elj} p_0 \leq 2\|f\|_1 \frac{\cdown}{\cbeta'}  p_j =: \AL{\ref{gammadown_lesssim_intp0}}\; p_j.
\end{align*}
\item By definition of $U$:
\begin{align}
    p_j &\leq \frac{\cu}{n^2\sum_{j\geq U} p_j} \leq \frac{\cu}{n^2D \int_{\T(\uA)} p_0} = \frac{\cu}{D} L h^{\alpha+d}\label{pour_item2}\\
    &\leq \frac{1}{\BL{\ref{gammadown_lesssim_intp0} 
    }}\Gammaup_j, \hspace{1cm} \text{ for some constant $\BL{\ref{gammadown_lesssim_intp0}}$.}\nonumber
\end{align}
\item Let $x \in \widetilde C_j$ and $y \in \widetilde C_j$ such that $p_0(y) = \frac{p_j}{h^d}$, which exists by the intermediate value theorem. We have by Assumption \eqref{simplifyingAssumption}:
\begin{align*}
    p_0(x) &\leq (1+\cstar)p_0(y) + L(h\sqrt{d})^\alpha \\
    & \leq \left[(1+\cstar)\frac{\cu}{D} + \sqrt{d}^\alpha\right]Lh^\alpha \hspace{1cm} \text{by Equation \eqref{pour_item2}},
\end{align*}
so that:
\begin{align*}
    \int_{\widetilde C_j} p_0^t \leq \left[(1+\cstar)\frac{\cu}{D} + \sqrt{d}^\alpha\right]^t L^t h^{\alpha t + d} \leq \frac{1}{ \CL{\ref{gammadown_lesssim_intp0}}} \; \|\gammaup_j\|_t^t \;, \hspace{3mm} \text{for some constant $\CL{\ref{gammadown_lesssim_intp0}}$}.
\end{align*}
\end{enumerate}
\end{proof}

\begin{proof}[Proof of Proposition \ref{separation_prior_tail}]
Assume throughout the proof that $\int_{\B^c} p_0 > \frac{\ctail}{n}$. We show that our prior concentrates with high probability on a zone separated away from $p_0$ by an $L_t$ distance of order $\rhot$, up to a constant. To lower bound the $L_t$ separation between our prior and the null distribution, we will only consider the discrepancy accounted for by the perturbations $(\gammaup_j)_j$. We recall that $\widetilde C_0 = D(U)^c$. For all $j\geq U$, fix $b_j \in \{0,1\}$ as well as $n$ large enough, such that
\begin{equation}\label{Evenement_gde_pba_tail}
    \left|\, \|q_b\|_1-1 \,\right| \leq \CL{\ref{gammadown_lesssim_intp0}}.
\end{equation}
By Lemma \ref{norm_qb} and the Chebyshev inequality, the event corresponding to Equation \eqref{Evenement_gde_pba_tail} can have arbitrarily high probability when $n$ is larger than a suitably chosen constant. Now, write $\Ibarzero = \{0\} \cup \{j \in \Ibar : j\geq U\}$.

%\ju{Plein de choses à compléter !!!} we have: \ju{Se placer sur l'événement de grande proba qui contrôle $\|q_b\|_1 \in \big[\frac{1}{2}, \frac{3}{2}\big]$}
\begin{align*}
    \|p_0 - p_b\|_t^t & = \sum_{j\in \Ibarzero} b_j\int_{\widetilde C_j} \Big|p_0 - \frac{p_0 + \gammaup_j}{\|q_b\|_1}\Big|^t +\sum_{j\in \Ibarzero} (1-b_j) \int_{\widetilde C_j} \Big|p_0 - \frac{p_0 + \gammadown}{\|q_b\|_1}\Big|^t\\
    &\geq \sum_{j\in \Ibarzero} b_j\int_{\widetilde C_j} \Big|p_0 - \frac{p_0 + \gammaup_j}{\|q_b\|_1}\Big|^t = \sum_{j\in \Ibarzero} b_j \Big\|p_0 - \frac{p_0 + \gammaup_j}{\|q_b\|_1}\Big\|_{t, \widetilde C_j}^t\\
    &\geq \sum_{j\in \Ibarzero} b_j \left|\frac{\|\gammaup_j\|_{t, \widetilde C_j}}{\|q_b\|_1} - \Big\|p_0\Big(1-\frac{1}{\|q_b\|_1}\Big)\Big\|_{t, \widetilde C_j}\right|^t \text{ by the reverse triangle inequality}\\
    & \geq \sum_{j\in \Ibarzero} b_j \left(\frac{\|\gammaup_j\|_{t, \widetilde C_j}}{\|q_b\|_1} - \Big(1-\frac{1}{\|q_b\|_1}\Big)\, \frac{1}{ \CL{\ref{gammadown_lesssim_intp0}}} \, \|\gammaup_j\|_{t,\widetilde C_j}\right)^t \text{ by Lemma \ref{gammadown_lesssim_intp0} and Equation \eqref{Evenement_gde_pba_tail}}\\
    & = \sum_{j\in \Ibarzero} b_j \|\gammaup_j\|_{t, \widetilde C_j}^t \left(\frac{1}{\|q_b\|_1}\left(1+\frac{1}{\CL{\ref{gammadown_lesssim_intp0}}}\right) - \frac{1}{\CL{\ref{gammadown_lesssim_intp0}}} \right)^t \\
    & \geq \sum_{j\in \Ibarzero} b_j \|\gammaup_j\|_{t, \widetilde C_j}^t \left(\frac{1}{2+\CL{\ref{gammadown_lesssim_intp0}}}\right)^t ~~ \text{ by Equation \eqref{Evenement_gde_pba_tail}}\\
    & \geq \sum_{j\geq U} b_j \cu \cdown L^t h^{\alpha t + d}\left(\frac{1}{2+\CL{\ref{gammadown_lesssim_intp0}}}\right)^t := \Cgap L^t h^{\alpha t + d} \sum_{j \geq U} b_j.
\end{align*}
It now remains to prove that, \textit{whp}, $L^t h^{\alpha t + d} \sum\limits_{j\geq U} b_j \gtrsim {\rhot}^t$. 
\begin{align*}
    \mathbb{E}\Big[\sum_{j\geq U} b_j\Big] = \sum_{j\geq U} \pi_j = \frac{n^2 \left(\sum\limits_{j\geq U}p_j\right)^2}{2\cu} \geq \frac{D^2 \ctail^2}{2\cu} ~ \text{ by Lemma \ref{sum_pj_geq_int}}.
\end{align*}
Moreover,
\begin{align*}
    \mathbb{V}\Big[\sum_{j\geq U} b_j\Big]\leq \sum_{j\geq U} \pi_j = \mathbb{E}\Big[\sum_{j\geq U} b_j\Big]
    %\frac{n^2 \big(\sum_{j\geq U}p_j\big)^2}{2\cu} \leq \frac{\CL{\ref{Toute_la_case_OK_moment1}}^2}{D^2}\left(\mathbb{E}\Big[\sum_{j\geq U} b_j\Big] \right)^2.
\end{align*}
We now consider the event
\begin{equation}\label{Evenement_gde_pba_tail_2}
    \sum_{j\geq U} b_j \geq \frac{1}{2} \mathbb{E}\Big[\sum_{j\geq U} b_j\Big].
\end{equation}
By the Chebyshev inequality, the probability of this event \lsmall{the constant $\ctail$}, since $\mathbb{V}\Big[\sum\limits_{j\geq U} b_j\Big] = o\Big(\mathbb{E}^2\Big[\sum\limits_{j\geq U} b_j\Big]\Big)$ as $\ctail \to +\infty$. 
Therefore, on the intersection of the events defined in Equations \eqref{Evenement_gde_pba_tail} and \eqref{Evenement_gde_pba_tail_2}, we have that: 
\begin{align*}
    \|p_0 - p_b\|_t^t &\geq \Cgap L^t h^{\alpha t + d} \sum_{j\geq U} b_j \geq \frac{\Cgap}{2} L^t h^{\alpha t + d} \sum_{j\geq U} \pi_j\\
    & \geq \frac{\Cgap}{2} L^t h^{\alpha t + d} D\int_{\T(\uA)}p_0 \hspace{5mm} \text{ by Lemma \ref{sum_pj_geq_int}}\\
    & \asymp {\rhot}^t.
\end{align*}
\end{proof}

\subsection{Proof of Proposition \ref{lower_bound_tail}}

\begin{proof}[Proof of Proposition \ref{lower_bound_tail}]

We draw $\widetilde n \sim \Poi(2n)$ and $\widetilde n' | b \sim \Poi\big(2n \int_{\Omega }q_b\big)$ independent of $\widetilde n$, and we let $\An = \{\widetilde n \geq n\}$ and $\An' = \{\widetilde n' \geq n\}$. By Lemma \ref{An_high_pba}, we can ensure $\mathbb{P}(\An), \mathbb{P}(\An') \geq 1-\eta/100$ for $n$ larger than a constant. This condition will be assumed throughout the proof of Proposition \ref{lower_bound_tail}. We will also slightly abuse notation and identify the probability densities with their associated probability measures. Moreover, we will use the notation $\widetilde C_0 = D(U)^c$ where we recall that $D(U) = \bigcup\limits_{j \geq U} \widetilde C_j$. We recall the definition of $\bar p^{(n)}$ in \eqref{def_pbar} and introduce the \textit{poissonized} probability measures $\bar p_b^{(\widetilde n')}$, $\qtens$ and $\ptens$, defined over $\Xtilde = \bigcup\limits_{n \in \N} \Omega^n$. The core of the proof is to link our target quantity $d_{TV}(\bar p^{(n)}, p_0^{\otimes n}) = d_{TV}(\mathbb{P}_{p_0^{\otimes n}}, \mathbb{P}_{\bar p^{(n)}})$ (which we want to upper bound by a small constant), to the quantity $d_{TV}(\qtens, \ptens)$ which is easier to work with. For clarity, we give the densities associated to each of the poissonized probability measures. For any $x \in \Xtilde$, we denote by $\widetilde n(x) \in \N$ the unique integer such that $x=(x_1, \dots, x_{\widetilde n(x)})$.

\begin{align*}
    \ptens(x) \big| \{\widetilde n = \nu\} \; &= \begin{cases} \text{ if } \widetilde n(x) \neq \nu: & 0 \\\text{ otherwise: } &
    p_0^{\otimes \nu}(x). \end{cases}\\
    &\\
    \bar p_b^{(\widetilde n')}(x) \big| \{\widetilde n' = \nu\} &= \begin{cases} \text{ if } \widetilde n'(x) \neq \nu: & 0 \\\text{ otherwise: } & \hspace{-5mm}
    \sum\limits_{(\beta_j)_{j\geq U} \in \{0,1\}}~ \prod\limits_{j \geq U} ~ \pi_j^{\beta_j} (1 - \pi_j)^{1-\beta_j} ~ \left(\frac{q_\beta}{\|q_\beta\|_1}\right)^{\otimes \nu} \hspace{-2mm}(x). \end{cases} \\
    &\\
    \qtens(x) \big| \{\widetilde n = \nu\} \; &= \begin{cases} \text{ if } \widetilde n(x) \neq \nu: & 0 \\\text{ otherwise: } & \hspace{-5mm}
    \sum\limits_{(\beta_j)_{j\geq U} \in \{0,1\}}~ \prod\limits_{j\geq U} ~ \pi_j^{\beta_j} (1 - \pi_j)^{1-\beta_j} ~ q_\beta^{\otimes \nu}(x). \end{cases} 
\end{align*}
%\al{La tensorisation se fait uniquement a partir de $U$ cest ca? je ne crois pas que cest unifie dans les lemmes qui suivent (notemment le lemme 36)}

Note that $q_b$ is not a density. Therefore the term $\qtens$ with $\widetilde n \sim \Poi(2n)$ denotes the mixture of inhomogeneous spatial Poisson processes with intensity functions $\left(2n\, q_b\right)_b$, over the realizations of $(b_j)_{j\in \Ibarzero}$ where we recall that $\Ibarzero = \{0\} \cup \{j\in \Ibar: j\geq U\}$. We define the histogram of $x$ over the domain $D(U)$ by setting for all $j\geq U: \widetilde N_j(x) = \sum_{i=1}^{\widetilde n(x)} \mathbb{1}_{x_i \in \widetilde C_j}$. \\

We have $R_B^{\; tail} = 1 - d_{TV}(\mathbb{P}_{p_0}, \mathbb{P}_{\bar p})$, where $\mathbb{P}_{p_0}$ and $\mathbb{P}_{\bar p}$ are respectively the probability measures of the densities $p_0^{\otimes n}$ and $\bar p$. We therefore aim at proving $d_{TV}(\mathbb{P}_{p_0}, \mathbb{P}_{\bar p}) < 1-\eta$. We will denote by $\Poi(f)$ the inhomogeneous spatial Poisson process with nonnegative intensity function $f$, by $f_{|\widetilde C_j}$ the restriction of $f$ to the cell $\widetilde C_j$. Moreover, for any two probability measures $P, Q$ over the same measurable space $(\mathcal{Y}, \mathcal{C})$, and for any event $A_0 \in \mathcal{C}$ we will denote by $d_{TV}^{A_0}(P,Q)$ the total variation restricted to $A_0$, defined as the quantity:
\begin{equation}\label{def_TV_event}
    d_{TV}^{(A_0)}(P,Q) = \sup_{A \in \mathcal{C}}\big|P(A \cap A_0) - Q(A \cap A_0) \big|.
\end{equation}% by $p(\cdot| \Aonce)$ the probability of $p$ conditional to event $A$.
We prove the following lemmas concerning the total variation restricted to $A_0$:
\begin{lemma}\label{TV_restricted_abs}
For any two probability measures $P, Q$ over the same measurable space $(\mathcal{Y}, \mathcal{C})$, and for any event $A_0 \in \mathcal{C}$, if $P,Q \ll \mu$ over $(\mathcal{Y}, \mathcal{C})$, i.e. $dP = p d\mu$ and $dQ = q d\mu$, it holds: $$d_{TV}^{(A_0)}(P,Q) = \frac{1}{2}\left[|P(A_0) - Q(A_0)| + \int_{A_0} |p-q|d\mu \right].$$
\end{lemma}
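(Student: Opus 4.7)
The plan is to reduce this to the classical Jordan-type decomposition argument. First I would observe that taking $A \in \mathcal{C}$ and intersecting with $A_0$ is the same as letting $B = A \cap A_0$ range over all measurable subsets of $A_0$, so
\[
d_{TV}^{(A_0)}(P,Q) = \sup_{\substack{B \in \mathcal{C}\\ B \subseteq A_0}} \left| P(B) - Q(B) \right| = \sup_{\substack{B \in \mathcal{C}\\ B \subseteq A_0}} \left| \int_B (p-q)\, d\mu \right|.
\]

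Next I would set $a_+ = \int_{A_0} (p-q)_+\, d\mu$ and $a_- = \int_{A_0} (p-q)_-\, d\mu$, where $(\cdot)_\pm$ denote positive and negative parts. The key step is to show that the supremum above equals $\max(a_+, a_-)$. For the upper bound, for any $B \subseteq A_0$ we split $B = (B \cap \{p \geq q\}) \cup (B \cap \{p < q\})$ and bound $\int_B (p-q)\, d\mu \leq a_+$ and $-\int_B (p-q)\, d\mu \leq a_-$; for the lower bound, the extremal sets $B_+ = A_0 \cap \{p \geq q\}$ and $B_- = A_0 \cap \{p < q\}$ achieve $a_+$ and $-a_-$ respectively.

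Finally I would just combine two elementary identities:
\[
P(A_0) - Q(A_0) = \int_{A_0}(p-q)\, d\mu = a_+ - a_-, \qquad \int_{A_0}|p-q|\, d\mu = a_+ + a_-,
\]
together with $\max(a_+, a_-) = \tfrac{1}{2}\bigl[(a_+ + a_-) + |a_+ - a_-|\bigr]$, which yields
\[
d_{TV}^{(A_0)}(P,Q) = \max(a_+, a_-) = \tfrac{1}{2}\Bigl[\, \int_{A_0}|p-q|\, d\mu + |P(A_0)-Q(A_0)| \,\Bigr],
\]
as claimed.

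There is no real obstacle here; this is essentially the Scheff\'e/Jordan decomposition restricted to $A_0$. The only thing to be slightly careful about is that the Hahn decomposition sets $\{p \geq q\} \cap A_0$ and $\{p < q\} \cap A_0$ are genuinely in $\mathcal{C}$ (which follows since $p$ and $q$ are measurable Radon--Nikodym derivatives), so they are admissible choices of $B$ in the supremum.
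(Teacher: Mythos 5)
Your proof is correct and follows essentially the same route as the paper's: both hinge on the Hahn set $\{p \geq q\}$ (intersected with $A_0$) being the extremizer of $|P(B \cap A_0) - Q(B \cap A_0)|$. The paper assumes WLOG $P(A_0) \geq Q(A_0)$ and does a two-case comparison of arbitrary $B$ against $B_0 \cap A_0$, whereas you avoid the WLOG by writing the supremum directly as $\max(a_+, a_-)$ and invoking the identity $\max(x,y) = \tfrac{1}{2}(x+y+|x-y|)$ — a cleaner but equivalent packaging of the same argument.
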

\begin{lemma}\label{TV_restricted_tensorization}
For any two probability measures $P_1, Q_1$ (resp. $P_2, Q_2$) over the same measurable space $(\mathcal{Y}_1, \mathcal{C}_1)$(resp. $(\mathcal{Y}_2, \mathcal{C}_2)$), for any event $A_0 = \Azeroone \times \Azerotwo $ such that $\Azeroone \in \mathcal{C}_1$ and $\Azerotwo \in \mathcal{C}_2$, if $P_1,Q_1 \ll \mu_1$ (resp. $P_2,Q_2 \ll \mu_2$) over $(\mathcal{Y}_1, \mathcal{C}_1)$ (resp. $(\mathcal{Y}_2, \mathcal{C}_2)$), i.e. $dP_j = p_j d\mu_j$ and $dQ_j = q_j d\mu_j$ for $j=1, 2$, then it holds:
$$ d_{TV}^{(A_0)}(P_1 \otimes P_2,Q_1 \otimes Q_2) \leq  d_{TV}^{(\Azeroone)}(P_1,Q_1)+ d_{TV}^{(\Azerotwo)}(P_2,Q_2).$$
\end{lemma}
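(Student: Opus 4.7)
My plan is to reduce the inequality to the integral representation provided by Lemma \ref{TV_restricted_abs} and then to invoke the classical telescoping identity $p_1 p_2 - q_1 q_2 = (p_1 - q_1) p_2 + q_1 (p_2 - q_2)$ applied termwise. Concretely, writing $\mu = \mu_1 \otimes \mu_2$, the product densities $p = p_1 p_2$ and $q = q_1 q_2$ are densities of $P_1 \otimes P_2$ and $Q_1 \otimes Q_2$ with respect to $\mu$, so Lemma \ref{TV_restricted_abs} yields
\begin{equation*}
    d_{TV}^{(A_0)}(P_1 \otimes P_2, Q_1 \otimes Q_2) = \tfrac{1}{2}\Big[\,|P_1(\Azeroone)P_2(\Azerotwo) - Q_1(\Azeroone)Q_2(\Azerotwo)| + \int_{\Azeroone \times \Azerotwo} |p_1 p_2 - q_1 q_2|\, d\mu\,\Big].
\end{equation*}

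Next, I would control each of the two pieces by the same telescoping trick. For the integral piece, the bound $|p_1 p_2 - q_1 q_2| \leq |p_1 - q_1|\, p_2 + q_1\, |p_2 - q_2|$ together with Fubini and the fact that $\int_{\Azerotwo} p_2\, d\mu_2 \leq 1$ and $\int_{\Azeroone} q_1\, d\mu_1 \leq 1$ gives
\begin{equation*}
    \int_{A_0} |p_1 p_2 - q_1 q_2|\, d\mu \;\leq\; \int_{\Azeroone} |p_1 - q_1|\, d\mu_1 \;+\; \int_{\Azerotwo} |p_2 - q_2|\, d\mu_2.
\end{equation*}
The same identity applied to the scalars $P_1(\Azeroone), P_2(\Azerotwo), Q_1(\Azeroone), Q_2(\Azerotwo) \in [0,1]$ yields
\begin{equation*}
    |P_1(\Azeroone)P_2(\Azerotwo) - Q_1(\Azeroone)Q_2(\Azerotwo)| \;\leq\; |P_1(\Azeroone) - Q_1(\Azeroone)| + |P_2(\Azerotwo) - Q_2(\Azerotwo)|.
\end{equation*}

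Summing the two bounds and multiplying by $\tfrac{1}{2}$, one recognizes precisely $d_{TV}^{(\Azeroone)}(P_1, Q_1) + d_{TV}^{(\Azerotwo)}(P_2, Q_2)$ via a second application of Lemma \ref{TV_restricted_abs}, which concludes the proof. There is no real obstacle here: the statement is a routine tensorization, and the only thing to be careful about is that the product structure $A_0 = \Azeroone \times \Azerotwo$ is used in an essential way (it is what allows Fubini to decouple the two marginals); without it, one could only hope for the weaker bound using the global $d_{TV}$. I would therefore present the proof as a compact two-line computation once Lemma \ref{TV_restricted_abs} is cited.
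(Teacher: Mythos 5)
Your argument is correct and follows essentially the same route as the paper: apply Lemma \ref{TV_restricted_abs} to the product measures, telescope $p_1p_2 - q_1q_2$, use Fubini on the rectangular event, and apply Lemma \ref{TV_restricted_abs} once more to recognize the right-hand side. The only (cosmetic) difference is that you bound the integrated factors $\int_{\Azerotwo} p_2\,d\mu_2$ and $\int_{\Azeroone} q_1\,d\mu_1$ by $1$ immediately, whereas the paper keeps $P_1(\Azeroone)$ and $Q_2(\Azerotwo)$ in place through the intermediate identity $2P_1(\Azeroone)\,d_{TV}^{(\Azerotwo)}(P_2,Q_2) + 2Q_2(\Azerotwo)\,d_{TV}^{(\Azeroone)}(P_1,Q_1)$ and drops them at the final step.
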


We now come back to the proof of Proposition \ref{lower_bound_tail}. We define $\forall j \geq U$:
\begin{align}
    \Aoncej ~~ &= \big\{x \in \Xtilde ~ \big| ~  \widetilde N_j(x)\leq 1\big\},\label{def_Aonce_j}\\
    \Aonce^{(0)} &= \Xtilde.\label{def_Aonce_M+1}
\end{align}

We also introduce
\begin{equation}\label{def_Aonce}
    \Aonce = \big\{x \in \Xtilde ~ \big| ~ \forall j \geq U: \widetilde N_j\leq 1\big\} = \bigcap_{j \geq U} \Aoncej,
\end{equation}
the subset of $\Xtilde$ of all the vectors of observations such that any cube $(\widetilde C_j)_{j \geq U}$ contains at most one observation. $\Aonce$ will play an essential role, as it is the high probability event on which we will place ourselves to approximate the total variation $d_{TV}(p_0^{\otimes n}, \bar p^{(n)})$.

In Lemmas \ref{TVvraie_TVpoisson}-\ref{TV_on_each_cell}, we will successively use equivalence of models to formalize the following (informal) chain of approximations: %\al{dans le dernier $\lesssim$ de la premiere ligne tu devrais avoir du $+q_b (\mathcal A^c)+ p+0(\mathcal A^c)$ non? (pas grave mais bon)}
\begin{align*}
    d_{TV}(\mathbb{P}_{p_0}, \mathbb{P}_{\bar p}) & \lesssim d_{TV}(\bar p^{(\widetilde n')}, \ptens) = d_{TV}(\qtens, \ptens) \lesssim d_{TV}^{(\Aonce)}\big(\qtens, \ptens\big) \\
    & = d_{TV}^{(\Aonce)}\left( \bigotimes_{j \in \Ibarzero} \Poi\big(2n\, p_{0 | \widetilde C_j}\big), \bigotimes_{j \in \Ibarzero} \Poi\big(2n\, q_{b_j| \widetilde C_j}\big) \right) \\
    & \leq \sum_{j \geq U} d_{TV}^{(\Aoncej)}\left\{\Poi\Big(2n\, q_{b| \widetilde C_j}\Big), \; \Poi\Big(2n\, p_{0 | \widetilde C_j}\Big)\right\},
\end{align*} %\al{Sur les dernieres equations, les produits/somme ne se font que a partir de u etant donne la def de b non?}
At each step, we will control the approximation errors. We recall that $\mathbb{P}_{p_0^{\otimes n}}$ and $ \mathbb{P}_{\bar p^{(n)}}$ are defined over $\Omega^n$ whereas $\ptens, \bar p_b^{(\widetilde n')}$ and $\qtens$ are defined on $\Xtilde = \bigcup\limits_{n \in \N} \Omega^n$.
More precisely we will prove the following lemmas:
\begin{lemma}\label{TVvraie_TVpoisson}
It holds $d_{TV}(\bar p^{(n)}, p_0^{\otimes n}) \leq d_{TV}(\bar p^{(\widetilde n')}, \ptens)/ \mathbb{P}(\An \cap \An')$.
\end{lemma}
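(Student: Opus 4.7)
The plan is to pass from the non-Poissonized experiment to the Poissonized one through a ``keep the first $n$ coordinates'' operation and then appeal to a data-processing/Neyman--Pearson argument. I will work on the joint probability space on which $\widetilde n$ and $(b,\widetilde n')$ are independent, so that $\mathbb{P}(\An\cap\An')=\mathbb{P}(\An)\,\mathbb{P}(\An')$.

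First, I would introduce the measurable extraction map $\Phi:\Xtilde\longrightarrow\Omega^n\sqcup\{\star\}$ defined by $\Phi(x)=(x_1,\dots,x_n)$ whenever $\widetilde n(x)\ge n$ and $\Phi(x)=\star$ otherwise. Using the standard Poisson disintegration (the first $n$ points of a sample of length $\widetilde n\geq n$ are iid from the underlying density), one computes on the null side $\Phi_{*}\,\ptens=\mathbb{P}(\An)\,p_0^{\otimes n}+(1-\mathbb{P}(\An))\,\delta_\star$. Conditioning on $b$ before averaging gives the alternative-side identity $\Phi_{*}\,\bar p^{(\widetilde n')}=\mathbb{E}_b\bigl[\mathbb{P}(\widetilde n'\ge n\mid b)\,p_b^{\otimes n}\bigr]+\bigl(1-\mathbb{P}(\An')\bigr)\,\delta_\star$.

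Second, I would lift an optimal test $\psi^\star:\Omega^n\to\{0,1\}$ for $d_{TV}(\bar p^{(n)},p_0^{\otimes n})$ to a test on $\Xtilde$ by $\widetilde\psi(x)=\psi^\star(\Phi(x))\cdot\mathbb{1}_{\{\widetilde n(x)\ge n\}}$ and compute its expectations against $\ptens$ and $\bar p^{(\widetilde n')}$. Combining the two push-forward identities above with Lemma~\ref{norm_qb} (concentration of $\|q_b\|_1$ around $1$, so that $\mathbb{P}(\widetilde n'\ge n\mid b)\approx \mathbb{P}(\An')$ uniformly in $b$) will let me write, after multiplying through by $\mathbb{P}(\An\cap\An')=\mathbb{P}(\An)\mathbb{P}(\An')$, an inequality of the form $\mathbb{P}(\An\cap\An')\,d_{TV}(\bar p^{(n)},p_0^{\otimes n})\le \bigl|\mathbb{E}_{\ptens}[\widetilde\psi]-\mathbb{E}_{\bar p^{(\widetilde n')}}[\widetilde\psi]\bigr|\le d_{TV}(\bar p^{(\widetilde n')},\ptens)$, which rearranges to the claim.

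The \textbf{main obstacle} is the second step: the Poisson rate of $\widetilde n'$ depends on $b$ through $\|q_b\|_1$, so the alternative-side push-forward does not trivially factor as $\mathbb{P}(\An')\,\bar p^{(n)}$. This is precisely why Lemma~\ref{norm_qb} is invoked beforehand: the concentration $\|q_b\|_1=1+O(1/n)$ together with Poisson tail bounds makes the $b$-dependent weight $\mathbb{P}(\widetilde n'\ge n\mid b)$ uniformly close to the mean $\mathbb{P}(\An')$, and the resulting small discrepancy is absorbed on the right-hand side into $d_{TV}(\bar p^{(\widetilde n')},\ptens)$ itself.
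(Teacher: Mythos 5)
Your approach is essentially the paper's own: restrict attention to the observable event of having at least $n$ data points, extract the first $n$ coordinates, and apply data processing. The paper phrases this via conditioning on $\An\cap\An'$ and then invoking $d_{TV}\big(\bar p_{b,\An'}^{(\widetilde n')},\, p_{0,\An}^{\otimes\widetilde n}\big)\ge d_{TV}\big(\bar p^{(n)},\,p_0^{\otimes n}\big)$; your push-forward map $\Phi$ and lifted test $\widetilde\psi$ are an equivalent reformulation of the same two steps.

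What is genuinely valuable in your write-up is the explicit flag of the obstacle: because $\widetilde n'\mid b\sim\Poi(2n\|q_b\|_1)$, the weight $w(b)=\mathbb P(\widetilde n'\ge n\mid b)$ depends on $b$, so $\Phi_*\bar p^{(\widetilde n')}$ restricted to $\Omega^n$ is $\mathbb E_b\big[w(b)\,p_b^{\otimes n}\big]$ and not $\mathbb P(\An')\,\bar p^{(n)}=\mathbb E_b\big[\alpha'\,p_b^{\otimes n}\big]$. The paper's displayed chain silently treats these as the same thing — the equality $\sup_{A\subset\An\cap\An'}\big|\bar p_b^{(\widetilde n')}(A)-\ptens(A)\big|=\mathbb P(\An\cap\An')\sup_A\big|\bar p_{b,\An'}^{(\widetilde n')}(A)-p_{0,\An}^{\otimes\widetilde n}(A)\big|$ only holds if the conditioning on $\An'$ does not tilt the $b$-law, which it does. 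So you have correctly spotted a gap that the paper's own proof does not address.

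However, your resolution is itself a hand-wave. Saying "the resulting small discrepancy is absorbed on the right-hand side into $d_{TV}(\bar p^{(\widetilde n')},\ptens)$ itself" is not a proof: replacing $w(b)$ by its mean $\alpha'$ inside $\int\big|\alpha'\,p_0^{\otimes n}-\mathbb E_b[w(b)\,p_b^{\otimes n}]\big|$ can either increase or decrease the integral, so the target inequality does not follow from data processing alone. To actually close the step you would need a quantitative bound of the form $\mathbb E_b\big[|w(b)-\alpha'|\big]$ small (e.g.\ via Lemma~\ref{norm_qb}, a Chebyshev bound on $\|q_b\|_1$, and a Poisson tail estimate), and then carry the resulting additive error term through the final display — which in turn would deliver a bound of the form $d_{TV}(\bar p^{(n)},p_0^{\otimes n})\le d_{TV}(\bar p^{(\widetilde n')},\ptens)/\mathbb P(\An\cap\An')+o(1)$ rather than the clean multiplicative inequality stated. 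So: same approach as the paper, a sharper eye for the weak point, but the proposal is not yet a complete proof of the lemma as stated.
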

Note that in the right hand side of Lemma \ref{TVvraie_TVpoisson}, we have $\widetilde n'$ observations for $ \bar p^{(\widetilde n')}$ and $\widetilde n $ for $p_0^{\otimes \widetilde n}$. 
\begin{lemma}\label{pbar=qbn'}
It holds $\overline p^{(\widetilde n')} = \qtens$.
\end{lemma}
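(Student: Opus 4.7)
\textbf{Proof proposal for Lemma \ref{pbar=qbn'}.} The plan is to verify the equality of measures on $\Xtilde = \bigcup_{\nu \geq 0}\Omega^\nu$ by comparing the densities of both sides, conditionally on the mixing variable $b = (b_j)_{j \geq U}$ first, and then marginalizing. The core ingredient is the classical equivalence between two constructions of an inhomogeneous spatial Poisson process with nonnegative intensity function $\lambda:\Omega \to \R_+$ with $\|\lambda\|_1 < \infty$: one can either (i) sample $N \sim \Poi(\|\lambda\|_1)$ and then $N$ iid points from $\lambda/\|\lambda\|_1$, or equivalently (ii) realize it directly from its Janossy-type density $\frac{e^{-\|\lambda\|_1}}{\nu!}\prod_{i=1}^\nu \lambda(x_i)$ on $\Omega^\nu$.

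First, I would compute the density of $\bar p^{(\widetilde n')}$ at a point $x = (x_1,\dots,x_\nu) \in \Omega^\nu \subset \Xtilde$. Conditionally on $b$, by construction $\widetilde n' \sim \Poi(2n\|q_b\|_1)$ and the $\widetilde n'$ observations are iid with density $q_b/\|q_b\|_1$, so the joint density at $(\nu,x)$ is
\begin{equation*}
\frac{e^{-2n\|q_b\|_1}(2n\|q_b\|_1)^\nu}{\nu!} \prod_{i=1}^\nu \frac{q_b(x_i)}{\|q_b\|_1} \;=\; \frac{e^{-2n\|q_b\|_1}(2n)^\nu}{\nu!}\prod_{i=1}^\nu q_b(x_i).
\end{equation*}
Marginalizing over $b$ against the product of Bernoulli($\pi_j$) distributions yields
\begin{equation*}
\bar p^{(\widetilde n')}(x)\,\big|\,\{\widetilde n' = \nu\} \;=\; \sum_{(\beta_j)_{j\geq U}\in \{0,1\}^{\{U,\dots,M\}}} \prod_{j\geq U}\pi_j^{\beta_j}(1-\pi_j)^{1-\beta_j}\;\frac{e^{-2n\|q_\beta\|_1}(2n)^\nu}{\nu!}\prod_{i=1}^\nu q_\beta(x_i).
\end{equation*}

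Next, I would write down $\qtens$. By definition $\qtens$ is the mixture over $b$ of the inhomogeneous spatial Poisson processes with intensity $2n q_b$. Using representation (ii), the density of the $b$-th mixture component on $\Omega^\nu$ is exactly $\frac{e^{-2n\|q_b\|_1}}{\nu!}\prod_{i=1}^\nu 2n\, q_b(x_i)$, and averaging against the same Bernoulli mixing distribution produces the same expression as above. Hence $\bar p^{(\widetilde n')} = \qtens$ as measures on $\Xtilde$. The two short things to double-check are that $q_b \geq 0$ a.s.\ (so that $2n q_b$ is a legitimate Poisson intensity) -- which holds by item~1 of Proposition~\ref{Properties_gammadown} since $p_b^{(n)} \in \mathcal P(\alpha,L',\cstar')$ forces $q_b = \|q_b\|_1 \, p_b^{(n)} \geq 0$ -- and that the mixing measure on $b$ is the same on both sides, which is immediate from the definitions \eqref{def_qbn}--\eqref{def_pbar}.

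The computation is essentially bookkeeping, so I do not anticipate a genuine obstacle; the only point requiring care is to keep the dependence $\widetilde n'\,|\,b \sim \Poi(2n\|q_b\|_1)$ straight, so that the $\|q_b\|_1^\nu$ factor in the Poisson weight cancels the $\|q_b\|_1^{-\nu}$ factor coming from the iid sampling from $q_b/\|q_b\|_1$. This cancellation is precisely what makes the two constructions of an inhomogeneous Poisson process equivalent and is the reason the normalization of $q_b$ disappears, allowing the match with $\qtens$, which is defined through the un-normalized intensity $2n q_b$.
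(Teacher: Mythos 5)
Your proof is correct and follows the same route as the paper: conditionally on $b$, the Poisson rate $2n\|q_b\|_1$ combines with the normalized density $q_b/\|q_b\|_1$ so that the normalization cancels, giving the inhomogeneous Poisson process with intensity $2n\,q_b$, and one then averages over $b$. The paper states this in one line; you make the same cancellation explicit via the Janossy-type densities on $\Omega^\nu$, which is just a more spelled-out rendering of the identical argument.
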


\begin{lemma}\label{TV_over_A}
It holds: $d_{TV}(\qtens, \ptens) \leq d_{TV}^{(\Aonce)}\big(\qtens, \ptens\big)  + \qtens(\Aonce^c) + p_0^{\otimes \widetilde n}(\Aonce^c). $ 
\end{lemma}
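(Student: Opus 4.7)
The plan is to prove Lemma \ref{TV_over_A} by a direct decomposition of the total variation distance along the event $\Aonce$ and its complement. The statement is essentially a restricted-to-an-event version of the triangle inequality for total variation, so I do not expect any hard step here — it will follow from the definition of $d_{TV}$ as a supremum over measurable sets and from elementary set manipulations.

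More concretely, I will fix an arbitrary measurable set $A \subset \Xtilde$ and write
\[
\qtens(A) - \ptens(A) = \bigl[\qtens(A \cap \Aonce) - \ptens(A \cap \Aonce)\bigr] + \bigl[\qtens(A \cap \Aonce^c) - \ptens(A \cap \Aonce^c)\bigr].
\]
The first bracket is bounded in absolute value by $d_{TV}^{(\Aonce)}(\qtens, \ptens)$ by the definition \eqref{def_TV_event}. For the second bracket, I will apply the crude bound $|\qtens(A \cap \Aonce^c) - \ptens(A \cap \Aonce^c)| \leq \qtens(A \cap \Aonce^c) + \ptens(A \cap \Aonce^c) \leq \qtens(\Aonce^c) + \ptens(\Aonce^c)$, using only nonnegativity of the measures and monotonicity.

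Combining these two bounds and then taking the supremum over $A$ on the left-hand side yields the claim. The only (minor) subtlety is that $d_{TV}^{(\Aonce)}(\qtens, \ptens)$ as defined in \eqref{def_TV_event} is itself a supremum over measurable sets of the form $A \cap \Aonce$, so the first bracket is uniformly controlled by this quantity independently of $A$. Since the right-hand side of the resulting inequality no longer depends on $A$, taking the supremum preserves it. No particular obstacle is expected: this is essentially a triangle-style decomposition.
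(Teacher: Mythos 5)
Your proof is correct and matches the paper's approach; the paper's proof states the same decomposition in one line (splitting each $A$ into $A\cap\Aonce$ and $A\cap\Aonce^c$ and applying the crude bound on the complement), and you have simply written out the details explicitly.
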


\begin{lemma}\label{Tensorization_Poisson}
The following tensorization of the spatial Poisson processes holds:\\

$\Poi(2np_0)= \bigotimes\limits_{j \in \Ibarzero} \Poi(2np_{0 | \widetilde C_j})$ and $\Poi(2nq_{b}) = \bigotimes\limits_{j \in \Ibarzero} \Poi(2nq_{b_j| \widetilde C_j})$. Hence we have:
\begin{align*}
    d_{TV}^{(\Aonce)}\big(\qtens, \ptens\big) \leq \sum_{j \geq U} d_{TV}^{(\Aoncej)}\left(\Poi\big(2n\, q_{b| \widetilde C_j}\big), \; \Poi\big(2n\, p_{0 | \widetilde C_j}\big)\right).
\end{align*}%\al{Attention aux indices qui commencent la tensorisation - ca devrait pas etre $U$?} \ju{C'est pareil : $p_0 = q_b$ sur toutes les cases $\widetilde C_j$ pour $j=1, \dots U-1$ et sur $\widetilde C_{M+1}$}
\end{lemma}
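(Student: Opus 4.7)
The plan is to split the argument into two parts: first, verify the two tensorization identities for $\ptens$ and $\qtens$, and then apply Lemma \ref{TV_restricted_tensorization} iteratively over the cell partition $\{\widetilde C_j\}_{j \in \Ibarzero}$ (with $\widetilde C_0 = D(U)^c$, so that $\Omega = \widetilde C_0 \sqcup \bigsqcup_{j \geq U}\widetilde C_j$ up to a set of Lebesgue measure zero).

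For the tensorization of $\ptens$, I will invoke the standard restriction/superposition theorem for Poisson point processes: if $\mu$ is a nonnegative intensity on $\Omega$ and $\{A_j\}$ is a measurable partition, the restrictions of $\Poi(\mu)$ to the $A_j$ are independent with respective laws $\Poi(\mu|_{A_j})$. Applied with $\mu = 2np_0$, this immediately gives $\ptens = \bigotimes_{j \in \Ibarzero} \Poi(2np_{0\,|\, \widetilde C_j})$. For $\qtens$, I will first apply the same theorem conditionally on $b$ to obtain $\qtens \mid b = \bigotimes_{j \in \Ibarzero} \Poi(2n\,q_{b\,|\,\widetilde C_j})$. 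The key observation is then that by the definition \eqref{def_qbn} of $q_b$, the perturbations $\gammaup_j, \gammadown_j$ are supported in $D(U)$, so $q_b|_{\widetilde C_0} = p_0|_{\widetilde C_0}$ deterministically, and for $j \geq U$ the restriction $q_b|_{\widetilde C_j}$ depends on $b$ only through $b_j$. Combining this with the mutual independence of the $(b_j)_{j \geq U}$, the overall mixture factorizes as $\qtens = \Poi(2np_{0\,|\,\widetilde C_0}) \otimes \bigotimes_{j \geq U} \mathbb{E}_{b_j}\!\left[\Poi(2n\,q_{b_j\,|\,\widetilde C_j})\right]$, which is the product of per-cell Bernoulli mixtures denoted by $\Poi(2n q_{b|\widetilde C_j})$ in the statement.

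Next, I will observe that $\Aonce = \bigcap_{j \geq U}\Aoncej$ together with $\Aonce^{(0)} = \Xtilde$, and that each $\Aoncej$ depends only on the restriction of the configuration to $\widetilde C_j$. Under the product-measure identification established above, $\Aonce$ therefore factorizes as $\Aonce^{(0)} \times \prod_{j \geq U}\Aoncej$. Iterating Lemma \ref{TV_restricted_tensorization} (with a standard truncation/monotone-limit argument to handle the potentially countably infinite index set $\Ibar$) yields
\begin{equation*}
d_{TV}^{(\Aonce)}(\qtens, \ptens) \;\leq\; \sum_{j \in \Ibarzero} d_{TV}^{(\Aoncej)}\!\left(\Poi\big(2n\,q_{b\,|\,\widetilde C_j}\big),\;\Poi\big(2n\,p_{0\,|\,\widetilde C_j}\big)\right).
\end{equation*}
The term $j=0$ vanishes since on $\widetilde C_0$ both marginals coincide, so the sum collapses to $j \geq U$, which is the desired inequality.

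The main obstacle is the tensorization of the mixture $\qtens$: even though it is globally a mixture over $2^{|\{j \geq U\}|}$ configurations of $b$, the independence of the $b_j$'s together with the cell-wise localization of their effects turns it into a product of per-cell two-point mixtures, which is precisely what makes Lemma \ref{TV_restricted_tensorization} applicable factor by factor. The secondary technicality is the extension of Lemma \ref{TV_restricted_tensorization} from two to countably many factors in the unbounded case; this follows by induction on finite sub-partitions and a standard monotone-convergence argument on the nested events $\bigcap_{U \leq j \leq J}\Aoncej$ as $J \to \infty$.
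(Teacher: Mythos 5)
Your proof is correct and takes essentially the same route as the paper: tensorize both Poisson point process laws over the cell partition $\{\widetilde C_j\}_{j\in\Ibarzero}$, iterate Lemma~\ref{TV_restricted_tensorization}, and drop the $j=0$ term since $q_b = p_0$ on $\widetilde C_0$. You are somewhat more explicit than the paper about why the \emph{mixture} $\qtens$ factorizes across cells (cell-wise localization of the $\gammaup_j,\gammadown_j$ plus independence of the $b_j$'s) and about the monotone-limit extension of Lemma~\ref{TV_restricted_tensorization} to countably many factors, but these are refinements of the same argument rather than a different approach.
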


\vspace{3mm}

Furthermore, we will prove the following lemmas controlling the error at each step:

\vspace{3mm}

\begin{lemma}\label{An_high_pba}
There exists a constant $n_0 \in \N$ such that whenever $n \geq n_0$, it holds \\ $\mathbb{P}(\An), \mathbb{P}(\An') \geq 1- \eta/100$. %\ju{Ajuster. }
\end{lemma}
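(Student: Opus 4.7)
My plan is to apply Chebyshev's inequality separately to the two Poisson variables, treating $\widetilde n'$ by first conditioning on $b$ and then controlling the fluctuations of the (random) conditional intensity via Lemma \ref{norm_qb}. The statement is quantitative but does not require any sharp concentration; rough second-moment bounds will comfortably give the $\eta/100$ tolerance once $n$ is taken large enough depending only on $\eta$ and the constant $\CL{\ref{norm_qb}}$.

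For $\An$, the argument is immediate: $\widetilde n \sim \Poi(2n)$ has mean and variance both equal to $2n$, so
\begin{equation*}
    \mathbb{P}(\An^c) \;=\; \mathbb{P}(\widetilde n - 2n < -n) \;\leq\; \mathbb{P}(|\widetilde n - 2n| > n) \;\leq\; \frac{2n}{n^2} \;=\; \frac{2}{n},
\end{equation*}
which is below $\eta/100$ as soon as $n \geq 200/\eta$.

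For $\An'$, the plan is to split on whether $\|q_b\|_1$ is close to its mean. Set $B = \{\|q_b\|_1 \geq 3/4\}$. By Lemma \ref{norm_qb}, $\mathbb{E}\|q_b\|_1 = 1$ and $\V\|q_b\|_1 \leq \CL{\ref{norm_qb}}/n^2$, so Chebyshev yields $\mathbb{P}(B^c) \leq 16\CL{\ref{norm_qb}}/n^2$. On the event $B$ the conditional Poisson intensity $\lambda_b = 2n\|q_b\|_1$ is at least $3n/2$, and conditional Chebyshev gives $\mathbb{P}(\widetilde n' < n \mid b) \leq 4\lambda_b/n^2 = 8\|q_b\|_1/n$. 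Integrating over $b$ and using $\mathbb{E}\|q_b\|_1 = 1$:
\begin{equation*}
    \mathbb{P}({\An'}^c) \;\leq\; \mathbb{P}(B^c) + \mathbb{E}_b\!\left[\tfrac{8\|q_b\|_1}{n}\mathbb{1}_B\right] \;\leq\; \frac{16\CL{\ref{norm_qb}}}{n^2} + \frac{8}{n},
\end{equation*}
which is below $\eta/100$ for $n$ larger than a suitable constant $n_0 = n_0(\eta,\CL{\ref{norm_qb}})$.

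There is no real obstacle here: the proof is a two-step Chebyshev argument, and the only subtlety is that the intensity of the second Poisson variable is itself random, so one must first control $\|q_b\|_1$ via Lemma \ref{norm_qb} before applying the Poisson concentration bound. Choosing $n_0$ to be the maximum of the two thresholds completes the proof.
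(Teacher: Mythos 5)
Your proof is correct and takes essentially the same two-step Chebyshev route as the paper: bound $\mathbb{P}(\An^c)$ directly, then for $\An'$ split on whether $\|q_b\|_1$ is bounded away from zero using Lemma~\ref{norm_qb} and apply Poisson concentration on the good event. The only (cosmetic) difference is that the paper uses stochastic monotonicity of the Poisson family in its mean to reduce to a fixed $\Poi(4n/3)$, whereas you keep the random intensity in the conditional Chebyshev bound and then integrate over $b$ using $\mathbb{E}\|q_b\|_1 = 1$; both yield an $O(1/n)$ bound that is below $\eta/100$ for $n$ large.
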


\begin{lemma}\label{Control_Ac}
It holds $p_0^{\otimes n}(\Aonce^c) \leq \AL{\ref{Control_Ac}}$ and $q_b^{(n)}(\Aonce^c) \leq \BL{\ref{Control_Ac}}$ where $\AL{\ref{Control_Ac}}$ and $\BL{\ref{Control_Ac}}$ are two constants which \ssmall{successively $\cI, \cu$ and $\cdown$}. % $p_0^{\otimes n}(\Aonce^c) \leq 2 \CL{\ref{Toute_la_case_OK_moment2}}$ and $q_b^{(n)}(\Aonce^c) \leq \CL{\ref{norm_qb}} + 2\CL{\ref{Toute_la_case_OK_moment2}}$ where $\CL{\ref{norm_qb}}$ is defined in Lemma \ref{norm_qb} and $\CL{\ref{Toute_la_case_OK_moment2}}$ in Lemma \ref{Toute_la_case_OK_moment2}.
\end{lemma}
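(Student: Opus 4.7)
The plan is to use a union bound over the cells $(\widetilde C_j)_{j \geq U}$ combined with a binomial-tail estimate: if a density $\mu$ has mass $\mu_j$ in the cell $\widetilde C_j$, then the probability that $n$ iid samples from $\mu$ place at least two points in $\widetilde C_j$ is at most $\binom{n}{2}\mu_j^2 \leq n^2 \mu_j^2$. Both estimates therefore reduce to controlling $\sum_{j \geq U} n^2 \mu_j^2$ for the appropriate mass $\mu_j$.

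For $p_0^{\otimes n}(\Aonce^c)$, this sum is exactly $\sum_{j\geq U} n^2 p_j^2$. Since the $p_j$ are sorted in decreasing order, $p_j \leq p_U$ for every $j \geq U$, and the defining inequality \eqref{def_U} of $U$ immediately yields
\begin{equation*}
\sum_{j \geq U} n^2 p_j^2 \;\leq\; n^2 p_U \sum_{j \geq U} p_j \;\leq\; \cu,
\end{equation*}
so $\AL{\ref{Control_Ac}}$ is arbitrarily small as soon as $\cu$ is chosen small enough.

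For $q_b^{(n)}(\Aonce^c)$, I would first condition on $b=(b_j)_{j\geq U}$. The $p_b^{(n)}$-mass in $\widetilde C_j$ is $\widetilde q_j(b) = \bigl(p_j + b_j\Gammaup_j - (1-b_j)\Gammadown_j\bigr)/\|q_b\|_1$, and on the high-probability event $\{\|q_b\|_1 \geq 1/2\}$ --- whose complement has probability $O(1/n^2)$ by Lemma \ref{norm_qb} and Chebyshev's inequality --- taking expectation over the independent $\pi_j$-Bernoulli $b_j$'s gives, after expanding the squares,
\begin{equation*}
\mathbb{E}_b[\widetilde q_j(b)^2] \;\leq\; 4\bigl[p_j^2 + 2p_j\bigl(\pi_j \Gammaup_j - (1-\pi_j)\Gammadown_j\bigr) + \pi_j \Gammaup_j^2 + (1-\pi_j)\Gammadown_j^2\bigr].
\end{equation*}
The crucial observation is that the cross term vanishes, because $\cupward_j$ was defined precisely so that $\pi_j \Gammaup_j = (1-\pi_j)\Gammadown_j$. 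Summing $n^2$ times the three surviving contributions over $j \geq U$ then gives: the $p_j^2$ term produces $\cu$ as above; the $\Gammadown_j^2$ term is controlled by $\Gammadown_j \leq \AL{\ref{gammadown_lesssim_intp0}} p_j$ (Lemma \ref{gammadown_lesssim_intp0}), giving $\lesssim \cu$ as well; and the $\Gammaup_j^2$ term combines $\Gammaup_j \lesssim \cu\cdown/(n^2 p_0[\T])$ with $\sum_{j\geq U}\pi_j = n^2(\sum_{j\geq U}p_j)^2/(2\cu) \leq n^2 p_0[\T]^2/(2\cu)$ to contribute of order $\cu\cdown^2$. Altogether $\BL{\ref{Control_Ac}} \lesssim \cu + \cu\cdown^2$, arbitrarily small by picking $\cI$ first (so that the whole framework is set up), then $\cu$, then $\cdown$.

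The only structurally non-routine step is the cross-term cancellation, which is exactly the reason the coefficients $\cupward_j$ were chosen to enforce $\pi_j\Gammaup_j = (1-\pi_j)\Gammadown_j$; without it, the $2p_j\Gammaup_j$ contributions would typically be of the same order as $\Gammaup_j^2$ and swamp the estimate. Everything else is direct bookkeeping using Lemmas \ref{gammadown_lesssim_intp0} and \ref{norm_qb} together with the definitions of $U$ and $\bar\pi$, and the only place care is needed is to ensure that the normalization $\|q_b\|_1$ has been controlled before the union bound is applied.
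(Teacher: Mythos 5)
Your proof is correct and follows the same overall strategy as the paper (union bound over cells, coincidence-probability bound $\lesssim n^2\mu_j^2$, expand the square and use the cross-term cancellation enforced by the choice of $\cupward_j$), but there are two genuine differences worth noting. For the $p_0$-bound, the paper writes $p_j^2 = (\int_{\widetilde C_j}p_0)^2 \leq h^d\int_{\widetilde C_j}p_0^2$ by Cauchy--Schwarz and then invokes Lemma~\ref{Toute_la_case_OK_moment2}, which yields a constant depending on $\Cmom$ (hence on $\cI$); you instead observe that the sorted ordering gives $p_j \leq p_U$ for $j \geq U$ and bound $\sum_{j\geq U}n^2p_j^2 \leq n^2 p_U\sum_{j\geq U}p_j \leq \cu$ directly from the definition of $U$, which is shorter and reads off exactly what the cutoff $U$ was designed to deliver. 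For the prior bound, you work with the normalized density $p_b^{(n)}$ and hence introduce the event $\{\|q_b\|_1 \geq 1/2\}$ before bounding $1/\|q_b\|_1$ by $2$; the paper's poissonized quantity $\qtens$ is built on the unnormalized intensity $2n\,q_b$, so no normalization event is needed. Your bookkeeping of the remaining diagonal terms (via $\Gammadown_j \leq \AL{\ref{gammadown_lesssim_intp0}}p_j$ and the explicit forms of $\Gammaup_j$, $\pi_j$) is an unrolled version of the paper's shortcut: the paper recognizes $\sum_{j\geq U}\bigl[\pi_j{\Gammaup_j}^2 + (1-\pi_j){\Gammadown_j}^2\bigr] = \mathbb{V}[\|q_b\|_1]$ and quotes Lemma~\ref{norm_qb}. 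One small overstatement in your commentary: even without the exact relation $\pi_j\Gammaup_j = (1-\pi_j)\Gammadown_j$, the cross term is of order $\cdown\, n^2\sum_j p_j^2 \lesssim \cdown\,\cu$ in magnitude (since $\pi_j\Gammaup_j$, $(1-\pi_j)\Gammadown_j \lesssim \cdown\,p_j$), so it would not actually swamp the estimate; the cancellation is a convenience rather than a necessity.
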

Finally we compute each of the terms in the last sum from Lemma \ref{Tensorization_Poisson}:
\begin{lemma}\label{TV_on_each_cell}
For all $j\geq U$ it holds $d_{TV}^{(\Aoncej)}\left(\Poi\big(2n\, q_{b| \widetilde C_j}\big), \; \Poi\big(2n\, p_{0 | \widetilde C_j}\big)\right) \leq \AL{\ref{TV_on_each_cell}} n^2p_j^2 + \BL{\ref{TV_on_each_cell}} \frac{p_j}{\sum\limits_{l \geq U} p_l}$ where $\AL{\ref{TV_on_each_cell}}$ and $\BL{\ref{TV_on_each_cell}}$ are two constants and $\BL{\ref{TV_on_each_cell}}$ \ssmall{$\cu$}. % $d_{TV}^{(\Aoncej)}\left(\Poi\big(2n\, q_{b| \widetilde C_j}\big), \; \Poi\big(2n\, p_{0 | \widetilde C_j}\big)\right) \leq 8n^2p_j^2 + \frac{3}{2}\left(\Cpiup + \Cq\right)\frac{p_j}{\sum_{l=U}^M p_l}$, where $\Cpiup = \cu\left[1 +  \cdown \Cint \CL{\ref{Toute_la_case_OK_moment1}} \right]^2$ and $\Cq = 16 \cu \Cdown \CL{\ref{Toute_la_case_OK_moment1}}$.
\end{lemma}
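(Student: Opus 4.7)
The plan is to compute $d_{TV}^{(\Aoncej)}$ directly, exploiting the calibration $\pi_j\Gammaup_j = (1-\pi_j)\Gammadown_j$ built into the definition of $\cupward_j$. Starting from Lemma \ref{TV_restricted_abs},
\[
2\, d_{TV}^{(\Aoncej)}(P,Q) \;=\; |P(\Aoncej) - Q(\Aoncej)| \;+\; \int_{\Aoncej}|p-q|\,d\mu,
\]
with $P = \Poi(2nq_{b|\widetilde C_j})$ and $Q = \Poi(2np_{0|\widetilde C_j})$, the right-hand side splits according to $\Aoncej = \{N_j = 0\}\sqcup\{N_j = 1\}$ into a point-mass discrepancy at $\{N_j = 0\}$, a density integral over $\{N_j = 1\}$, and the boundary term $|P(\Aoncej^c)-Q(\Aoncej^c)|$. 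I will systematically use the small parameters $u := 2n\Gammaup_j$, $v := 2n\Gammadown_j$ and $\tau_j := 2n\pi_j\Gammaup_j = 2n(1-\pi_j)\Gammadown_j$, together with the scalings $\Gammaup_j \asymp \cu/(n^2\sum_{l\geq U}p_l)$, $\Gammadown_j \asymp p_j$ and $\pi_j \asymp n^2 p_j\sum p_l/\cu$ (from Lemma \ref{gammadown_lesssim_intp0} and the definition of $\cupward_j$). These produce the two key identities $\tau_j u \asymp \cu p_j/\sum p_l$, which yields the $\BL{\ref{TV_on_each_cell}}$-term (arbitrarily small in $\cu$), and $\tau_j v \asymp n^2 p_j^2$, which yields the $\AL{\ref{TV_on_each_cell}}$-term.

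For the boundary term I apply the Poisson tail bound $\mathbb P(\Poi(\Lambda)\geq 2) \leq \Lambda^2/2$ to both $Q(N_j \geq 2) \lesssim (2np_j)^2$ and, after conditioning on $b_j$, to $P(N_j\geq 2) \leq \pi_j\bigl[2n(p_j+\Gammaup_j)\bigr]^2 + (1-\pi_j)\bigl[2n(p_j-\Gammadown_j)\bigr]^2$; direct substitution of the scalings above collapses these into $n^2p_j^2 + \cu p_j/\sum p_l$. The $\{N_j = 0\}$ discrepancy $e^{-2np_j}|\pi_j e^{-u} + (1-\pi_j)e^v - 1|$ is handled by a second-order Taylor expansion around $(u,v) = 0$: the calibration $\pi_j u = (1-\pi_j)v = \tau_j$ annihilates the linear part, leaving the bracket of order $\tfrac{1}{2}[\pi_j u^2 + (1-\pi_j)v^2] = \tfrac{1}{2}\tau_j(u+v)$ plus cubic remainders, controlled because $u,v$ are small once $\cu$ is small and $n\sum p_l \gtrsim \ctail$.

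The crux is the $\{N_j = 1\}$ integral $2n e^{-2np_j}\int_{\widetilde C_j}|B(x)|\,dx$ with
\[
B(x) = p_0(x)\bigl[\pi_j e^{-u}+(1-\pi_j) e^v - 1\bigr] + \pi_j e^{-u}\gammaup_j(x) - (1-\pi_j) e^v \gammadown_j(x).
\]
The first summand integrates to $p_j\cdot(\rho_0 - 1)$, already of order $p_j\tau_j(u+v)$ and negligible after multiplication by $2n$. The difficulty lies in the pointwise cross term: although $\int[\pi_j\gammaup_j-(1-\pi_j)\gammadown_j]\,dx = 0$ by calibration, the naive triangle bound $\int|\cdot| \lesssim \pi_j\Gammaup_j + (1-\pi_j)\Gammadown_j \asymp p_j$ would, after the prefactor $2n$, yield an unacceptable $np_j$. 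To circumvent this, I Taylor-expand $e^{-u}$ and $e^v$ to first order and rewrite the cross term as $\bigl[\pi_j\gammaup_j - (1-\pi_j)\gammadown_j\bigr] - \tau_j\bigl[\gammaup_j + \gammadown_j\bigr] + R(x)$, with remainder $\|R\|_1 \lesssim (u+v)^2(\Gammaup_j + \Gammadown_j)$. The remaining task is to show that the contribution of the zeroth-order bracket to the $L^1$-norm of $B$ can itself be absorbed into a $\tau_j(u+v)$-sized error by a careful pairing with the $-\tau_j[\gammaup_j+\gammadown_j]$ piece, exploiting the explicit form of $\gammaup_j$ (a single bump at the center of $\widetilde C_j$, see~\eqref{def_gamma}) and $\gammadown_j$ (a superposition of smaller bumps in $D_j$, see~\eqref{def_gammadown}). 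This bookkeeping of the pointwise-to-mean reduction, together with a uniform control of $R$, is the main technical obstacle; once established, multiplying through by $2n$ delivers the claimed bound.
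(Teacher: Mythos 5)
Your overall plan — decompose via Lemma \ref{TV_restricted_abs} into the boundary piece, the $\{N_j=0\}$ point mass, and the $\{N_j=1\}$ integral; handle the boundary with $\mathbb P(\Poi(\Lambda)\geq 2)\leq\Lambda^2/2$; handle $\{N_j=0\}$ by Taylor-expanding and killing the linear term through the calibration $\pi_j\Gammaup_j=(1-\pi_j)\Gammadown_j$ — follows the paper's structure (the latter is exactly the paper's ``Fact 1''). The genuine gap is in the $\{N_j=1\}$ piece, which you yourself flag as ``the main technical obstacle'' and leave open; the sketch as written does not close it.

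Two issues. First, your one-point density carries the prefactor $2ne^{-2np_j}$, whereas the paper's derivation of this term is set up so that the prefactor multiplying $\int_{\widetilde C_j}|\pi_j\gammaup_j-(1-\pi_j)\gammadown_j|$ is $\mathbb{P}_{\qtens}(\widetilde N_j=1)\approx 2np_j$, i.e.\ an extra factor of $p_j$. It is precisely this extra $p_j$ that makes the \emph{naive} triangle bound $\int|\pi_j\gammaup_j-(1-\pi_j)\gammadown_j|\leq 2\Gammadown_j\lesssim p_j$ sufficient: $2np_j\cdot p_j=2np_j^2\leq 2n^2p_j^2$, and no finer cancellation is needed. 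In your bookkeeping the same triangle bound gives $2n\cdot p_j$, which — as you observe — cannot be dominated by $\AL{\ref{TV_on_each_cell}}n^2p_j^2+\BL{\ref{TV_on_each_cell}}p_j/\sum p_l$ for small $np_j$. You need to reconcile your $B(x)$ expression with the paper's $(\pi_j\pup(x)+(1-\pi_j)\pdown(x))\,\mathbb{P}_{\qtens}(\widetilde N_j=1)$ and pin down the correct normalization before anything else; at present the two disagree by a factor of $p_j$, and the proof cannot go forward until that is settled. Second, the proposed escape — rewrite the cross term as $[\pi_j\gammaup_j-(1-\pi_j)\gammadown_j]-\tau_j[\gammaup_j+\gammadown_j]+R(x)$ and ``absorb'' the zeroth-order bracket ``by a careful pairing'' — does not work as described. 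The function $\bar\gamma_j:=\pi_j\gammaup_j-(1-\pi_j)\gammadown_j$ integrates to zero, but its $L^1$ norm is essentially $\pi_j\Gammaup_j+(1-\pi_j)\Gammadown_j=2\pi_j\Gammaup_j$, because $\gammaup_j$ (one bump of width $\asymp h$ centered at $z_j$, see \eqref{def_gamma}) and $\gammadown_j$ (a superposition of much thinner bumps of width $\asymp\hlj\ll h$, see \eqref{def_gammadown}) have essentially disjoint supports, so there is no pointwise cancellation to ``pair'' against. Subtracting $\tau_j[\gammaup_j+\gammadown_j]$ merely rescales each piece by a factor $1\pm\tau_j$ and cannot reduce $\|\bar\gamma_j\|_1$ by the required order of magnitude. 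In short: the proposal correctly diagnoses the hard step but neither matches the paper's bookkeeping (which delivers the bound without any refined cancellation) nor supplies a workable alternative.
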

Bringing together Lemmas \ref{TVvraie_TVpoisson} - \ref{TV_on_each_cell}, we get: 
\begin{align}
    d_{TV}(\bar p^{(n)}, p_0^{\otimes n}) &\leq \Big[\AL{\ref{Control_Ac}} + \BL{\ref{Control_Ac}} + \AL{\ref{TV_on_each_cell}}\sum_{j \geq U} n^2p_j^2 + \BL{\ref{TV_on_each_cell}} \Big]/\mathbb{P}(\An \cap \An')\nonumber\\
    & \leq \Big[\AL{\ref{Control_Ac}} + \BL{\ref{Control_Ac}} + \AL{\ref{TV_on_each_cell}}\cu + \BL{\ref{TV_on_each_cell}} \Big]/\mathbb{P}(\An \cap \An'),\label{eq:TV_prior_tail}
\end{align}
where the right-hand side \ssmall{successively $\cI, \cu$ and $\cdown$}, which ends the proof of Proposition \ref{lower_bound_tail}.\\

%\al{Jai mis les preuves de tous les lemmes ici a la fin pour pas briser le flow}
We now prove Lemmas \ref{TV_restricted_abs} - \ref{TV_on_each_cell}.

\begin{proof}[Proof of Lemma \ref{TV_restricted_abs}]
Suppose by symmetry $P(A_0) \geq Q(A_0)$ and set $B_0 = \{x \in \mathcal{Y}: p(x) \geq q(x)\}$. Then we have:
\begin{align*}
    |P(B_0 \cap A_0) - Q(B_0 \cap A_0)| &= P(B_0 \cap A_0)-Q(B_0 \cap A_0) = \int_{B_0 \cap A_0} |p-q| d\mu\\ 
    & = P(A_0) - Q(A_0) + \int_{A_0 \setminus B_0} |p-q| d\mu,\\
    \text{so that } ~~  |P(B_0 \cap A_0) - Q(B_0 \cap A_0)| &= \frac{1}{2}\left[|P(A_0) - Q(A_0)| + \int_{A_0} |p-q|d\mu \right],\\
    \text{which yields: }~~ d_{TV}^{(A_0)}(P,Q) & \geq \frac{1}{2}\left[|P(A_0) - Q(A_0)| + \int_{A_0} |p-q|d\mu \right].
\end{align*}
Moreover, for any $B \in \mathcal{C}$, we consider $|P(B \cap A_0) - Q(B \cap A_0)|$. There are two cases.

\underline{First case:} $P(B \cap A_0) \geq  Q(B \cap A_0)$. Then we have:
\begin{align*}
    |P(B\cap A_0) - Q(B \cap A_0)| & = P(B \cap A_0) - Q(B \cap A_0) \\
    &= P(B \cap A_0 \cap B_0) - Q(B \cap A_0 \cap B_0) + \underbrace{ P(B \cap A_0 \setminus B_0) - Q(B \cap A_0 \setminus B_0)}_{\leq 0}\\
    & \leq P(B \cap A_0 \cap B_0) - Q(B \cap A_0 \cap B_0) \leq P( A_0 \cap B_0) - Q( A_0 \cap B_0)\\
    & = |P( A_0 \cap B_0) - Q(A_0 \cap B_0)|.
\end{align*}
\underline{Second case:} $Q(B \cap A_0) \geq  P(B \cap A_0)$. Then we have:
\begin{align*}
    |P(B\cap A_0) - Q(B \cap A_0)| & = Q(B \cap A_0) - P(B \cap A_0)| \\
    &=Q(B \cap A_0 \setminus B_0) - P(B \cap A_0 \setminus B_0)  + \underbrace{ Q(B \cap A_0 \cap B_0) - P(B \cap A_0 \cap B_0)}_{\leq 0}\\
    & \leq Q(B \cap A_0 \setminus B_0) - P(B \cap A_0 \setminus B_0) \leq Q( A_0 \setminus B_0) - P( A_0 \setminus B_0)\\
    & = \underbrace{Q(A_0) - P(A_0)}_{\leq 0} + P( A_0 \cap B_0) - Q(A_0 \cap B_0)\\
    & \leq |P( A_0 \cap B_0) - Q(A_0 \cap B_0)|.
\end{align*}
In both cases, the result is proven.
\end{proof}
\begin{proof}[Proof of Lemma \ref{TV_restricted_tensorization}]
We have by Lemma \ref{TV_restricted_abs}:%\al{je crois que a la deuxieme ligne il y a un $\leq$ et pas $=$ (inegalite triangulaire). Jai change}
\begin{align*}
    2d_{TV}^{(A_0)}(P_1 \otimes P_2,Q_1 \otimes Q_2)  =& \; |P_1 \otimes P_2(A_0) - Q_1 \otimes Q_2(A_0)|+ \int_{A_0} |p_1(x) p_2(y) - q_1(x) q_2(y)| d\mu_1(x) d\mu_2(y)\\
    %\al{\leq}
    \leq & \; P_1(\Azeroone)\big|P_2(\Azerotwo) - Q_2(\Azerotwo)\big| + Q_2(\Azerotwo)\big|P_1(\Azeroone) - Q_1(\Azeroone)\big| \\
    & ~~ + P_1(\Azeroone) \int_{\Azerotwo}|p_2(y) - q_2(y)|d\mu_2(y) \\
    & ~~ + Q_2(\Azerotwo) \int_{\Azeroone}|p_1(x) - q_1(x)|d\mu_1(x)\\
    = & \; 2P_1(\Azeroone) d_{TV}^{(\Azerotwo)}(P_2,Q_2) + 2Q_2(\Azerotwo) d_{TV}^{(\Azeroone)}(P_1,Q_1)\\
    \leq & \; 2d_{TV}^{(\Azeroone)}(P_1,Q_1)+2d_{TV}^{(\Azerotwo)}(P_2,Q_2).
\end{align*}
\end{proof}

\begin{proof}[Proof of Lemma \ref{TVvraie_TVpoisson}]
We have:
\begin{align*}
    d_{TV}\big(\bar p_b^{(\widetilde n')}, \ptens\big) &= \sup_{A \in \Xtilde} \big|\bar p_b^{(\widetilde n')}(A) - \ptens(A) \big| \geq \sup_{A \in \Xtilde \cap \An \cap \An'} \big|\bar p_b^{(\widetilde n')}(A) - \ptens(A) \big|\\
    & = \mathbb{P}(\An\cap \An') \sup_{A \in \Xtilde} \big|\bar p_{b,\An'}^{(\widetilde n')}(A) - p_{0,\An}^{\otimes \widetilde n}(A) \big| ~~ \text{ where } \begin{cases}\bar p_{b,\An'}^{\, (\widetilde n')} =\bar p_b^{\, (\widetilde n')}(\cdot | \An')\\ \\p_{0,\An}^{\otimes \widetilde n} = \ptens(\cdot | \An) \end{cases}\\
    & = \mathbb{P}(\An \cap \An') \; d_{TV}\big(\bar p_{b,\An'}^{(\widetilde n')}, p_{0, \An}^{\otimes \widetilde n}\big).
\end{align*}
Furthermore, over $\An \cap \An'$ it holds $\widetilde n \geq n$ and $\widetilde n' \geq n$, so that $d_{TV}\big(\bar p_{b,\An}^{(\widetilde n)}, p_{0, \An}^{\otimes \widetilde n}\big) \geq d_{TV}\big(\bar p_b^{( n)}, p_0^{\otimes n}\big)$, which yields the result.
\end{proof}

\begin{proof}[Proof of Lemma \ref{pbar=qbn'}]
For fixed $b = (b_j)_{j \geq U}$ we have $\widetilde n' \sim \Poi(\|q_b\|_1)$. 
Moreover, we also have $\overline p_b^{(\widetilde n')} = \Poi \big(n \|q_b\|_1 \frac{q_b}{\|q_b\|_1}\big) = \Poi(nq_b) = q_b^{\otimes \widetilde n}$ so that taking the mixture over all realizations of $b$ yields that, unconditionally on $b$:  $\overline p_b^{(\widetilde n')} = \qtens$.
\end{proof}

\hfill

\begin{proof}[Proof of Lemma \ref{TV_over_A}]
We have:
\begin{align*}
    d_{TV}\big(\qtens, \ptens \big) &\leq \sup_{A \in \widetilde X \cap \Aonce} \big| \qtens(A) - \ptens(A)\big| + \qtens(\Aonce^c)+ \ptens(\Aonce^c) \\
    & = d_{TV}^{(\Aonce)}\big(\qtens, \ptens \big) + \qtens(\Aonce^c)+ \ptens(\Aonce^c).
\end{align*}

Hence the result.
\end{proof}

\begin{proof}[Proof of Lemma \ref{Tensorization_Poisson}]
To further transform the last quantity $d_{TV}^{(\Aonce)}\big(\qtens, \ptens\big)$ from Lemma \ref{TV_over_A}, we recall that %drawing $\widetilde n \sim \Poi(2n)$ iid observations from some density $f$ is equivalent to 
drawing an inhomogeneous spatial Poisson process with intensity function $f$, defined on $\bigcup_{j \in \Ibarzero} \widetilde C_j$, is equivalent to drawing independently for each cell $\widetilde C_j, j \in \Ibarzero$, one inhomogeneous spatial Poisson process with intensity $f_{| \widetilde C_j}$. %\ju{Est-ce qu'on peut vraiment poissoniser $q_b^{\widetilde n}$ ? Normalement oui car les $b_j$ sont indep. A préciser.}.
For any non-negative function $g$, denote by $\Poi(g)$ the spatial Poisson process with intensity function $g$. We can therefore re-index the data generated from $ p_0^{\otimes \widetilde n} = \Poi(np_0)$, as data generated by $\bigotimes_{j \in \Ibarzero} \Poi\big(np_{0| \widetilde C_j}\big)$ - and respectively $ \qtens$ as $\bigotimes_{j \in \Ibarzero} \Poi\big(nq_{b| \widetilde C_j}\big)$. Moreover, by independence of $(b_j)_{{j \in \Ibarzero}}$, the events $(\Aoncej)_{{j \in \Ibarzero}}$ defined in \eqref{def_Aonce_j} and \eqref{def_Aonce_M+1} are independent under both $\Poi(2np_0)$ and $\Poi(2n q_b)$ so that Lemma \ref{TV_restricted_tensorization} yields:
\begin{align*}
    d_{TV}^{(\Aonce)}\big(\qtens, \ptens\big) &\leq \sum_{j \in \Ibarzero} d_{TV}^{(\Aoncej)}\left(\Poi\big(2n\, q_{b| \widetilde C_j}\big), \; \Poi\big(2n\, p_{0 | \widetilde C_j}\big)\right)\\
    & = \sum_{j \geq U} d_{TV}^{(\Aoncej)}\left(\Poi\big(2n\, q_{b| \widetilde C_j}\big), \; \Poi\big(2n\, p_{0 | \widetilde C_j}\big)\right) ~~ \text{ since on $\widetilde C_0: \; p_0 = q_b$ for all $b$.}
\end{align*}

\end{proof}

\begin{proof}[Proof of Lemma \ref{An_high_pba}]
By Chebyshev's inequality:
\begin{align*}
    \mathbb{P}(\An^c) = \mathbb{P}(\Poi(2n) < n) \leq \mathbb{P}(|\Poi(2n)-2n| > n) \leq \frac{2n}{n^2} = \frac{2}{n}
\end{align*}
Moreover,
\begin{align*}
    \mathbb{P}({\An'}^c) &= \mathbb{P}(\Poi(2n\|q_b\|_1) < n) \leq \mathbb{P}\left(\Poi(2n\|q_b\|_1) < n \; \Big| \; \|q_b\|_1 \geq \frac{2}{3}\right) + \mathbb{P}\left(\|q_b\|_1 < \frac{2}{3} \right)\\
    & \leq \mathbb{P}\left(\Poi\Big(\frac{4}{3}n\Big) < n\right) +\mathbb{P}\left(\big|\|q_b\|_1 - 1\big| > \frac{1}{3} \right) \leq \frac{12}{n} + \frac{9\CL{\ref{norm_qb}}}{n^2} ~~ \text{ by Lemma \ref{norm_qb}}.
\end{align*}
Choosing $n_0$ such that $\frac{12}{n_0} + \frac{9\CL{\ref{norm_qb}}}{n_0^2} \leq \eta/100$, %\ju{Ajuster},
we get the result.
\end{proof}

\begin{proof}[Proof of Lemma \ref{Control_Ac}]
\begin{itemize}
    \item For the first quantity: 
    \vspace{-3mm}
    \begin{align*}
         p_0^{\otimes n}(\Aonce^c) &\leq \frac{1}{\mathbb{P}(\An)} \mathbb{E}_{\widetilde n}\left[\mathbb{P}_{\ptens}\left(\exists j \geq U: \widetilde N_j \geq 2 \;|\; \widetilde n \right) ~\Big|~ \widetilde n \geq n \right]\\
         & \leq \frac{1}{\mathbb{P}(\An)} \mathbb{E}_{\widetilde n}\left[\sum_{j \geq U} {\widetilde n}^2 p_j^2 ~\Big|~ \widetilde n \geq n \right] =  \frac{2n^2}{\mathbb{P}(\An)}\sum_{j \geq U} p_j^2 \\
         & \leq \frac{2n^2}{\mathbb{P}(\An)} \sum_{j \geq U} h^d \int_{\widetilde C_j}p_0^2 \leq 3 \CL{\ref{Toute_la_case_OK_moment2}},
    \end{align*}%\al{constante a introduire}
    by the Cauchy-Schwarz inequality and Lemma \ref{Toute_la_case_OK_moment2}, and taking $n \geq n_0$. Setting $ \AL{\ref{Control_Ac}} = 3 \CL{\ref{Toute_la_case_OK_moment2}}$ yields the result.
    \item For the second quantity:
    \begin{align*}
        \qtens(\Aonce^c) &= \mathbb{E}_{b,\widetilde n}\left[\mathbb{P}_{\qtens}\left(\exists j \geq U: \widetilde N_j \geq 2 \;|\; \widetilde n \right) \right] \\
        & \leq \sum_{j \geq U} \mathbb{E}_{b_j,\widetilde n}\left[\mathbb{P}_{q_{b_j | \widetilde C_j}^{(\widetilde n)}}\left(\widetilde N_j \geq 2 \;|\; \widetilde n \right) \right] \leq \sum_{j \geq U} \mathbb{E}_{b_j,\widetilde n}\left[ {\widetilde n}^2 \left(\int_{\widetilde C_j} q_b\right)^2\right]\\
        & = 2n^2 \Big[\mathbb{V}\left[\|q_b\|_1  \right]+ \sum_{j \geq U} p_j^2 \Big] \leq 2\CL{\ref{norm_qb}} + 2 \CL{\ref{Toute_la_case_OK_moment2}} =: \BL{\ref{Control_Ac}}.
    \end{align*}%\al{constante a introduire}
    Setting $\BL{\ref{Control_Ac}}:= 2\CL{\ref{norm_qb}} + 2 \CL{\ref{Toute_la_case_OK_moment2}} $ yields the result.
\end{itemize}
\end{proof}

\begin{proof}[Proof of Lemma \ref{TV_on_each_cell}]
We will use the notation 
\begin{align}
    &\pup_j = p_j + \Gammaup_j, \label{Def_pup}\\
    &\pdown_j = p_j - \Gammadown_j. \label{Def_down}
\end{align}
First, we show the following two facts:\\

\underline{\textbf{Fact 1}}: For all $a,b \geq 0$ such that $a+b=1$, and for all $x,y,z \in \R_+$ such that $x= ay +bz$, it holds $$ \big|e^{-x} - a e^{-y} - b e^{-z} \big| \leq \frac{x^2}{2} + a \frac{y^2}{2} + b \frac{z^2}{2}.$$
The proof of Fact 1 is straightforward by the relation $1-u \leq e^{-u} \leq 1-u+ \frac{u^2}{2}$ for all $u\geq 0$. \\

\underline{\textbf{Fact 2}}: We have $\mathbb{P}_{\ptens}\left(\widetilde N_j =1\right) = 2np_j e^{-2np_j}$ and moreover it holds: $$0 \; \leq \; \mathbb{P}_{\qtens}\left(\widetilde N_j =1\right) - 2np_j e^{-2np_j} \leq \Cq n^2p_j^2,$$ where %$\Cq = 16 \cu \Cdown \CL{\ref{Toute_la_case_OK_moment1}}$ 
$\Cq$ is a constant. \\

We now prove Fact 2. Under $\ptens$ we have that the number of observations is distributed as $\widetilde n \sim \Poi(2n)$ so that $\widetilde N_j \sim \Poi(2n \, p_j)$, hence $\mathbb{P}_{\ptens}\left(\widetilde N_j =1\right) = 2np_j e^{-2np_j}$.\\

Now, under $\qtens$, it holds: $\widetilde N_j \sim \pi_j \Poi\big(2n \pup_j\big) + (1-\pi_j) \Poi\big(2n\pdown_j\big)$. Therefore, 
\begin{align}
    \mathbb{P}_{\qtens}\left(\widetilde N_j =1\right) &= \pi_j 2n \pup_j e^{-2n\pup_j} + (1-\pi_j) 2n\pdown_j e^{-2n\pdown_j}\\
    & = e^{-2n p_j}\left[\pi_j 2n \pup_j e^{-2n\Gammaup_j} + (1-\pi_j) 2n\pdown_j e^{2n\Gammadown_j} \right], \label{qtens_1_observation}
\end{align}
hence $\mathbb{P}_{\qtens}\left(\widetilde N_j =1\right) \geq 2np_j e^{-2np_j}$ using the inequality $e^{x} \geq 1+x$. We now prove $\mathbb{P}_{\qtens}\left(\widetilde N_j =1\right) \leq 2np_j e^{-2np_j} + 8n^2p_j^2$. First, we have $2n\Gammaup_j \leq 1$ since $2n\Gammaup_j = 2n \frac{\Cint \cupward_j}{n^2 \int_{\T(\uA)}p_0} \leq 2\frac{\Cint \cu \cdown}{\ctail} \leq 1$ by choosing $\ctail$ large enough. Therefore, using the inequality $e^x \leq 1+2x$ for $0 \leq x \leq 1$, we get from Equation \eqref{qtens_1_observation}:
\begin{align*}
    \mathbb{P}_{\qtens}\left(\widetilde N_j =1\right) & \leq e^{-2np_j}\left( \pi_j 2n \pup_j + (1-\pi_j) 2n\pdown_j \big(1+ 4n\Gammadown_j\big)\right)\\
    & \leq 2np_j e^{-2np_j} + 8n^2 p_j \Gammadown_j \leq 2np_j e^{-2np_j} + 8 n^2p_j^2\\
    & =:2np_j e^{-2np_j} + 8n^2 p_j \Gammadown_j \leq 2np_j e^{-2np_j} + 8 n^2p_j^2,
\end{align*}
which ends the proof of Fact 2.\\

Facts $1$ and $2$ being established, we can now compute $d_{TV}^{(\Aoncej)}\left(\Poi\big(2n\, q_{b| \widetilde C_j}\big), \; \Poi\big(2n\, p_{0 | \widetilde C_j}\big)\right)$ for fixed $j \geq U$. By Lemma  \ref{TV_restricted_abs} we have:
\begin{align}
    &d_{TV}^{(\Aoncej)}\left(\Poi\big(2n\, q_{b| \widetilde C_j}\big), \; \Poi\big(2n\, p_{0 | \widetilde C_j}\big)\right)\nonumber\\
    %&= \frac{1}{2} \Big[\big|\Poi\big(2n\, q_{b| \widetilde C_j}\big)(\Aonce^c) - \Poi\big(2n\, p_{0 | \widetilde C_j}\big)(\Aonce^c)\big| + \int_{\Aonce} |p_{0 | \widetilde C_j}^{\otimes \widetilde n}-q_{b| \widetilde C_j}^{(\widetilde n)}| \Big]\nonumber\\
    & \leq \frac{1}{2}\Big[\Poi\big(2n\, q_{b| \widetilde C_j}\big)({\Aoncej}^c) + \Poi\big(2n\, p_{0 | \widetilde C_j}\big)({\Aoncej}^c
    ) + \int_{\Aoncej}\big|p_{0 | \widetilde C_j}^{\otimes \widetilde n}-q_{b| \widetilde C_j}^{(\widetilde n)}\big| \Big] \label{TV_Aj}
\end{align}
We now compute the term $\int_{\Aoncej}\big|p_{0 | \widetilde C_j}^{\otimes \widetilde n}-q_{b| \widetilde C_j}^{(\widetilde n)}\big|$.
\begin{equation}\label{Split_TV_Aj}
    \int_{\Aoncej}\big|p_{0 | \widetilde C_j}^{\otimes \widetilde n}-q_{b| \widetilde C_j}^{(\widetilde n)}\big| = \underbrace{\int_{\{\widetilde N_j = 0\}}\big|p_{0 | \widetilde C_j}^{\otimes \widetilde n}-q_{b| \widetilde C_j}^{(\widetilde n)}\big|}_{\text{Term 1}} ~+~ \underbrace{\int_{\{\widetilde N_j = 1\}}\big|p_{0 | \widetilde C_j}^{\otimes \widetilde n}-q_{b| \widetilde C_j}^{(\widetilde n)}\big|}_{\text{Term 2}}.
\end{equation}
%\al{Je me demande la chose suivante dans tes bornes des termes 1 et 2: au fond il suffit de borner juste un des deux - disons le terme 2 qui doit etre un chouilla plus simple - puis le terme 1 est forcement plus petit que le terme 1 plus la proba des Aj}
\begin{align*}
    \text{Term 1} &= \big| \Poi\big(2n\, p_{0 | \widetilde C_j}\big)(\widetilde N_j=0) -  \Poi\big(2n\, q_{b | \widetilde C_j}\big)(\widetilde N_j=0) \big|\\
    & = \big|e^{-2np_j} - \pi_j e^{-2n\pup_j} - (1-\pi_j)e^{-2n\pdown_j} \big|\\
    & \leq 2n^2 p_j^2 + \pi_j 2n^2{\pup_j}^2 + (1-\pi_j)2n^2{\pdown_j}^2 ~~ \text{ using Fact 1}\\
    &\leq 4 n^2 p_j^2 + \pi_j 2n^2{\pup_j}^2.
\end{align*}
Moreover:
\begin{align}
    \pi_j 2n^2{\pup_j}^2 &= \Big[\frac{1}{2\cu} p_jn^2\sum_{l \geq U} p_l\Big] 2n^2\left[p_j + \frac{\cupward_j \Cint}{n^2 \int_{\T(\uA)}p_0}\right]^2 \nonumber \\
    & \leq \Big[\frac{1}{2\cu} p_j\, n^2\sum_{l \geq U} p_l\Big] 2n^2 \Bigg[\frac{1}{n^2 \sum\limits_{l \geq U} p_l} \Bigg]^2 \left[\cu + \cupward_j \Cint \CL{\ref{Toute_la_case_OK_moment1}} \right]^2 \text{ by Lemma \ref{Toute_la_case_OK_moment1}} \nonumber\\
    & \leq \frac{p_j}{\sum\limits_{l \geq U} p_l} \cu\left[1 + 2 \cdown \Cint \CL{\ref{Toute_la_case_OK_moment1}} \right]^2 =: \Cpiup \frac{p_j}{\sum\limits_{l \geq U} p_l}, \label{pi_j_n2_pj2}
\end{align}
where $\Cpiup$ \ssmall{$\cu$}. It follows that 
\begin{equation}\label{Term1}
    \text{Term 1} \leq 4n^2 p_j^2 + \Cpiup\frac{p_j}{\sum\limits_{l \geq U} p_l} .
\end{equation}

We now consider Term 2. We set $\pup(x) = p_0(x) + \gammaup_j(x)$ and $\pdown(x) = p_0(x) - \gammadown_j(x)$.
\begin{align}
    \text{Term 2} =& \int_{\{\widetilde N_j = 1\}}\big|p_{0 | \widetilde C_j}^{\otimes \widetilde n}-q_{b| \widetilde C_j}^{(\widetilde n)}\big| = \int_{\widetilde C_j} \big|p_0(x)\mathbb{P}_{\ptens}\left(\widetilde N_j =1\right) - q_b(x)\mathbb{P}_{\qtens}\left(\widetilde N_j =1\right) \big|dx \nonumber\\
    = & \int_{\widetilde C_j} \big|p_0(x)\mathbb{P}_{\ptens}\left(\widetilde N_j =1\right) - \left(\pi_j \pup(x) + (1-\pi_j)\pdown(x)\right)\mathbb{P}_{\qtens}\left(\widetilde N_j =1\right) \big|dx \nonumber\\
    \leq & \; p_j\left|\mathbb{P}_{\ptens}\left(\widetilde N_j =1\right) - \mathbb{P}_{\qtens}\left(\widetilde N_j =1\right) \right| \nonumber\\
    & + \mathbb{P}_{\qtens} \left(\widetilde N_j =1\right) \int_{\widetilde C_j} \big|\left(\pi_j \gammaup_j(x) - (1-\pi_j)\gammadown_j(x)\right)\big|dx \nonumber\\
    \leq & \, \Cq n^2p_j^3 + \mathbb{P}_{\qtens} \left(\widetilde N_j =1\right) 2 \Gammadown_j ~ \text{ by Fact 2 and recalling } \pi_j\Gammaup_j = (1-\pi_j) \Gammadown_j\nonumber\\
    \leq & ~ \Cq n^2p_j^3 + \Big(2n p_j + \Cq n^2p_j^2\Big) 2 p_j ~~ \text{ by Fact 2 and recalling $\Gammadown_j \leq p_j$}\nonumber\\
    \leq & ~ (3\Cq+4) n^2p_j^2. \label{Term2}
\end{align}
We now control the term $\Poi\big(2n\, q_{b| \widetilde C_j}\big)({\Aoncej}^c) + \Poi\big(2n\, p_{0 | \widetilde C_j}\big)({\Aoncej}^c)$ from equation \eqref{TV_Aj}. We have:
\begin{align}
    \Poi\big(2n\, p_{0 | \widetilde C_j}\big)({\Aoncej}^c)&= \mathbb{P}(\Poi(2np_j) \geq 2) = 1-e^{-2np_j} (1+2np_j)\nonumber \\ &\leq 1-(1-2np_j)(1+2np_j) = 4n^2 p_j^2. \label{A_jc_p0}
\end{align}
\begin{align}
    \text{Moreover we have: } ~~ &\Poi\big(2n\, q_{b| \widetilde C_j}\big)({\Aoncej}^c) = \mathbb{P}\left(\Poi(2n\int_{\widetilde C_j}q_b) \geq 2\right)\nonumber \\
    &= \pi_j \mathbb{P}(\Poi(2n\pup_j) \geq 2) + (1-\pi_j)\mathbb{P}(\Poi(2n\pdown_j) \geq 2)~~~~~~~~~~~~~~~~~~~~~~~~ \nonumber \\
    & \leq 4\pi_j n^2 {\pup_j}^2 + (1-\pi_j) 4n^2 {\pdown_j}^2 \text{ by \eqref{A_jc_p0}}\nonumber \\
    & \leq 2\Cpiup \frac{p_j}{\sum\limits_{l \geq U} p_l} + 4n^2 p_j^2 ~~ \text{ by \eqref{pi_j_n2_pj2}.} \label{A_jc_qb}
\end{align}
Bringing together equations \eqref{TV_Aj}, \eqref{Split_TV_Aj}, \eqref{Term1}, \eqref{Term2}, \eqref{A_jc_p0} and \eqref{A_jc_qb}, we get:
\begin{align*}
    &d_{TV}^{(\Aoncej)}\left(\Poi\big(2n\, q_{b| \widetilde C_j}\big), \; \Poi\big(2n\, p_{0 | \widetilde C_j}\big)\right) \leq \\
    &\frac{1}{2} \left[ 4n^2 p_j^2 + 2\Cpiup \frac{p_j}{\sum\limits_{l \geq U} p_l} + 4n^2 p_j^2 + 4n^2 p_j^2 + 2\Cpiup\frac{p_j}{\sum\limits_{l \geq U} p_l} + (3\Cq + 4)n^2p_j^2\right]\\
    & = (8 + \frac{3}{2}\Cq)n^2p_j^2 + \frac{3}{2}\Cpiup\frac{p_j}{\sum\limits_{l \geq U} p_l}\\
    & =: \AL{\ref{TV_on_each_cell}} \; n^2p_j^2 + \BL{\ref{TV_on_each_cell}}\frac{p_j}{\sum\limits_{l \geq U} p_l}.
\end{align*}
\end{proof}
which ends the proof of Proposition \ref{lower_bound_tail}.

\end{proof}

%\al{je ne comprend pas lenonce du lemme ci-dessous - $\mathbb{P}_b$ est la distribution de $b$? Et dans la TV cest le melange du coup? ET cest quoi $p_b$? b est fixe ou random?}

\begin{lemma}\label{LB_prior_restricted}
Denote by $\mathbb{P}_b$  the probability distribution over the realizations of the random variable $b$. Set $\Asep = \{b \text{ is such that } \|p_0 - p_b\|_t \geq \CtailLB \rhot\}$ and $p_{b,cond} = \mathbb{E}(p_b^{(n)} | \Asep)$ where the expectation is taken according to the realizations of $b$. Suppose $\mathbb{P}_b(\Asep^c) \leq \epsilon$. Then $d_{TV}(p_0^{\otimes n}, p_{b,cond}) \leq d_{TV}(p_0^{\otimes n}, \mathbb{E}_b(p_b^{(n)})) + 2 \epsilon$.
\end{lemma}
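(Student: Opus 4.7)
The plan is to reduce everything to the standard identity expressing an unconditional mixture as a convex combination of its conditional-on-$\Asep$ and conditional-on-$\Asep^c$ versions, and then invoke the triangle inequality for the total variation distance. By the law of total expectation applied pointwise to the density $\mathbb{E}_b(p_b^{(n)})$, I can write
\begin{equation*}
\mathbb{E}_b\bigl(p_b^{(n)}\bigr) \;=\; \mathbb{P}_b(\Asep)\, p_{b,\mathrm{cond}} \;+\; \mathbb{P}_b(\Asep^c)\, \mathbb{E}_b\bigl(p_b^{(n)} \,\big|\, \Asep^c\bigr),
\end{equation*}
which is a valid convex combination of probability densities over $\Omega^n$ with weights summing to $1$. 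Rearranging, I obtain the pointwise identity
\begin{equation*}
\mathbb{E}_b\bigl(p_b^{(n)}\bigr) \;-\; p_{b,\mathrm{cond}} \;=\; \mathbb{P}_b(\Asep^c)\,\Bigl[\mathbb{E}_b\bigl(p_b^{(n)} \,\big|\, \Asep^c\bigr) - p_{b,\mathrm{cond}}\Bigr].
\end{equation*}

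Next I apply the triangle inequality:
\begin{equation*}
d_{TV}\bigl(p_0^{\otimes n},\, p_{b,\mathrm{cond}}\bigr) \;\leq\; d_{TV}\bigl(p_0^{\otimes n},\, \mathbb{E}_b(p_b^{(n)})\bigr) \;+\; d_{TV}\bigl(\mathbb{E}_b(p_b^{(n)}),\, p_{b,\mathrm{cond}}\bigr).
\end{equation*}
For the second term, using the displayed identity and the definition $d_{TV}(P,Q)=\tfrac12\int|p-q|\,d\mu$, I get
\begin{equation*}
d_{TV}\bigl(\mathbb{E}_b(p_b^{(n)}),\, p_{b,\mathrm{cond}}\bigr) \;=\; \mathbb{P}_b(\Asep^c)\, d_{TV}\bigl(\mathbb{E}_b(p_b^{(n)} \mid \Asep^c),\, p_{b,\mathrm{cond}}\bigr) \;\leq\; \mathbb{P}_b(\Asep^c)\cdot 1 \;\leq\; \epsilon,
\end{equation*}
since both arguments are probability densities. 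Plugging this into the triangle inequality gives the desired bound (in fact slightly better than the claimed $2\epsilon$, so the statement holds a fortiori).

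There is no genuine obstacle here: the only thing to be careful about is verifying that $p_{b,\mathrm{cond}}$ and $\mathbb{E}_b(p_b^{(n)} \mid \Asep^c)$ are honest probability densities (which follows because each $p_b^{(n)}$ is a probability density by Proposition~\ref{Properties_gammadown} and conditional expectation preserves this), and that the convex decomposition above is indeed valid pointwise $\mu^{\otimes n}$-a.e., which is immediate from the law of total expectation applied to the nonnegative random function $b \mapsto p_b^{(n)}(x_1,\dots,x_n)$.
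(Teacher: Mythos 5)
Your proof is correct and follows essentially the same route as the paper: decompose $\mathbb{E}_b(p_b^{(n)})$ as the convex combination $\mathbb{P}(\Asep)\,p_{b,\mathrm{cond}} + \mathbb{P}(\Asep^c)\,\mathbb{E}_b(p_b^{(n)}\mid\Asep^c)$ and then use the triangle inequality for $d_{TV}$. Your handling of the second term is in fact slightly sharper: by factoring out $\mathbb{P}_b(\Asep^c)$ and recognizing the remainder as a total variation between two probability measures (hence $\leq 1$), you get $d_{TV}(\mathbb{E}_b(p_b^{(n)}), p_{b,\mathrm{cond}}) \leq \epsilon$ rather than the paper's $2\epsilon$, so you prove the stated bound a fortiori.
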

\begin{proof}[Proof of Lemma]
We have:
\begin{align*}
    d_{TV}\big(p_{b,cond}, \mathbb{E}_b(p_b^{(n)})\big) &= \sup_{A} \big| p_{b,cond}(A) - p_{b,cond}(A)\mathbb{P}(\Asep) - \mathbb{E}\big[p_b^{(n)}(A|\Asep^c)\big]\mathbb{P}(\Asep^c) \big|\\
    &\leq \sup_{A} \big| p_{b,cond}(A) (1- \mathbb{P}(\Asep)) \big| + \mathbb{P}(\Asep^c)  \leq 2 \epsilon,\\
    \text{so that: } ~~ d_{TV}\big(p_0^{\otimes n}, p_{b,cond}\big) &\leq d_{TV}\big(p_0^{\otimes n}, \mathbb{E}_b(p_b^{(n)})\big) + d_{TV}\big(\mathbb{E}_b(p_b^{(n)}), p_{b,cond}\big)\\
    &\leq d_{TV}\big(p_0^{\otimes n}, \mathbb{E}_b(p_b^{(n)})\big) + 2\epsilon.
\end{align*}
\end{proof}

\subsection{Technical results}

\begin{lemma}\label{very_small_dominate}
It holds: $\int_{\T(2\Cmom L h^\alpha)} p_0 \geq \frac{1}{2} \int_{\T(\uA)} p_0.$
%\ju{Cela impose nécessairement $\frac{\Cmom}{\CL{\ref{pU_leq_Lh^alpha+d}}} < 1$}
\end{lemma}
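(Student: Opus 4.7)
\textbf{Plan for Lemma \ref{very_small_dominate}.} The plan is to split $\T(\uA)$ into the very-small region $\T(2\Cmom L h^\alpha)$ (where $p_0<2\Cmom L h^\alpha$) and its complement $A := \T(\uA)\setminus \T(2\Cmom L h^\alpha) = \{x:\; 2\Cmom L h^\alpha \le p_0(x) < \uA\}$, and to show that the mass on $A$ is at most half the mass on $\T(\uA)$. If $2\Cmom L h^\alpha \ge \uA$ then $\T(\uA)\subseteq \T(2\Cmom L h^\alpha)$ and the inequality is immediate, so I may assume the strict inequality $2\Cmom L h^\alpha < \uA$, which makes $A$ a genuine subset of $\T(\uA)$.

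On $A$ we have the pointwise lower bound $p_0(x)\ge 2\Cmom L h^\alpha$, hence
\[
\int_A p_0 \;\le\; \frac{1}{2\Cmom L h^\alpha}\int_A p_0^2 \;\le\; \frac{1}{2\Cmom L h^\alpha}\int_{\T(\uA)} p_0^2 .
\]
The second moment in the right-hand side is exactly the quantity already controlled by Lemma \ref{int_p0squared_uA_small}, which gives $\int_{\T(\uA)}p_0^2\le \Cmom/(n^2 h^d)$ with the \emph{same} constant $\Cmom$ that appears in the definition of the threshold $2\Cmom L h^\alpha$. This matched choice of constants is the whole point of the lemma: the two factors of $\Cmom$ cancel, leaving
\[
\int_A p_0 \;\le\; \frac{1}{2\,n^2 L h^{\alpha+d}} .
\]

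Finally I would invoke the definition $h = \hT(\uA) = (n^2 L\, p_0[\T(\uA)])^{-1/(\alpha+d)}$ from \eqref{def_htail}, which rewrites as $n^2 L h^{\alpha+d} = 1/p_0[\T(\uA)]$, giving $\int_A p_0 \le \tfrac12 \int_{\T(\uA)} p_0$. Subtracting from $\int_{\T(\uA)}p_0$ yields the claimed bound. I do not anticipate any real obstacle: the proof is a one-line Chebyshev-type argument whose only subtlety is that the constant in the threshold is chosen precisely to match the constant produced by Lemma \ref{int_p0squared_uA_small}, and that $h$ encodes $p_0[\T(\uA)]$ through its very definition.
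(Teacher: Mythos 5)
Your proof is correct and follows the same Chebyshev-style argument as the paper's own proof: bound the mass on $\T(\uA)\setminus\T(2\Cmom Lh^\alpha)$ by $\tfrac{1}{2\Cmom Lh^\alpha}\int_{\T(\uA)}p_0^2$, apply Lemma~\ref{int_p0squared_uA_small} so the two factors of $\Cmom$ cancel, and then use $n^2 L h^{\alpha+d}=1/p_0[\T(\uA)]$ from the definition~\eqref{def_htail}. The only cosmetic difference is that you explicitly dispose of the trivial case $2\Cmom Lh^\alpha\ge\uA$, which the paper handles implicitly by phrasing the final inequality as $\int_{\T(\uA)}p_0-\int_{\T(2\Cmom Lh^\alpha)}p_0\le\tfrac12\int_{\T(\uA)}p_0$.
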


\begin{proof}[Proof of Lemma \ref{very_small_dominate}]
We have:
\begin{align*}
    2\Cmom L h^\alpha \int_{\T(\uA) \setminus \T(2\Cmom L h^\alpha)} p_0 \leq \int_{\T(\uA)} p_0^2 \leq \frac{\Cmom}{n^2h^d} ~~ \text{ by Lemma \ref{int_p0squared_uA_small}.}
\end{align*}
Therefore:
\begin{align*}
     \int_{\T(\uA)} p_0 - \int_{\T(2\Cmom L h^\alpha)} p_0 \leq \frac{1}{2} \int_{\T(\uA)} p_0.
\end{align*}
\end{proof}

\begin{lemma}\label{pU_leq_Lh^alpha+d}
It holds $p_U < \CL{\ref{pU_leq_Lh^alpha+d}}Lh^{\alpha+d}$ where $\CL{\ref{pU_leq_Lh^alpha+d}} = 2\Cmom(1+\cstar) + \sqrt{d}^\alpha$ and $h = \hT(\uA)$. 
\end{lemma}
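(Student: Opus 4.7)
The plan is to argue by contradiction: I would assume $p_U \geq \CL{\ref{pU_leq_Lh^alpha+d}} Lh^{\alpha+d}$ and reach a contradiction by combining the defining inequality $n^2 p_U S_U \leq \cu$ of the index $U$ (where $S_U = \sum_{l\geq U} p_l$) with Lemma \ref{very_small_dominate} applied to the very small sublevel set $\T(2\Cmom Lh^\alpha)$.

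The heart of the argument is a geometric dichotomy. Every cube $\widetilde C_j$ produced by Algorithm \ref{splitting_tail} has edge length $\tilde h \leq h$, and the covering is sorted so that $p_j \geq p_U$ for every $j \leq U$. Under the contradiction hypothesis, the average of $p_0$ over $\widetilde C_j$ must therefore be at least $\CL{\ref{pU_leq_Lh^alpha+d}} Lh^\alpha$, so there exists $y_j \in \widetilde C_j$ with $p_0(y_j) \geq \CL{\ref{pU_leq_Lh^alpha+d}} Lh^\alpha$. Applying Assumption~\eqref{simplifyingAssumption} between $y_j$ and an arbitrary $z \in \widetilde C_j$, and using $\|y_j - z\| \leq \sqrt{d}\, h$, the specific value $\CL{\ref{pU_leq_Lh^alpha+d}} = 2\Cmom(1+\cstar) + \sqrt{d}^\alpha$ is tuned precisely so that
\[
p_0(z) \;\geq\; \frac{p_0(y_j) - \sqrt{d}^\alpha Lh^\alpha}{1+\cstar} \;\geq\; 2\Cmom Lh^\alpha,
\]
which matches the threshold of Lemma \ref{very_small_dominate}. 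Consequently $\widetilde C_j \cap \T(2\Cmom Lh^\alpha) = \emptyset$ for every $j \leq U$.

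To finish, I would observe that because each cube $\widetilde C_j$ must intersect $\T(\uA)$ by the construction of Algorithm \ref{splitting_tail}, the disjointness just established forces $2\Cmom Lh^\alpha < \uA$, and hence $\T(2\Cmom Lh^\alpha) \subset \T(\uA) \subset \bigcup_{l \in \Ibar} \widetilde C_l$; combining this inclusion with the disjointness for $l \leq U$ gives $\T(2\Cmom Lh^\alpha) \subset \bigcup_{l > U} \widetilde C_l$. Lemma \ref{very_small_dominate} then yields $\tfrac{1}{2}\int_\T p_0 \leq \sum_{l > U} p_l \leq S_U$. Inserting this lower bound into $p_U \leq \cu/(n^2 S_U)$ and using the identity $n^2 L h^{\alpha+d} = 1/\int_\T p_0$ gives $p_U \leq 2\cu\, Lh^{\alpha+d}$, which for $\cu$ a sufficiently small constant (in particular $2\cu < \sqrt{d}^\alpha \leq \CL{\ref{pU_leq_Lh^alpha+d}}$) contradicts the standing hypothesis. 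The only delicate point is the exact arithmetic in Assumption~\eqref{simplifyingAssumption} — namely the requirement that the deficit $L\sqrt{d}^\alpha h^\alpha$ absorb cleanly after dividing by $1+\cstar$ — which is precisely what dictates the stated value of $\CL{\ref{pU_leq_Lh^alpha+d}}$.
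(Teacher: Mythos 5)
Your proof is correct and follows essentially the same strategy as the paper's: a contradiction argument combining the definition of $U$, Assumption~\eqref{simplifyingAssumption} applied within a cube $\widetilde C_j$, and Lemma~\ref{very_small_dominate}. The only cosmetic difference is the direction in which you apply Assumption~\eqref{simplifyingAssumption} — you go from a point $y_j$ of large density to an arbitrary $z$, while the paper starts from a point $x$ of small density and bounds $p_0(y)$ above; these are contrapositive formulations of the same step, and the constant arithmetic ($2\Cmom(1+\cstar)+\sqrt d^\alpha$) works out identically. Your extra observation that every $\widetilde C_j$ must intersect $\T(\uA)$ forces $2\Cmom Lh^\alpha < \uA$, which makes the inclusion $\T(2\Cmom Lh^\alpha)\subset\bigcup_{l>U}\widetilde C_l$ fully explicit, is a small but legitimate tightening of the paper's argument, which leaves that inclusion implicit.
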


\begin{proof}[Proof of Lemma \ref{pU_leq_Lh^alpha+d}]
Fix $j \in \Ibar$ such that $\widetilde C_j \cap \T(2\Cmom Lh^{\alpha+d}) \neq \emptyset$ and $x \in \widetilde C_j$ such that $p_0(x) < 2\Cmom Lh^{\alpha+d}$. Then by Assumption \eqref{simplifyingAssumption}, for all $y \in \widetilde C_j$
\begin{align*}
    p_0(y) \leq (1+\cstar)p_0(x) + Lh^\alpha \sqrt{d}^\alpha < \CL{\ref{pU_leq_Lh^alpha+d}} Lh^\alpha
\end{align*}
so that $p_j \leq \CL{\ref{pU_leq_Lh^alpha+d}} L^{\alpha+d}$. Therefore, if we had $p_U \geq \CL{\ref{pU_leq_Lh^alpha+d}} Lh^{\alpha+d}$, then necessarily, $\T(2\Cmom Lh^\alpha) \subset \bigcup\limits_{j\geq U} \widetilde C_j$, hence:
\begin{align*}
    \frac{\cu}{n^2} \geq p_U \sum_{j\geq U} p_j \geq \CL{\ref{pU_leq_Lh^alpha+d}} Lh^{\alpha+d} \int_{\T(2\Cmom Lh^\alpha)} p_0 \geq \frac{\CL{\ref{pU_leq_Lh^alpha+d}}}{2n^2}>\frac{\cu}{n^2}
\end{align*}
for $\cu$ small enough. Contradiction.
\end{proof}

\begin{lemma}\label{only_small_jumps}
Set $h=\hT(\uA)$ and assume that the tail dominates i.e. $\CBT \rhob \leq \rhot$. For all $j \in \Ibar$, $j \geq 2$, if $p_j\leq \CL{\ref{pU_leq_Lh^alpha+d}} Lh^{\alpha + d}$ then $ p_{j-1} \leq \CL{\ref{only_small_jumps}} L h^{\alpha + d}$ where $\CL{\ref{only_small_jumps}}$ is a constant depending only on $\CL{\ref{pU_leq_Lh^alpha+d}}$. %\al{Ici lenonce est un peu confusant. Il vaut ptetre mieux dire "j' an adjacent cell to j" plutot que j-1 - car au fond lordre d'indexage est pas trop discute, si?}
\end{lemma}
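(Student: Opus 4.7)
I will argue by contradiction in the spirit of Lemma \ref{pU_leq_Lh^alpha+d}. Suppose $p_{j-1} > \CL{\ref{only_small_jumps}} L h^{\alpha+d}$ for a universal constant $\CL{\ref{only_small_jumps}} \geq \CL{\ref{pU_leq_Lh^alpha+d}}$ to be fixed later. Since the sequence $(p_l)_{l \in \Ibar}$ is sorted in decreasing order, the bound automatically propagates to every index $l \leq j-1$.

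The Hölder step is literally the one used in Lemma \ref{pU_leq_Lh^alpha+d}: if a cube $\widetilde C_l$ contains even a single point $x$ with $p_0(x) < 2\Cmom L h^\alpha$, then Assumption \eqref{simplifyingAssumption} forces $p_0(y) \leq (1+\cstar)\cdot 2\Cmom L h^\alpha + L h^\alpha \sqrt d^\alpha = \CL{\ref{pU_leq_Lh^alpha+d}} L h^\alpha$ for every $y \in \widetilde C_l$, hence $p_l \leq \CL{\ref{pU_leq_Lh^alpha+d}} L h^{\alpha+d}$. Applying the contrapositive to each $l \leq j-1$ gives $\widetilde C_l \cap \T(2\Cmom L h^\alpha) = \emptyset$. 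Each such cube also lies in the tail covering, hence meets $\T(\uA)$; this first forces $\uA > 2\Cmom L h^\alpha$ (otherwise $\T(\uA) \subset \T(2\Cmom L h^\alpha)$, and the intersection $\widetilde C_l \cap \T(\uA)$ would already be empty, contradiction), and then yields the inclusion $\T(2\Cmom L h^\alpha) \subset \T(\uA) \subset \bigcup_l \widetilde C_l \subset \bigcup_{l \geq j} \widetilde C_l$. Lemma \ref{very_small_dominate} then produces the lower bound
\[
\sum_{l \geq j} p_l \;\geq\; \int_{\T(2\Cmom L h^\alpha)} p_0 \;\geq\; \tfrac{1}{2}\int_\T p_0 \;=\; \frac{1}{2 n^2 L h^{\alpha+d}}.
\]

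To conclude I will split on whether $j \geq U+1$ or $j \leq U$. In the former case the lemma is immediate, since $p_{j-1} \leq p_U \leq \CL{\ref{pU_leq_Lh^alpha+d}} L h^{\alpha+d}$ already by Lemma \ref{pU_leq_Lh^alpha+d}, and we may take $\CL{\ref{only_small_jumps}} = \CL{\ref{pU_leq_Lh^alpha+d}}$. In the latter case $j-1 < U$, so the minimality in the definition of $U$ gives $n^2 p_{j-1} \sum_{l \geq j-1} p_l > \cu$, while Lemma \ref{T(2uA)Moment1} gives $\sum_{l \in \Ibar} p_l \leq (1+\CL{\ref{T(2uA)Moment1}})/(n^2 L h^{\alpha+d})$. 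Combining this upper bound on $\sum_{l \in \Ibar} p_l$ with the lower bound on $\sum_{l \geq j} p_l$ obtained above and the hypothesis $p_j \leq \CL{\ref{pU_leq_Lh^alpha+d}} L h^{\alpha+d}$, a direct calculation will pin down $p_{j-1}$ to be at most $\CL{\ref{only_small_jumps}} L h^{\alpha+d}$, with $\CL{\ref{only_small_jumps}}$ depending only on $\CL{\ref{pU_leq_Lh^alpha+d}}$, $\cu$, $\CL{\ref{T(2uA)Moment1}}$ and $\Cmom$ — all themselves determined by $\eta,\alpha,t,d$ alone.

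The hard part will be this final bookkeeping. The ``core'' of the contradiction argument mirrors Lemma \ref{pU_leq_Lh^alpha+d} verbatim, with the index $U$ replaced by $j-1$, but the conclusion is less sharp because the definition of $U$ does \emph{not} directly upper-bound $n^2 p_{j-1} \sum_{l \geq j-1} p_l$ when $j-1 < U$. Routing the argument through the total mass bound of Lemma \ref{T(2uA)Moment1}, so as to transfer the quantitative control available at $l = U$ to the index $j-1$, is the only non-routine step, and fixing a universal $\CL{\ref{only_small_jumps}}$ depending \emph{only} on $\CL{\ref{pU_leq_Lh^alpha+d}}$ hinges on performing this bookkeeping cleanly.
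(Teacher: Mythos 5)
Your argument by contradiction diverges fundamentally from the paper's, which is constructive: the paper picks $y \in \T(\uA)$ with $p_0(y) = \CL{\ref{only_small_jumps}}' L h^\alpha$ where $\CL{\ref{only_small_jumps}}' = 4(1+\cstar)\CL{\ref{pU_leq_Lh^alpha+d}} + \sqrt{d}^\alpha$ (existence requires $\CL{\ref{only_small_jumps}}' L h^\alpha < \uA$, which is secured via Lemma~\ref{htail_leq_hbulk} for $\CBT$ large), lets $\widetilde C_l$ be the cube containing $y$, and uses Assumption~\eqref{simplifyingAssumption} to pin $p_0$ between $2\CL{\ref{pU_leq_Lh^alpha+d}} L h^\alpha$ and $\CL{\ref{only_small_jumps}} L h^\alpha$ on that cube, so $p_l > p_j$ (hence $l \leq j-1$ in the sorted order) and $p_l \leq \CL{\ref{only_small_jumps}} L h^{\alpha+d}$; the decreasing order then gives $p_{j-1} \leq p_l$. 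No contradiction is ever needed.

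Your Case 1 is fine, but Case 2 ($j \leq U$) --- exactly the case $j = U$ invoked in Lemma~\ref{sum_pj_geq_int}, the only place this lemma is used --- has a genuine gap. The minimality of $U$ gives only a \emph{lower} bound, $p_{j-1} > \cu/(n^2 \sum_{l \geq j-1} p_l)$. The only upper bound your ingredients produce is $p_{j-1} \leq \sum_{l \leq j-1} p_l \leq (1/2 + \CL{\ref{T(2uA)Moment1}}) \int_\T p_0$, obtained by subtracting your lower bound on $\sum_{l \geq j} p_l$ from the total mass. But this is not of the form $C L h^{\alpha+d}$: since $L h^{\alpha+d} = 1/(n^2 \int_\T p_0)$, the ratio $\int_\T p_0 / (L h^{\alpha+d}) = n^2 (\int_\T p_0)^2$ is unbounded whenever $n \int_\T p_0 \gg 1$, which is permitted. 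Structurally, your contradiction hypothesis places every $\widetilde C_l$, $l \leq j-1$, in the transition band $\B(2\Cmom L h^\alpha) \cap \T(2\uA)$, and a single cube near the bulk boundary can carry mass of order $\uA h^d \gg L h^{\alpha+d}$, since $\uA \gtrsim \cA(\CBT^{(2)})^\alpha L h^\alpha$ with $\CBT^{(2)}$ large. The aggregate mass bound on $\sum_{l \leq j-1} p_l$ cannot control the largest such term. What makes the proof go is a targeted cube at the \emph{small} end of the transition band; the counting argument does not close.
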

\begin{proof}[Proof of Lemma \ref{only_small_jumps}]
Let $j \in \Ibar$, $j \geq 2$ and $z_j \in \widetilde C_j$ such that $p_0(z_j) h^d = p_j$ and assume $p_j < \CL{\ref{pU_leq_Lh^alpha+d}} Lh^{\alpha+d}$. 
Set $\CL{\ref{only_small_jumps}}' = 4 (1+\cstar) \CL{\ref{pU_leq_Lh^alpha+d}} + \sqrt{d}^\alpha$.\\
\vspace{-3mm}

Let $y \in \T(\uA)$ such that $p_0(y) = \CL{\ref{only_small_jumps}}' L h^\alpha$. 
We can assume $\CL{\ref{only_small_jumps}}' L h^\alpha < \uA$ by choosing $\CBT$ large enough.
Indeed, by Lemma \ref{htail_leq_hbulk}, we have $\uA \geq \cA L\inf\limits_{x \in \B} \hB^\alpha(x) \geq \cA L \CBT^{(2)\alpha} \hT^\alpha(\uA)$ and choosing $\CBT$ large enough ensures that $\CBT^{(2)\alpha}$ is large enough. Denote by $l$ the index of the cube $\widetilde C_l$ containing $y$. 
\begin{itemize}
    \item First, $p_l > p_j$. Indeed, for all $z \in \widetilde C_l$ we have
    \begin{align*}
        p_0(z) \geq (1-\cstar)\CL{\ref{only_small_jumps}}' L h^\alpha - Lh^\alpha \sqrt{d}^\alpha \geq (3 \CL{\ref{pU_leq_Lh^alpha+d}} - \sqrt{d}^\alpha)Lh^\alpha \geq 2 \CL{\ref{pU_leq_Lh^alpha+d}} Lh^\alpha,
    \end{align*}
    hence $p_l = \int_{\widetilde C_j} p_0(z)dz \geq 2 \CL{\ref{pU_leq_Lh^alpha+d}} Lh^ {\alpha+d} > p_j$.
    \item Second, for all $z \in \widetilde C_l$ we have $p_0(z) \leq (1+\cstar)\CL{\ref{only_small_jumps}}' Lh^\alpha + \sqrt{d}^\alpha Lh^\alpha =: \CL{\ref{only_small_jumps}} Lh^\alpha$ hence $p_l \leq \CL{\ref{only_small_jumps}}Lh^\alpha$. 
\end{itemize}
Since the $(p_l)_l$ are sorted in decreasing we also have $p_{j-1} \leq \CL{\ref{only_small_jumps}} Lh^\alpha$.
%By Assumption \eqref{simplifyingAssumption}, for any $x \in\widetilde C_j$ we have $p_0(x) - p_0(z_j) \leq \cstar \CL{\ref{pU_leq_Lh^alpha+d}} Lh^\alpha + L (\sqrt{d} h)^{\alpha} = (1+\cstar) \CL{\ref{pU_leq_Lh^alpha+d}} L h^\alpha$. Therefore $p_l - p_j \leq (1+\cstar)\CL{\ref{pU_leq_Lh^alpha+d}} L h^{\alpha + d}$, so that, by the intermediate value theorem, we must have the desired result for all $j$, recalling that the family $(p_l)_l$ is sorted in decreasing order. \al{???? pourquoi est ce que tu peux toujours trouver une cellule adjacente a j qui a son p plus petit que pj?}
\end{proof}

%\al{Cest  le lemme ci-dessous qui te fait dire que on a besoin que de $\uA$?}
\begin{lemma}\label{sum_pj_geq_int}
Suppose that the tail dominates, i.e.: $\rhot \geq \CBT \rhob$. There exists a constant $D$ % $D = \frac{\cu}{6\CL{\ref{pU_leq_Lh^alpha+d}}}$ \ju{A modifier}
such that whenever $\int_{\T(\uA)} p_0 \geq \frac{\ctail}{n}$, it holds: $$\sum_{j\geq U} p_j \geq D \int_{\T(\uA)} p_0.$$ Moreover, $D$ \ssmall{$\cu$} and $\ctail$ large enough, successively.
%When $\int_{\T(\uA)}p_0 \geq \frac{\ctail}{n}$ and $\rhot \geq \CBT \rhob$, it holds:
%$$ \sum_{j\geq U} p_j \geq \Big(D \land \frac{1}{2} \Big) \int_{\T(\uA)} p_0.$$
%\ju{Ajuster $\Cmom$ pour que $D \leq \frac{1}{2}$ }
\end{lemma}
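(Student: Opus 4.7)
\textbf{Proof proposal for Lemma \ref{sum_pj_geq_int}.} The strategy is to exploit the minimality in the definition of $U$: by definition, the index $U-1$ fails the defining inequality, so $n^2 p_{U-1} \sum_{l\geq U-1} p_l > \cu$. This already yields a lower bound on $\sum_{l \geq U-1} p_l$ in terms of $1/(n^2 p_{U-1})$. The plan is then to control $p_{U-1}$ from above (so that the lower bound on the sum is large), and finally to subtract $p_{U-1}$ to pass from $\sum_{l\geq U-1}$ to $\sum_{l\geq U}$, absorbing the loss using the assumption $\int_\T p_0 \geq \ctail/n$ with $\ctail$ large.

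First I would dispose of the trivial case $U=1$ (where $\sum_{l\geq U} p_l = \sum_{l \in \Ibar} p_l \geq \int_\T p_0$, giving $D=1$). Assume $U\geq 2$. By Lemma \ref{pU_leq_Lh^alpha+d}, $p_U \leq \CL{\ref{pU_leq_Lh^alpha+d}} L h^{\alpha+d}$, and then Lemma \ref{only_small_jumps} applied at index $j=U$ gives $p_{U-1}\leq \CL{\ref{only_small_jumps}} L h^{\alpha+d}$. Using the explicit form $h = \hT(\uA) = (n^2 L \int_\T p_0)^{-1/(\alpha+d)}$ from \eqref{def_htail}, one computes $n^2 L h^{\alpha+d} = 1/\int_\T p_0$, hence
\begin{equation*}
    p_{U-1} \;\leq\; \frac{\CL{\ref{only_small_jumps}}}{n^2 \int_\T p_0}.
\end{equation*}
Combining this with the failure of the defining inequality at $U-1$ yields
\begin{equation*}
    \sum_{l\geq U-1} p_l \;>\; \frac{\cu}{n^2 p_{U-1}} \;\geq\; \frac{\cu}{\CL{\ref{only_small_jumps}}}\, \int_\T p_0.
\end{equation*}

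Finally I would write $\sum_{l\geq U} p_l = \sum_{l\geq U-1} p_l - p_{U-1}$ and bound the subtracted term. Using $\int_\T p_0 \geq \ctail/n$, the upper bound on $p_{U-1}$ gives $p_{U-1} \leq \CL{\ref{only_small_jumps}}/(n^2 \int_\T p_0) \leq (\CL{\ref{only_small_jumps}}^2/(\cu\, \ctail^2))\cdot (\cu/\CL{\ref{only_small_jumps}})\int_\T p_0$, so that choosing $\ctail$ large enough (depending on $\cu$ and $\CL{\ref{only_small_jumps}}$) makes $p_{U-1}$ at most half of the lower bound on $\sum_{l\geq U-1} p_l$. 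One concludes
\begin{equation*}
    \sum_{l\geq U} p_l \;\geq\; \frac{\cu}{2\,\CL{\ref{only_small_jumps}}}\, \int_\T p_0 \;=:\; D\, \int_\T p_0,
\end{equation*}
which is the claim, with $D$ \ssmall{$\cu$} and $\ctail$ large enough successively, as required.

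The only substantive obstacle is the chain of bookkeeping ensuring that the constants produced by Lemmas \ref{pU_leq_Lh^alpha+d} and \ref{only_small_jumps} (which themselves rely on the tail-dominance assumption $\CBT \rhob \leq \rhot$ to guarantee that $\uA \geq \CL{\ref{only_small_jumps}} L h^\alpha$) can be absorbed by choosing $\cu$ small and $\ctail$ large in the correct order. Everything else is a direct computation from the definitions of $U$, $h$, and the assumption $\int_\T p_0 \geq \ctail/n$.
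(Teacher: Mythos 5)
Your proof is correct and follows essentially the same route as the paper: use the minimality of $U$ to get $p_{U-1}\sum_{l\geq U-1}p_l>\cu/n^2$, bound $p_{U-1}\leq\CL{\ref{only_small_jumps}}Lh^{\alpha+d}$ via Lemmas \ref{pU_leq_Lh^alpha+d} and \ref{only_small_jumps}, rewrite $n^2Lh^{\alpha+d}=1/\int_\T p_0$, and absorb the subtracted $p_{U-1}$ using $\int_\T p_0\geq\ctail/n$ with $\ctail$ large. The one small difference is how you guarantee $U-1\in\Ibar$: you dispose of the case $U=1$ separately (correctly noting that the covering gives $\sum_{l\geq 1}p_l\geq\int_\T p_0$, so $D=1$ works), whereas the paper instead shows that $U\geq 2$ always holds under tail dominance by exhibiting a cell touching the level set $\{p_0=\uA\}$ with $p_1>\CL{\ref{pU_leq_Lh^alpha+d}}Lh^{\alpha+d}\geq p_U$. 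Your split is slightly more elementary and equally valid; the paper's argument carries the extra information that $U\geq 2$ in this regime, which is not needed for the statement itself.
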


%\ju{Changer $D$ en $\CL{\ref{sum_pj_geq_int}}$ partout ?}

\begin{proof}[Proof of Lemma \ref{sum_pj_geq_int}]
Recall that by Lemma \ref{pU_leq_Lh^alpha+d}, $p_U \leq  \CL{\ref{pU_leq_Lh^alpha+d}} Lh^{\alpha+d}$.
Therefore, we cannot have $U=1$.
Indeed, there always exists $j \in \Ibar$ such that for some $x \in \widetilde C_1, p_0(x) = \uA$ and for this index $j$:
\begin{align}
    \forall y \in \widetilde C_j: p_0(y) \geq (1-\cstar) \uA - Lh^\alpha \sqrt{d}^\alpha > \CL{\ref{pU_leq_Lh^alpha+d}} Lh^{\alpha}\label{case_inter_uA}
\end{align}
for $\CBT$ large enough, by Lemma \ref{htail_leq_hbulk} and recalling $\uA \geq \cA L \min\limits_\B \hB^\alpha$. Therefore, $p_1 >  \CL{\ref{pU_leq_Lh^alpha+d}} Lh^{\alpha+d} \geq p_U$ hence $U \geq 2$. We can then write, by definition of $U$:
\begin{align*}
    p_{U-1} \sum_{j \geq U-1} p_j > \frac{\cu}{n^2},
\end{align*} where $p_{U-1} \leq \CL{\ref{only_small_jumps}} L h^{\alpha+d}$ by Lemma \ref{only_small_jumps}. Therefore:
\begin{align*}
    \sum_{j \geq U} p_j &> \frac{\cu}{n^2 p_{U-1}} - p_{U-1} \geq \frac{\cu}{n^2 \CL{\ref{only_small_jumps}} L h^{\alpha + d}} - \CL{\ref{only_small_jumps}} L h^{\alpha + d}\\
    & = \frac{\cu}{\CL{\ref{only_small_jumps}}} \int_{\T(\uA)} p_0 - \frac{\CL{\ref{only_small_jumps}}}{n^2 \int_{\T(\uA)} p_0}\\
    & \geq \frac{\cu}{\CL{\ref{only_small_jumps}}} \int_{\T(\uA)} p_0 - \frac{\CL{\ref{only_small_jumps}}}{\ctail^2} \int_{\T(\uA)} p_0\geq D \int_{\T(\uA)} p_0.
\end{align*}
Choosing $\ctail^2 \geq \frac{\CL{\ref{only_small_jumps}}^2}{2\cu}$ yields the result with $D = \frac{\cu}{\CL{\ref{only_small_jumps}}} - \frac{\CL{\ref{only_small_jumps}}}{\ctail^2}$.

\end{proof}

\begin{lemma}\label{Toute_la_case_OK_moment1}
In the case where the tail dominates, \textit{i.e.} when $\rhot \geq \CBT \rhob$, there exists a constant $\CL{\ref{Toute_la_case_OK_moment1}}$ such that $\sum\limits_{j \geq U} p_j \leq \CL{\ref{Toute_la_case_OK_moment1}}  \int_{\T(\uA)} p_0$. %\ju{A raffiner pour avoir $\CBT$ proportionnel à $\cu$.}
\end{lemma}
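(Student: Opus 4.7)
The plan is to exploit two previously established lemmas which together pin down the fact that the covering $(\widetilde C_j)_{j \in \Ibar}$ produced by Algorithm \ref{splitting_tail} cannot leak much mass outside $\T(\uA)$ once the tail dominates. First, I would bound the quantity of interest by the total $p_0$-mass on the covering:
\begin{equation*}
    \sum_{j\geq U} p_j \;\leq\; \sum_{j \in \Ibar} p_j \;=\; \int_{\bigcup_{j\in\Ibar} \widetilde C_j} p_0.
\end{equation*}
This step is immediate since $p_j \geq 0$ for all $j$.

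Next, I would use Lemma \ref{T(2uA)}, which states that in the tail-dominating regime every cube $\widetilde C_j$ in the covering is contained in $\T(2\uA)$. The underlying reason is that the edge length of each cube is $\hT(\uA)$ and Lemma \ref{htail_leq_hbulk} forces $L\hT^\alpha \ll \uA$ when $\CBT$ is chosen large enough, so Assumption \eqref{simplifyingAssumption} propagates an inequality $p_0 < \uA$ on one point of $\widetilde C_j$ to $p_0 \leq 2\uA$ on the whole cube. Therefore
\begin{equation*}
    \int_{\bigcup_{j\in\Ibar} \widetilde C_j} p_0 \;\leq\; \int_{\T(2\uA)} p_0.
\end{equation*}

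Finally, I would invoke Lemma \ref{T(2uA)Moment1}, which yields
\begin{equation*}
    \int_{\T(2\uA)} p_0 \;\leq\; \bigl(1+\CL{\ref{T(2uA)Moment1}}\bigr) \int_{\T(\uA)} p_0,
\end{equation*}
again under the tail-dominance assumption. Chaining the three inequalities gives the claim with $\CL{\ref{Toute_la_case_OK_moment1}} = 1 + \CL{\ref{T(2uA)Moment1}}$.

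The only subtle point, which could be seen as the main obstacle, is that the cubes $\widetilde C_j$ of the tail covering are allowed to straddle the boundary between $\T(\uA)$ and $\B(\uA)$, so a priori $\sum_j p_j$ could be larger than $\int_{\T(\uA)} p_0$. The whole work of controlling this excess has already been done in Lemmas \ref{T(2uA)} and \ref{T(2uA)Moment1}: the first ensures the covering does not reach beyond the slightly enlarged tail $\T(2\uA)$, and the second controls the extra mass picked up between level $\uA$ and level $2\uA$ via the second-moment bound of Lemma \ref{int_p0squared_uA_small}. Given these tools, no further computation is needed.
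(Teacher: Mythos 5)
Your chain of inequalities is correct and yields the claim with the constant $\CL{\ref{Toute_la_case_OK_moment1}} = 1+\CL{\ref{T(2uA)Moment1}}$, but it takes a genuinely different route than the paper. The paper proves a sharper containment: arguing exactly as in equation~\eqref{case_inter_uA} (using Lemma~\ref{pU_leq_Lh^alpha+d} together with Assumption~\eqref{simplifyingAssumption} and Lemma~\ref{htail_leq_hbulk}), it shows that no cube $\widetilde C_j$ with $j\geq U$ can contain a point where $p_0 = \uA$, hence by the intermediate value theorem $\bigcup_{j\geq U}\widetilde C_j \subset \T(\uA)$, which gives the bound directly with constant~$1$. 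The key is that for $j\geq U$ the masses $p_j$ are so small (bounded by $\CL{\ref{pU_leq_Lh^alpha+d}} Lh^{\alpha+d}$) that those cubes cannot cross the bulk boundary. You instead discard the information that $j\geq U$ immediately, bound $\sum_{j\geq U}p_j$ by the total mass over all cubes $\bigcup_{j\in\Ibar}\widetilde C_j$, contain that union in $\T(2\uA)$ via Lemma~\ref{T(2uA)}, and absorb the excess through Lemma~\ref{T(2uA)Moment1}. This is slightly wasteful (you retain the large cubes $j<U$ and pay a constant for the annulus $\T(2\uA)\setminus\T(\uA)$), but it is entirely modular and correct. The price you pay for the modularity is only a worse constant, which is irrelevant here.

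Two small caveats to keep you honest. First, the coverings in the tail upper-bound and tail lower-bound sections use different edge lengths: in the lower bound (where this lemma lives) the edge is $\cch\hT(\uA)$ rather than $\hT(\uA)$. Lemma~\ref{T(2uA)} was proved in the upper-bound context, so strictly speaking you should observe that its proof goes through a fortiori when $h$ is replaced by the smaller $\cch\hT(\uA)$, since the only place $h$ enters is through the bound $L(h\sqrt{d})^\alpha \leq \uA$. Second, your side remark attributes the mechanism of Lemma~\ref{T(2uA)Moment1} to the second-moment bound of Lemma~\ref{int_p0squared_uA_small}; in fact its proof goes through $\uA^{1-r}\I$ and the tail-dominance inequality $\rhot\geq\CBT\rhob$, not through $\int p_0^2$. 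Neither point affects the validity of the chain, only its explanation.
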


\begin{proof}[Proof of Lemma \ref{Toute_la_case_OK_moment1}]
Set $h = \hT(\uA)$.
Proceeding like in equation \eqref{case_inter_uA}, it is impossible that $\exists j \geq U,\; \exists x \in \widetilde C_j: p_0(x) = \uA$. For all $j \geq U$, we therefore have: $\sup_{\widetilde C_j} p_0 < v \leq \uA$, hence $\bigcup_{j \geq U} \widetilde C_j \subset \T(\uA)$, which yields $\int_{\bigcup_{j \geq U} \widetilde C_j} p_0 \leq \int_{\T(\uA)} p_0$.
\end{proof}

\begin{lemma}\label{Toute_la_case_OK_moment2}
Whenever the tail dominates, \textit{i.e.} when $\rhot \geq \CBT \rhob$, there exists a constant $\CL{\ref{Toute_la_case_OK_moment2}}$ such that $\int_{\bigcup\limits_{j \geq U} \widetilde C_j} p_0^2 \leq \CL{\ref{Toute_la_case_OK_moment2}} \, \frac{1}{n^2 h^d}$.
\end{lemma}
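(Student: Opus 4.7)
The plan is to observe that this statement follows almost immediately by combining the inclusion $\bigcup_{j \geq U} \widetilde C_j \subset \T(\uA)$ (which was established in the proof of Lemma \ref{Toute_la_case_OK_moment1}) with the upper bound on $\int_{\T(\uA)} p_0^2$ provided by Lemma \ref{int_p0squared_uA_small}. There is no new technical content beyond these two ingredients, so the proof will be essentially a one-line deduction.

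More precisely, I would first briefly recall why $\bigcup_{j \geq U} \widetilde C_j \subset \T(\uA)$ holds when the tail dominates. This comes from the argument already used in the proof of Lemma \ref{Toute_la_case_OK_moment1}: if there existed some $j \geq U$ and $x \in \widetilde C_j$ with $p_0(x) = \uA$, then Assumption \eqref{simplifyingAssumption} together with the fact that $h = \hT(\uA) \leq (\CBT^{(2)})^{-1}\min_{\B}\hB$ (Lemma \ref{htail_leq_hbulk}) would force $p_0 \geq \CL{\ref{pU_leq_Lh^alpha+d}} L h^\alpha$ on the whole cell $\widetilde C_j$, which contradicts $p_j \leq p_U \leq \CL{\ref{pU_leq_Lh^alpha+d}} L h^{\alpha+d}$ from Lemma \ref{pU_leq_Lh^alpha+d}.

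Once this inclusion is in hand, the bound is immediate:
\begin{equation*}
\int_{\bigcup_{j \geq U} \widetilde C_j} p_0^2 \;\leq\; \int_{\T(\uA)} p_0^2 \;\leq\; \frac{\Cmom}{n^2\, \hT^d(\uA)} \;=\; \frac{\Cmom}{n^2 h^d},
\end{equation*}
where the second inequality is exactly Lemma \ref{int_p0squared_uA_small}. Hence the lemma holds with $\CL{\ref{Toute_la_case_OK_moment2}} = \Cmom$.

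There is really no obstacle here: all the work was done earlier in Appendix \ref{relations_btw_cutofss} (especially Lemmas \ref{int_p0squared_uA_small}, \ref{pU_leq_Lh^alpha+d}, \ref{htail_leq_hbulk}), and the present lemma is simply the specialization of the general second-moment bound to the (smaller) domain $\bigcup_{j\geq U} \widetilde C_j$. The only mildly delicate point worth double-checking is the dependence of $\CBT$: one must ensure that $\CBT$ is chosen large enough so that the contradiction argument in the inclusion step goes through, but this is already the standing assumption in the tail-dominates regime as used in Lemma \ref{htail_leq_hbulk}.
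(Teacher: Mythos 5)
Your proof matches the paper's own argument: both establish the inclusion $\bigcup_{j \geq U} \widetilde C_j \subset \T(\uA)$ via the same contradiction (a cell touching the $\uA$-level set would, by Assumption \eqref{simplifyingAssumption} and Lemma \ref{htail_leq_hbulk}, carry mass exceeding $p_U$, contradicting Lemma \ref{pU_leq_Lh^alpha+d} and the decreasing ordering of the $p_j$), and then conclude via Lemma \ref{int_p0squared_uA_small}. The paper compresses this by referring back to equation \eqref{case_inter_uA}, but the underlying deduction and the resulting constant $\Cmom$ are identical.
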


\begin{proof}[Proof of Lemma \ref{Toute_la_case_OK_moment2}]
Set $h = \hT(\uA)$. Proceeding like in equation \eqref{case_inter_uA}, we have: $\sup_{\widetilde C_j} p_0 < v \leq \uA$, hence $\bigcup\limits_{j \geq U} \widetilde C_j \subset \T(\uA)$, which yields by Lemma \ref{int_p0squared_uA_small}: $\int_{\bigcup\limits_{j \geq U} \widetilde C_j} p_0^2 \leq \frac{\Cmom}{n^2 h^d}$.
\end{proof}

\begin{lemma}\label{control_norm_t_Rd}
Let $p \in \mathcal{P}_{\R^d}(\alpha,L,\cstar)$ and $ \Omega' \subset \Omega$ a countable union of cubic domains of $\R^d$.
\begin{enumerate}
    \item If $\int_{\Omega'} p \leq \frac{c}{n}$ for some constant $c>0$, then 
    $$ \left(\int_{\Omega'} p^t\right)^{1/t} \leq \AL{\ref{control_norm_t_Rd}}(c) \cdot \rhor,$$
    for $\AL{\ref{control_norm_t_Rd}}(c) $ a constant depending only on $c, \eta, d, \alpha$ and $t$. Moreover, we have $\AL{\ref{control_norm_t_Rd}}(c) \underset{c \to 0}{\to} 0$ and $\AL{\ref{control_norm_t_Rd}}(c) \underset{c \to + \infty}{\to} + \infty$.
    \item There exists a constant $\BL{\ref{control_norm_t_Rd}}$ such that $\|p\|_t \leq \BL{\ref{control_norm_t_Rd}} \cdot L^{\frac{d(t-1)}{t(\alpha+d)}}$.
\end{enumerate}
\end{lemma}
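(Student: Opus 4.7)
The plan is to use Assumption \eqref{simplifyingAssumption} as a local oscillation bound on $p$ at a scale $h$, and to balance a ``concentrated'' versus a ``diffuse'' contribution to $\int_{\Omega'} p^t$ by choosing $h$ optimally. Fix $h>0$, and cover $\Omega'$ by a disjoint family of subcubes $(C_k)_k$ of edge length comparable to $h$: this is possible because $\Omega'$ is a countable union of cubes, using a few subcubes of edge length in $[h/2,h]$ near the boundary of each component. Applying Assumption \eqref{simplifyingAssumption} symmetrically (i.e.\ swapping $x$ and $y$) to the pair of points of $C_k$ realising $\max_{C_k}p$ and $\min_{C_k}p$ gives
\[
\max_{C_k} p \;\le\; (1+\cstar)\,h^{-d}\!\int_{C_k} p \;+\; L(h\sqrt d)^\alpha.
\]

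Split $(C_k)_k$ into \emph{big} subcubes, with $\int_{C_k} p \ge Lh^{\alpha+d}$, and \emph{small} subcubes, with $\int_{C_k} p < Lh^{\alpha+d}$. On a small subcube both terms of the oscillation bound are $\lesssim Lh^\alpha$, so $\max_{C_k} p \le c_1 Lh^\alpha$ and $\int_{C_k} p^t \le (c_1 Lh^\alpha)^{t-1}\int_{C_k}p$; summing produces $\int_{\Omega'_{\mathrm{small}}} p^t \le (c_1 Lh^\alpha)^{t-1}\, c/n$. On a big subcube the second term is dominated by the first, so $\max_{C_k} p \le c_2 h^{-d}\int_{C_k} p$ and $\int_{C_k} p^t \le c_2^t h^{d(1-t)}(\int_{C_k} p)^t$. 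Since each $C_k\subset \Omega'$ we have $\int_{C_k} p \le c/n$, whence $\sum_{\text{big}} (\int_{C_k} p)^t \le (c/n)^{t-1}\sum_{\text{big}} \int_{C_k} p \le (c/n)^t$. Combining,
\[
\int_{\Omega'} p^t \;\le\; C\bigl[\,h^{d(1-t)}(c/n)^t \,+\, (Lh^\alpha)^{t-1}(c/n)\,\bigr].
\]
Balancing the two summands by taking $h^{\alpha+d} \asymp c/(nL)$ produces $\int_{\Omega'} p^t \le C'\, c^{(\alpha t + d)/(\alpha+d)} \rhor^t$, hence Item 1 with
\[
\AL{\ref{control_norm_t_Rd}}(c) \;=\; (C')^{1/t}\, c^{(\alpha t + d)/(t(\alpha+d))},
\]
an increasing function of $c$ that tends to $0$ as $c\to 0$ and to $+\infty$ as $c\to +\infty$. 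Item 2 then follows at once by applying the same estimate to $\Omega' = \R^d$ with $c = n$ (since $\int_{\R^d} p = 1$), giving $\|p\|_t \le \AL{\ref{control_norm_t_Rd}}(n)\,\rhor = (C')^{1/t}\, L^{d(t-1)/(t(\alpha+d))}$.

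The only mildly delicate point is the partition bookkeeping at the boundaries of the cubic components of $\Omega'$, whose edge lengths are generally not multiples of $h$; allowing a finite number of subcubes of variable edge length in $[h/2,h]$ fixes this without altering the constants. Everything else is a routine bias--variance trade-off in $h$, and Item 2 is essentially free once Item 1 has been established, by taking $\Omega' = \R^d$.
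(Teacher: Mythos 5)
Your proof is correct and delivers the stated rate, but its route differs somewhat from the paper's. The paper first establishes a \emph{pointwise} bound $\sup_{\Omega'} p \lesssim (cL^d/n^\alpha)^{1/(\alpha+d)}$ directly: for any $x$ in the region, Assumption~\eqref{simplifyingAssumption} forces $p\ge p(x)/4$ on the ball $B\big(x,(p(x)/(4L))^{1/\alpha}\big)$, so a large value of $p(x)$ would contradict $\int p\le c/n$; it then finishes by H\"older, $\int p^t \le (\int p)^{2-t}(\int p^2)^{t-1}$ together with $\int p^2\le(\sup p)\int p$. You instead lay down a fixed grid of side $h$, extract an oscillation bound cube-by-cube, split into big and small cubes, and optimize $h$. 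This works, but the big/small dichotomy is an unnecessary detour: your own cube-wise bound $\max_{C_k}p\le (1+\cstar)h^{-d}\int_{C_k}p + L(h\sqrt d)^\alpha\le (1+\cstar)h^{-d}(c/n)+L(h\sqrt d)^\alpha$ holds uniformly in $k$, and optimizing $h$ at $h^{\alpha+d}\asymp c/(nL)$ already recovers $\sup_{\Omega'} p \lesssim L^{d/(\alpha+d)}(c/n)^{\alpha/(\alpha+d)}$, after which $\int_{\Omega'} p^t\le(\sup_{\Omega'} p)^{t-1}\int_{\Omega'} p$ closes the argument in one line, exactly as the paper does. (Incidentally, your exponent $(\alpha t+d)/(t(\alpha+d))$ on $c$ in $\AL{\ref{control_norm_t_Rd}}(c)$ is the correct one; the paper's intermediate display $p(x)\le\big(c L^d/(\Cd n^\alpha)\big)^{1/(\alpha+d)}$ has an arithmetic slip in the power of $c/\Cd$ --- it should be $\alpha/(\alpha+d)$ rather than $1/(\alpha+d)$ --- which is harmless since only the $n$ and $L$ exponents matter for the conclusion.)

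One caveat that you brush aside but that is real (and that the paper's proof shares): the tiling of $\Omega'$ by cubes of side $\asymp h^*=(c/(nL))^{1/(\alpha+d)}$ silently requires that $\Omega'$ have no cubic component of side $\ll h^*$, and your ``partition bookkeeping'' remark only covers edge lengths that fail to be multiples of $h$, not genuinely tiny components. This is not cosmetic: take $\Omega'$ a single cube of side $\asymp(c/n)^{1/d}$ located in a region where $p\asymp 1$ (such a $p$ exists in $\mathcal{P}_{\R^d}(\alpha,1,\cstar)$ for any $\alpha\le 1$); then $\int_{\Omega'}p\asymp c/n$ while $\int_{\Omega'}p^t\asymp c/n\gg\rhor^t$ for any fixed $t>1$ and large $n$, so the claimed inequality fails. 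The paper's ``half-ball'' volume estimate has exactly the same unstated hypothesis. In every actual invocation of the lemma in the paper, $\Omega'$ is either the complement of one large cube or a union of congruent tail cubes of side $\ge h^*$, so both proofs are valid in context; still, you should state the implicit geometric restriction on $\Omega'$ rather than wave it away as a boundary effect.
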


\begin{proof}[Proof of Lemma \ref{control_norm_t_Rd}]
Let $x \in \R^d \setminus \Omega'$ and let $h = \left(\frac{p(x)}{4L}\right)^{1/\alpha}$. By Assumption \eqref{simplifyingAssumption}, we have for all $y \in B(x,h)$:
\begin{align*}
    p(y) \geq \frac{p_0(x)}{2} -L\left(\frac{p(x)}{4}\right) = \frac{p(x)}{4}.
\end{align*}
Moreover, by assumption over $\Omega'$, $\text{Vol}\left(B(x,h) \cap (\R^d \setminus \Omega')\right) \geq \frac{1}{2}\text{Vol}(B(x,h))$. Therefore:
\begin{equation}\label{majorant_p_Rd}
    \int_{\R^d \setminus \Omega'} p ~\geq ~ \frac{p(x)}{4}\frac{1}{2}\text{Vol}(B(x,h)) ~ = ~ \Cd \frac{p(x)^{\frac{\alpha+d}{\alpha}}}{L^{d/\alpha}},
\end{equation}
where $\Cd = \frac{\text{Vol}\left(B(0,1)\right)}{8 \times 4^{d/\alpha}}$. 
\begin{enumerate}
    \item Therefore, if $\int_{\R^d \setminus \Omega'}p \leq \frac{c}{n}$, then $p(x) \leq \left(\frac{c}{\Cd}\frac{L^d}{n^\alpha}\right)^\frac{1}{\alpha+d}$, which yields:
\begin{align*}
    \int_{\R^d \setminus \Omega'} p^2 \leq \left(\frac{c}{\Cd}\frac{L^d}{n^\alpha}\right)^\frac{1}{\alpha+d} \times \frac{c}{n} = c \left(\frac{c}{\Cd}\right)^\frac{1}{\alpha+d}\frac{L^\frac{d}{\alpha+d}}{n^\frac{2\alpha + d}{\alpha + d}}.
\end{align*}
Now, by Hölder's inequality, we have
\begin{align*}
    \int_{\R^d \setminus \Omega'} p^t \leq \left(\int_{\R^d \setminus \Omega'} p\right)^{2-t}\left(\int_{\R^d \setminus \Omega'} p^2\right)^{t-1} =: \AL{\ref{control_norm_t_Rd}}^t(c) {\rhor}^t
\end{align*}
    \item The proof of the second assertion follows the same lines. We have by Equation \eqref{majorant_p_Rd} that $\forall x \in \R^d : 1=\int_{\R^d}p \geq \Cd \frac{p(x)^{\frac{\alpha+d}{\alpha}}}{L^{d/\alpha}}$, hence $p(x) \leq \frac{1}{\Cd^{\alpha/(\alpha+d)}}L^{\frac{d}{\alpha + d}}$ for all $x \in \R^d$. Therefore, $\int_{\R^d} p^2 \leq \frac{1}{\Cd^{\alpha/(\alpha+d)}}L^{\frac{d}{\alpha + d}} \int_{\R^d} p \leq \frac{1}{\Cd^{\alpha/(\alpha+d)}}L^{\frac{d}{\alpha + d}}$, so that by Hölder's inequality:
    \begin{align*}
    \int_{\R^d } p^t \leq \left(\int_{\R^d} p\right)^{2-t}\left(\int_{\R^d} p^2\right)^{t-1} = \Cd^{\frac{\alpha(1-t)}{\alpha+d}}L^{\frac{d(t-1)}{\alpha+d}} =: \BL{\ref{control_norm_t_Rd}}^t L^{\frac{d(t-1)}{\alpha+d}}.
\end{align*}
\end{enumerate}
\end{proof}

\boundedcase{\section{Analysis of the degenerate cases }\label{degenerate_case_appendix}
Throughout the section, assume $\Omega = [-\homega/2, \homega/2]$. Define $\text{diam}\; \mathcal{P}(\alpha, L ,\cstar) = \sup \big\{\|p-q\|_t~|~ p,q \in \mathcal{P}(\alpha,L,\cstar)\big\}$.

\begin{proposition}\label{degenerate_case_proposition}
Assume that $\alpha > 1$ and $\sup \big\{\|p-q\|_\infty~|~ p,q \in \mathcal{P}(\alpha,L,\cstar)\big\} \leq \homega^{-d}/2$. If $\text{diam}\; \mathcal{P}(\alpha,L,\cstar) \geq \frac{c}{\sqrt{n}\homega^{d-d/t}}$ for some $c > 0$ then for all $p_0 \in \mathcal{P}(\alpha,L,\cstar)$: $\rho^*(p_0) \asymp \frac{1}{\sqrt{n}\homega^{d-d/t}}$.
\end{proposition}

\subsection{Proof of Proposition \ref{degenerate_case_proposition}: Upper bound}
In this subsection, we set $h = \cch \homega$ for $\cch$ a constant, sufficiently small. Define $\hat \Delta(x) = \frac{1}{k} \sum\limits_{i=1}^k K_{\homega}(x-X_i) - p_0(x)$ and $\hat \Delta'(x) = \frac{1}{k} \sum\limits_{i=k+1}^n K_{\homega}(x-X_i) - p_0(x)$ and $\taun = \Ctaun (nh^d)^{-1}$ for $\Ctaun$ a large constant. Define the test statistic
\begin{equation}\label{def_statistic_degen}
    \Tdeg = \int_{\Omega} \hat{\Delta}(x)\hat{\Delta}'(x) dx
\end{equation}
and the test statistic
\begin{equation}\label{def_test_degen}
    \psidegen = \mathbb{1}\left\{\Tdeg > \taun\right\}.
\end{equation}

\begin{lemma}\label{upper_bound_degen}
\begin{enumerate}
    \item Under $H_0$, $\mathbb{P}_{p_0}(\psidegen=1) \leq \frac{\eta}{2}$.
    \item There exists a large constant $C>0$ such that whenever $p \in \mathcal{P}(\alpha,L,\cstar)$ satisfies $\|\Delta\|_t \geq \frac{C}{\sqrt{n} \homega^{d-d/t}}$, we have $\mathbb{P}_{p}(\psidegen=0) \leq \frac{\eta}{2}$.
\end{enumerate}
\end{lemma}

\begin{proof}[Proof of Lemma \ref{upper_bound_degen}]
We essentially follow the same steps as for the proof of the bulk upper bound by replacing $\omega$ by $1$ and $\TB$ by $\Omega$. \\

\begin{enumerate}
    \item Under $H_0$, we have
\begin{align*}
    \mathbb{E}_{p_0}\Tdeg = \int_\Omega \mathbb{E}_{p_0}^2[\hat{\Delta}(x)] dx \leq \CK^2 \cch^2 L \homega^{2\alpha+d} \leq \frac{\taun}{2} ~~ \text{ for $\Ctaun$ large enough. Moreover,}
\end{align*}

\begin{align*}
    \mathbb{V}_{p_0}\Tdeg \leq \mathbb{E}_{p_0}(\Tdeg^2) = \iint_{\Omega^2} \mathbb{E}_{p_0}^2\left[\hat{\Delta}(x)\hat{\Delta}(y)\right] dx dy \leq  (J_1^{1/2} + J_2^{1/2})^2,
\end{align*}
by Equation \eqref{exp_delta2_delta2}, where $ J_1 $ and $ J_2$ are obtained from the expressions \eqref{def_J1} and \eqref{def_J22} by replacing $\omega$ by $1$ and $\TB$ by $\Omega$. We get
\begin{align*}
    \sqrt{J_1} &= \int_\Omega \mathbb{E}_{p_0}^2\left(K_{\homega}(x-X) - p_0(x)\right) \leq \CK^2 \cch^2 L^2 \homega^{2\alpha+d},\\
    J_2 &= \frac{1}{k^2} \iint_{\Omega^2} \text{cov}^2\left(K_{\homega}(x-X),K_{\homega}(y-X)\right) dx dy\\
    & \leq \frac{1}{k^2} \iint_{\Omega^2} \mathbb{V}_{p_0}\left(K_{\homega}(x-X)\right)\mathbb{V}_{p_0}\left(K_{\homega}(y-X)\right) dx dy\\
    & \leq \frac{1}{k^2} {\CK^{(2)}}^2 \iint_{\Omega^2} \frac{\max p_0^2}{\homega^{2d}} dx dy\\
    & \leq \frac{1}{k^2}{\CK^{(2)}}^2 \frac{4}{h^{2d}} \ll \taun ~~ \text{ for $\Ctaun$ large enough,}
\end{align*}
since $\max_{p_0} \leq 2\homega^{-d}$. Hence, $\mathbb{V}_{p_0}(\Tdeg) \leq \frac{\eta}{2} \taun^2$ for $\Ctaun$ large enough. Now, by Chebyshev's inequality, $\mathbb{P}_{p_0}(\psidegen=1) = \mathbb{P}_{p_0}\left(\Tdeg>\mathbb{E}\Tdeg + \taun \right) \leq \mathbb{V}\Tdeg/\taun^2 \leq \eta/2$.
\item Under the alternative, assume $p \in \mathcal{P}(\alpha,L,\cstar)$ is such that $\|\Delta\|_t \geq \frac{C}{\sqrt{n} \homega^{d-d/t}}$ for some sufficiently large constant $C>0$. By Hölder's inequality:
\begin{align*}
    \|\Delta\|_2^t\; \homega^{d \frac{2-t}{2}} \geq \|\Delta\|_t^t \geq \frac{C^t}{\sqrt{n}^t \homega^{dt-d}} ~~ \text{ hence } ~~ \|\Delta\|_2^2 \geq \frac{C^2}{n\homega^d}.
\end{align*}
\begin{align*}
    \text{Now, } ~~ \mathbb{E}_p \Tdeg &= \int_\Omega \mathbb{E}_p\left[\hat{\Delta}(x)\right]^2 dx \geq \int_\Omega \left(|\Delta(x)| - \CK L\homega^\alpha \right)^2 dx\\
    & = \big\|\,|\Delta| - \CK L \homega^\alpha\, \big\|_{L_2}^2 \geq \big|\|\Delta\|_{L_2} - \|\CK L \homega^\alpha\|_{L_2}\big|^2\\
    & \geq \left|\frac{C}{\sqrt{n}\homega^{d/2}} - \frac{C}{2\sqrt{n}\homega^{d/2}} \right|^2 = \frac{C^2}{4n\homega^d},
\end{align*}
where at the last inequality we used $L^2 \homega^{2\alpha+d} \leq \csmall^2/\left(n^2\homega^d\right)$ and chose $\csmall$ small enough. Next, by Lemma \ref{splitting_variance}:
\begin{align*}
    \mathbb{V}\Tdeg \leq \left[(\|\Delta\|_2 + \taun)^2 + \sqrt{J_2}\right]^2 - \mathbb{E}^2\Tdeg,
\end{align*}
where, similarly as above, $J_2 \leq \frac{16 {\CK^{(2)}}^2}{n^2\homega^{2d}} \leq \taun^2$ for $\Ctaun$ large enough. Moreover, 
\begin{align*}
    \mathbb{E}^2(\Tdeg) \geq \left(\|\Delta\|_2^2 - \CK^2 L^2 \homega^{2\alpha+d}\right)^2 \geq \left(\|\Delta\|_2^2 - \taun\right)^2 ~~ \text{ for $\Ctaun$ large enough.}
\end{align*}
Hence $\mathbb{V}(\Tdeg) = O(\|\Delta\|_2^3 \taun)$ as $C \to +\infty$ and when $\Ctaun$ is fixed. Taking successively $\Ctaun$ and $C$ large enough, $\mathbb{V} \Tdeg \leq \frac{\eta}{4}\mathbb{E}^2\Tdeg$. We conclude by applying Chebyshev's inequality:
\begin{align*}
    \mathbb{P}_p(\psidegen=0) \leq \mathbb{P}_p\left(|\Tdeg-\mathbb{E \Tdeg}|\geq \mathbb{E}\Tdeg /2\right) \leq \frac{\mathbb{V}_p\Tdeg}{\left(\mathbb{E}_p \Tdeg \right)^2/2} \leq\frac{\eta}{2}.
\end{align*}
\end{enumerate}
\end{proof}

\subsection{Proof of Proposition \ref{degenerate_case_proposition}: Lower bound}
\begin{lemma}\label{lower_bound_degen}
Set $$\phi(x) = \cphi \frac{x_1}{\sqrt{n}\homega^{d+1}}$$
where $\cphi$ is a small constant, and $x_1$ denotes the first coordinate of any vector $x \in \Omega$. Set moreover for any $\epsilon \in \{\pm 1\}: p_\epsilon^{(n)} = p_0 + \epsilon \phi$ and assume $\epsilon \sim Rad(\frac{1}{2})$. We have:
\begin{enumerate}
    \item For all $\epsilon \in \{\pm 1\}$, $p_\epsilon^{(n)} \in \mathcal{P}(\alpha,L,\cstar)$,
    \item If the constant $\cphi$ is small enough, it holds $d_{TV}(p_0^{\otimes n}, \mathbb{E}_{\epsilon}(p_{\epsilon}^{(n)})) < \frac{1}{\eta}$, where the expectation is taken according to the prior,
    \item For fixed $\epsilon \in \{\pm 1\}$ it holds $\|p_0 - p_\epsilon^{(n)}\|_t = \frac{\cphi}{\sqrt{n} \homega^{d-d/t}}$.
\end{enumerate}
\end{lemma}

\begin{proof}[Proof of Lemma \ref{lower_bound_degen}]
\begin{enumerate}
    \item We have for all $x,y \in \Omega$:
    \begin{align*}
        |p_0(x)+\phi(x) - p_0(y)-\phi(y)| &\leq |p_0(x) - p_0(y)| + \cphi\frac{|x-y|}{\sqrt{n}\homega^{d+1}} \leq \cstar p(x) + L\homega^\alpha \sqrt{d}^\alpha + \frac{\cphi}{\sqrt{n}\homega^d} \leq \cstar''\homega^{-d}
    \end{align*}
    where $\cstar'' < \frac{1}{2}$ for $\cphi$ small enough. Moreover, we have $p_0 + \phi \in H(\alpha, L)$, $\int_{\Omega} \phi = 0$ and $p_0 \pm \phi \geq 0$.
    \item Proceeding like in the proof of the bulk lower bound (Proposition \ref{lower_bound_bulk}) for $N=1$ and $\phi$ replacing $\phi_j$, we obtain from equation \eqref{quantite_chi2}
    \begin{align*}
        d_{TV}(p_0, \mathbb{E}_\epsilon(p_\epsilon^{(n)})) &\leq -1 + \chi^2\left(p_0 || \mathbb{E}_\epsilon(p_\epsilon^{(n)})\right) \leq -1 + \exp\left(\frac{1}{2} n^2 \left(\int_\Omega \frac{\phi^2}{p_0}\right)^2\right) \\
        & \leq -1+\exp(\cphi) < 1-\eta ~~ \text{ for $\cphi$ small enough.}
    \end{align*}
    \item The last assertion can be shown by direct algebra.
\end{enumerate}
\end{proof}

\subsection{Case $\max\limits_\B \hB > \homega$}\label{hb>homega}
Let $x = \arg\max\limits_\Omega p_0$. By definition of the bulk we have $p_0(x) \geq \cA L \hB^\alpha(x) \geq \cA \Clarge^\alpha L \homega^\alpha$. Now, by Assumption \eqref{simplifyingAssumption}, we have for all $y \in \Omega$:
\begin{align*}
    p_0(y) \geq (1-\cstar)p_0(x) - L\homega^\alpha \sqrt{d^\alpha} \geq \left(1-\cstar - \frac{\sqrt{d}^\alpha}{\cA\Clarge^\alpha}\right)p_0(x) \geq \frac{1}{2} \homega^{-d}
\end{align*}
for $\Clarge$ large enough.
Moreover, for $y \in \Omega$ such that $p_0(y) = \homega^{-d}$ we have 
\begin{align*}
    p_0(x) \leq (1+\cstar) p_0(y) + L\homega^\alpha \sqrt{d}^\alpha \leq (1+\cstar) p_0(y) + \frac{\sqrt{d}^\alpha}{\cA\Clarge^\alpha}p_0(x)
    \Longrightarrow ~ p_0(x) \leq \frac{3}{2} \homega^{-d}
\end{align*}
for $\Clarge$ large enough. Proposition \ref{degenerate_case_proposition} yields the result.

\subsection{Case $Ln \homega^{\alpha+d}\leq \csmall$ and $\alpha \leq 1$}\label{degen_alpha_leq_1}

If $\alpha \leq 1$ then let $x \in \Omega$ such that $p_0(x) = \homega^{-d}$. By the definition of the Hölder class, for all $y \in \Omega$:
\begin{align*}
    |p(x) - p(y)| \leq L\|x-y\|^\alpha \leq L\homega^\alpha \sqrt{d}^\alpha \leq \frac{\csmall\sqrt{d}^\alpha}{n\homega^d}
\end{align*}
so that for all $y \in \Omega, p(y) \in [h^{-d} \pm \frac{\csmall}{n} \sqrt{d}^\alpha \homega^{-d}]$. Thus, the $L_t$ diameter of the class is at most:
\begin{align*}
    \left(\int_{\Omega} \big(2\frac{\csmall}{n} \sqrt{d}^\alpha \homega^{-d}\big)^t dx\right)^{1/t} = 2 \frac{\csmall}{n} \sqrt{d}^\alpha \frac{1}{h^{d-d/t}}.
\end{align*}

Now, the total variation between any two probability distributions $p,q \in \mathcal{P}(\alpha,L)$ writes 
\begin{align*}
    d_{TV}(p,q) = \frac{1}{2} \int_\Omega |p-q| \leq \frac{\csmall}{n}\sqrt{d},
\end{align*}
so that $d_{TV}(p^{\otimes n}, q^{\otimes n}) \leq  \csmall \sqrt{d}^\alpha < 1-\eta$ for $\csmall$ sufficiently small.

\subsection{Case $Ln\, \homega^{\alpha+d} \leq \csmall$ and $\alpha>1$}\label{degen_alpha>1}
By Assumption \eqref{simplifyingAssumption} any $p \in \mathcal{P}(\alpha,L,\cstar)$ takes only values in $[1 \pm \frac{1}{2}]\homega^{-d}$. Indeed, for $x\in\Omega$ such that $p(x) = \homega^{-d}$, we have 
\begin{align}
    |p_0(x) - p_0(y)| \leq \cstar \homega^{-d} + L\homega^\alpha\sqrt{d}^\alpha \leq  \left(\cstar+\frac{\csmall \sqrt{d}}{n}\right)\homega^{-d} \leq \frac{1}{2} \homega^{-d}   \nonumber
\end{align}
for $\csmall$ sufficiently small. Proposition \ref{degenerate_case_proposition} yields the result.
}

\section{Homogeneity and rescaling}\label{Extension_unbounded_domains}

%Note that any density defined over a bounded cubic domain in $\R^d$ can be modified through translation and rotation to get a density over $[0,\lambda]^d$ for some $\lambda>0$. We consider $d$ and $t$ as being fixed. 
For any cubic domain $\Omega \subset \R^d$, introduce
\begin{align}
    %\forall a < b: \mathcal{P}_{a,b}(\alpha,L,\cstar) &= \left\{p\text{  density over } [a,b]^d ~\big|~ p\in H(\alpha,L) \text{ and $p$ satisfies \eqref{simplifyingAssumption} over $[a,b]^d$} \right\}, \label{def_P_ab}\\
     \mathcal{P}_{\Omega}(\alpha,L,\cstar) &= \left\{p\text{  density over } \Omega' ~\big|~ p\in H(\alpha,L) \text{ and $p$ satisfies \eqref{simplifyingAssumption} over $\Omega'$} \right\}, \label{def_P_cubic}
\end{align}

%$\forall \lambda>0: \mathcal{P}_\lambda(\alpha,L,\cstar) &= \left\{p\text{  density over } [0,\lambda]^d ~\big|~ p\in H(\alpha,L) \text{ and $p$ satisfies \eqref{simplifyingAssumption} over $[0,\lambda]^d$} \right\}, \label{def_P_H}\\$
For $\lambda>0$, define the rescaling operator:
\begin{equation}\label{rescaling_operator}
    \Phi_{\lambda}: \left\{ \begin{array}{ccc}
        \mathcal{P}_{\lambda \Omega}(\alpha,L,\cstar) & \longrightarrow & \mathcal{P}_{\Omega}(\alpha, L \lambda^{\alpha+d},\cstar)\\
        &&\\
        p & \longmapsto & \lambda^d\, p(\lambda \, \cdot \,) 
    \end{array}\right. 
\end{equation}
where $p(\lambda \, \cdot \,) : x \mapsto p(\lambda x)$ and $\lambda \Omega =\{\lambda x : x \in \Omega\}$. For any cubic domain $\Omega \subset \R^d$, we define $\rho_{\Omega}^*(p_0,\alpha,L,n)$ as the minimax separation radius for the following testing problem over $\Omega$, upon observing $X_1,\dots,X_n$ iid with density $p \in \mathcal{P}_{\Omega}(\alpha,L,\cstar)$ 
%For fixed $\lambda>0$ and for $p_0 \in \mathcal{P}_\lambda(\alpha,L,\cstar)$ (resp. $p_0 \in \mathcal{P}_{\R^d}(\alpha,L,\cstar)$), define $\rho_\lambda^*(p_0,\alpha,L,n)$ (resp. $\rho_{\R^d}^*(p_0,\alpha,L,n)$ ) as the minimax separation radius for the following testing problem over $[0,\lambda]^d$ (resp. $\R^d$), upon observing $X_1,\dots,X_n$ iid with density $p \in \mathcal{P}_\lambda(\alpha,L,\cstar)$ (resp. $\mathcal{P}_{\R^d}(\alpha,L,\cstar)$):
\begin{align}\label{testing_pb_lambda}
\begin{split}
    H_0~~~ &: \; p = p_0 \;\;~~~~~~\; \text{ versus }\\
    H_1^{(\Omega)}(\rho) &:\; p \in \mathcal{P}_{\Omega}(\alpha,L', \cstar') ~ \text{ s.t. } \|p-p_0\|_t \geq \rho.
\end{split}
\end{align}
%Similarly, if $\Omega'$ is of the form $[0,\lambda]^d$ for $\lambda>0$, we denote for simplicity $\rho_{\Omega'}^*(p_0,\alpha,L,n)$ by $\rho_\lambda^*(p_0,\alpha,L,n)$.
%We now state the following Proposition, guaranteeing that our analysis over $[0,1]^d$ can be transferred, by homogeneity, to densities on any bounded cube $[0,\lambda]^d$ for $\lambda>0$, and by translation and rotation, to any cubic domain in $\R^d$. 

\begin{proposition}\label{Rescaling_proposition}(Rescaling)
Let $\lambda >0$ and let $p_0 \in \mathcal{P}_{\lambda \Omega}(\alpha,\, L,\,\cstar)$. It holds
$$ \rho_{\Omega}^*\left(\Phi_{\lambda}(p_0),\, \alpha,\, L\lambda^{\alpha+d},\, n\right) = \lambda^{d-d/t}\,\rho_{\lambda \Omega}^*\left(p_0, \alpha, L, n \right).$$
\end{proposition}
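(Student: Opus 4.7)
The plan is to reduce the identity to a bijection between the two testing problems induced by the rescaling $\Phi_\lambda$. Under this bijection, the hypothesis classes, the $L_t$ separation, the distribution of the data, and the tests all transform in a controlled way, and the minimax risks on the two sides become literally equal.

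First I would verify that $\Phi_\lambda$ really is a bijection $\mathcal{P}_{\lambda\Omega}(\alpha, L, \cstar) \to \mathcal{P}_{\Omega}(\alpha, L\lambda^{\alpha+d}, \cstar)$ with inverse $\Phi_{1/\lambda}$. Positivity and the total-mass-one condition follow from an immediate change of variables $y = \lambda x$. For the H\"older condition, one checks that the Taylor polynomial of $\Phi_\lambda(p)$ at $x$ evaluated at $y-x$ is $\lambda^d P_p(\lambda x, \lambda(y-x))$, so that
\[
|\Phi_\lambda(p)(x) - P_{\Phi_\lambda(p)}(x, y-x)| = \lambda^d |p(\lambda x) - P_p(\lambda x, \lambda(y-x))| \le \lambda^d L\,\|\lambda(x-y)\|^\alpha = L\lambda^{\alpha+d}\|x-y\|^\alpha.
\]
Assumption \eqref{simplifyingAssumption} transforms analogously: $|\Phi_\lambda(p)(x)-\Phi_\lambda(p)(y)| = \lambda^d|p(\lambda x)-p(\lambda y)| \le \cstar \Phi_\lambda(p)(x) + L\lambda^{\alpha+d}\|x-y\|^\alpha$, so $\cstar$ is preserved while $L$ becomes $L\lambda^{\alpha+d}$, as required.

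Second, a change of variables gives the key $L_t$ identity: for any $p,q \in \mathcal{P}_{\lambda\Omega}(\alpha,L,\cstar)$,
\[
\|\Phi_\lambda(p)-\Phi_\lambda(q)\|_t^t = \int_\Omega \lambda^{dt}|p(\lambda x)-q(\lambda x)|^t\,dx = \lambda^{dt-d}\|p-q\|_t^t,
\]
so $\|\Phi_\lambda(p)-\Phi_\lambda(q)\|_t = \lambda^{d-d/t}\|p-q\|_t$. Third, if $X_1,\dots,X_n$ are iid with density $p$ on $\lambda\Omega$, then $Y_i := X_i/\lambda$ are iid with density $\Phi_\lambda(p)$ on $\Omega$. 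Consequently, the map $\psi \mapsto \tilde\psi$ defined by $\tilde\psi(x_1,\dots,x_n) := \psi(x_1/\lambda,\dots,x_n/\lambda)$ is a bijection between tests on $(\lambda\Omega)^n$ and tests on $\Omega^n$ preserving type-I and type-II error under the correspondences $p \leftrightarrow \Phi_\lambda(p)$, $p_0 \leftrightarrow \Phi_\lambda(p_0)$.

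Combining these, for every $\rho>0$, problem \eqref{testing_pb_lambda} on $\lambda\Omega$ with null $p_0$, smoothness $L$, and radius $\rho$ is equivalent, risk-by-risk, to the same problem on $\Omega$ with null $\Phi_\lambda(p_0)$, smoothness $L\lambda^{\alpha+d}$, and radius $\lambda^{d-d/t}\rho$. Taking the infimum in the definition \eqref{separation_radius} of $\rho^*$ on both sides yields
\[
\rho_\Omega^*\bigl(\Phi_\lambda(p_0),\alpha,L\lambda^{\alpha+d},n\bigr) = \lambda^{d-d/t}\,\rho_{\lambda\Omega}^*(p_0,\alpha,L,n).
\]
The only real obstacle is the first step: making sure Assumption \eqref{simplifyingAssumption} and the H\"older bound transform with the correct joint scaling of $L$ and $\cstar$ (in particular that $\cstar$ is invariant while $L$ picks up the factor $\lambda^{\alpha+d}$); everything after that is a routine change-of-variables plus a measure-theoretic identification of the two experiments.
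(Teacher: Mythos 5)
Your proof is correct and follows essentially the same route as the paper's: establish that $\Phi_\lambda$ is a bijection between the two H\"older classes, note that it scales $L_t$ distances by $\lambda^{d-d/t}$, and observe that pulling back tests by $x_i \mapsto x_i/\lambda$ preserves type-I and type-II errors under the correspondence $p \leftrightarrow \Phi_\lambda(p)$. The only cosmetic difference is that you obtain the equality directly by noting the two experiments have identical minimax risk for every $\rho$, whereas the paper argues the two inequalities separately via an optimal test and a symmetry appeal to $\Phi_{\lambda^{-1}}$; you also spell out the verification that Assumption \eqref{simplifyingAssumption} and the H\"older bound transform with the right scaling of $L$ and $\cstar$, which the paper leaves as ``direct to prove.''
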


\begin{proposition}\label{restriction_of_support}(Restriction of support)
Let $p_0 \in \mathcal{P}_{\Omega}(\alpha,L,\cstar)$ and $\Omega' \subset \Omega$ another possibly bounded cubic domain of $\R^d$. Assume that the support of $p_0$ is included in $\Omega'$. Then $$\rho_{\Omega}^*(p_0,n,\alpha,L) \asymp \rho_{\Omega'}^*(p_0,n,\alpha,L).$$
\end{proposition}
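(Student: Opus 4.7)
\textbf{Proof proposal for Proposition \ref{restriction_of_support}.}
The plan is to reduce both $\rho_\Omega^*(p_0,n,\alpha,L)$ and $\rho_{\Omega'}^*(p_0,n,\alpha,L)$ to the explicit formula of Theorem \ref{main_th} and observe that the two formulas produce numerically equal quantities. To that end I will track each ingredient of the expression \eqref{expression_rho_star} as a functional of $p_0$: the auxiliary cut-offs $\uI$ and $\uA$ depend on $p_0$ only through the ratio $p_0^2[\T(u)]/p_0[\T(u)]^{d/(\alpha+d)}$ and through $\I=p_0^r[\B(\uI)]$; the three terms $\rhob,\rhot,\rhor$ likewise depend on $p_0$ only through $\I$, $p_0[\T(\uA)]$, and the constants $n,L,d,\alpha,t$. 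Since $p_0\equiv 0$ on $\Omega\setminus\Omega'$, every integral $\int_\Omega p_0^s\,\mathbb{1}_{\{p_0\in A\}}$ coincides with its $\Omega'$-analogue for any $s>0$ and any Borel set $A\subset\mathbb{R}_+$. Consequently, $\uI,\uA,\I$, and hence $\rhob+\rhot+\rhor$, take the same value in both settings, and since the multiplicative constants implicit in $\asymp$ and the threshold $\barn$ in Theorem \ref{main_th} depend only on $\eta,\alpha,t,d$, one concludes $\rho_\Omega^*(p_0,\cdot)\asymp\rho_{\Omega'}^*(p_0,\cdot)$.

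The content left to verify is that Theorem \ref{main_th} actually applies in both settings, i.e.\ that its upper bound (the test) and its lower bound (the prior) transfer. For the lower bound direction $\rho_\Omega^*\gtrsim\rho_{\Omega'}^*$, I would take the bulk and tail priors constructed in Appendices \ref{Lower_bound_bulk_appendix} and \ref{LB_tail_appendix} for the $\Omega'$-problem and extend them by $0$ on $\Omega\setminus\Omega'$. The bulk perturbations $\phi_j$ are supported inside cells $B_j\subset \B(\uA/2)$ where $p_0\geq\uA/2>0$, and the tail perturbations $\gammaup_j,\gammadown_j$ live inside cells $\widetilde C_j$ which by construction intersect $\{p_0>0\}\subset\Omega'$ and have edge length $\asymp \hT$; in both cases the support of the perturbations is a positive Euclidean distance from $\partial\Omega'$ (because $p_0\in H(\alpha,L)$ on $\Omega$ forces $p_0$ to vanish continuously at $\partial\Omega'$, so the level sets $\{p_0\geq\uA/2\}$ and the active tail cells are compactly contained in the interior of $\Omega'$). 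Therefore the extended priors lie in $\mathcal{P}_\Omega(\alpha,L',\cstar')$ by the same verification as Proposition \ref{prior_bulk_admissible} and Proposition \ref{Properties_gammadown}, and the total variation distance to $p_0^{\otimes n}$ is unchanged since the data is supported on $\Omega'$ in both cases.

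For the upper bound direction $\rho_\Omega^*\lesssim\rho_{\Omega'}^*$, I would take the optimal test $\psi^*=\psib\lor\psit$ constructed for the $\Omega'$-problem and combine it with a simple auxiliary test $\psiout=\mathbb{1}\{\exists\,i:X_i\notin\Omega'\}$. Under $H_0$, $\psiout\equiv 0$ almost surely since $p_0$ is supported in $\Omega'$, so the type-I error is unchanged. Under an alternative $p\in\mathcal{P}_\Omega(\alpha,L',\cstar')$ with $\|p-p_0\|_t\geq C\rho_{\Omega'}^*$, split into two cases according to whether $\int_{\Omega\setminus\Omega'}p$ is above or below a threshold of order $\rho_{\Omega'}^{*\,t}/(\mathrm{diameter})^{d(t-1)}$: in the first case $\psiout=1$ with probability $\geq 1-\eta/2$ by a standard binomial argument (or by the same argument as the $\psi_1$ analysis in Lemma \ref{Erreur_II_psi1}, using that $\rhor\lesssim\rho_{\Omega'}^*$); in the second case $\|(p-p_0)\mathbb{1}_{\Omega'}\|_t\gtrsim\rho_{\Omega'}^*$ so $\psi^*$ fires with high probability.

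The main obstacle is step 2 of the upper-bound argument, i.e.\ showing that when $p$ has small but nonzero mass outside $\Omega'$, its restriction $p\,\mathbb{1}_{\Omega'}$ (appropriately renormalized) still lies in a class to which the $\Omega'$-test applies with unchanged guarantees. This is where the flexibility $\delta>0$ in the definition of $L'=(1+\delta)L$ and $\cstar'=(1+\delta)\cstar$ is used: since the constants are only inflated multiplicatively, a further inflation by a factor $1+O(1/n)$ coming from the renormalization is absorbed. All the other verifications (Hölder regularity and \eqref{simplifyingAssumption} across the extension by zero) reduce to the smooth vanishing of $p_0$ near $\partial\Omega'$ and the interior localization of the perturbations, and do not present further difficulty.
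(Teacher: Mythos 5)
Your overall plan — prove $\rho_\Omega^*\gtrsim\rho_{\Omega'}^*$ by transferring the lower-bound priors, and $\rho_\Omega^*\lesssim\rho_{\Omega'}^*$ by combining the $\Omega'$-test $\psi^*$ with the auxiliary test $\psiout=\mathbb{1}\{\exists i: X_i\notin\Omega'\}$ via a case split on the mass $p$ places outside $\Omega'$ — is the same structure the paper uses. The opening digression about both sides reducing to the explicit formula of Theorem \ref{main_th} is harmless but not load-bearing; you correctly abandon it and switch to the direct two-sided comparison. Your treatment of the lower-bound direction (extend the $\Omega'$-priors by zero and check admissibility) is more explicit than the paper's one-word ``clearly'' and is fine.

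The gap is in the upper-bound case split. You propose to split on $\int_{\Omega\setminus\Omega'}p$ with a threshold of order $\rho_{\Omega'}^{*\,t}/(\mathrm{diameter})^{d(t-1)}$. In the paper's setting $\Omega=\R^d$ and $\Omega'$ can be bounded, so $\Omega\setminus\Omega'$ has infinite diameter and this threshold degenerates to zero: the case split becomes vacuous and the second branch never activates. The conversion you need — from ``small $L^1$ mass of $p$ on $\Omega\setminus\Omega'$'' to ``small $L^t$ mass of $p$ on $\Omega\setminus\Omega'$'' — cannot come from Hölder's inequality on a set of bounded Lebesgue measure; on an unbounded set it must come from the Hölder \emph{smoothness} of $p$ itself. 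This is precisely Lemma \ref{control_norm_t_Rd}, which shows that for $p\in\mathcal{P}_{\R^d}(\alpha,L,\cstar)$, $\int_{\Omega\setminus\Omega'}p\leq c/n$ forces $(\int_{\Omega\setminus\Omega'}p^t)^{1/t}\lesssim\rhor$, with the bound controlled by $c$ and independent of the geometry of $\Omega\setminus\Omega'$. Using it, the correct dispatch is: if $\int_{\Omega\setminus\Omega'}p^t\geq\Cout\rhor^t$ then (contrapositive of Lemma \ref{control_norm_t_Rd}) $\int_{\Omega\setminus\Omega'}p\geq\cout/n$ and $\psiout$ rejects with high probability; otherwise $\|(p-p_0)\mathbb{1}_{\Omega'}\|_t^t\geq\|p-p_0\|_t^t-\Cout\rhor^t\gtrsim\rho_{\Omega'}^{*\,t}$ (using $\rho_{\Omega'}^*\gtrsim\rhor$) and $\psi^*$ rejects. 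Your worry about renormalizing $p\mathbb{1}_{\Omega'}$ and absorbing the perturbation into the $(1+\delta)$ slack is a legitimate technical point that the paper leaves implicit, and your resolution of it is reasonable.
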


\begin{corollary}\label{cor:notion_difficulty}
The quantity $\rho^*_{\R^d}(p_0,\alpha,L,n,t)/\rhor(\alpha,L,n=1,t)$ is invariant by rescaling.
\end{corollary}
Corollary~\ref{cor:notion_difficulty} is a direct consequence of the definition of $\rhor$ in~\eqref{eq:def_rhor} and Proposition~\ref{Rescaling_proposition}.

\begin{proof}[Proof of Proposition \ref{Rescaling_proposition}]
It is direct to prove that $\forall \lambda>0, \; \Phi_\lambda$ is well-defined and bijective. We can also immediately check that $$\forall p,q \in \mathcal{P}_{\lambda \Omega}(\alpha,L,\cstar): \|\Phi_{\lambda}(p) - \Phi_{\lambda}(q)\|_t = \lambda^{d-d/t}\|p-q\|_t.$$ 
Let $\psi_\lambda^*$ be a test such that $$\forall p\in \mathcal{P}_{\lambda \Omega}(\alpha,L,\cstar): \|p-p_0\|_t \geq C \rho_{\lambda \Omega}^*\left(p_0, \alpha, L, n \right) \Longrightarrow \mathbb{P}_{p_0}(\psi_\lambda^* = 1) + \mathbb{P}_{p}(\psi_\lambda^* = 0) \leq \eta,$$ 
for some constant $C$. 
Now, let $\widetilde p \in \mathcal{P}_{\Omega}(\alpha,L\lambda^{\alpha+d},\cstar)$ such that $$\|\widetilde p - \Phi_{\lambda}(p_0)\|_t \geq C \lambda^{d-\frac{d}{t}}\,\rho_{\lambda \Omega}^*\left(p_0, \alpha, L, n \right).$$ 
It then follows that $\|\Phi_{\lambda^{-1}}(\widetilde p) - p_0\|_t \geq C \rho_{\lambda \Omega}^*\left(p_0, \alpha, L, n \right)$ hence $\mathbb{P}_{p_0}(\psi_\lambda^* = 1) + \mathbb{P}_{\Phi_{\lambda^{-1}}(\widetilde p)}(\psi_\lambda^* = 0) \leq \eta$ i.e $\mathbb{P}_{\Phi_\lambda(p_0)}(\psi^* = 1) + \mathbb{P}_{\widetilde p}(\psi^* = 0) \leq \eta$ where $\psi^*(x_1, \dots, x_n) = \psi_\lambda^*(\lambda x_1, \dots, \lambda x_n)$. 
Therefore, $C \lambda^{d-d/t} \rho_{\lambda \Omega}^*\left(p_0, \alpha, L, n \right) \geq \rho_{\Omega}^*\left(\Phi(p_0), \alpha, L \lambda^{\alpha+d}, n \right)$ and the converse bound can be proved by symmetry using $\Phi_{\lambda^{-1}}$.
\end{proof}

\begin{proof}[Proof of Proposition \ref{restriction_of_support}]
Clearly $\rho^*_{\Omega}(p_0) \geq \rho^*_{\Omega'}(p_0)$. For the converse bound, we define $\psiout = \mathbb{1}\left\{\bigvee\limits_{i=1}^n (X_i \notin \Omega')\right\}$ the test rejecting $H_0$ whenever one of the observations $X_i$ belongs to $\Omega \setminus \Omega'$. Lemma \ref{control_norm_t_Rd} shows that, for $\cout$ and $\Cout$ two large enough constants, if $p \in \mathcal{P}_{\Omega}(\alpha,L,\cstar)$ is such that $\int_{\Omega \setminus \Omega'}p^t \geq \Cout \rhor^t$  then $\int_{\Omega\setminus \Omega'} p \geq \frac{\cout}{n}$, so that $\mathbb{P}_p(\psiout=1) > 1-\eta/2$. 
Now, assume  $\|p-p_0\|_t \geq C\rho^*_{\Omega'}(p_0)$ over $\Omega$, for $C$ a large enough constant, and let $\psi^*$ be an optimal test over $\Omega'$, i.e. such that $\mathbb{P}_{p_{0,\Omega'}}(\psi^*=1) + \mathbb{P}_{p_{\Omega'}}(\psi^*=0) \leq \eta$ whenever $\|p-p_0\|_t \geq C' \rho_{\Omega'}^*(p_0)$. Then if $\int_{\Omega \setminus \Omega'}p^t \geq \Cout \rhor^t$, $\mathbb{P}_{p_{\Omega'}}(\psiout\lor\psi^*=0)\leq \eta/2$. Otherwise, $\int_{\Omega'}|p-p_0|^t \geq C'\rho^*_\Omega(p_0) - \Cout\rhor \geq \frac{C'}{2}\rho^*_\Omega(p_0)$ so that $\mathbb{P}_{p_{\Omega'}}(\psiout\lor\psi^*=0)\leq \eta/2$ for $C'$ large enough. Moreover, under $H_0$ , we clearly have $\mathbb{P}_{p_0}(\psiout\lor\psi^*=1)\leq \eta/2$. Hence the result.
\end{proof}

\section{Proofs of examples}\label{Proof_examples}

\subsection{Uniform distribution}
\boundedcase{We set $p_0 = \lambda^{-d}$ over $\Omega := [0,\lambda]^d$. By the definition of $\uI$ \eqref{def_uI}, we can immediately check that $\uI < \lambda^{-d}$ if and only if $\TL \leq \big(\cI |\Omega|\big)^{-\alpha - d}$. Moreover, by the definition of $\uA$ \eqref{def_uA}, and using $\cI^{\alpha+d} = \cA^{4\alpha+d}$, we can also immediately check that $\uI < \lambda^{-d} \Longleftrightarrow \uA < \lambda^{-d} \TL \leq \big(\cI |\Omega|\big)^{-\alpha - d}$. Therefore, only exactly one of the two terms $\rhob$ or $\rhot$ is non-zero. The bound \eqref{rate_uniforme} can be immediately obtained by simplifying the expression of the minimax separation radius \eqref{expression_rho_star}.}
\unboundedcase{See \cite{ingster2012nonparametric} for $\lambda=1$ and use Proposition \ref{Rescaling_proposition} for arbitrary $\lambda>0$.}

\subsection{Arbitrary $p_0$ over $\Omega = [-1,1]^d$ with $L=1$}

First, note that by equation \eqref{majorant_p_Rd}, $p_0$ is upper bounded by a constant denoted by $C_{\max}$ since $L=1$. 
For any small constant $c$, there exists a fixed  constant $\delta>0$ such that for all $p_0$ with support over $[-1,1]^d$, the set $\{p_0 \geq c\}$ has Lebesgue measure at least $\delta$. Fix such a $c$.
Now, there exists a constant $n_0$ such that for all $n \geq n_0$, for all $p_0$, $\uI(p_0) \leq c$. We then have 
\begin{align*}
    C_{\max}^r 2^d \geq \int_{\B(\uI)} p_0^r \geq c^r \delta ~~ \text{ which is a constant.}
\end{align*}
Therefore, $\rhob^t \asymp n^{-\frac{2\alpha t}{4\alpha+d}}$.\\

As for the tail, we have by the Cauchy-Schwarz inequality and Lemma \ref{int_p0squared_uA_small}
\begin{align*}
    \left(\int_{\T(\uA)}p_0\right)^2 \leq \frac{\left(\int_{\T(\uA)}p_0\right)^2}{\big|\T(\uA) \cap [0,1]^d\big|} \leq \int_{\T(\uA)} p_0^2 \leq \Cmom \TL^\frac{1}{\alpha+d} \left(\int_{\T(\uA)}p_0\right)^\frac{d}{\alpha+d}
\end{align*}
hence 
\begin{align*}
    p_0[\T]\leq \Cmom^\frac{\alpha+d}{2\alpha+d}\, n^{-\frac{2\alpha}{2\alpha+d}}.
\end{align*}
We can now immediately check that $\rhob \gg \rhot$ and $\rhob \gg \rhor$ as $n \to \infty$. Since $\rhob$ is independent of $p_0$, the result is proven.

\subsection{Spiky null}

Set $\widetilde{p_0}(x) = \frac{f}{\|f\|_1}$ over $\R^d$\boundedcase{ and $\widetilde{\widetilde{p_0}}(x)$ the restriction of $\widetilde{p_0}(x)$ over $[\pm \frac{1}{2}]^d$}. Since $\widetilde{p_0}$ takes nonzero values only over $[\pm \frac{1}{2}]^d$ we have \boundedcase{by Proposition \ref{restriction_of_support} $\rho^*(\widetilde{p_0},\alpha,1,n) \asymp \rho^*(\widetilde{\widetilde{p_0}},\alpha,1,n) $ and} $\rho^*(\boundedcase{\widetilde{}}\unboundedcase{\widetilde{p_0}},\alpha,1,n)\asymp n^{-\frac{2\alpha}{4\alpha+d}}$ by the preceding case. Now, by homogeneity (see Proposition \ref{Rescaling_proposition}), we have $\rho^*(p_0, \alpha,L, n) = L^\frac{d(t-1)}{t(\alpha+d)} \rho^*(\widetilde{p_0}, \alpha,1,n)$, which yields the result. 

\subsection{Gaussian null}

Note that
\begin{equation}\label{equivalent_normale}
    \int_{\|x\|>b} p_0(x)dx = e^{-\frac{b^2}{2\sigma^2}(1+o(1))} ~~ \text{ when $b \to +\infty$.}
\end{equation}
Therefore, noting $\bI$ the unique value such that if $\|x\|=\bI$, then $p_0(x) = \uI$, we have by the definition of $\uI$ that $\bI = \sigma^2 \frac{4\alpha}{2\alpha+d} \log(n) (1+o(1))$ when $n \to +\infty$. By Lemma \ref{htail_all_equal} and using \eqref{equivalent_normale}, it holds $\int_{\T(\uA)}p_0 \asymp \int_{\widetilde{\T}(\uI)}p_0 = \int_{\T(\uI)}p_0 = n^{-\frac{2\alpha}{2\alpha+d}(1+o(1))} \gg \frac{1}{n}$, so that the tail rate writes 
$$ \rhot \asymp L^{\frac{d(t-1)}{t(\alpha+d)}} n^{-\frac{2\alpha}{2\alpha + d}(1+o(1))} \gg \rhor.$$
Now, by direct calculation, $\rhob \asymp \frac{L^\frac{d}{4\alpha+d}}{n^\frac{2\alpha}{4\alpha+d}} (\sigma^d)^{\frac{(4-3t)\alpha+d}{t(4\alpha+d)}}$ and we can immediately check that it is the dominant term.

\subsection{Pareto null}

Fix $d=t=1$ and $\alpha \leq 1$. We let $\qI>x_1$ denote the unique value such that $p_0(\qI)=\uI$. By the definition of $\uI$ and using simple algebra we get $\qI \asymp \TL^{-\frac{1}{3\beta + \alpha +1}}$. Moreover, we have by Lemma \ref{htail_all_equal} that $\int_{\T(\uA)}p_0 \asymp \int_{\widetilde{\T}(\uI)}p_0 = \int_{\T(\uI)}p_0 = \TL^\frac{\beta}{3\beta+\alpha+1}$ so that (by recalling $t=1$): $\rhot \asymp \int_{\T(\uA)}p_0 \asymp\TL^\frac{\beta}{3\beta+\alpha+1} \gg \rhor$. Now, we can easily get $\rhob \asymp \TL^\frac{1}{4\alpha+1} \ll \rhot$ which ends the proof.

\end{document}